\titleclass{\subsubsubsection}{straight}[\subsection]
\newcounter{subsubsubsection}[subsubsection]
\renewcommand\thesubsubsubsection{\thesubsubsection.\arabic{subsubsubsection}}
\renewcommand\paragraph{\@startsection{paragraph}{5}{\z@}%
  {3.25ex \@plus1ex \@minus.2ex}%
  {-1em}%
  {\normalfont\normalsize\bfseries}}
\renewcommand\subparagraph{\@startsection{subparagraph}{6}{\parindent}%
  {3.25ex \@plus1ex \@minus .2ex}%
  {-1em}%
  {\normalfont\normalsize\bfseries}}
\def\toclevel@subsubsubsection{4}
\def\toclevel@paragraph{5}
\def\toclevel@paragraph{6}
\def\l@subsubsubsection{\@dottedtocline{4}{7em}{4em}}
\def\l@paragraph{\@dottedtocline{5}{10em}{5em}}
\def\l@subparagraph{\@dottedtocline{6}{14em}{6em}}
\newcommand{\footremember}[2]{%
    \footnote{#2}
    \newcounter{#1}
    \setcounter{#1}{\value{footnote}}%
}
\newcommand{\footrecall}[1]{%
    \footnotemark[\value{#1}]%
}
\theoremstyle{plain}
\newtheorem{theorem}{Theorem}[section]
\newtheorem{lemma}[theorem]{Lemma}
\newtheorem{corollary}[theorem]{Corollary}
\newtheorem{proposition}[theorem]{Proposition}
\newtheorem{problem}[theorem]{Riemann-Hilbert Problem}
\theoremstyle{definition}
\newtheorem{remark}[theorem]{Remark}
\def \emph#1{ {\rm #1}}
\newcommand{\ds}{\displaystyle}
\newcommand{\C}{{\mathbb C}}
\newcommand{\BigO}[1]{\ensuremath{\operatorname{O}\left(#1\right)}}
\newcommand*\pFqskip{8mu}
\newcommand*\pFq{\begingroup
        \catcode`\,\active
        \def ,{\mskip\pFqskip\relax}%
        \dopFq
}
\def\dopFq#1#2#3#4#5{%
        {}_{#1}F_{#2}\biggl(\genfrac..{0pt}{}{#3}{#4} \hspace{1mm}\biggr|\hspace{1mm} #5\biggr)%
        \endgroup
}
\renewcommand{\l}{\lambda}
\newcommand{\ra}{\rightarrow}
\newcommand{\1}{{\bf 1}}
\newcommand{\hf}{\frac 12}
\newcommand\blfootnote[1]{%
  \begingroup
  \renewcommand\thefootnote{}\footnote{#1}%
  \addtocounter{footnote}{-1}%
  \endgroup
}
\begin{document}

\numberwithin{equation}{subsection}

\title{Diagonalization of the finite Hilbert transform on two adjacent intervals: the Riemann-Hilbert approach}

\author{M. Bertola\footnote{Department of Mathematics and Statistics, Concordia University, 1455 de Maisonneuve W., Montr\'eal, Qu\'ebec, Canada H3G 1M8.} \footnote{SISSA, International School for Advanced Studies, via Bonomea 265, Trieste, Italy.}, E. Blackstone\footnote{Department of Mathematics, KTH Royal Institute of Technology, Lindstedtsv\"{a}gen 25, SE-114 28 Stockholm, Sweden.}, A. Katsevich\footremember{UCF}{Department of Mathematics, University of Central Florida, P.O. Box 161364, 4000 Central Florida Blvd, Orlando, FL 32816-1364, USA.}, A. Tovbis\footrecall{UCF} \blfootnote{The research of M. B. was supported in part by the Natural Sciences and Engineering Research Council of Canada grant RGPIN-2016-06660.  The research of E. B., A. K., and A. T. was supported in part by NSF grant DMS-1615124.}}

\date{}

\maketitle

\begin{abstract}

    In this paper we study the spectra of bounded self-adjoint linear operators that are related to finite Hilbert transforms $\mathcal{H}_L:L^2([b_L,0])\to L^2([0,b_R])$ and $\mathcal{H}_R:L^2([0,b_R])\to L^2([b_L,0])$.  These operators arise when one studies the interior problem of tomography.  The diagonalization of $\mathcal{H}_R,\mathcal{H}_L$ has been previously obtained, but only asymptotically when $b_L\neq-b_R$.  We implement a novel approach based on the method of matrix Riemann-Hilbert problems (RHP) which diagonalizes $\mathcal{H}_R,\mathcal{H}_L$ explicitly.  We also find the asymptotics of the solution to a related RHP and obtain error estimates.

\end{abstract}

\tableofcontents

\section{Introduction}

\newcommand{\CH}{\mathcal{H}}
\newcommand{\Iin}{I_{\text{1}}}
\newcommand{\Iex}{I_{\text{2}}}
\newcommand{\br}{{\mathbb R}}

Let $\Iin,\Iex\subset\br$ be multiintervals, i.e. the unions of finitely many non-overlapping intervals (but the sets $\Iin, \Iex$ can overlap). The intervals can be bounded or unbounded. Consider the Finite Hilbert transform (FHT) and its adjoint:
\begin{equation}\label{hilb-orig}
\CH:\,L^2(\Iin)\to L^2(\Iex)\,\ (\CH f)(y)=\frac1\pi \int_{\Iin}\frac{f(x)}{x-y}dx;\ 
\CH^*:\, L^2(\Iex)\to L^2(\Iin),\ (\CH^* g)(x)=\frac1\pi \int_{\Iex}\frac{g(y)}{x-y}dy.
\end{equation}
The general problem we consider is to study the spectral properties of $\CH$ (and the associated self-adjoint operators $\CH^*\CH$, $\CH\CH^*$) depending on the geometry of the sets $\Iin,\Iex$. The properties we are interested in include finding the spectrum, establishing the nature of the spectrum (e.g., discrete vs. continuous), and finding the associated resolution of the identity. 
In the case when $\Iin=\Iex$, these problems were studied starting in the 50s and 60s, see e.g.  \cite{koppinc59, kopp60, widom60, kopp64, pincus64, putnam65, rosenblum66}.

More recently, the problem of the diagonalization of $\CH^*\CH$ and $\CH\CH^*$ occurred when solving the problem of image reconstruction from incomplete tomographic data, e.g. when solving the interior problem of tomography \cite{yyww-07, yyw-07b, yyw-08, kcnd, cndk-08, aak14}. In these applications, a significant diversity of different arrangements of $\Iin$, $\Iex$ was encountered: the two sets can be disjoint (i.e., $\text{dist}(\Iin, \Iex)>0$), touch each other (i.e., have a common endpoint), or overlap over an interval. Each of these arrangements leads to different spectral properties of the associated FHT. Generally, if $\text{dist}(\Iin, \Iex)>0$, the operators $\CH^*\CH$ and $\CH\CH^*$ are Hilbert-Schmidt with discrete spectrum \cite{kat10c, BKT16}.  In one particular case of overlap, when $\Iin=[a_1,a_3]$, $\Iex=[a_2,a_4]$, $a_1<a_2<a_3<a_4$, the spectrum is discrete, but has two accumulation points: $\lambda=0$ and $\lambda=1$ \cite{aak14, adk15}, where $\lambda$ denotes the spectral parameter.  In two cases when $\Iin$, $\Iex$ touch each other, the spectrum is purely absolutely continuous. The case $\Iin=[a_1,a_2]$, $\Iex=[a_2,a_3]$, $a_1<a_2<a_3$, was considered in \cite{KT16}. The case when $\Iin$, $\Iex$ are unions of more than one  sub-interval each and $\Iin\cup\Iex=\br$ was considered in \cite{kbt18}. 

To obtain the above mentioned results, three methods have been employed. In very few exceptional cases, e.g. when $\Iin\cup\Iex=\br$, it is possible to diagonalize the FHT explicitly via some ingenious transformations.  When $\Iin$, $\Iex$ each consist of a single interval (the intervals can be separated, touch, or overlap), the method of a commuting differential operator is used \cite{kat10c, kat11, aak14, adk15, KT16}. Here the associated singular functions (and kernels of the unitary transformations) are obtained as solutions of the special Sturm-Liouville problems for the differential operator that commutes with the FHT. As is seen, both of these approaches are fairly limited.

In \cite{BKT16}, a new powerful approach based on the method of the Riemann-Hilbert Problem (RHP) and the nonlinear steepest descent method of Deift-Zhou was proposed. This approach allows one to treat fairly general cases of $\Iin$, $\Iex$. However, the limitation of this approach so far  was the assumption $\text{dist}(\Iin, \Iex)>0$, which ensured that the associated operators $\CH^*\CH$ and $\CH\CH^*$ are Hilbert-Schmidt. In the case when $\Iin$, $\Iex$ touch each other, the RHP approach encountered several challenges that have not been studied before. Here are the three main ones: 
\begin{enumerate}
\item In the case of a purely discrete spectrum, residues are used to compute singular functions in \cite{BKT16}. However, in the limit $\text{dist}(\Iin, \Iex)\ra 0$ the spectra of the operators change from discrete to continuous. Thus, a different technique is needed to extract the spectral properties of the FHT from the solution of the corresponding RHP; 
\item Construction of small $\lambda$ approximations of the RHP solution near the common endpoints (parametrices) was not known, and it had to be developed; 
\item The proper boundary conditions near the common endpoints in the formulation of the RHP was not well understood. 
\end{enumerate}

As stated earlier, in the particular case when $\Iin=[a_1,a_2]$, $\Iex=[a_2,a_3]$, $a_1<a_2<a_3$, the spectral analysis of the FHT was performed in \cite{KT16}. Since the method in \cite{KT16} is based on a commuting differential operator, this method cannot be extended to more general situations of touching intervals,
for example, to the case  when $\Iex$ consists of two disjoint intervals. 
The goal of this paper is to extend the RHP and Deift-Zhou approach from \cite{BKT16} to the case when $\Iin$, $\Iex$ touch each other. This problem in its most general setting is very complicated. For example, if $\Iin$, $\Iex$ touch each other at several points, the continuous spectrum may have multiplicity greater than 1 (see \cite{kbt18}).  If there are also pieces of $\Iin$ that are at a positive distance from $\Iex$ (or, vice versa), then there will be discrete spectrum accumulating at $\lambda=0$ embedded in the continuous spectrum. In this paper we build the foundation for using the RHP/Deift-Zhou method by studying the case $\Iin=[a_1,a_2]$, $\Iex=[a_2,a_3]$, $a_1<a_2<a_3$, as a model example.  Our results include: 
\begin{enumerate}
\item Formulating the corresponding RHP {(including the proper boundary conditions)} and explicitly calculating its solution $\Gamma(z;\lambda)$ in terms of hypergeometric functions; 
\item Complete spectral analysis of the operators  $\CH^*\CH$ and $\CH\CH^*$ as well as {diagonalization} of the operators $\CH$, $\CH^*$; 
\item Calculating the leading order asymptotics of $\Gamma(z;\lambda)$  in the limit $\lambda\ra 0$ in various regions of the complex $z$ plane. These regions include, in particular, a small annulus centered at the {common endpoint} $a_2$. 
\item Finally, we also show that the spectral asymptotics of \cite{KT16} match the explicit spectral results of this paper.
\end{enumerate}

The asymptotics in the annulus around $a_2$ from Item 3 allows us to use  $\Gamma(z;\lambda)$  as a parametrix near any {common endpoint} for {more} general multiintervals $\Iin$, $\Iex$. This parametrix is the key missing link that is needed to construct the leading order asymptotics of $\Gamma(z;\lambda)$ for the general $\Iin$, $\Iex$. {This will be the subject of future research.}

The paper is organized as follows. In Section \ref{secKandRHP} we introduce the integral operator  $$
\hat{K}:L^2([b_L,b_R])\to L^2([b_L,b_R]),\ b_L<0<b_R,
$$
which, when restricted to $L^2([b_L,0])$ and $L^2([0,b_R])$, coincides with $\frac{1}{2i}\CH_L$ and $\frac{1}{2i}\CH_R$, respectively (see equation \eqref{hatK-restr}). Here $\CH_L$ is the FHT from $L^2([b_L,0])\to L^2([0,b_R])$, and $\CH_R$ is the FHT from $L^2([0,b_R])\to L^2([b_L,0])$. In the spirit of the above notation, we have $a_1=b_L$, $a_2=0$, and $a_3=b_R$.
It can easily be shown that the knowledge of the spectrum of $\hat K^2$ allows one to find the spectra  of $\CH_L^*\CH_L$ and $\CH_R\CH_R^*$.  Similarly to the case of disjoint intervals studied in \cite{BKT15}, the key observation is that the kernel $K$ of $ \hat K$, see \eqref{hatK}, 
is a kernel of integrable type in the sense of \cite{IIKS90}. Therefore, the kernel of the resolvent $\hat R$ of $\hat K$ 
is readily available in terms of the solution $\Gamma(z;\lambda)$ of the  matrix RHP \ref{Gamma4RHP}.  {Thus, RHP \ref{Gamma4RHP} plays a fundamental role in our paper.  Because the jump matrix of this RHP is constant in $z$, its solution  $\Gamma(z;\l)$ satisfies a Fuchsian system of linear differential equations with three singular points and, therefore, can be 
expressed in terms of hypergeometric functions. An explicit expression for $\Gamma(z;\l)$ is obtained in Theorem \ref{ThmRHPSol}. Using this
expression, we show that the matrix $\Gamma(z;\lambda)$ is analytic for $\lambda\in\overline{\mathbb{C}}\setminus[-1/2,1/2]$ and has analytic continuations across $(-1/2,0)$ and $(0,1/2)$ from above and from below (Proposition \ref{GammaNoLambdaPoles}). }

In Section \ref{secRHPDiag} we explicitly find the unitary operators  $U_L: L^2([b_L,0]) \to L^2([0,1],\sigma_{\chi_L})$ and ${U_R: L^2([0,b_R])\to L^2([0,1],\sigma_{\chi_R})}$, which diagonalize the operators $\mathcal{H}^*_L\mathcal{H}_L$ and  $\mathcal{H}^*_R\mathcal{H}_R$, respectively, see Theorem \ref{HstarHdiagHHstardiag}. Here $\sigma_{\chi_L}$, $\sigma_{\chi_R}$ denote the corresponding spectral measures. We prove in Theorems \ref{ResOfIdThm} and \ref{SimpSpec} that the spectrum of the operators   $\mathcal{H}^*_L\mathcal{H}_L$ and  $\mathcal{H}^*_R\mathcal{H}_R$  
is simple, purely absolutely continuous, and consists of the interval $[0,1]$. Our approach is based on the resolution of the identity  
Theorem \ref{AGdiag}  (see, for example, \cite{AG80}) and explicit calculation of the jump of the kernel
of the resolvent $\hat R$  over the spectral set in terms of the hypergeometric functions (Theorem \ref{resJump}). 
In the process, we find that $\Gamma(z;\lambda)$ satisfies the jump condition in the spectral variable $\l$ over the segment $[-\hf,\hf]$, see Theorem \ref{GammaJumpLambdaThm}, which, in some sense, is dual to the jump condition in the $z$ variable, see  RHP  \ref{Gamma4RHP}. {This observation is a RHP analogue of certain  bispectral problems \cite{Gru01}.}

The spectrum and diagonalization of the operators $\mathcal{H}_L, \mathcal{H}_R$ was studied in  \cite{KT16}, where the authors used a second order differential operator $L$, see \eqref{DiffOpL}, which commutes with $\mathcal{H}_L$ and $\CH_R$. The Titchmarsh-Weyl theory was utilized in  \cite{KT16} to obtain the small-$\l$  asymptotics of the unitary operators $U_1,U_2$ that diagonalize $\CH_L$, $\CH_R$, i.e. $U_2\CH_L U_1^*$ and $U_1\CH_R U_2^*$ are multiplication operators. In Section \ref{secODEDiag}, we construct the  operators $U_1,U_2$, see \eqref{U1}, \eqref{U2}, explicitly in terms of the hypergeometric functions everywhere on the continuous spectrum. We also show, see Theorem \ref{diagEquiv},  that the diagonalization of $\mathcal{H}^*_L\mathcal{H}_L$ and  $\mathcal{H}^*_R\mathcal{H}_R$, obtained in Section \ref{secRHPDiag}, is equivalent to the diagonalization obtained through the operators $U_1,U_2$.

In Section \ref{secGammaAsymp} we obtain the leading order behavior of  $\Gamma(z;\l)$ as $\l\ra 0$ in different regions of the complex $z$-plane,
see Theorem \ref{GammaAsympMainResult}.
The main tool we use here is the Deift-Zhou nonlinear steepest descent method combined with the $g$-function mechanism, which reduces (asymptotically)
the original RHP \ref{Gamma4RHP} for  $\Gamma(z;\l)$ to the so called model RHP \ref{modelRHP} for $\Psi(z)$. The latter represents the 
leading order approximation of $\Gamma(z;\l)$ on compact subsets of $\C\setminus [-\hf,\hf]$. Since the jump matrices in RHP \ref{modelRHP} 
commute with each other, the model RHP has a simple algebraic solution (Theorem \ref{model_sol}). However, due to a singularity at the common endpoint $z=0$, this solution is not unique. In order to 
select the appropriate $\Psi(z)$, we need to match it with the  leading order behavior of 
$\Gamma(z;\l)$ as $\l\ra 0$ in a small annulus centered at $z=0$, which is derived in Theorem \ref{GammaAsmptotics}. This, in turn,
requires calculating the leading order approximation of the hypergeometric function $F(a,b;c;\eta)$, where the parameters $a,b,c$ go to infinity in a certain way as $\l\ra 0$. Moreover,  this approximation must be uniform in a certain {large radius annulus $\Omega$ in the complex $\eta$-plane.}  Such asymptotics was recently obtained in \cite{Par13} based on the saddle point method
for complex integrals of the type of \eqref{int2}, but error estimates and uniformity needed for our purposes were not addressed there. 
Thus, we state and prove Theorem \ref{result} for such complex integrals.

Details of the construction of  $\Gamma(z;\l)$, the proof of Theorem \ref{result},  and other auxiliary  material can be found in the Appendix.

%%%%%%%%%%%%%%%%%%%%%%%%%%%%%%%%%%%%%%%%%%%%%%%%%%%%%%

\section{Integral operator $\hat{K}$ and RHP}\label{secKandRHP}

Let us begin by defining the finite Hilbert transforms $\mathcal{H}_L:L^2([b_L,0])\to L^2([0,b_R])$ and $\mathcal{H}_R:L^2([0,b_R])\to L^2([b_L,0])$ by
\begin{equation}\label{HLHRdef}
\mathcal{H}_L[f](y):=\frac{1}{\pi}\int_{b_L}^0\frac{f(x)}{x-y}dx,~~ \mathcal{H}_R[g](x):=\frac{1}{\pi}\int_{0}^{b_R}\frac{g(y)}{y-x}dy.
\end{equation}
Notice that the adjoint of $\mathcal{H}_L$ is $-\mathcal{H}_R$.

\subsection{Definition and Properties of $\hat{K}$}

We define the integral operator $\hat{K}:L^2([b_L,b_R])\to L^2([b_L,b_R])$ by the requirements 
\begin{equation}\label{hatK-restr}
    \hat{K}\big|_{L^2([b_L,0])}=\frac{1}{2i}\mathcal{H}_L, ~~~ \hat{K}\big|_{L^2([0,b_R])}=\frac{1}{2i}\mathcal{H}_R.
\end{equation}
Explicitly,
\begin{equation}\label{hatK}
    \hat{K}[f](z):=\int_{b_L}^{b_R} K(z,x)f(x)~dx, ~~ \text{ where } ~~ K(z,x):=\frac{\chi_L(x)\chi_R(z)+\chi_R(x)\chi_L(z)}{2\pi i(x-z)}
\end{equation}
and $\chi_{L},\chi_R$ are indicator functions on $[b_L,0],[0,b_R]$, respectively.

\begin{proposition}\label{hatKProp}
The integral operator $\hat{K}:L^2([b_L,b_R])\to L^2([b_L,b_R])$ is self-adjoint and bounded.
\end{proposition}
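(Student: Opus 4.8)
The plan is to reduce Proposition \ref{hatKProp} to two elementary facts about the kernel $K$ in \eqref{hatK}. The crucial preliminary observation is that \emph{no principal value is needed} in \eqref{hatK}: the numerator $\chi_L(x)\chi_R(z)+\chi_R(x)\chi_L(z)$ is supported on $\bigl([b_L,0]\times[0,b_R]\bigr)\cup\bigl([0,b_R]\times[b_L,0]\bigr)$, which meets the diagonal $\{x=z\}$ only at the single point $(0,0)$. Consequently $K$ is an ordinary measurable function, bounded on compact subsets of $[b_L,b_R]^2$ avoiding the origin, and near the origin $|K(z,x)|\le\frac{1}{2\pi|x-z|}$ with $x$ and $z$ on opposite sides of $0$. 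Everything then follows from controlling this mild singularity.

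For boundedness I would run a Schur test on the positive kernel $|K(z,x)|$ with weight $w(t)=|t|^{-1/2}$. The only estimate that is not immediate is the one near the origin: for $z>0$,
\[
\int_{b_L}^{b_R}|K(z,x)|\,w(x)\,dx=\frac{1}{2\pi}\int_{0}^{|b_L|}\frac{u^{-1/2}}{u+z}\,du\le\frac{z^{-1/2}}{2\pi}\int_{0}^{\infty}\frac{s^{-1/2}}{1+s}\,ds=\tfrac12\,w(z),
\]
the case $z<0$ is symmetric, and since $|K(z,x)|=|K(x,z)|$ the transposed Schur inequality coincides with this one; hence $\|\hat K\|\le\tfrac12$. (The same bound follows more quickly from the block decomposition $L^2([b_L,b_R])=L^2([b_L,0])\oplus L^2([0,b_R])$: by \eqref{hatK-restr}, $\hat K$ is purely off-diagonal, with blocks $\tfrac{1}{2i}\mathcal{H}_L$ and $\tfrac{1}{2i}\mathcal{H}_R$, each of which, after extending the $L^2$ function by zero to $\mathbb{R}$, is a restriction of the Hilbert transform on $L^2(\mathbb{R})$, whose norm is $1$.) Running the same Schur computation on $|K|$ itself also yields $\iint|K(z,x)|\,|f(x)|\,|g(z)|\,dx\,dz<\infty$ for all $f,g\in L^2([b_L,b_R])$, so the sesquilinear form of $\hat K$ is represented by an absolutely convergent double integral.

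For self-adjointness, a one-line computation from \eqref{hatK} shows that $K$ is Hermitian, $\overline{K(x,z)}=K(z,x)$: the numerator is real and symmetric under $x\leftrightarrow z$, while conjugating the denominator $2\pi i(z-x)$ produces $2\pi i(x-z)$. Combined with the absolute convergence just established, Fubini gives
\[
\langle\hat Kf,g\rangle=\iint K(z,x)\,f(x)\,\overline{g(z)}\,dx\,dz=\iint f(x)\,\overline{K(x,z)}\,\overline{g(z)}\,dx\,dz=\langle f,\hat Kg\rangle,
\]
so $\hat K=\hat K^{*}$. (In the block picture, Hermiticity of $K$ is precisely the relation $\mathcal{H}_L^{*}=-\mathcal{H}_R$ recorded after \eqref{HLHRdef}, which makes the off-diagonal operator $\hat K$ self-adjoint.)

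The only point requiring genuine care — hence the main, if modest, obstacle — is the boundedness step: $\hat K$ just fails to be Hilbert--Schmidt, since $\iint|K|^2$ diverges logarithmically near the origin (consistent with the purely absolutely continuous spectrum found later in the paper), so the naive Hilbert--Schmidt estimate is unavailable and one must invoke either the weighted Schur test above or the $L^2(\mathbb{R})$-boundedness of the Hilbert transform. Granted the absolutely convergent representation of the sesquilinear form, self-adjointness is then immediate.
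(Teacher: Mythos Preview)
Your proof is correct. The paper's own proof is a two-line affair: boundedness is quoted from the $L^2(\mathbb{R})$-boundedness of the Hilbert transform, and self-adjointness from the kernel symmetry $K(z,x)=\overline{K(x,z)}$ --- exactly your parenthetical block-decomposition argument and your Hermitian-kernel computation. Your primary route, the weighted Schur test with $w(t)=|t|^{-1/2}$, is a genuinely different and more self-contained alternative for boundedness: it does not appeal to the Hilbert transform on the whole line, it yields the same sharp bound $\|\hat K\|\le\tfrac12$, and as a byproduct it supplies the absolute convergence that makes the Fubini step in the self-adjointness argument rigorous (a point the paper leaves implicit). Your observation that $\hat K$ narrowly fails to be Hilbert--Schmidt is also a useful remark the paper does not make at this point, though it is of course consistent with the continuous spectrum established later.
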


\begin{proof}
    The boundedness of $\hat{K}$ follows from the boundedness of the Hilbert transform on $L^2(\mathbb{R})$ and we can see that $\hat{K}$ is self-adjoint because $K(z,x)=\overline{K(x,z)}$.
\end{proof}

\subsection{Resolvent of $\hat{K}$ and the Riemann-Hilbert Problem}

The operator $\hat{K}$ falls within the class of ``integrable kernels'' (see \cite{IIKS90}) and it is known that its spectral properties are intimately related to a suitable Riemann-Hilbert problem.  In particular, the kernel of the resolvent integral operator $\hat{R}=\hat{R}(\lambda):L^2([b_L,b_R])\to L^2([b_L,b_R])$, defined by
\begin{equation}\label{resolvent}
(I+\hat{R}(\lambda))\left(I-\frac{1}{\lambda}\hat{K}\right)=I,
\end{equation}
can be expressed through the solution $\Gamma(z;\lambda)$ of the following RHP.

\begin{problem}\label{Gamma4RHP}

Find a $2\times2$ matrix-function $\Gamma(z;\lambda)$, $\lambda\in\overline{\mathbb{C}}\setminus[-1/2,1/2]$, analytic for $z\in\overline{\mathbb{C}}\setminus [b_L,b_R]$ and satisfying
\begin{align}
\Gamma(z_+;\lambda)=&\Gamma(z_-;\lambda)\begin{bmatrix} 1 & -\frac{i}{\lambda} \\ 0 & 1 \end{bmatrix}, ~~ z\in[b_L,0], \label{rhpI1} \\
\Gamma(z_+;\lambda)=&\Gamma(z_-;\lambda)\begin{bmatrix} 1 & 0 \\ \frac{i}{\lambda} & 1 \end{bmatrix}, ~~ z\in[0,b_R], \label{rhpI3} \\
\Gamma(z;\lambda)= &\begin{bmatrix} \BigO{1} & \BigO{\log(z-b_L)}\end{bmatrix}, ~~ z\to b_L, \label{RHP2endpt} \\
\Gamma(z;\lambda)= &\begin{bmatrix} \BigO{\log(z-b_R)} & \BigO{1}\end{bmatrix}, ~~ z\to b_R, \label{RHP2endpt2} \\
\Gamma(z;\lambda)\in&L^2([b_L,b_R]), \label{RHP2endpt3} \\
\Gamma(z;\lambda) = & \1 + \BigO{z^{-1}}, ~~ z\to\infty, \label{rhpC1}
\end{align}
where $\1$ denotes the identity matrix.  The endpoint behavior of $\Gamma(z;\lambda)$ is described column-wise, the intervals $(b_L,0)$ and $(0,b_R)$ are positively oriented, and $z_\pm$ denotes the values on the positive/negative side of the jump contour $(b_L,b_R)$ respectively.

\end{problem}

As described in Appendix \ref{appendixGammaConstruct}, we are able to construct the solution of RHP \ref{Gamma4RHP} in terms of the hypergeometric functions
\begin{align}
    h_\infty(z)&:=e^{a\pi i}z^{-a}\pFq{2}{1}{a,a+1}{2a+2}{\frac{1}{z}} \implies h_\infty'(z)=-a e^{a\pi i}z^{-a-1}\pFq{2}{1}{a+1,a+1}{2a+2}{\frac{1}{z}}, \label{hhprime} \\
    s_\infty(z)&:=-\frac{z^{a+1}}{e^{a\pi i}}\pFq{2}{1}{-a-1,-a}{-2a}{\frac{1}{z}}\implies s_\infty'(z)=\frac{a+1}{-e^{a\pi i}}z^a\pFq{2}{1}{-a,-a}{-2a}{\frac{1}{z}}, \label{ssprime}
\end{align}
where $h_\infty,s_\infty$ are linearly independent solutions of the ODE 
\begin{equation}\label{hsODE}
    z(1-z)w''(z)+a(a+1)w(z)=0,
\end{equation}
and $a=a(\lambda)$, where 
\begin{equation}\label{aFunc}
a(\lambda):=\frac{1}{i\pi}\ln\left(\frac{i+\sqrt{4\lambda^2-1}}{2\lambda}\right).
\end{equation}
The principle branch of the log is taken and $\sqrt{4\lambda^2-1}=2\lambda+\BigO{1}$ as $\lambda\to\infty$, so it can be shown that $a(\lambda)$ is analytic for $\lambda\in\overline{\mathbb{C}}\setminus[-1/2,1/2]$.  This function $a(\lambda)$ will occur frequently throughout this paper so we have listed its relevant properties in Appendix \ref{aAppendix}.  We will often write $a$ in place of $a(\lambda)$ for convenience.  Recall that the standard Pauli matrices are
\begin{equation}\label{Pauli}
\sigma_1=\begin{bmatrix} 0 & 1 \\ 1 & 0 \end{bmatrix}, \hspace{3mm} \sigma_2=\begin{bmatrix} 0 & -i \\ i & 0 \end{bmatrix}, \hspace{3mm} \sigma_3=\begin{bmatrix} 1 & 0 \\ 0 & -1 \end{bmatrix}.
\end{equation}
In Appendix \ref{appendixGammaConstruct}, we describe how to construct the (unique!) solution to RHP \ref{Gamma4RHP}.

\begin{theorem}\label{ThmRHPSol}

For $\lambda\in\overline{\mathbb{C}}\setminus[-1/2,1/2]$, the unique solution to RHP \ref{Gamma4RHP} is 
\begin{equation}\label{RHPSolution}
\Gamma(z;\lambda)=\sigma_2Q^{-1}(\lambda)\hat{\Gamma}^{-1}\left(M_1(\infty)\right)\begin{bmatrix} 1 & \frac{b_Lb_R}{z(b_R-b_L)(a+1)} \\ 0 & 1 \end{bmatrix}\hat{\Gamma}\left(M_1(z)\right)Q(\lambda)\sigma_2,
\end{equation}
where $M_1(z)=\frac{b_R(z-b_L)}{z(b_R-b_L)}$ and 
\begin{equation}\label{QGammahat}
Q=\begin{bmatrix} -\tan(a\pi) & 0 \\ 0 & 4^{2a+1}e^{a\pi i}\frac{\Gamma(a+3/2)\Gamma(a+1/2)}{\Gamma(a)\Gamma(a+2)} \end{bmatrix}\begin{bmatrix} 1 & e^{a\pi i} \\ -e^{a\pi i} & 1 \end{bmatrix}, ~~ \hat{\Gamma}(z)=\begin{bmatrix} h_\infty\left(z\right) & s_\infty\left(z\right) \\ h_\infty'\left(z\right) & s_\infty'\left(z\right) \end{bmatrix}.
\end{equation}

Here $a:=a(\lambda)$ which is defined in \eqref{aFunc} and $h_\infty,s_\infty$ are defined in \eqref{hhprime},\eqref{ssprime}.

\end{theorem}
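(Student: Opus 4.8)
The plan is to reduce RHP \ref{Gamma4RHP} to a Fuchsian ODE in $z$, solve that ODE in terms of hypergeometric functions, and then pin down the remaining constant matrices by imposing the jump relations, the normalization at infinity, and the prescribed endpoint behavior. First I would observe that since the jump matrices in \eqref{rhpI1}, \eqref{rhpI3} are constant in $z$, the logarithmic derivative $A(z):=\Gamma'(z;\lambda)\Gamma^{-1}(z;\lambda)$ (or $\Gamma^{-1}\Gamma'$, depending on convention) is single-valued and meromorphic on $\overline{\mathbb C}$, with at worst simple poles at $z=b_L,0,b_R$ coming from the endpoint behavior \eqref{RHP2endpt}--\eqref{RHP2endpt3}, and vanishing at $z=\infty$ by \eqref{rhpC1}. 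Hence $\Gamma$ satisfies a rank-two Fuchsian system with three regular singular points, which after the Möbius change of variable $M_1(z)=\tfrac{b_R(z-b_L)}{z(b_R-b_L)}$ sending $\{b_L,0,b_R\}$ to $\{0,\infty,1\}$ becomes the hypergeometric system. The local exponents at the three singularities are read off from the $\BigO{1}$ / $\BigO{\log}$ / $L^2$ conditions: the $\log$ behavior forces a resonant pair of exponents differing by an integer at $b_L$ and $b_R$, while at $z=0$ the exponents are $\pm$ something determined by the monodromy, and matching the product of the two constant jump matrices $\left[\begin{smallmatrix}1 & -i/\lambda\\ 0 & 1\end{smallmatrix}\right]\left[\begin{smallmatrix}1 & 0\\ i/\lambda & 1\end{smallmatrix}\right]$ to the local monodromy exponential $e^{2\pi i a}$ is exactly what produces the definition \eqref{aFunc} of $a(\lambda)$.

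Next I would construct an explicit fundamental solution. The matrix $\hat\Gamma(z)=\left[\begin{smallmatrix} h_\infty & s_\infty\\ h_\infty' & s_\infty'\end{smallmatrix}\right]$ built from the two independent solutions \eqref{hhprime}, \eqref{ssprime} of the second-order ODE \eqref{hsODE} is a Wronskian-type fundamental matrix for the companion system associated to \eqref{hsODE}; composing with $M_1$ gives a local solution near $z=\infty$ (i.e. near the image point $M_1(\infty)$). The elementary upper-triangular factor $\left[\begin{smallmatrix}1 & \frac{b_Lb_R}{z(b_R-b_L)(a+1)}\\ 0 & 1\end{smallmatrix}\right]$ is a gauge/dressing transformation that simultaneously (i) removes the apparent singularity so that the correct column-wise endpoint behavior at $b_L$ and $b_R$ holds, and (ii) arranges the $z\to\infty$ expansion to be $\1+\BigO{z^{-1}}$; I would verify this by expanding $\hat\Gamma(M_1(z))$ near $z=\infty$ using the known $F(\cdot,\cdot;\cdot;1/z)=1+\BigO{1/z}$ expansions and checking the $\BigO{z^{-1}}$ coefficient cancels against the triangular factor. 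The conjugation by $\hat\Gamma^{-1}(M_1(\infty))$ normalizes the solution to be $\1$ at infinity before the triangular correction, and the outer conjugations by $\sigma_2 Q^{-1}(\lambda)$ on the left and $Q(\lambda)\sigma_2$ on the right are the constant connection matrices that convert the "solution organized at $z=\infty$" into one with the correct jumps \eqref{rhpI1}, \eqref{rhpI3} across $[b_L,0]$ and $[0,b_R]$ — this is where the hypergeometric connection formulae (the Gamma-function ratios in $Q$) enter, since the jumps on the two subintervals are governed by the monodromy of $F$ around $0$ and around $1$.

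The verification then splits into four checks: analyticity of \eqref{RHPSolution} off $[b_L,b_R]$ (clear, since $h_\infty,s_\infty$ and $M_1$ are analytic there and the scalar prefactor has no poles for $\lambda\notin[-1/2,1/2]$, using the properties of $a(\lambda)$ collected in Appendix \ref{aAppendix}); the two jump relations, obtained by computing $\hat\Gamma(M_1(z_+))\hat\Gamma(M_1(z_-))^{-1}$ from the monodromy of the hypergeometric equation and checking it is conjugated by $Q\sigma_2$ into the required constant triangular matrices; the endpoint estimates \eqref{RHP2endpt}--\eqref{RHP2endpt3}, read from the local Frobenius exponents together with the role of the triangular dressing factor; and the normalization \eqref{rhpC1}. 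Uniqueness follows from a standard Liouville argument: if $\Gamma_1,\Gamma_2$ both solve the RHP then $\Gamma_1\Gamma_2^{-1}$ is analytic across $(b_L,b_R)$ (the jumps cancel), has at worst integrable isolated singularities at the three endpoints hence removable ones by \eqref{RHP2endpt3} together with the $\BigO{\log}$/$\BigO{1}$ bounds, and tends to $\1$ at infinity, so it is identically $\1$ by Liouville's theorem. I expect the main obstacle to be the endpoint bookkeeping — showing that the particular off-diagonal entry $\frac{b_Lb_R}{z(b_R-b_L)(a+1)}$ is exactly the value that kills the apparent singularity and yields the asymmetric column-wise $\BigO{\log}$ behavior at $b_L$ versus $b_R$ — and, relatedly, correctly tracking the hypergeometric connection constants so that the Gamma-function entries of $Q(\lambda)$ come out precisely as in \eqref{QGammahat}; the rest is b{}ookkeeping with classical $ {}_2F_1$ identities. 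Full details are deferred to Appendix \ref{appendixGammaConstruct}.
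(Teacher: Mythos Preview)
Your overall strategy---constant-in-$z$ jumps $\Rightarrow$ Fuchsian system $\Rightarrow$ M\"obius to hypergeometric $\Rightarrow$ match monodromy to the prescribed jumps via a constant conjugation $Q$---is exactly what the paper does in Appendix~\ref{appendixGammaConstruct}, and the uniqueness argument is the same Liouville step.

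There is, however, a concrete misattribution in your sketch that would cause trouble if you tried to carry it out as written. The upper-triangular factor $\left[\begin{smallmatrix}1 & \frac{b_Lb_R}{z(b_R-b_L)(a+1)}\\ 0 & 1\end{smallmatrix}\right]$ is \emph{not} there to fix the behavior at $b_L$ and $b_R$, nor to arrange the normalization at $z=\infty$. Its sole purpose is to repair the $L^2$ condition \eqref{RHP2endpt3} at the common endpoint $z=0$. As $z\to 0$ one has $M_1(z)\to\infty$, and from \eqref{ssprime} the second column of $\hat\Gamma(M_1(z))$ behaves like $z^{-a-1}$ in its first entry, which is \emph{not} locally $L^2$ since $|\Re a|<1/2$. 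The off-diagonal entry is chosen precisely so that the combination $s_\infty(M_1(z))+\frac{b_Lb_R}{z(b_R-b_L)(a+1)}s_\infty'(M_1(z))=\BigO{z^{-a}}$, which \emph{is} $L^2$; see \eqref{hatGammaL2Loc}. This is the genuinely new feature of the touching-interval case and the reason the RHP needs the unusual $L^2$ condition at $z=0$ rather than a pointwise bound.

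The asymmetric $\BigO{1}/\BigO{\log}$ column behavior at $b_L$ and $b_R$ has a different, simpler source: once the jumps are the triangular matrices \eqref{rhpI1}, \eqref{rhpI3}, the first column has no jump on $(b_L,0)$ and is therefore analytic across it, hence $\BigO{1}$ at $b_L$; the second column picks up the jump and so acquires a logarithm there (and symmetrically at $b_R$). No dressing is needed for this---it falls out of the triangular structure of the jumps after conjugation by $Q\sigma_2$. If you rework the proposal with the triangular factor aimed at $z=0$ and the log behavior explained by the jump structure, the rest of your outline goes through as stated.
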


\begin{remark}\label{GammaSymmetry}
For any $\lambda\in\mathbb{C}\setminus[-1/2,1/2]$, the function $\Gamma(z;\lambda)$ has the symmetries 
\begin{equation}
\overline{\Gamma(\overline{z};\overline{\lambda})}=\Gamma(z;\lambda), \hspace{3mm} \sigma_3\Gamma(z;-\lambda)\sigma_3=\Gamma(z;\lambda)
\end{equation}
which follow from the observation that $\sigma_3\Gamma(z;-\lambda)\sigma_3$ and $\overline{\Gamma(\overline{z};\overline{\lambda})}$ also solve RHP \ref{Gamma4RHP}.
\end{remark}

\begin{proposition}\label{GammaNoLambdaPoles}
    The matrix $\Gamma(z;\lambda)$, defined in \eqref{RHPSolution}, is analytic for $\lambda\in\overline{\mathbb{C}}\setminus[-1/2,1/2]$ and can be analytically continued across the intervals $(-1/2,0)$ and $(0,1/2)$ from above and from below.
\end{proposition}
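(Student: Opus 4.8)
The plan is to work directly from the explicit formula \eqref{RHPSolution} and show that every factor appearing there is analytic in $\lambda$ on $\overline{\mathbb C}\setminus[-1/2,1/2]$, and that the only possible sources of singularities — the points where $a(\lambda)\in\mathbb Z$ or $2a(\lambda)\in\mathbb Z$, at which the hypergeometric representations \eqref{hhprime}, \eqref{ssprime} and/or the Gamma-function prefactors in $Q$ degenerate — are in fact removable. First I would recall from the appendix on $a(\lambda)$ (Appendix \ref{aAppendix}) the mapping properties of $a$: it is analytic on $\overline{\mathbb C}\setminus[-1/2,1/2]$, and as $\lambda$ traverses the spectral cut the value $a(\lambda)$ runs along the boundary of a fixed vertical strip, taking the exceptional integer/half-integer values only at the endpoints $\lambda=\pm 1/2$ and, crucially, at $\lambda=0$ and on the two one-sided limits to $(-1/2,0)$ and $(0,1/2)$. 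So the claim reduces to a local analysis near those exceptional parameter values, together with the observation that for $\lambda$ in the open resolvent set $\overline{\mathbb C}\setminus[-1/2,1/2]$ one has $a(\lambda)\notin\frac12\mathbb Z$ except possibly at isolated points, where we must check cancellation.

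The key steps, in order: (i) Verify analyticity of $\hat\Gamma(M_1(z))$ in $\lambda$ for $a$ away from the exceptional set: $h_\infty, s_\infty$ and their derivatives are, by \eqref{hhprime}–\eqref{ssprime}, products of $z^{\mp a}$ (entire in $a$) with ${}_2F_1$ functions whose parameters depend analytically on $a$; the ${}_2F_1$ series converges locally uniformly for $|1/z|<1$, hence is jointly analytic, and the pole-matching argument handles $|1/z|\ge 1$ via the known analytic continuation of the hypergeometric function. (ii) Verify that the prefactor matrix $Q(\lambda)$ and $\hat\Gamma^{-1}(M_1(\infty))$ are analytic where defined; the Wronskian of $h_\infty, s_\infty$ must be computed (it is a known constant multiple of a ratio of Gamma functions in $a$) to see that $\det\hat\Gamma$ is nonvanishing and analytic off the exceptional set, so $\hat\Gamma^{-1}$ is fine there. (iii) The heart of the matter: near a value $\lambda_0$ with $a(\lambda_0)\in\mathbb Z$ or $2a(\lambda_0)\in\mathbb Z$, the factors $\tan(a\pi)$, the Gamma quotient $\Gamma(a+3/2)\Gamma(a+1/2)/(\Gamma(a)\Gamma(a+2))$, and the solutions $h_\infty, s_\infty$ (whose standard ${}_2F_1$ forms break down when a lower parameter is a non-positive integer) all individually blow up or degenerate; one shows that in the product $\sigma_2 Q^{-1}\hat\Gamma^{-1}(M_1(\infty))\,[\cdots]\,\hat\Gamma(M_1(z))\,Q\sigma_2$ these singularities cancel. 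The cleanest route is the RHP characterization itself: for such $\lambda_0$ (in particular on the one-sided boundary values over $(-1/2,0)\cup(0,1/2)$) the jump matrices in \eqref{rhpI1}–\eqref{rhpI3} still make sense, the solution of RHP \ref{Gamma4RHP} is still unique, and uniqueness forces $\lim_{\lambda\to\lambda_0}\Gamma(z;\lambda)$ to exist and to equal that solution — so any apparent pole in \eqref{RHPSolution} is removable, and the limit provides the analytic continuation from each side. (iv) Finally, use the symmetry $\sigma_3\Gamma(z;-\lambda)\sigma_3=\Gamma(z;\lambda)$ from Remark \ref{GammaSymmetry} to transfer the analysis of continuation across $(0,1/2)$ to that across $(-1/2,0)$, halving the work.

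The main obstacle I anticipate is step (iii): bookkeeping the precise orders of the zeros and poles in $a$ contributed by each factor and confirming they match exactly, since the ${}_2F_1$ functions $h_\infty, s_\infty$ have to be re-expressed (via a limiting/confluence identity, e.g. involving logarithmic solutions or a different pair of connection coefficients) when a denominator parameter $2a+2$, $-2a$, etc., hits a non-positive integer. Pushing this through by brute force is delicate; that is precisely why I would lean on the uniqueness of RHP \ref{Gamma4RHP} as a black box — it lets one assert removability of the singularity from the \emph{existence} of a well-behaved limiting RHP solution, without reconstructing the degenerate hypergeometric expansions explicitly. The only genuinely computational part then is checking that the jump matrices and endpoint conditions in RHP \ref{Gamma4RHP} vary continuously (indeed analytically) as $\lambda\to\lambda_0$ from one side of the spectral cut, which is immediate since they depend on $\lambda$ only through the entire function $i/\lambda$ and through $a(\lambda)$, analytic up to the boundary from each side by Appendix \ref{aAppendix}.
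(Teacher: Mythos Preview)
Your plan is considerably more elaborate than necessary because it misses the single observation that does almost all the work. From Appendix \ref{aAppendix} (Proposition \ref{aProp}) one has $|\Re[a(\lambda)]|<\tfrac12$ for every $\lambda\in\mathbb C\setminus[-1/2,1/2]$. This inequality immediately rules out nearly all of the exceptional parameter values you set up to handle in step (iii): the lower parameters $2a+2$ and $-2a$ of the ${}_2F_1$ functions in \eqref{hhprime}--\eqref{ssprime} can be non-positive integers only if $\Re a\le -1$ or $2\Re a\in\{0,1,2,\dots\}$, and the Gamma and tangent factors in $Q$ are singular only at half-integers. Within the open strip $|\Re a|<\tfrac12$ the sole surviving exceptional value is $a=0$, attained only at $\lambda=\infty$. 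The paper handles that one point by a direct zero/pole cancellation between the second column of $\hat\Gamma$ (simple pole) and the second row of $Q$ (simple zero). No RHP black box is invoked. For the analytic continuation across $(-1/2,0)\cup(0,1/2)$ the paper again uses only the range of $a$: by Proposition \ref{aProp}, $a_\pm(\lambda)$ has nonzero imaginary part there, hence stays away from $\tfrac12\mathbb Z$, and formula \eqref{RHPSolution} extends analytically from each side with no further work.

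Your step (iii) also contains a gap as written: uniqueness of the RHP solution at a limit point $\lambda_0$ does not by itself force $\lim_{\lambda\to\lambda_0}\Gamma(z;\lambda)$ to exist. You would need, in addition, existence of the solution at $\lambda_0$ together with a stability result for the family of RHPs (e.g., analytic Fredholm theory for the associated singular integral operator). That can be made rigorous, but it is much heavier machinery than the problem requires, and the sentence ``uniqueness forces the limit to exist'' is not a valid inference on its own.
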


\begin{proof}
    Recall from \cite{DLMF} 15.2 that the hypergeometric function $F(a,b;c;z)$ (here $a,b,c$ are generic parameters) is an entire function of $a,b$ and meromorphic in $c$ with poles at non positive integers.  It easy to see that $\Gamma(z;\lambda)$ is analytic for $\lambda\in\overline{\mathbb{C}}\setminus[-1/2,1/2]$ because $a(\lambda)$ is analytic and $|\Re[a(\lambda)]|<\frac{1}{2}$ for $\lambda\in\mathbb{C}\setminus[-1/2,1/2]$.  Note that $a(\lambda)=0$ only for $\lambda=\infty$, so we can see that the second row of $Q(\lambda)$ has a simple zero when $\lambda=\infty$ and the second column of $\hat{\Gamma}(z,\lambda)$ has a simple pole when $\lambda=\infty$.  Thus the product $\hat{\Gamma}(z,\lambda)Q(\lambda)=\BigO{1}$ as $\lambda\to\infty$ so $\Gamma(z;\lambda)$ is analytic at $\lambda=\infty$ as well.  Using Appendix \ref{aAppendix}, we can see that $a(\lambda)\not\in\mathbb{R}$ for $\lambda\in(-1/2,0)\cup(0,1/2)$. Thus,  $\Gamma(z;\lambda)$ can be analytically continued  across the intervals  $(-1/2,0)$ and $(0,1/2)$ from above and from below (but these continuations do not coincide on  $(-1/2,0)$ and on  $(0,1/2)$). 

\end{proof}

We now show the relation between $\hat{K}$ and $\Gamma(z;\lambda)$.

\begin{theorem}\label{resolventKernel}
With the resolvent operator $\hat{R}$ defined by $\eqref{resolvent}$, let the kernel of $\hat{R}$ be denoted by $R$.  Then, 
\begin{equation}\label{res kernel}
R(z,x;\lambda)=\frac{\vec{g}_1^t(x)\Gamma^{-1}(x;\lambda)\Gamma(z;\lambda)\vec{f}_1(z)}{2\pi i\lambda(z-x)} \text{   where    } \vec{f}_1(z)=\begin{bmatrix} i\chi_L(z) \\ \chi_R(z) \end{bmatrix}, ~ \vec{g}_1(x)=\begin{bmatrix} -i\chi_R(x) \\ \chi_L(x) \end{bmatrix}.
\end{equation}
The matrix $\Gamma(z;\lambda)$ is defined in \eqref{RHPSolution} and functions $\chi_L,\chi_R$ are indicator functions on $[b_L,0],[0,b_R]$, respectively.
\end{theorem}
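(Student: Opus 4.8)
The plan is to verify that the right-hand side of \eqref{res kernel} is indeed the resolvent kernel by exploiting the "integrable kernel" structure of $\hat K$ in the sense of \cite{IIKS90}. First I would rewrite the kernel $K(z,x)$ from \eqref{hatK} in the standard integrable form $K(z,x)=\dfrac{\vec f^{\,t}(x)\vec g(z)}{z-x}$ (or, symmetrically, $\dfrac{\vec f^{\,t}(z)\vec g(x)}{x-z}$) with vector-valued functions built out of $\chi_L,\chi_R$; a natural choice consistent with \eqref{res kernel} is $\vec f_1(z)=\begin{bmatrix} i\chi_L(z) \\ \chi_R(z)\end{bmatrix}$, $\vec g_1(x)=\begin{bmatrix}-i\chi_R(x)\\ \chi_L(x)\end{bmatrix}$, and one checks directly that $\dfrac{\vec g_1^{\,t}(x)\vec f_1(z)}{2\pi i(z-x)}$ reproduces $K(z,x)$ after using $\chi_L(x)\chi_L(z)$-type products vanishing appropriately on the disjoint supports — indeed $\vec g_1^{\,t}(x)\vec f_1(z)= i\chi_R(x)\chi_L(z)\cdot i\cdot(-1)?\ldots$ — I would pin down the exact scalar normalization (the factor $\tfrac1{2\pi i}$ and the placement of the $i$'s) by matching against \eqref{hatK}, which is a short bookkeeping computation.

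The second step is the standard IIKS dressing argument. For an integrable kernel, the resolvent of $\tfrac1\lambda\hat K$ is again integrable, with kernel
\[
R(z,x;\lambda)=\frac{\vec G^{\,t}(x)\vec F(z)}{z-x},\qquad \vec F(z)=\frac{1}{\lambda}\,Y(z)\,\vec f_1(z),\quad \vec G(x)=\lambda^{-1}(Y^{-1}(x))^t\vec g_1(x)\ \text{(up to normalization)},
\]
where $Y$ solves the RHP with jump $\1 - \tfrac{2\pi i}{\lambda}\vec f_1\vec g_1^{\,t}$ on $[b_L,b_R]$ and $Y=\1+\BigO{z^{-1}}$ at infinity. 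Concretely I would: (i) compute $\vec f_1(z)\vec g_1^{\,t}(z)$ on $[b_L,0]$ and on $[0,b_R]$ separately, observing the outer product is nilpotent there, so the jump becomes exactly the unipotent matrices $\begin{bmatrix}1 & -i/\lambda\\ 0 & 1\end{bmatrix}$ on $[b_L,0]$ and $\begin{bmatrix}1 & 0\\ i/\lambda & 1\end{bmatrix}$ on $[0,b_R]$ appearing in \eqref{rhpI1}--\eqref{rhpI3}; (ii) check that the IIKS endpoint behavior (dictated by $\vec f_1,\vec g_1$ having jump-type logarithmic singularities from the Cauchy operator) matches \eqref{RHP2endpt}--\eqref{RHP2endpt3}, so that $Y=\Gamma$ by the uniqueness asserted in Theorem \ref{ThmRHPSol}; (iii) substitute back to get $\vec G^{\,t}(x)\vec F(z)=\lambda^{-2}\vec g_1^{\,t}(x)\Gamma^{-1}(x)\Gamma(z)\vec f_1(z)$, and finally reconcile the normalization of the Cauchy/Sokhotski conventions so that the prefactor comes out to $\dfrac{1}{2\pi i\lambda(z-x)}$ rather than $\dfrac{1}{\lambda^2(z-x)}$ — this is where the $2\pi i$ from the definition of $K$ in \eqref{hatK} gets absorbed.

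The main obstacle is not conceptual but is in getting the constants and orientations exactly right: the factor $\tfrac1{2\pi i}$ in $K$, the sign/orientation conventions in the jumps \eqref{rhpI1}--\eqref{rhpI3}, the $i$'s distributed asymmetrically between $\vec f_1$ and $\vec g_1$ (forced by self-adjointness, $K(z,x)=\overline{K(x,z)}$, together with the symmetries in Remark \ref{GammaSymmetry}), and the placement of $\lambda$ versus $\lambda^{-1}$ coming from the resolvent identity \eqref{resolvent} written as $I+\hat R=(I-\tfrac1\lambda\hat K)^{-1}$. I would therefore organize the proof as: (a) recall the abstract IIKS formula for the resolvent of an integrable operator $I-\tfrac1\lambda\hat K$; (b) identify the RHP it produces with RHP \ref{Gamma4RHP}, invoking uniqueness; (c) do the careful constant-tracking to land on \eqref{res kernel}. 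A useful consistency check I would include at the end: verify the claimed formula respects the symmetries of Remark \ref{GammaSymmetry} and gives $R(z,x;\lambda)=\overline{R(x,z;\bar\lambda)}$, consistent with $\hat R(\lambda)$ being the resolvent of a self-adjoint operator.
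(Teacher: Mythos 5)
Your proposal is correct and takes essentially the same route as the paper, which omits the details and cites Lemma 3.16 of \cite{BKT16}: the two key observations you identify — that $K(z,x)=\vec f_1^{\,t}(z)\vec g_1(x)/(2\pi i(x-z))$ is integrable in the IIKS sense and that the jumps \eqref{rhpI1}--\eqref{rhpI3} are exactly $\1-\tfrac1\lambda\vec f_1\vec g_1^{\,t}$ — are precisely the ingredients the paper highlights, and the rest is the standard dressing argument plus uniqueness of RHP \ref{Gamma4RHP}. The only loose end is the constant-tracking you defer, but your diagnosis of where the single factor $1/(2\pi i\lambda)$ comes from is right.
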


The proof is the same as in \cite{BKT16} (Lemma 3.16) so it will be omitted here.  An important ingredient of the proof is the observation that the jump of $\Gamma(z;\lambda)$ can be compactly written as
\begin{equation}\label{GammaJumpZStrucure}
    \Gamma(z_+;\lambda)=\Gamma(z_-;\lambda)\left(\1-\frac{1}{\lambda}\vec{f}_1(z)\vec{g}_1^t(z)\right)
\end{equation}
for $z\in[b_L,b_R]$ and 
\begin{equation}
    K(z,x)=\frac{\vec{f}_1^t(z)\vec{g}_1(x)}{2\pi i(x-z)},
\end{equation}
where $K(z,x)$ is the kernel of $\hat{K}$, see \eqref{hatK}.

\section{Spectral properties and diagonalization $\mathcal{H}_R^*\mathcal{H}_R$ and $\mathcal{H}_L^*\mathcal{H}_L$}\label{secRHPDiag}

The goal of this section is to construct unitary operators ${U_R:L^2([0,b_R])\to L^2(J,\sigma_R)}$ and $U_L:L^2([0,b_R])\to L^2(J,\sigma_L)$ such that 
\begin{equation}
    U_R\mathcal{H}^*_R\mathcal{H}_RU_R^*=\lambda^2, ~~ U_L\mathcal{H}^*_L\mathcal{H}_LU_L^*=\lambda^2
\end{equation}
where $\lambda^2$ is a multiplication operator (the space is clear by context), $J:=\{\lambda^2:0\leq\lambda^2\leq1\}$, and the spectral measures $\sigma_L,\sigma_R$ are to be determined.  This is to be understood in the sense of operator equality on $L^2(J,\sigma_R),L^2(J,\sigma_L)$, respectively.  We will begin this section with a brief summary of the spectral theory for a self-adjoint operator with simple spectrum.

\subsection{Basic facts about diagonalizing a self-adjoint operator with simple spectrum}\label{subsecDiag}

For an in-depth review of the spectral theorem for self-adjoint operators, see \cite{AG80}, \cite{Wei80}, \cite{DS57}.  We present a short summary of this topic which is directly related to the needs of this paper.  Let $\mathcal{K}$ be a Hilbert space and let $A$ be a self-adjoint operator with simple spectrum acting on $\mathcal{K}$.  Recall from \cite{AG80}, that a self-adjoint operator has simple spectrum if there is a vector $g\in\mathcal{K}$ so that the span of $\hat{E}_\Delta[g]$, where $\Delta$ runs through the set of all subintervals of the real line, is dense in $\mathcal{K}$.  Here the family of operators $\hat{E}_t$ denotes the so-called \textit{resolution of the identity} for the operator $A$, which we define in \eqref{ResOfIdGeneral}.  Define $\hat{R}$, the resolvent of $A$, via the formula 
\begin{equation}\label{stdRes}
    \hat{R}(t)=(tI-A)^{-1}
\end{equation}
for $t\not\in\mathbb{R}$.  Then, according to \cite{DS57} p.921, the resolution of the identity is computed by the formula 
\begin{equation}\label{ResOfIdGeneral}
    \hat{E}_{(\alpha,\beta)}:=\hat{E}_\beta-\hat{E}_\alpha=\lim_{\epsilon\to0^+}\lim_{\delta\to0^+}\int_{\alpha+\delta}^{\beta-\delta}\frac{-1}{2\pi i}\left[\hat{R}(t+i\epsilon)-\hat{R}(t-i\epsilon)\right]~dt,
\end{equation}
where $\alpha<\beta$.  Once we obtain $\hat{E}_t$, we can construct the unitary operators which will diagonalize $A$, as described in the following Theorem from \cite{AG80} p.279.

\begin{theorem}\label{AGdiag}
    If $A$ is a self-adjoint operator with simple spectrum, if $g$ is any generating element, and if $\sigma(t)=\langle \hat{E}_t[g],g \rangle$, then the formula
    \begin{equation}\label{unitaryOp}
        f=\int_{\mathbb{R}}\tilde{f}(t)~d\hat{E}_t[g]
    \end{equation}
    associates with each function $\tilde{f}\in L^2(\mathbb{R},\sigma)$ a vector $f\in\mathcal{K}$, and this correspondence is an isometric mapping of $L^2(\mathbb{R},\sigma)$ onto $\mathcal{K}$.  It maps the domain $D(Q)$ of the multiplication operator $Q$ in $L^2(\mathbb{R},\sigma)$ into the domain $D(A)$ of the operator $A$, and if the element $f\in D(A)$ corresponds to the function $\tilde{f}\in L^2(\mathbb{R},\sigma)$, then the element $Af$ corresponds to the function $t\tilde{f}(t)$.
    
\end{theorem}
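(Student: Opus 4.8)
The statement is the classical multiplicity-one form of the spectral theorem, so the plan is to exhibit the map $V\tilde f:=\int_{\mathbb R}\tilde f(t)\,d\hat E_t[g]$ from \eqref{unitaryOp} explicitly, prove it is an isometric bijection, and then check that it carries the multiplication operator $Q$ to $A$ with matching domains. The bulk of the argument is routine measure-theoretic bookkeeping with the projection-valued family $\{\hat E_t\}$; I would organize it in four steps.

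First I would define $V$ on simple functions and verify it is isometric there. For $\tilde f=\sum_{j=1}^N c_j\chi_{\Delta_j}$ with pairwise disjoint intervals $\Delta_j$, set $V\tilde f:=\sum_j c_j\hat E_{\Delta_j}[g]$. Using that the $\hat E_{\Delta_j}$ are mutually orthogonal projections (so $\hat E_{\Delta_j}\hat E_{\Delta_k}=\delta_{jk}\hat E_{\Delta_j}$ and $\hat E_{\Delta_j}^*=\hat E_{\Delta_j}$), one computes
\[
\|V\tilde f\|^2=\sum_{j,k}c_j\overline{c_k}\,\langle\hat E_{\Delta_j}[g],\hat E_{\Delta_k}[g]\rangle=\sum_j|c_j|^2\langle\hat E_{\Delta_j}[g],g\rangle=\sum_j|c_j|^2\,\sigma(\Delta_j)=\|\tilde f\|^2_{L^2(\mathbb R,\sigma)}.
\]
Since simple functions are dense in $L^2(\mathbb R,\sigma)$, $V$ then extends uniquely to an isometry $V\colon L^2(\mathbb R,\sigma)\to\mathcal{K}$; the extension is precisely the vector-valued Stieltjes integral of \eqref{unitaryOp}, and by polarization it preserves inner products as well. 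For surjectivity I would note that the range of $V$ contains every $\hat E_\Delta[g]=V\chi_\Delta$, hence their linear span, which is dense in $\mathcal{K}$ because $g$ is a generating element; an isometry has closed range, and a closed dense subspace is all of $\mathcal{K}$, so $V$ is unitary, i.e. the correspondence is an isometric bijection.

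Next I would show $V$ intertwines the spectral families. The same short computation, applied to $\hat E_s\hat E_{\Delta_j}[g]=\hat E_{(-\infty,s]\cap\Delta_j}[g]$, shows on simple functions that $\hat E_s\circ V=V\circ(\text{multiplication by }\chi_{(-\infty,s]})$, and this passes to the limit on all of $L^2(\mathbb R,\sigma)$. Hence for $f=V\tilde f$,
\[
\langle\hat E_s[f],f\rangle=\langle V(\chi_{(-\infty,s]}\tilde f),V\tilde f\rangle=\int_{-\infty}^{s}|\tilde f(t)|^2\,d\sigma(t),
\]
so the scalar spectral measure of $f$ is $|\tilde f(t)|^2\,d\sigma(t)$. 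Feeding this into the spectral-theorem description $f\in D(A)\iff\int_{\mathbb R}t^2\,d\langle\hat E_t[f],f\rangle<\infty$ gives $f\in D(A)\iff\int_{\mathbb R}t^2|\tilde f(t)|^2\,d\sigma<\infty\iff\tilde f\in D(Q)$, i.e. $V(D(Q))=D(A)$, which is even stronger than the inclusion asserted in the theorem. Finally, to identify the action, I would use that for a \emph{bounded} interval $\Delta$ one has $\hat E_\Delta[g]\in D(A)$ with $A\hat E_\Delta[g]=\int_\Delta t\,d\hat E_t[g]=V(t\chi_\Delta)$, so $AV\tilde f_n=V(Q\tilde f_n)$ for simple $\tilde f_n$ supported on bounded sets; given $\tilde f\in D(Q)$, pick such $\tilde f_n\to\tilde f$ with also $Q\tilde f_n\to Q\tilde f$ in $L^2(\mathbb R,\sigma)$ (truncate to $\{|t|\le n\}$, then approximate by simple functions), and conclude from the closedness of the self-adjoint operator $A$ that $V\tilde f\in D(A)$ and $AV\tilde f=V(Q\tilde f)$, i.e. $Af$ corresponds to $t\tilde f(t)$.

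The one genuinely delicate point is making the vector-valued integral $\int\tilde f\,d\hat E_t[g]$ and the intertwining identity $\hat E_s\circ V=V\circ\chi_{(-\infty,s]}$ fully rigorous, and carrying the bookkeeping cleanly from bounded to unbounded $A$ (the domain issues in Step 4); once those are set up, the isometry, the surjectivity, and the identification of the action are all routine, which is why in the paper this statement is simply quoted from \cite{AG80} rather than reproved.
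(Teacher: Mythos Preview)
Your proof is correct and follows the standard argument for the multiplicity-one spectral theorem. The paper, however, does not prove this statement at all: Theorem~\ref{AGdiag} is simply quoted verbatim from \cite{AG80}, p.~279, as background material, and you yourself note this in your final sentence. So there is no ``paper's own proof'' to compare against; what you have written is essentially the proof one would find in Akhiezer--Glazman.
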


\begin{remark}\label{diagShort}
    In short, Theorem \ref{AGdiag} says that $\sigma(t):=\langle \hat{E}_t[g],g \rangle$ defines the spectral measure ($g$ is any generating element) and the operator $U^*:L^2(\mathbb{R},\sigma)\to\mathcal{K}$ defined by
    \begin{equation}
        U^*[\tilde{f}](x):=\int_\mathbb{R}\tilde{f}(t)~d\hat{E}_t[g](x;t)
    \end{equation}
    is unitary.  Moreover,
    \begin{equation}
        UAU^*=t
    \end{equation}
    in the sense of operator equality on $L^2(\mathbb{R},\sigma)$.
\end{remark}

    Thus our immediate goal moving forward is to construct the resolution of the identity for $\mathcal{H}_R^*\mathcal{H}_R$ and $\mathcal{H}^*_L\mathcal{H}_L$.

\subsection{Resolution of the identity for $\mathcal{H}_R^*\mathcal{H}_R$ and $\mathcal{H}^*_L\mathcal{H}_L$}

From \eqref{ResOfIdGeneral}, knowledge of the resolvent operator is paramount.  We are able to express the resolvents of $\mathcal{H}_R^*\mathcal{H}_R,\mathcal{H}_L^*\mathcal{H}_L$ in terms of the resolvent of $\hat{K}$.

\begin{proposition}\label{HstarH_res}
    The resolvent of $\mathcal{H}_R^*\mathcal{H}_R$ and $\mathcal{H}_L^*\mathcal{H}_L$ is 
    \begin{align}
        \mathcal{R}_R(\lambda^2)&:=\left(I-\frac{1}{\lambda^2}\mathcal{H}_R^*\mathcal{H}_R\right)^{-1}=I+\pi_R\hat{R}(\lambda/2)\pi_R, \label{HstarHRres} \\
        \mathcal{R}_L(\lambda^2)&:=\left(I-\frac{1}{\lambda^2}\mathcal{H}_L^*\mathcal{H}_L\right)^{-1}=I+\pi_L\hat{R}(\lambda/2)\pi_L, \label{HstarHLres}
    \end{align}
    where $\pi_R:L^2([b_L,b_R])\to L^2([0,b_R])$, $\pi_L:L^2([b_L,b_R])\to L^2([b_L,0])$ are orthogonal projections (i.e. restrictions), $\hat{R}(\lambda)$ is defined by the relation \eqref{resolvent} and the kernel of $\hat{R}(\lambda)$ is computed in Theorem \ref{resolventKernel}.
    
\end{proposition}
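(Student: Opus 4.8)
The plan is to derive \eqref{HstarHRres}–\eqref{HstarHLres} from the resolvent identity \eqref{resolvent} for $\hat K$ by exploiting the block structure of $\hat K$ with respect to the orthogonal decomposition $L^2([b_L,b_R]) = L^2([b_L,0])\oplus L^2([0,b_R])$. First I would record that, by the defining property \eqref{hatK-restr} together with the fact that $K(z,x)$ is supported off the diagonal blocks (the kernel \eqref{hatK} contains only the cross terms $\chi_L(x)\chi_R(z)$ and $\chi_R(x)\chi_L(z)$), the operator $\hat K$ is \emph{off-diagonal} in this decomposition: writing $P_L=\pi_L$, $P_R=\pi_R$ for the two projections, we have $P_L\hat K P_L = P_R\hat K P_R = 0$, while $P_R\hat K P_L = \tfrac1{2i}\mathcal H_L$ (as a map $L^2([b_L,0])\to L^2([0,b_R])$) and $P_L\hat K P_R = \tfrac1{2i}\mathcal H_R$. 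Recalling from the remark after \eqref{HLHRdef} that $\mathcal H_L^* = -\mathcal H_R$, one gets $(P_R\hat K P_L)^* = -\tfrac1{2i}\mathcal H_R = -P_L\hat K P_R$; consistency with self-adjointness of $\hat K$ (Proposition \ref{hatKProp}) forces the bookkeeping to work out, and in particular $\hat K^2$ is block-diagonal with $P_L\hat K^2 P_L = \tfrac1{(2i)^2}\mathcal H_R\mathcal H_L$ — which I would identify, using $\mathcal H_L^*=-\mathcal H_R$, with $-\tfrac14(-\mathcal H_L^*\mathcal H_L) = \tfrac14\mathcal H_L^*\mathcal H_L$ acting on $L^2([b_L,0])$, and similarly $P_R\hat K^2 P_R = \tfrac14\mathcal H_R^*\mathcal H_R$ on $L^2([0,b_R])$ (after checking $\mathcal H_R^*=-\mathcal H_L$, or equivalently taking adjoints of the previous line). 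Thus $\hat K^2 = \tfrac14\big(\mathcal H_L^*\mathcal H_L \oplus \mathcal H_R^*\mathcal H_R\big)$.

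Next I would manipulate the resolvent relation. From \eqref{resolvent}, $I+\hat R(\mu) = (I-\tfrac1\mu\hat K)^{-1}$ for $\mu=\lambda/2$. Because $\hat K$ is off-diagonal, $(I-\tfrac1\mu\hat K)^{-1}$ can be expanded: using the Neumann series or simply the standard $2\times2$ block-inverse formula for a matrix with zero diagonal blocks, its $(L,L)$ block is $\big(I - \tfrac1{\mu^2} (P_L\hat K P_R)(P_R\hat K P_L)\big)^{-1} = \big(I - \tfrac1{\mu^2} P_L\hat K^2 P_L\big)^{-1}$, and likewise for the $(R,R)$ block. Substituting $\mu^2 = \lambda^2/4$ and $P_L\hat K^2 P_L = \tfrac14 \mathcal H_L^*\mathcal H_L$ gives $P_L(I+\hat R(\lambda/2))P_L = \big(I - \tfrac1{\lambda^2}\mathcal H_L^*\mathcal H_L\big)^{-1}$, which is exactly \eqref{HstarHLres} once one notes $\pi_L\hat R(\lambda/2)\pi_L = P_L\hat R(\lambda/2)P_L$ and $\pi_L I \pi_L = I$ on $L^2([b_L,0])$; the $R$ case is symmetric and yields \eqref{HstarHRres}. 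One should also remark that the operators $\mathcal H_L^*\mathcal H_L$ and $\mathcal H_R^*\mathcal H_R$ have norm at most $1$ (since $\|\hat K\|\le 1$, the Hilbert transform on $L^2(\mathbb R)$ being an isometry up to the $\tfrac1{2i}$ factor), so the inverses on the left-hand sides exist for $\lambda^2$ outside $[0,1]$, matching the domain $\lambda\in\overline{\mathbb C}\setminus[-1/2,1/2]$ on which $\hat R$ is available through $\Gamma(z;\lambda)$.

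I expect the main obstacle to be purely organizational rather than deep: getting all the factors of $i$, signs, and the placement of adjoints exactly right when translating between $\hat K$, $\mathcal H_L$, $\mathcal H_R$, and their products — in particular verifying carefully that $P_L\hat K^2P_L$ equals $\tfrac14\mathcal H_L^*\mathcal H_L$ and not, say, $-\tfrac14\mathcal H_L\mathcal H_L^*$ or a version with the intervals swapped. The cleanest way to keep this honest is to write everything at the level of kernels: compose the kernels in \eqref{hatK} restricted to the appropriate blocks, compare with the kernel of $\mathcal H_L^*\mathcal H_L$ computed directly from \eqref{HLHRdef}, and confirm they agree. A secondary point requiring a line of justification is the legitimacy of the block-inverse computation, i.e. that $I-\tfrac1\mu\hat K$ is boundedly invertible on the full space $L^2([b_L,b_R])$ for the relevant $\mu$ and that inversion commutes with the block decomposition in the stated way; this follows from $\|\hat K\|\le 1$ and the off-diagonal structure, and can be phrased via the identity $(I-\tfrac1\mu\hat K)^{-1} = (I+\tfrac1\mu\hat K)(I-\tfrac1{\mu^2}\hat K^2)^{-1}$, where the last factor is genuinely block-diagonal.
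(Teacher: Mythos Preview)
Your proposal is correct and follows essentially the same approach as the paper: both use the off-diagonal block structure of $\hat K$ in the decomposition $L^2([b_L,b_R])=L^2([b_L,0])\oplus L^2([0,b_R])$ to identify the diagonal blocks of $(I-\tfrac1\mu\hat K)^{-1}$ with the resolvents of $\mathcal H_L^*\mathcal H_L$ and $\mathcal H_R^*\mathcal H_R$. The paper carries this out via the Neumann series for large $\lambda$ (noting even powers are block-diagonal and odd powers are off-diagonal) and then analytically continues, which is exactly your first suggested route; your alternative factorization $(I-\tfrac1\mu\hat K)^{-1}=(I+\tfrac1\mu\hat K)(I-\tfrac1{\mu^2}\hat K^2)^{-1}$ is a clean repackaging of the same computation.
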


\begin{remark}\label{resForm}
    
    The resolvents of $\mathcal{H}_R^*\mathcal{H}_R$ and $\mathcal{H}_L^*\mathcal{H}_L$, defined in \eqref{HstarHRres}, \eqref{HstarHLres}, slightly differ from the standard definition given by \eqref{stdRes}.  It is easy to see that
    \begin{equation}
        \frac{\mathcal{R}_R(\lambda^2)}{\lambda^2}=\left(\lambda^2I-\mathcal{H}^*_R\mathcal{H}_R\right)^{-1},
    \end{equation}
    and the same is true for the resolvent of $\mathcal{H}_L^*\mathcal{H}_L$.
    
\end{remark}

\begin{proof}
    In the direct sum decomposition $L^2([b_L,b_R])=L^2([b_L,0])\oplus L^2([0,b_R])$, $\hat{K}$ has the block structure 
    \begin{equation}
        \hat{K}=\begin{bmatrix} 0 & -\frac{i}{2}\mathcal{H}_L \\ -\frac{i}{2}\mathcal{H}_R & 0 \end{bmatrix}=\begin{bmatrix} 0 & \frac{i}{2}\mathcal{H}_R^* \\ -\frac{i}{2}\mathcal{H}_R & 0 \end{bmatrix}=\begin{bmatrix} 0 & -\frac{i}{2}\mathcal{H}_L \\ \frac{i}{2}\mathcal{H}_L^* & 0 \end{bmatrix}.
    \end{equation}
For $\lambda$ sufficiently large, we can write (recall that $\hat{K}$ is bounded, see Proposition \ref{hatKProp})
    \begin{align}\label{hatKres}
        I+\hat{R}(\lambda)=\left(I-\frac{1}{\lambda}\hat{K}\right)^{-1}=\sum_{n=0}^\infty\left(\frac{\hat{K}}{\lambda}\right)^n
    \end{align}
    where all the even powers in the right hand side of \eqref{hatKres} are block diagonal and all the odd powers in \eqref{hatKres} are block off-diagonal.  The result is extended to all $\lambda\not\in\mathbb{R}$ via analytic continuation.  Similarly, we can write 
    \begin{align}
        \left(I-\frac{1}{\lambda^2}\mathcal{H}_R^*\mathcal{H}_R\right)^{-1}=\sum_{n=0}^\infty \left(\frac{\mathcal{H}_R^*\mathcal{H}_R}{\lambda^2}\right)^n
    \end{align}
    and comparing with the series in \eqref{hatKres} gives our result for the resolvent of $\mathcal{H}_R^*\mathcal{H}_R$.  The proof for the resolvent of $\mathcal{H}_L^*\mathcal{H}_L$ is nearly identical.

\end{proof}

To construct the resolution of the identity (see \eqref{ResOfIdGeneral}), we need to compute the jump of the resolvent of $\mathcal{H}_R^*\mathcal{H}_R$ and $\mathcal{H}_L^*\mathcal{H}_L$ in the $\lambda$-plane.  The kernel of the resolvent is expressed in terms of $\Gamma(z;\lambda)$ (see Theorem \ref{resolventKernel}), so we need to compute the jump of $\Gamma(z;\lambda)$ in the $\lambda-$plane.

\begin{remark}\label{mobius}
    
In the remaining sections of this paper we will frequently encounter the following M\"obius transformations:
\begin{align}
        M_1(x)&=\frac{b_R(x-b_L)}{x(b_R-b_L)}, ~~ M_2(x)=\frac{b_Rb_Lx}{x(b_R+b_L)-b_Rb_L}, \\
        M_3(x)&=M_1(M_2(x))=\frac{-b_L(x-b_R)}{x(b_R-b_L)}, ~~ M_4(x)=\frac{x(b_R-b_L)}{x(b_R+b_L)-2b_Rb_L}.
\end{align}

\end{remark}

Define functions
\begin{align}\label{drdl}
    d_R(z;\lambda)&:=\alpha(\lambda)h_\infty '\left(M_1(z)\right)+\beta(\lambda)s_\infty '\left(M_1(z)\right), \\ d_L(z;\lambda)&:=-e^{a\pi i}\alpha(\lambda)h_\infty '\left(M_1(z)\right)+e^{-a\pi i}\beta(\lambda)s_\infty '\left(M_1(z)\right),
\end{align}
where $h_\infty',s_\infty'$ is defined \eqref{hhprime}, \eqref{ssprime}, $a:=a(\lambda)$ is defined in \eqref{aFunc}, and coefficients $\alpha(\lambda),\beta(\lambda)$ are  
\begin{align}\label{alphabeta}
    \alpha(\lambda):=\frac{e^{-a\pi i}\tan(a\pi)\Gamma(a)}{4^{a+1}\Gamma(a+3/2)}, ~~ \beta(\lambda):=\frac{4^{a}e^{a\pi i}\Gamma(a+1/2)}{\Gamma(a+2)}.
\end{align}
The functions $d_L,d_R$ will play a major role in this section so we have complied all of their relevant properties in Appendix \ref{dldr_appendix}.  Importantly, it is shown that both $d_L(z;\lambda)$ and $d_R(z;\lambda)$ are single valued when $\lambda\in(-1/2,0)$.  For $\lambda\in(-1/2,0)\cup(0,1/2)$, define vectors
\begin{align}\label{fgVec}
    \vec{f}_2(z,\lambda)&:=\frac{-2b_Rb_L|\lambda|(2a_-(-|\lambda|)+1)}{b_R-b_L}\begin{bmatrix} d_R(z;-|\lambda|) \\ \text{sgn}(\lambda)d_L(z;-|\lambda|) \end{bmatrix}, ~~~ \vec{g}_2(z,\lambda):=\begin{bmatrix} -\text{sgn}(\lambda)d_L(z;-|\lambda|) \\ d_R(z;-|\lambda|) \end{bmatrix}.
\end{align}
We are now ready to compute the jump of $\Gamma(z;\lambda)$ in the $\lambda-$plane.

\begin{theorem}\label{GammaJumpLambdaThm}
For $\lambda\in(-1/2,0)\cup(0,1/2)$,
\begin{align}\label{GammaJumpLambda}
    \Gamma(z;\lambda_+)&=\Gamma(z;\lambda_-)\left[\1-\frac{1}{z}\vec{f}_2(z,\lambda)\vec{g}_2^t(z,\lambda)\right],
\end{align}
where $\Gamma(z;\lambda)$ is defined in \eqref{RHPSolution}.
\end{theorem}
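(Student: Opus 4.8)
The plan is to build on the explicit formula \eqref{RHPSolution}, which shows that $\Gamma(z;\lambda)$ depends on $\lambda$ only through the scalar $a=a(\lambda)$ and the $z$-independent matrices $Q(\lambda)$, whereas the $z$-jump matrices in \eqref{rhpI1}--\eqref{rhpI3} depend on $\lambda$ only through $1/\lambda$, which is real and continuous on $(-1/2,0)\cup(0,1/2)$. Hence, for $\lambda$ in either subinterval, both boundary values $\Gamma(z;\lambda_+)$ and $\Gamma(z;\lambda_-)$ obey the \textit{same} jump relations in $z$ and the same normalization at $z=\infty$; consequently $W(z):=\Gamma^{-1}(z;\lambda_-)\Gamma(z;\lambda_+)$ transforms by conjugation across the cut, $W(z_+)=V^{-1}(z)W(z_-)V(z)$ with $V$ the triangular matrices from \eqref{rhpI1}--\eqref{rhpI3}, equals $\1$ at $z=\infty$, and has unit determinant since $\det\Gamma\equiv1$ (the jumps are unipotent and the endpoint behavior is integrable). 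I would first reduce to $\lambda\in(-1/2,0)$: by the symmetry $\sigma_3\Gamma(z;-\lambda)\sigma_3=\Gamma(z;\lambda)$ of Remark \ref{GammaSymmetry}, the statement for $\lambda\in(0,1/2)$ follows from the one for $-\lambda\in(-1/2,0)$ after conjugating the rank-one factor by $\sigma_3$, and this conjugation is precisely what introduces the $\text{sgn}(\lambda)$ entries in \eqref{fgVec}.

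For $\lambda\in(-1/2,0)$ the decisive input is the boundary behavior of $a(\lambda)$ from Appendix \ref{aAppendix}: the two boundary values satisfy $a_+:=a(\lambda_+)=-1-a(\lambda_-)=:-1-a_-$, which is exactly the invariance $a\mapsto-1-a$ of the coefficient $a(a+1)$ in the ODE \eqref{hsODE}. Under this replacement the two distinguished solutions interchange, $h_\infty(\cdot\,;a_+)=s_\infty(\cdot\,;a_-)$ and $s_\infty(\cdot\,;a_+)=h_\infty(\cdot\,;a_-)$ (immediate from \eqref{hhprime}--\eqref{ssprime} together with $e^{(-1-a)\pi i}=-e^{-a\pi i}$, and likewise for the derivatives), so $\hat{\Gamma}(\cdot\,;a_+)=\hat{\Gamma}(\cdot\,;a_-)\,\sigma_1$. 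Substituting \eqref{RHPSolution} for $\Gamma(z;\lambda_\pm)$ into $W(z)$, the two transcendental matrices $\hat{\Gamma}(M_1(z);a_\pm)$ collapse against one another through this identity, leaving an expression built only from the constant matrices $Q(a_\pm)$, the Pauli matrices, and the rational upper-triangular dressing factors of \eqref{RHPSolution} evaluated at $a_+$ and at $a_-$. Using the constancy of the Wronskian of $h_\infty,s_\infty$ (the ODE \eqref{hsODE} has no first-derivative term), equivalently $\hat{\Gamma}^{-1}=(\det\hat{\Gamma})^{-1}\sigma_2\hat{\Gamma}^t\sigma_2$ and $\Gamma^{-1}=\sigma_2\Gamma^t\sigma_2$, one checks by a direct computation that $W(z)$ equals $\1$ plus a rank-one matrix with a simple pole at $z=0$; collecting the coefficients and comparing with \eqref{drdl}, \eqref{alphabeta}, \eqref{fgVec} identifies the rank-one part as $-z^{-1}\vec f_2(z,\lambda)\vec g_2^{\,t}(z,\lambda)$, and the single-valuedness of $d_L,d_R$ on $(-1/2,0)$ from Appendix \ref{dldr_appendix} is what makes the right-hand side well defined. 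An equivalent, more ``integrable-systems'' packaging is to check instead that $\Gamma(z;\lambda_-)\vec f_2(z,\lambda)$ and $\Gamma^{-t}(z;\lambda_-)\vec g_2(z,\lambda)$ are rational in $z$ with no jump across $(b_L,b_R)$, so that $\Gamma(z;\lambda_+)\Gamma^{-1}(z;\lambda_-)=\1-z^{-1}(\Gamma(z;\lambda_-)\vec f_2)(\Gamma^{-t}(z;\lambda_-)\vec g_2)^t$ is forced by Liouville's theorem to be $\1+z^{-1}A$ with $A$ constant; this form is dual to \eqref{GammaJumpZStrucure}.

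The main obstacle is twofold. First, the sheer bookkeeping: one must carry the $\Gamma$-function ratios, the powers $4^{a}$, and the factors $e^{a\pi i}$, $\tan(a\pi)$ coming from $Q(a_\pm)$ and from \eqref{alphabeta} through the reflection formulas forced by $a_+=-1-a_-$ without error, which is tedious but mechanical. Second, and more delicate, is confirming that $W(z)$ (equivalently $\tilde W(z):=\Gamma(z;\lambda_+)\Gamma^{-1}(z;\lambda_-)$) really is meromorphic with only a simple pole at $z=0$: at $z=b_L$ and $z=b_R$ the $\BigO{\log(z-b_L)}$ and $\BigO{\log(z-b_R)}$ columns prescribed in \eqref{RHP2endpt}--\eqref{RHP2endpt2} must cancel so that the isolated singularities there are removable, while at $z=0$, where on $(-1/2,0)$ the parameter $a_\pm$ lies on the line $\Re a=-1/2$ and $\Gamma(z;\lambda_\pm)$ carries $|z|^{-1/2}$-type local factors, the product must have net singularity no worse than $z^{-1}$ and with the correct residue; this is precisely the step that uses the $L^2$ condition \eqref{RHP2endpt3}, and it is the only place where $\Gamma(z;\lambda_+)$ and $\Gamma(z;\lambda_-)$ genuinely differ. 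Once the algebraic identity of the previous paragraph is in place these local subtleties are automatic, but they resurface if one prefers to prove \eqref{GammaJumpLambda} by verifying that $\Gamma(z;\lambda_-)[\1-z^{-1}\vec f_2\vec g_2^{\,t}]$ satisfies all the conditions of RHP \ref{Gamma4RHP} at $\lambda_+$ and invoking its uniqueness.
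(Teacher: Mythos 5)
Your proposal is correct and follows essentially the same route as the paper: reduce to $\lambda\in(-1/2,0)$ via the $\sigma_3$ symmetry of Remark \ref{GammaSymmetry}, exploit $a_+(\lambda)+a_-(\lambda)=-1$ to get $\hat{\Gamma}(\cdot\,;a_+)=\hat{\Gamma}(\cdot\,;a_-)\sigma_1$ together with the corresponding jump of $Q$, and then compute $\Gamma^{-1}(z;\lambda_-)\Gamma(z;\lambda_+)$ directly, identifying the resulting rank-one, $z^{-1}$-pole term with $-z^{-1}\vec f_2\vec g_2^{\,t}$ via the Wronskian/determinant identity. The alternative Liouville-type packaging you sketch is a valid variant but is not needed once the direct computation is carried out.
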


\begin{proof}

The proof is straightforward; we have
\begin{align}
\Gamma(z;\lambda_+)&=\Gamma(z;\lambda_-)\Gamma^{-1}(z;\lambda_-)\Gamma(z;\lambda_+)
\end{align}
and thus we compute $\Gamma^{-1}(z;\lambda_-)\Gamma(z;\lambda_+)$ to obtain the stated result.  We see from \eqref{RHPSolution} that $\Gamma(z;\lambda)$ is defined in terms of $\hat{\Gamma}, Q$ which are defined in \eqref{QGammahat}.  For $\lambda\in(-1/2,0)$ (standard orientation), we find that
\begin{align}
    \hat{\Gamma}\left(z,\lambda_+\right)&=\hat{\Gamma}\left(z,\lambda_-\right)\sigma_1, \label{hatGammaJump}\\
    Q_+(\lambda)&=\left[\frac{-\tan(a\pi)\Gamma(a)\Gamma(a+2)}{e^{2\pi ia}4^{2a+1}\Gamma(a+3/2)\Gamma(a+1/2)}\right]_-\sigma_1Q_-(\lambda). \label{Qjump}
\end{align}
Recall from Proposition \ref{aProp} that $a_+(\lambda)+a_-(\lambda)=-1$ for $\lambda\in(-1/2,0)$ thus the jump for $Q$ follows.  Then by inspection, we see from \eqref{hhprime}, \eqref{ssprime} 
\begin{equation}
    h_\infty(z,a_+(\lambda))=h_\infty(z,-1-a_-(\lambda))=s_\infty(z,a_-(\lambda))
\end{equation}
so the jump of $\hat{\Gamma}$ follows.  Now using \eqref{RHPSolution}, \eqref{hatGammaJump}, \eqref{Qjump} we obtain
\begin{align}
\Gamma^{-1}(z;\lambda_-)\Gamma(z;\lambda_+)&=\1-\frac{b_Lb_R}{z(b_R-b_L)}\left(\frac{2a+1}{a(a+1)}\right)_-M^{-1}(z,\lambda_-)\begin{bmatrix} 0 & 1 \\ 0 & 0 \end{bmatrix}M(z,\lambda_-)    
\end{align}
where we have used $\hat{\Gamma}_-(M_1(\infty))Q_-Q^{-1}_+\hat{\Gamma}_+(M_1(\infty))=c_-(\lambda)\1$ and $c_-(\lambda)$ is the scalar appearing on the right hand side of \eqref{Qjump}.  Here $M(z,\lambda):=\hat{\Gamma}\left(M_1(z)\right)Q\sigma_2$.  Let $m_{21},m_{22}$ denote the (2,1), (2,2) elements of the matrix $M$, respectively.  Then, 
\begin{align*}
    M^{-1}\begin{bmatrix} 0 & 1 \\ 0 & 0 \end{bmatrix}M&=|M|^{-1}\begin{bmatrix} m_{22}m_{21} & m_{22}^2 \\ -m_{21}^2 & -m_{21}m_{22} \end{bmatrix}=|M|^{-1}\begin{bmatrix} m_{22} \\ -m_{21} \end{bmatrix}\begin{bmatrix} m_{21} & m_{22} \end{bmatrix}
\end{align*}
and, explicitly, 
\begin{align*}
    &m_{21}(z;\lambda)=\frac{ie^{a\pi i}\Gamma(a+3/2)}{4^{-a-1}\Gamma(a)}d_L(z;\lambda), \qquad
    m_{22}(z;\lambda)=\frac{ie^{a\pi i}\Gamma(a+3/2)}{4^{-a-1}\Gamma(a)}d_R(z;\lambda), \\
    &|M(z;\lambda)|=-\frac{2e^{2\pi ia}4^{2a+1}\Gamma^2\left(a+\frac{3}{2}\right)}{\lambda a(a+1)\Gamma^2(a)}.
\end{align*}
To compute $|M|$ we have used \eqref{appendix_detHatGamma}.  Finally, we calculate  
\begin{align*}
    &\left(\frac{2a+1}{a(a+1)}\right)_-M^{-1}(z,\lambda_-)\begin{bmatrix} 0 & 1 \\ 0 & 0 \end{bmatrix}M(z,\lambda_-)=\left(\frac{2a+1}{a(a+1)|M|}\right)_-\begin{bmatrix} m_{22}(z;\lambda_-) \\ -m_{21}(z;\lambda_-) \end{bmatrix}\begin{bmatrix} m_{21}(z;\lambda_-) & m_{22}(z;\lambda_-) \end{bmatrix} \\
    &=2\lambda\left(2a_-+1\right)\begin{bmatrix} d_R(z;\lambda) \\ -d_L(z;\lambda) \end{bmatrix}\begin{bmatrix} d_L(z;\lambda) & d_R(z;\lambda) \end{bmatrix}
\end{align*}
which can be used to obtain the desired result for $\lambda\in(-1/2,0)$.

Now, to calculate the jump of $\Gamma(z;\lambda)$ when $\lambda\in(0,1/2)$, we take advantage of the symmetry $\Gamma(z;\lambda)=\sigma_3\Gamma(z;-\lambda)\sigma_3$, see Remark \ref{GammaSymmetry}.  For brevity, let 
\begin{equation}
    J(z;\lambda):=\1-\frac{1}{z}\vec{f}_2(z,\lambda)\vec{g}_2^t(z,\lambda).
\end{equation}
Using \eqref{GammaJumpLambda} for $\lambda\in(-1/2,0)$, we obtain
\begin{equation}
    \Gamma(z;\lambda_+)=\Gamma(z;\lambda_-)J(z;\lambda_-)
\end{equation}
only for $\lambda\in(-1/2,0)$.  For $\lambda\in(0,1/2)$,
\begin{align}
    \Gamma(z;\lambda_+)&=\sigma_3\Gamma(z,(-\lambda)_-)\sigma_3=\sigma_3\Gamma(z,(-\lambda)_+)J^{-1}(z;(-\lambda)_-)\sigma_3=\Gamma(z;\lambda_-)\sigma_3J^{-1}(z;(-\lambda)_-)\sigma_3.
\end{align}
It can now be verified that 
\begin{equation}
    \sigma_3J^{-1}(z;(-\lambda)_-)\sigma_3=\1-\frac{1}{z}\vec{f}_2(z,\lambda)\vec{g}_2^t(z,\lambda_-)
\end{equation}
for $\lambda\in(0,1/2)$.

\end{proof}

Recall from Proposition \ref{HstarH_res} and Theorem \ref{resolventKernel} that the resolvents of $\mathcal{H}^*_L\mathcal{H}_L$, $\mathcal{H}^*_R\mathcal{H}_R$ are expressed in terms of $\Gamma(z;\lambda)$.  In light of the previous Theorem, we can now compute the jump of the resolvents of $\mathcal{H}^*_L\mathcal{H}_L$, $\mathcal{H}^*_R\mathcal{H}_R$ in the $\lambda$ plane, which is required to construct the resolution of the identity, see \eqref{ResOfIdGeneral}.

\begin{theorem}\label{resJump}
The kernel of $\hat{R}(\lambda)$ is single valued for $\lambda\in\overline{\mathbb{C}}\setminus[-1/2,1/2]$ and satisfies the jump property 
    \begin{equation}
        R(z,x;\lambda_+)-R(z,x;\lambda_-)=\frac{\begin{bmatrix} -i\chi_R(x) & \chi_L(x) \end{bmatrix}\vec{f}_2(x,\lambda)\vec{g}_2^t(z,\lambda)\begin{bmatrix} i\chi_L(z) \\ \chi_R(z) \end{bmatrix}}{2\pi i\lambda xz}
    \end{equation}
    for $\lambda\in(-1/2,0)\cup(0,1/2)$, where $\chi_L,\chi_R$ are the characteristic functions on $(b_L,0), (0,b_R)$ and $R(z,x;\lambda)$, $\vec{f}_2(x,\lambda),\vec{g}_2(z,\lambda)$ are defined in \eqref{res kernel},\eqref{fgVec}, respectively.
\end{theorem}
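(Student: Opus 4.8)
The plan is to read off the $\lambda$-jump of $R(z,x;\lambda)$ directly from the $\lambda$-jump of $\Gamma$ in Theorem~\ref{GammaJumpLambdaThm}. Single-valuedness of $R$ on $\overline{\mathbb{C}}\setminus[-1/2,1/2]$ is immediate from \eqref{res kernel}: there $R$ is assembled from $\Gamma(z;\lambda)$, $\Gamma^{-1}(x;\lambda)$ and the rational prefactor $\bigl(2\pi i\lambda(z-x)\bigr)^{-1}$, and $\Gamma(\cdot;\lambda)$ is analytic, invertible and single-valued there by Proposition~\ref{GammaNoLambdaPoles}. For $\lambda\in(-1/2,0)\cup(0,1/2)$ the prefactor carries no boundary-value jump, so writing $h(x,z;\lambda):=\vec{g}_1^t(x)\Gamma^{-1}(x;\lambda)\Gamma(z;\lambda)\vec{f}_1(z)$ one has $R(z,x;\lambda_\pm)=h(x,z;\lambda_\pm)/\bigl(2\pi i\lambda(z-x)\bigr)$, and the claim is equivalent to $h(x,z;\lambda_+)-h(x,z;\lambda_-)=\tfrac{z-x}{xz}\,\vec{g}_1^t(x)\vec{f}_2(x,\lambda)\,\vec{g}_2^t(z,\lambda)\vec{f}_1(z)$; the factor $z-x$ then cancels the $z-x$ in the prefactor, and recalling $\vec{g}_1^t(x)=\begin{bmatrix}-i\chi_R(x)&\chi_L(x)\end{bmatrix}$, $\vec{f}_1(z)=\begin{bmatrix}i\chi_L(z)&\chi_R(z)\end{bmatrix}^t$ reproduces the stated formula.

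Two structural facts about the vectors \eqref{fgVec} do the work. (i) From their explicit form $\vec{g}_2^t(w,\lambda)\vec{f}_2(w,\lambda)=0$ (the two products $d_Ld_R$ cancel), so the rank-one jump factor $J(w;\lambda):=\1-w^{-1}\vec{f}_2(w,\lambda)\vec{g}_2^t(w,\lambda)$ of Theorem~\ref{GammaJumpLambdaThm} is unipotent, with $J^{-1}(w;\lambda)=\1+w^{-1}\vec{f}_2(w,\lambda)\vec{g}_2^t(w,\lambda)$. (ii) Comparing \eqref{fgVec} with the quantities $m_{21},m_{22},|M|$ already computed inside the proof of Theorem~\ref{GammaJumpLambdaThm}, for $\lambda\in(-1/2,0)$ the column $\vec{f}_2(z,\lambda)$ is a scalar multiple --- the scalar being independent of $z$ --- of the first column of $M^{-1}(z,\lambda_-)$, and the row $\vec{g}_2^t(z,\lambda)$ is a $z$-independent scalar multiple of the second row of $M(z,\lambda_-)$, where $M(z,\lambda):=\hat{\Gamma}(M_1(z))Q\sigma_2$; the scalars do not depend on $z$ because $|M|$ is a Wronskian-type determinant of \eqref{hsODE} (hence constant) and the prefactor $ie^{a\pi i}\Gamma(a+3/2)/\bigl(4^{-a-1}\Gamma(a)\bigr)$ linking $m_{21},m_{22}$ to $d_L,d_R$ has no $z$-dependence. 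Writing $\Gamma(z;\lambda)=N_0(\lambda)\,T(z;\lambda)\,M(z,\lambda)$ as in \eqref{RHPSolution}, with $N_0(\lambda):=\sigma_2 Q^{-1}\hat{\Gamma}^{-1}(M_1(\infty))$ constant in $z$ and $T(z;\lambda)$ the unipotent upper-triangular middle factor, fact (ii) yields $\Gamma(z;\lambda_-)\vec{f}_2(z,\lambda)=\nu_1(\lambda)N_0(\lambda)\vec{e}_1$ and $\vec{g}_2^t(z,\lambda)\Gamma^{-1}(z;\lambda_-)=\nu_2(\lambda)\vec{e}_2^t N_0^{-1}(\lambda)$ --- each a fixed vector up to a scalar --- because $T$ and $T^{-1}$ fix $\vec{e}_1$ on the right and $\vec{e}_2^t$ on the left. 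Here $\vec{e}_1,\vec{e}_2$ are the standard basis vectors, so in particular $\vec{e}_2^t\vec{e}_1=0$.

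The computation then collapses. Substituting $\Gamma(w;\lambda_+)=\Gamma(w;\lambda_-)J(w;\lambda)$ at $w=x$ and $w=z$ and expanding with the help of fact (i), $h(x,z;\lambda_+)-h(x,z;\lambda_-)$ becomes a sum of three terms; the quadratic one carries the factor $\vec{g}_2^t(x,\lambda)\Gamma^{-1}(x;\lambda_-)\Gamma(z;\lambda_-)\vec{f}_2(z,\lambda)=\nu_1\nu_2\,\vec{e}_2^t N_0^{-1}N_0\vec{e}_1=0$, while in the two linear terms the inner matrix product simplifies by fact (ii) to $\Gamma^{-1}(x;\lambda_-)\Gamma(z;\lambda_-)\vec{f}_2(z,\lambda)=\vec{f}_2(x,\lambda)$ and to $\vec{g}_2^t(x,\lambda)\Gamma^{-1}(x;\lambda_-)\Gamma(z;\lambda_-)=\vec{g}_2^t(z,\lambda)$, respectively. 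Collecting the two surviving terms gives
\[
h(x,z;\lambda_+)-h(x,z;\lambda_-)=\Bigl(\tfrac1x-\tfrac1z\Bigr)\vec{g}_1^t(x)\vec{f}_2(x,\lambda)\,\vec{g}_2^t(z,\lambda)\vec{f}_1(z)=\tfrac{z-x}{xz}\,\vec{g}_1^t(x)\vec{f}_2(x,\lambda)\,\vec{g}_2^t(z,\lambda)\vec{f}_1(z),
\]
as required, proving the theorem for $\lambda\in(-1/2,0)$. For $\lambda\in(0,1/2)$ one either repeats the computation or invokes the symmetry $\sigma_3\Gamma(z;-\lambda)\sigma_3=\Gamma(z;\lambda)$ of Remark~\ref{GammaSymmetry}, exactly as at the end of the proof of Theorem~\ref{GammaJumpLambdaThm}; this is also what the factors $\text{sgn}(\lambda)$ and the arguments $-|\lambda|$ in \eqref{fgVec} are designed to encode.

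The step I expect to demand the most care is fact (ii): verifying that $\vec{f}_2$ and $\vec{g}_2$ are, up to genuinely $z$-independent scalars, the first column of $M^{-1}$ and the second row of $M$, and keeping the branches straight ($\lambda_+$ versus $\lambda_-$, i.e. $a$ versus $-1-a$, in each factor of $M$, $\hat{\Gamma}$ and $Q$). Everything after that is a two-line matrix manipulation, and the single-valuedness assertion together with the $\lambda\in(0,1/2)$ reduction is essentially free.
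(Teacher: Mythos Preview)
Your argument is correct. The paper takes a somewhat different route: rather than using the $\lambda$-jump of $\Gamma$ from Theorem~\ref{GammaJumpLambdaThm} and then expanding $J^{-1}(x)\,\Gamma^{-1}(x;\lambda_-)\Gamma(z;\lambda_-)\,J(z)$, it computes $\Gamma^{-1}(x;\lambda_\pm)\Gamma(z;\lambda_\pm)$ directly from the $M$-decomposition of \eqref{RHPSolution}, uses the scalar relation $M(z,\lambda_+)=c_-(\lambda)M(z,\lambda_-)$ (so the $c_-$'s cancel in the product), and reads off the difference as the rank-one matrix $\tfrac{z-x}{xz}\vec f_2(x,\lambda)\vec g_2^t(z,\lambda)$ in one stroke. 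Your approach is more structural: the unipotency of $J$ and your ``fact~(ii)'' (that $\Gamma(z;\lambda_-)\vec f_2(z,\lambda)$ and $\vec g_2^t(z,\lambda)\Gamma^{-1}(z;\lambda_-)$ are $z$-independent) make the cancellations transparent and explain \emph{why} the quadratic term drops and the two linear terms recombine into $(x^{-1}-z^{-1})$. The trade-off is that fact~(ii) requires exactly the same ingredients the paper uses directly --- the second row of $M$, the first column of $M^{-1}$, and the $z$-independence of $|M|$ --- so the two proofs have the same analytic content and roughly the same length; yours packages it more conceptually, the paper's more computationally. Your handling of single-valuedness and of the $\lambda\in(0,1/2)$ case via the $\sigma_3$-symmetry matches the paper.
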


\begin{proof}

    Recall, from \eqref{resolventKernel}, that 
    \begin{equation}
        R(z,x;\lambda)=\frac{\vec{g}_1^t(x)\Gamma^{-1}(x;\lambda)\Gamma(z;\lambda)\vec{f}_1(z)}{2\pi i\lambda(z-x)}.
    \end{equation}
    From Proposition \ref{GammaNoLambdaPoles} we can see that $\Gamma(z;\lambda)$ is single valued for $\lambda\in\overline{\mathbb{C}}\setminus[-1/2,1/2]$ so from \eqref{resolventKernel} it immediately follows that the kernel of $\hat{R}(\lambda)$ is also single valued for $\lambda\in\overline{\mathbb{C}}\setminus[-1/2,1/2]$.  Let $\Delta_\lambda F(\lambda):=F(\lambda_+)-F(\lambda_-)$ for any $F$.  To prove the result we need to compute $\Delta_\lambda [\Gamma^{-1}(x;\lambda)\Gamma(z;\lambda)]$.  For $\lambda\in(-1/2,0)$, we calculate (see proof of Theorem \ref{GammaJumpLambdaThm})
    \begin{align}
        \Gamma^{-1}(x;\lambda_-)\Gamma(z;\lambda_-)&=M^{-1}(x,\lambda_-)\begin{bmatrix} 1 & \frac{b_Lb_R(x-z)}{xz(b_R-b_L)(a+1)} \\ 0 & 1 \end{bmatrix}_-M(z,\lambda_-).
    \end{align}
    Using $M(z,\lambda_+)=c_-(\lambda)M(z,\lambda_-)$, where $c_-(\lambda)$ is the scalar found in the right hand side of \eqref{Qjump},
    \begin{align}
        \Gamma^{-1}(x;\lambda_+)\Gamma(z;\lambda_+)&=M^{-1}(x,\lambda_+)\begin{bmatrix} 1 & \frac{b_Lb_R(x-z)}{xz(b_R-b_L)(a+1)} \\ 0 & 1 \end{bmatrix}_+M(z,\lambda_+) \nonumber \\
        &=M^{-1}(x,\lambda_-)\begin{bmatrix} 1 & \frac{-b_Lb_R(x-z)}{axz(b_R-b_L)} \\ 0 & 1 \end{bmatrix}_-M(z,\lambda_-),
    \end{align}
    so we have that
    \begin{align}
        \Delta_\lambda [\Gamma^{-1}(x;\lambda)\Gamma(z;\lambda)]&=\left(\frac{-b_Lb_R(x-z)(2a+1)}{xza(a+1)(b_R-b_L)}\right)_-M^{-1}(x,\lambda_-)\begin{bmatrix} 0 & 1 \\ 0 & 0 \end{bmatrix}M(z,\lambda_-) \cr
        &=\frac{z-x}{xz}\vec{f}_2(x,\lambda)\vec{g}_2^t(z,\lambda).
    \end{align}
    Now for $\lambda\in(0,1/2)$, we again take advantage of the symmetry $\Gamma(z;\lambda)=\sigma_3\Gamma(z;-\lambda)\sigma_3$, see Remark \ref{GammaSymmetry}.  The process is the same as in the proof of Theorem \ref{GammaJumpLambdaThm}.  Directly from \eqref{res kernel} we have
    \begin{align}
        \Delta_\lambda R(z,x;\lambda)&=\frac{\vec{g}_1^t(x)\Delta_\lambda\left[\Gamma^{-1}(x;\lambda)\Gamma(z;\lambda)\right]\vec{f}_1(z)}{2\pi i\lambda(z-x)}
    \end{align}
    and plugging in our calculation of $\Delta_\lambda\left[\Gamma^{-1}(x;\lambda)\Gamma(z;\lambda)\right]$ gives the result. 
    
\end{proof}

\begin{remark}
    From Theorem \ref{resJump} we can immediately see that when $\lambda\in(-1/2,0)\cup(0,1/2)$ and $x,z\in(0,b_R)$, 
    \begin{equation}\label{RkernelRight}
        R(z,x;\lambda_+)-R(z,x;\lambda_-)=\frac{b_Lb_R(2a_-(-|\lambda|)+1)}{\emph{sgn}(\lambda)\pi xz(b_R-b_L)}d_R(x;-|\lambda|)d_R(z;-|\lambda|)
    \end{equation}
    and when $x,z\in(b_L,0)$,
    \begin{equation}\label{RkernelLeft}
        R(z,x;\lambda_+)-R(z,x;\lambda_-)=\frac{b_Lb_R(2a_-(-|\lambda|)+1)}{\emph{sgn}(\lambda)\pi xz(b_R-b_L)}d_L(x;-|\lambda|)d_L(z;-|\lambda|),
    \end{equation}
    where $d_R,d_L$ are defined in \eqref{drdl}.
\end{remark}

\begin{proposition}\label{noEigen}
    The operators $\mathcal{H}_R^*\mathcal{H}_R, \mathcal{H}_L^*\mathcal{H}_L$ do not have eigenvalues.
\end{proposition}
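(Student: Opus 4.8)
The plan is to reduce the claim to the absence of nonzero eigenvalues of $\hat K$, to dispose of the value $0$ directly via the injectivity of the finite Hilbert transform, and then to exclude eigenvalues of $\hat K$ using the $\lambda$-regularity of $\Gamma(z;\lambda)$ recorded in Proposition \ref{GammaNoLambdaPoles}. First, the algebraic reduction: in $L^2([b_L,b_R])=L^2([b_L,0])\oplus L^2([0,b_R])$ the operator $\hat K$ is off-diagonal, carrying each summand into the other via $\tfrac1{2i}\mathcal H_L$ and $\tfrac1{2i}\mathcal H_R$ (cf.\ the proof of Proposition \ref{HstarH_res}), so $\hat K^2$ is block-diagonal with $\hat K^2\big|_{L^2([0,b_R])}=(\tfrac1{2i}\mathcal H_L)(\tfrac1{2i}\mathcal H_R)=-\tfrac14\mathcal H_L\mathcal H_R=\tfrac14\mathcal H_R^*\mathcal H_R$ and, symmetrically, $\hat K^2\big|_{L^2([b_L,0])}=\tfrac14\mathcal H_L^*\mathcal H_L$ (using $\mathcal H_R^*=-\mathcal H_L$, $\mathcal H_L^*=-\mathcal H_R$). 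Since $0\le\mathcal H_R^*\mathcal H_R\le I$, any eigenvalue $\mu_0$ of $\mathcal H_R^*\mathcal H_R$ lies in $[0,1]$; if $\mu_0=0$ then $\mathcal H_R\phi=0$, and since the Cauchy transform $z\mapsto\int_0^{b_R}\tfrac{\phi(t)}{t-z}\,dt$ is holomorphic on the connected set $\overline{\mathbb C}\setminus[0,b_R]$ and vanishes on $(b_L,0)$ it vanishes identically, so $\phi=0$ by the Sokhotski--Plemelj jump relation (the same argument handles $\mathcal H_L^*\mathcal H_L$); and if $\mu_0\in(0,1]$ then $\hat K^2\phi=\tfrac{\mu_0}4\phi$, so at least one of $\xi_\pm:=\phi\pm\tfrac2{\sqrt{\mu_0}}\hat K\phi$ is a nonzero eigenvector of $\hat K$ with eigenvalue $\pm\tfrac{\sqrt{\mu_0}}2\in[-\tfrac12,\tfrac12]\setminus\{0\}$. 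It thus suffices to show $\hat K$ has no eigenvalue in $[-\tfrac12,\tfrac12]\setminus\{0\}$.

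For $\lambda_0\in(-\tfrac12,0)\cup(0,\tfrac12)$ I would argue via the resolvent. By \eqref{resolvent}, $\hat R(\lambda)=\lambda(\lambda I-\hat K)^{-1}-I$, and by Theorem \ref{resolventKernel} its kernel is $R(z,x;\lambda)=\vec{g}_1^t(x)\Gamma^{-1}(x;\lambda)\Gamma(z;\lambda)\vec{f}_1(z)\big/\bigl(2\pi i\lambda(z-x)\bigr)$; moreover $\det\Gamma\equiv1$ (the jump matrices of RHP \ref{Gamma4RHP} are unimodular, the endpoint growth \eqref{RHP2endpt}--\eqref{RHP2endpt3} is integrable, and $\Gamma(\infty)=\1$), so $\Gamma^{-1}$ is as regular in $\lambda$ as $\Gamma$. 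Proposition \ref{GammaNoLambdaPoles} provides analytic continuations of $\Gamma(z;\lambda)$ across $(-\tfrac12,0)$ and $(0,\tfrac12)$ from above and from below; hence the boundary values $\Gamma(z;\lambda_0\pm i0)$ exist, are finite, and are locally bounded in $\lambda_0$ (Theorem \ref{resJump} moreover identifies the resulting jump of $R$ as an explicit continuous function). Consequently $\langle\hat R(\lambda_0+i\epsilon)\psi,\psi\rangle$, and therefore (as $\lambda_0\ne0$) $\langle(\hat K-\lambda_0-i\epsilon)^{-1}\psi,\psi\rangle$, stays bounded as $\epsilon\to0^+$ for $\psi$ in a dense subspace. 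By the de la Vall\'ee Poussin criterion for a point mass of the spectral measure, $\|\hat E_{\{\lambda_0\}}\psi\|^2=\lim_{\epsilon\to0^+}\epsilon\,\mathrm{Im}\,\langle(\hat K-\lambda_0-i\epsilon)^{-1}\psi,\psi\rangle=0$ for all such $\psi$, so $\lambda_0$ is not an eigenvalue of $\hat K$; equivalently, \eqref{ResOfIdGeneral} yields an atom-free measure on $(-\tfrac12,0)\cup(0,\tfrac12)$.

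It remains to exclude $\lambda_0=\pm\tfrac12$ (equivalently $\mu_0=1$). Each of $\mathcal H_L,\mathcal H_R$ is the restriction to a subinterval of the image, under the norm-preserving Hilbert transform $H$ on $L^2(\mathbb R)$, of an $L^2$-function supported on the complementary subinterval, so $\|\mathcal H_L\|,\|\mathcal H_R\|\le1$ and, from the off-diagonal form of $\hat K$, $\|\hat K\|\le\tfrac12$. If $\|\hat K w\|=\tfrac12\|w\|$ for some $w=u\oplus v\ne0$ ($u$ supported on $[b_L,0]$, $v$ on $[0,b_R]$), then $\|H\tilde u\|_{L^2([0,b_R])}=\|H\tilde u\|_{L^2(\mathbb R)}$ for the zero-extension $\tilde u$ of $u$, forcing $H\tilde u$ to vanish a.e.\ off $[0,b_R]$, in particular on the support $[b_L,0]$ of $\tilde u$; there $H\tilde u$ is the finite Hilbert transform of $u$ on its own interval, which is injective on $L^2$, so $u=0$, and likewise $v=0$, a contradiction. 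Together with the first paragraph this shows that neither $\mathcal H_R^*\mathcal H_R$ nor $\mathcal H_L^*\mathcal H_L$ has an eigenvalue.

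The step I expect to be the main obstacle is the passage in the second paragraph from ``the kernel $R(z,x;\lambda)$ has finite boundary values on $(-\tfrac12,0)\cup(0,\tfrac12)$'' to ``$\hat R(\lambda_0+i\epsilon)$, hence $(\hat K-\lambda_0-i\epsilon)^{-1}$, stays bounded, so no atom appears in the spectral measure''. This requires controlling the Cauchy-type singularity $1/(z-x)$ of the resolvent kernel uniformly as $\epsilon\to0^+$ and verifying that finiteness of the kernel genuinely precludes a point mass in \eqref{ResOfIdGeneral}; the remaining ingredients are routine bookkeeping or classical injectivity properties of the finite Hilbert transform.
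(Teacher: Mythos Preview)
Your argument is correct and rests on the same mechanism as the paper's: both exclude eigenvalues in $(0,1)$ via the $\lambda$-analyticity of $\Gamma$ from Proposition~\ref{GammaNoLambdaPoles}, and both treat $\lambda^2=0$ and $\lambda^2=1$ by direct arguments (injectivity of the FHT for the former, a strict-contraction norm estimate for the latter). The paper's route is shorter: it works directly with the resolvent $\mathcal R_R(\lambda^2)=I+\pi_R\hat R(\lambda/2)\pi_R$ and its explicit kernel \eqref{kernelHRstarHRres}, observes that this kernel is single-valued and pole-free in $\lambda$ for $\lambda^2\notin\{0,1\}$, and then rules out $\lambda^2=1$ by the one-line chain $\|f\|=\|\mathcal H_R^*\mathcal H_Rf\|_{L^2([0,b_R])}<\|\mathcal H_Rf\|_{L^2([b_L,0])}<\|f\|_{L^2([0,b_R])}$. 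Your detour through $\hat K$ is logically fine but adds a layer; your endpoint arguments, though phrased differently, encode the same facts.

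On the obstacle you flag: it is not an obstacle. For $x,z\in(0,b_R)$ the numerator in \eqref{res kernel} reduces (up to a constant) to $[\Gamma^{-1}(x;\lambda)\Gamma(z;\lambda)]_{1,2}$, which vanishes at $z=x$ because $\Gamma^{-1}\Gamma=\1$; hence the restricted resolvent kernel has no diagonal singularity and is a genuine continuous function of $(x,z,\lambda)$ on compact subsets of $(0,b_R)^2$ times the two-sided $\lambda$-neighborhood supplied by Proposition~\ref{GammaNoLambdaPoles}. For $\psi\in C_c^\infty((0,b_R))$ the pairing $\langle\hat R(\lambda)\psi,\psi\rangle$ is therefore bounded as $\lambda\to\lambda_0\pm i0$, and your de~la~Vall\'ee~Poussin step goes through cleanly.
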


\begin{proof}
    We will prove this statement for $\mathcal{H}_R^*\mathcal{H}_R$ only, as the proof for $\mathcal{H}_L^*\mathcal{H}_L$ is similar.  We show that the resolvent of $\mathcal{H}_R^*\mathcal{H}_R$ has no poles in the $\lambda$ plane.  Recall from \eqref{HstarHRres} that 
    \begin{equation}
        \mathcal{R}_R(\lambda^2)=\left(I-\frac{1}{\lambda^2}\mathcal{H}_R^*\mathcal{H}_R\right)^{-1}=I+\pi_R\hat{R}(\lambda/2)\pi_R,
    \end{equation}
    and using \eqref{resolventKernel}, we can see that the kernel of $\mathcal{R}_R$ is 
    \begin{equation}\label{kernelHRstarHRres}
        \frac{[\Gamma^{-1}(x;\lambda/2)\Gamma(z;\lambda/2)]_{1,2}}{\pi \lambda(x-z)}\chi_R(x)\chi_R(z).
    \end{equation}
    Using Proposition \ref{GammaNoLambdaPoles}, we know that $\Gamma(z;\lambda/2)$ can (potentially) have a pole only when $\lambda=0,\pm1$.  Since \eqref{kernelHRstarHRres} is single-valued for $x,z\in(0,b_R)$ and pole-free for $\lambda\in\overline{\mathbb{C}}\setminus\{0,\pm1\}$, $\mathcal{R}_R(\lambda^2)$ is pole-free for $\lambda^2\in\overline{\mathbb{R}}\setminus\{0,1\}$.  We can see that $\lambda^2=0$ is not an eigenvalue of $\mathcal{H}_R^*\mathcal{H}_R$ because the null space of $\mathcal{H}_R$ contains only the zero vector. Likewise, it is easy to see that $\lambda^2=1$ is not an eigenvalue as well. Otherwise $\mathcal{H}_R^*\mathcal{H}_Rf=f$ for some $f\in L^2([0,b_R])$, $f\not\equiv0$, and the contradiction
\begin{equation}
\Vert f\Vert_{L^2([0,b_R])}
=\Vert \mathcal{H}_R^*\mathcal{H}_Rf \Vert_{L^2([0,b_R])}
<\Vert \CH^*_R\mathcal{H}_Rf \Vert_{\br}=\Vert \mathcal{H}_Rf \Vert_{L^2([b_L,0])}
<\Vert \mathcal{H}_Rf \Vert_{\br}=\Vert f \Vert_{L^2([0,b_R])}
\end{equation}
proves the desired assertion. Here we used the convention that whenever the norm $\Vert\cdot\Vert_\br$ is computed, the left-most Hilbert transform inside the norm is evaluated over the entire line.
\end{proof}

For convenience we define 
\begin{align}\label{DRDL}
    D_R(z;\lambda):=d_R(z;-|\lambda|/2), ~~  D_L(z;\lambda):=d_L(z;-|\lambda|/2),
\end{align}
where $d_L,d_R$ are defined in \eqref{drdl}.  Recall from Appendix \ref{dldr_appendix} that $d_L(z;-|\lambda|), d_R(z;-|\lambda|)$ are single-valued for $\lambda\in(-1/2,1/2)$.  Now for $y\in[b_L,0], x\in[0,b_R], \lambda^2\in[0,1]$,  define the kernels
\begin{equation}\label{phiLR}
    \phi_L(y,\lambda^2):=\frac{D_L(y;\lambda)}{\pi y|\lambda|D_R(\infty;\lambda)}, ~~~ \phi_R(x,\lambda^2):=\frac{-D_R(x;\lambda)}{\pi x|\lambda|D_L(\infty;\lambda)}.
\end{equation}
Using Proposition \ref{dldr_AppendixProp} we can see that $\phi_L(y,\lambda^2)=\BigO{1}$ as $y\to b_L$ and $\phi_R(x,\lambda^2)=\BigO{1}$ as $x\to b_R$.  We can see from \eqref{hhprime}, \eqref{ssprime} that $h_\infty'(M_1(z))=\BigO{\sqrt{z}}$, $s_\infty'(M_1(z))=\BigO{\sqrt{z}}$ as $z\to0$ whenever $\lambda^2\in[0,1]$.  Since both $D_R, D_L$ are linear combinations of $h_\infty', s_\infty'$, it follows that $\phi_L(y,\lambda^2)=\BigO{y^{-1/2}}$ as $y\to0^-$ and $\phi_R(x,\lambda^2)=\BigO{x^{-1/2}}$ as $x\to0^+$.  Also define the weights
\begin{align}\label{sigmaPrime}
    \frac{d\sigma_L(\lambda^2)}{d\lambda^2}=\frac{|b_L|b_R(a+1/2)}{i(b_R-b_L)}D_R^2(\infty;\lambda), ~~~ \frac{d\sigma_R(\lambda^2)}{d\lambda^2}=\frac{|b_L|b_R(a+1/2)}{i(b_R-b_L)}D_L^2(\infty;\lambda),
\end{align}
where $a:=a_-(-|\lambda|/2)$.  Notice that both $\sigma_L'(\lambda^2), \sigma_R'(\lambda^2)$ (here ' denotes differentiation with respect to $\lambda^2$) are non-negative real analytic for $\lambda^2\in(0,1)$, by Propositions \ref{aProp} and \ref{dldr_AppendixProp}.  With \eqref{ResOfIdGeneral} in mind, we can now prove the following theorem.

\begin{theorem}\label{ResOfIdThm} The spectrum of $\mathcal{H}_R^*\mathcal{H}_R$, $\mathcal{H}_L^*\mathcal{H}_L$ is $\lambda^2\in[0,1]$. Moreover, for $0\leq\lambda^2\leq1$, $g\in L^2([b_L,0])$, $f\in L^2([0,b_R])$, the operators 
    \begin{align}
        \hat{E}_{R,\lambda^2}[f](x)&=\int_0^{\lambda^2}\int_0^{b_R}\phi_R(x,\mu^2)\phi_R(z,\mu^2)f(z)~dzd\sigma_R(\mu^2), \label{ER_ResOfId} \\
        \hat{E}_{L,\lambda^2}[g](x)&=\int_0^{\lambda^2}\int_{b_L}^0\phi_L(x,\mu^2)\phi_L(z,\mu^2)g(z)~dzd\sigma_L(\mu^2), \label{EL_ResOfId}
    \end{align}
     where $\phi_L,\phi_R$ and $\sigma_L,\sigma_R$ are defined in \eqref{phiLR}, \eqref{sigmaPrime}, respectively, are the resolution of the identity for $\mathcal{H}_R^*\mathcal{H}_R$, $\mathcal{H}_L^*\mathcal{H}_L$, respectively.
\end{theorem}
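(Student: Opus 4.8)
The plan is to apply the abstract resolution-of-identity formula \eqref{ResOfIdGeneral} to the self-adjoint operators $A=\mathcal{H}_R^*\mathcal{H}_R$ and $A=\mathcal{H}_L^*\mathcal{H}_L$, using the identification $(tI-A)^{-1}=\mathcal{R}_R(t)/t$ (respectively $\mathcal{R}_L(t)/t$) from Remark \ref{resForm} and Proposition \ref{HstarH_res}, and to read off the operator-valued measure $\hat E_t$. I will describe the argument for $\mathcal{H}_R^*\mathcal{H}_R$; the left-hand case is verbatim the same with \eqref{RkernelLeft} replacing \eqref{RkernelRight} and $(b_L,0)$, $D_L$ playing the roles of $(0,b_R)$, $D_R$.

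First I would locate the spectrum. Since $\mathcal{H}_R^*\mathcal{H}_R\ge0$ its spectrum lies in $[0,\infty)$; by Proposition \ref{GammaNoLambdaPoles} the kernel \eqref{kernelHRstarHRres} is analytic in $\lambda^2\in\overline{\mathbb{C}}\setminus[0,1]$, and Proposition \ref{noEigen} excludes poles at $\lambda^2=0,1$, so $(tI-A)^{-1}$ exists for $t\notin[0,1]$ and hence $\sigma(\mathcal{H}_R^*\mathcal{H}_R)\subseteq[0,1]$. The reverse inclusion will drop out of the computation below: for every $t\in(0,1)$ the jump kernel \eqref{RkernelRight} is a nonzero rank-one kernel — the constant $2a_-(-\sqrt t/2)+1$ is nonvanishing by Proposition \ref{aProp}, and $D_R(\,\cdot\,;\sqrt t)\not\equiv0$ by Proposition \ref{dldr_AppendixProp} — so the resolution of the identity cannot be constant on any subinterval of $(0,1)$, forcing $[0,1]\subseteq\sigma(\mathcal{H}_R^*\mathcal{H}_R)$.

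Next I would compute the kernel of $\hat E_{R,\beta}-\hat E_{R,\alpha}$ for $0<\alpha<\beta<1$. On a compact segment $[\alpha,\beta]\subset(0,1)$ the function $\mathcal{R}_R$ is bounded and has boundary values, so in \eqref{ResOfIdGeneral} the $I$-part of $\mathcal{R}_R(t)/t=(I+\pi_R\hat R(\sqrt t/2)\pi_R)/t$ cancels in the difference, the remaining $\tfrac1{t+i\epsilon}$-versus-$\tfrac1t$ discrepancy integrates to $0$ as $\epsilon\to0^+$ (its kernel is an approximate identity centered at $t=0\notin[\alpha,\beta]$), and on $(0,b_R)^2$ one is left with the kernel
\begin{equation*}
\frac{-1}{2\pi i}\int_\alpha^\beta\frac{1}{t}\Bigl(R\bigl(z,x;(\sqrt t/2)_+\bigr)-R\bigl(z,x;(\sqrt t/2)_-\bigr)\Bigr)\,dt ,
\end{equation*}
the jump being supplied by Theorem \ref{resJump} and, for $x,z\in(0,b_R)$, by \eqref{RkernelRight} evaluated at argument $\sqrt t/2$. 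Writing $2a_-(-\sqrt t/2)+1=2(a+\tfrac12)$ with $a:=a_-(-\sqrt t/2)$ and $b_L=-|b_L|$, recalling \eqref{DRDL}, and then substituting the definitions \eqref{phiLR} and \eqref{sigmaPrime} — in which the factor $D_L(\infty;\sqrt t)$ in each $\phi_R$ cancels the $D_L^2(\infty;\sqrt t)$ in $d\sigma_R/dt$ — the integrand becomes precisely $\phi_R(x,t)\,\phi_R(z,t)\,d\sigma_R(t)/dt$, identifying the kernel of $\hat E_{R,\beta}-\hat E_{R,\alpha}$ with $\int_\alpha^\beta\phi_R(x,t)\phi_R(z,t)\,d\sigma_R(t)$; the analogous matching of constants for $\mathcal{H}_L^*\mathcal{H}_L$ via \eqref{RkernelLeft} produces \eqref{EL_ResOfId}.

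Finally I would let $\alpha\to0^+$. Since $\mathcal{H}_R$ is injective, $0$ is not an eigenvalue, so $\hat E_{R,0}=0$ and $\hat E_{R,\alpha}\to0$ strongly; hence $\hat E_{R,\lambda^2}[f](x)=\lim_{\alpha\to0^+}\int_\alpha^{\lambda^2}\int_0^{b_R}\phi_R\phi_R f\,dz\,d\sigma_R$, which is \eqref{ER_ResOfId} once the integral converges at $t=0$. This last point is where the real work lies, and it — not the (tedious but routine) constant-matching of the previous step — is the main obstacle: as $t\to0^+$ the quantity $2a_-(-\sqrt t/2)+1$ grows logarithmically, and combined with the explicit $1/t$ the crude estimate of the integrand is not integrable, so one must invoke the sharp $\lambda\to0$ asymptotics of $h_\infty',s_\infty'$ and of $\alpha(\lambda),\beta(\lambda)$ — equivalently of $D_R(x;\,\cdot\,)$ and $D_L(\infty;\,\cdot\,)$ — collected in Appendices \ref{dldr_appendix} and \ref{aAppendix} to see that $D_R(x;\sqrt t)D_R(z;\sqrt t)$ decays fast enough in $t$ to absorb the $\lvert\ln t\rvert/t$ factor and to make the $\epsilon$-regularized integrals uniform (near $t=1$ only an integrable square-root singularity, coming from the branch point $\lambda=1$ of $a(\lambda/2)$, is present). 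The bounds $\phi_R(x,\lambda^2)=\BigO{1}$ as $x\to b_R$ and $\phi_R(x,\lambda^2)=\BigO{x^{-1/2}}$ as $x\to0^+$ (from Proposition \ref{dldr_AppendixProp} together with \eqref{hhprime} and \eqref{ssprime}) show in addition that each $\hat E_{R,\lambda^2}$ is a bounded operator on $L^2([0,b_R])$, and the abstract identity $\hat E_{R,1}=I$ — valid because $\sigma(\mathcal{H}_R^*\mathcal{H}_R)\subseteq[0,1]$ — yields as a by-product the completeness relation $\int_0^1\phi_R(x,t)\phi_R(z,t)\,d\sigma_R(t)=\delta(x-z)$.
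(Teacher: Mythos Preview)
Your argument follows essentially the same route as the paper's: apply Stone's formula \eqref{ResOfIdGeneral} to $\mathcal{R}_R(\lambda^2)/\lambda^2$ via Proposition~\ref{HstarH_res}, read off the jump from Theorem~\ref{resJump} and \eqref{RkernelRight}, and match constants with \eqref{phiLR}, \eqref{sigmaPrime}. The spectrum identification and the sign bookkeeping (the paper records the orientation subtlety $\Delta_{\lambda^2}\mathcal{R}_R(\lambda^2)=\text{sgn}(\lambda)\,\Delta_\lambda\hat R(\lambda/2)$) are also the same.

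The one substantive difference is your treatment of the endpoint $t=0$. You flag it as ``the main obstacle'' and propose to control the kernel there by invoking the small-$\lambda$ asymptotics of $h_\infty',s_\infty',\alpha,\beta$ from Section~\ref{secGammaAsymp} and the appendices. The paper bypasses this entirely: since $\mathcal{H}_R^*\mathcal{H}_R$ has no eigenvalues (Proposition~\ref{noEigen}), the resolution of the identity $\hat E_{R,\lambda^2}$ is strongly continuous, so in \eqref{ResOfIdGeneral} one may simply set $\delta=0$; and because the resolvent kernel extends analytically across $(0,1)$, the $\epsilon$-limit may be taken inside. Thus the paper never needs any quantitative decay of $D_R$ as $\lambda\to 0$: once the kernel identity is established on every compact $[\alpha,\lambda^2]\subset(0,1)$, the $\alpha\to 0^+$ limit is supplied by the abstract continuity $\hat E_{R,\alpha}\to 0$, and \eqref{ER_ResOfId} holds as an operator identity. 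Your asymptotic route would also work, but it is a good deal more labor than necessary and imports machinery (the steepest-descent results of Section~\ref{secGammaAsymp}) that the paper deliberately keeps out of the diagonalization in Section~\ref{secRHPDiag}.
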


\begin{proof}
We will consider only the operator $\mathcal{H}_R^*\mathcal{H}_R$. The proofs for $\mathcal{H}_L^*\mathcal{H}_L$ are completely analogous. It follows from Proposition \ref{HstarH_res} and Theorem \ref{resJump} that
    \begin{equation}\label{specLoc}
        \mathcal{R}_{R+}(\lambda^2)-\mathcal{R}_{R-}(\lambda^2)\begin{cases}
        =0, &\lambda^2\in{\mathbb{R}}\setminus[0,1] \\
        \neq0, &\lambda^2\in[0,1],
        \end{cases}
    \end{equation}
    where $\mathcal{R}_R$ is the resolvent of $\mathcal{H}_R^*\mathcal{H}_R$, see \eqref{HstarHRres}.  Thus the spectral set is $\lambda^2\in[0,1]$.  It was shown in Proposition \ref{noEigen} that $\mathcal{H}_R^*\mathcal{H}_R$ has no eigenvalues.
        
Next, we construct $\hat{E}_{R,\lambda^2}$.  It was shown in Theorem \ref{resJump} that $\hat{R}(\lambda)$ is single-valued for $\lambda\in\mathbb{C}\setminus[-1/2,1/2]$, thus from \eqref{ResOfIdGeneral} and Proposition \ref{HstarH_res} we have 
    \begin{equation}\label{hatEproof}
        \hat{E}_{R,\lambda^2}=\lim_{\epsilon\to0^+}\lim_{\delta\to0^+}\frac{-1}{2\pi i}\int_{\delta}^{\lambda^2-\delta}\frac{1}{\mu^2}\left[\mathcal{R}_R(\mu^2+i\epsilon)-\mathcal{R}_R(\mu^2-i\epsilon)\right]d\mu^2.
    \end{equation}
    We have shown in Proposition \ref{noEigen} that $\mathcal{H}_R^*\mathcal{H}_R$ has no eigenvalues.  According to \cite{AG80} section 82, the lack of eigenvalues guarantees that $\hat{E}_{R,\lambda^2}$ has no points of discontinuity.  Now returning to \eqref{hatEproof}, we can take $\delta=0$ and we can move the $\epsilon$ limit inside the integral as the kernel of $\mathcal{R}_R$ has analytic continuation above and below the interval $(0,1)$.  So from \eqref{ResOfIdGeneral}, \eqref{HstarHRres}, and Remark \ref{resForm} we obtain 
    \begin{align}
        \hat{E}_{R,\lambda^2}&=\frac{-1}{2\pi i}\int_0^{\lambda^2}\frac{1}{\mu^2}\left[\mathcal{R}_{R+}(\mu^2)-\mathcal{R}_{R-}(\mu^2)\right]d\mu^2 \cr
        &=\int_0^{\lambda^2}\frac{-\text{sgn}(\mu)}{2\pi i\mu^2}\pi_R\left[\hat{R}_+(\mu/2)-\hat{R}_-(\mu/2)\right]\pi_Rd\mu^2,
    \end{align}
    and now plugging in \eqref{RkernelRight} gives the result.  Note that $\Delta_{\lambda^2}\mathcal{R}_R(\lambda^2)=\text{sgn}(\lambda)\Delta_\lambda\hat{R}(\lambda/2)$, because when $\lambda$ is on the upper shore of $(-1,0)$, $\lambda^2$ is on the lower shore of $(0,1)$.

\end{proof}

\subsection{Nature of the spectrum of $\mathcal{H}_R^*\mathcal{H}_R$ and $\mathcal{H}_L^*\mathcal{H}_L$}

In this subsection, we show that the spectrum of $\mathcal{H}_R^*\mathcal{H}_R$ and $\mathcal{H}_L^*\mathcal{H}_L$ is simple and purely absolutely continuous.  We will prove statements in this section for $\mathcal{H}_R^*\mathcal{H}_R$ only because the statements and ideas for proofs are nearly identical for $\mathcal{H}_L^*\mathcal{H}_L$.  Notice that the resolution of the identity of $\mathcal{H}_R^*\mathcal{H}_R$ (see \eqref{ER_ResOfId}) can be compactly written as
\begin{equation}\label{ResOfIdWithUR}
    \hat{E}_{R,\lambda^2}[f](x)=\int_0^{\lambda^2}\phi_R(x,\mu^2)U_R
    [f](\mu^2)~d\sigma_R(\mu^2),
\end{equation}
where $\phi_R, \sigma_R$ are defined in \eqref{phiLR}, \eqref{sigmaPrime}, respectively and the operator $U_R:C_0^\infty([0,b_R])\to C^\infty((0,1))$ is defined as 
\begin{equation}\label{UR}
U_R[f](\mu^2):=\int_0^{b_R}\phi_R(z,\mu^2)f(z)dz.
\end{equation}
For any interval $\Delta\subset [0,1]$, which is at a positive distance from $0$ and $1$ (to avoid the singularities of $\phi_R(z,\mu^2)$ as $\mu^2\to0$ or 1), we have
\begin{equation}\label{isoDelta}
 \big\Vert \hat{E}_{R,\Delta}[f]\big\Vert^2_{L^2([0,b_R])} =  \langle \hat{E}_{R,\Delta}[f](\cdot),\hat{E}_{R,\Delta}[f](\cdot)\rangle=\langle \hat{E}_{R,\Delta}^2[f](\cdot),f\rangle=\langle \hat{E}_{R,\Delta}[f](\cdot),f\rangle,\ f\in C_0^\infty([0,b_R]),
\end{equation}
where we have used that $\hat{E}_{R,\Delta}$ is a self-adjoint projection operator, see \cite{AG80} p.214.  Using \eqref{ResOfIdWithUR} and \eqref{UR}, it is easy to show that 
\begin{equation}\label{sigmafUR_v1}
    \langle \hat{E}_{R,\Delta}[f],f\rangle=\int_{\Delta}\left|U_R[f](\mu^2)\right|^2~d\sigma_R(\mu^2)=\big\Vert U_R[f]\big\Vert^2_{L^2(\Delta,\sigma_R)},\ f\in C_0^\infty([0,b_R]).
\end{equation}
By \eqref{isoDelta} and \eqref{sigmafUR_v1} and by continuity, we can extend $U_R$ to all of $L^2([0,b_R])$ and 
\begin{equation}\label{UR-prp}
 \big\Vert \hat{E}_{R,\Delta}[f]\big\Vert^2_{L^2([0,b_R])} =  \big\Vert U_R[f]\big\Vert^2_{L^2(\Delta,\sigma_R)},\ f\in L^2([0,b_R]).
\end{equation}
Taking the limit $\Delta \to [0,1]$ in \eqref{isoDelta}, \eqref{sigmafUR_v1} and using that $\hat{E}_{R,\lambda^2}$ is the resolution of the identity, the spectrum is confined to $[0,1]$, and there are no eigenvalues, we prove the following Lemma.

\begin{lemma}\label{isometry}
    The operator $U_R$ extends to an isometry from $L^2([0,b_R])\to L^2([0,1],\sigma_R)$.
    
\end{lemma}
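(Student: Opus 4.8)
The plan is to show that the map $f \mapsto U_R[f]$, already defined on the dense subspace $C_0^\infty([0,b_R])$ by \eqref{UR}, is norm-preserving into $L^2([0,1],\sigma_R)$ and hence extends uniquely to an isometry on all of $L^2([0,b_R])$. The starting point is the identity \eqref{sigmafUR_v1}, valid for $f \in C_0^\infty([0,b_R])$ and for any $\Delta \subset [0,1]$ bounded away from the endpoints $0$ and $1$, together with its consequence \eqref{UR-prp}. The only thing left is to pass to the limit $\Delta \to [0,1]$.

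First I would take an increasing sequence of intervals $\Delta_n = [1/n, 1-1/n]$ exhausting $(0,1)$. On the left side of \eqref{UR-prp}, since $\hat{E}_{R,\lambda^2}$ is the resolution of the identity for $\mathcal{H}_R^*\mathcal{H}_R$ (Theorem \ref{ResOfIdThm}), the projections $\hat{E}_{R,\Delta_n}$ increase strongly to $\hat{E}_{R,(0,1)}$; moreover, since the spectrum is contained in $[0,1]$ (Theorem \ref{ResOfIdThm}) and there are no eigenvalues (Proposition \ref{noEigen}), the endpoints $\{0\}$ and $\{1\}$ carry no spectral mass, so $\hat{E}_{R,(0,1)} = \hat{E}_{R,[0,1]} = I$ on $L^2([0,b_R])$. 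Hence $\Vert \hat{E}_{R,\Delta_n}[f]\Vert^2_{L^2([0,b_R])} \to \Vert f\Vert^2_{L^2([0,b_R])}$ for every $f$. On the right side of \eqref{UR-prp}, by the monotone convergence theorem $\Vert U_R[f]\Vert^2_{L^2(\Delta_n,\sigma_R)} = \int_{\Delta_n} |U_R[f](\mu^2)|^2\, d\sigma_R(\mu^2)$ increases to $\int_{(0,1)} |U_R[f](\mu^2)|^2\, d\sigma_R(\mu^2) = \Vert U_R[f]\Vert^2_{L^2([0,1],\sigma_R)}$ (using again that $\sigma_R$ has no atoms at $0$ or $1$, which follows from the absence of eigenvalues). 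Equating the two limits gives $\Vert U_R[f]\Vert_{L^2([0,1],\sigma_R)} = \Vert f\Vert_{L^2([0,b_R])}$ for all $f \in C_0^\infty([0,b_R])$. Since $C_0^\infty([0,b_R])$ is dense in $L^2([0,b_R])$ and $U_R$ is linear and isometric on it, it extends uniquely by continuity to an isometry $U_R : L^2([0,b_R]) \to L^2([0,1],\sigma_R)$, and by continuity the isometry identity persists for the extension; this also reconfirms \eqref{UR-prp} for all $f \in L^2([0,b_R])$.

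The main obstacle is the behavior near the endpoints $\lambda^2 = 0$ and $\lambda^2 = 1$: the kernel $\phi_R(z,\mu^2)$ and the weight $\sigma_R'(\mu^2)$ have (or may have) singularities there, which is exactly why \eqref{sigmafUR_v1} was only asserted on intervals $\Delta$ bounded away from $0$ and $1$. The resolution is conceptual rather than computational: one does not need integrability of $\phi_R$ or $\sigma_R'$ up to the endpoints, only that the spectral projection sees no mass at the two points, which is guaranteed by Proposition \ref{noEigen} (no eigenvalues) and the continuity of the resolution of the identity invoked in the proof of Theorem \ref{ResOfIdThm}. With that in hand, the monotone/strong limits on both sides of \eqref{UR-prp} are automatic and the extension is routine. (At this stage the map is only claimed to be an isometry, i.e. injective with closed range; surjectivity onto $L^2([0,1],\sigma_R)$ — unitarity — is a separate statement proved later.)
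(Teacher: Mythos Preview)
Your proof is correct and follows exactly the approach the paper indicates: take the limit $\Delta \to [0,1]$ in \eqref{isoDelta}--\eqref{UR-prp}, using that the spectrum is confined to $[0,1]$ and that there are no eigenvalues (hence no spectral mass at the endpoints), so that $\hat{E}_{R,\Delta_n} \to I$ strongly. The paper compresses all of this into one sentence just before the lemma statement; you have simply supplied the details that sentence elides, including the explicit appeal to monotone convergence on the right-hand side and the routine density argument.
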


We are now ready to conclude this section.

\begin{theorem}\label{SimpSpec}
    The spectrum of $\mathcal{H}_R^*\mathcal{H}_R$, $\mathcal{H}_L^*\mathcal{H}_L$ is simple and purely absolutely continuous.
\end{theorem}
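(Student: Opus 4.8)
The plan is to deduce both assertions from the explicit resolution of the identity \eqref{ER_ResOfId}--\eqref{EL_ResOfId}, Lemma \ref{isometry}, and Theorem \ref{AGdiag}. Concretely, I will exhibit a generating (cyclic) vector $g$ for $\mathcal{H}_R^*\mathcal{H}_R$; by the definition of simple spectrum recalled from \cite{AG80} this gives simplicity, and then Theorem \ref{AGdiag} provides a unitary equivalence of $\mathcal{H}_R^*\mathcal{H}_R$ with multiplication by $\lambda^2$ on $L^2([0,1],\sigma)$, where $\sigma(\lambda^2):=\langle\hat E_{R,\lambda^2}[g],g\rangle$. Checking that $\sigma$ is absolutely continuous with respect to Lebesgue measure then forces the spectrum to be purely absolutely continuous. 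The argument for $\mathcal{H}_L^*\mathcal{H}_L$ is word-for-word the same with $\phi_R,\sigma_R,U_R,D_R,d_R$ replaced by their $L$-counterparts.

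\textbf{Simplicity.} I would take \emph{any} nonzero $g\in C_0^\infty((0,b_R))$ and show it is generating, i.e.\ that $\langle\hat E_{R,\Delta}[g],h\rangle=0$ for every subinterval $\Delta$ implies $h=0$. Polarizing \eqref{sigmafUR_v1} (equivalently, reading off from \eqref{ResOfIdWithUR}--\eqref{UR} that $\hat E_{R,\Delta}$ acts as $f\mapsto U_R^*\bigl(\chi_\Delta\,U_R[f]\bigr)$) gives $\langle\hat E_{R,\Delta}[g],h\rangle=\int_\Delta U_R[g](\mu^2)\,\overline{U_R[h](\mu^2)}\,d\sigma_R(\mu^2)$, so the orthogonality for all $\Delta$ is equivalent to $U_R[g]\,\overline{U_R[h]}=0$ $\sigma_R$-a.e. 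The key point is that $U_R[g](\mu^2)=\int_0^{b_R}\phi_R(z,\mu^2)g(z)\,dz$ extends to a holomorphic function of $\mu^2$ in a neighborhood of $(0,1)$: by \eqref{phiLR}, \eqref{DRDL}, \eqref{drdl} the kernel $\phi_R(z,\cdot)$ is built from $h_\infty',s_\infty'$ (see \eqref{hhprime}, \eqref{ssprime}), which are entire in their parameter, composed with $a(-|\lambda|/2)$, which is analytic and single-valued on $(0,1)$ by Propositions \ref{aProp} and \ref{dldr_AppendixProp}; since $g$ is supported in a compact subset of $(0,b_R)$ (away from the only singularity $z=0$ of $\phi_R$), differentiation under the integral sign is legitimate. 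By Lemma \ref{isometry}, $\|U_R[g]\|_{L^2(\sigma_R)}=\|g\|_{L^2}\neq0$, so $U_R[g]\not\equiv0$, hence its zero set in $(0,1)$ is discrete and therefore of $\sigma_R$-measure zero (the measure $\sigma_R$ has no atoms by \eqref{sigmaPrime}). Consequently $U_R[g]\,\overline{U_R[h]}=0$ $\sigma_R$-a.e.\ forces $U_R[h]=0$ $\sigma_R$-a.e., and then $\|h\|_{L^2}=\|U_R[h]\|_{L^2(\sigma_R)}=0$ by Lemma \ref{isometry}. Thus $g$ is generating and the spectrum of $\mathcal{H}_R^*\mathcal{H}_R$ is simple; Proposition \ref{noEigen} already guarantees there is no point spectrum.

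\textbf{Pure absolute continuity.} For the same $g$, \eqref{ResOfIdWithUR} gives $\sigma(\lambda^2):=\langle\hat E_{R,\lambda^2}[g],g\rangle=\int_0^{\lambda^2}|U_R[g](\mu^2)|^2\,d\sigma_R(\mu^2)=\int_0^{\lambda^2}|U_R[g](\mu^2)|^2\,\sigma_R'(\mu^2)\,d\mu^2$, with $\sigma_R'$ the non-negative real-analytic density from \eqref{sigmaPrime}. Since the integral over all of $[0,1]$ equals $\|g\|_{L^2}^2<\infty$, the integrand lies in $L^1([0,1])$, so $\lambda^2\mapsto\sigma(\lambda^2)$ is absolutely continuous. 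Feeding $g$ and $\sigma$ into Theorem \ref{AGdiag} produces a unitary map of $L^2([0,1],\sigma)$ onto $L^2([0,b_R])$ carrying $\mathcal{H}_R^*\mathcal{H}_R$ to multiplication by $\lambda^2$; as $\sigma$ is a finite absolutely continuous measure, this multiplication operator — and hence $\mathcal{H}_R^*\mathcal{H}_R$ — is purely absolutely continuous. The identical argument settles $\mathcal{H}_L^*\mathcal{H}_L$.

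\textbf{Main obstacle.} The only nontrivial input is the analyticity, and hence a.e.\ non-vanishing, of $U_R[g]$ on $(0,1)$; everything else is bookkeeping with the resolution of the identity and the isometry $U_R$. Care is needed so that the $z\to0$ singularity $\phi_R(z,\mu^2)=\BigO{z^{-1/2}}$ does not interfere — hence the choice of $g\in C_0^\infty((0,b_R))$ — and so that $\sigma_R'$, although it degenerates at the endpoints $\lambda^2=0,1$, is integrable against $|U_R[g]|^2$, which holds a posteriori because $\sigma(1)=\langle\hat E_{R,1}[g],g\rangle=\|g\|^2<\infty$.
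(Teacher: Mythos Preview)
Your argument is correct, and it takes a genuinely different route from the paper's for both parts of the theorem.

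For simplicity, the paper exhibits the explicit generating vector $g=\chi_R$ and invokes the identity $U_R[\chi_R]\equiv 1$ (statement~(6) of Proposition~\ref{dldr_AppendixProp}); with $U_R[g]$ nowhere vanishing the approximation of $U_R[f]$ by simple functions $\tilde\phi_n\cdot U_R[g]$ is immediate. Your approach instead takes \emph{any} nonzero $g\in C_0^\infty((0,b_R))$, argues that $\mu^2\mapsto U_R[g](\mu^2)$ is real-analytic on $(0,1)$ (which the paper also asserts for the kernel $\phi_R$ in its absolute-continuity proof), and concludes that its zero set is discrete and hence $\sigma_R$-null. Both establish that $U_R[g]$ vanishes on a $\sigma_R$-null set; the paper's version is more explicit and avoids the analyticity step, while yours shows that the choice of generator is not special and uses a robust spectral-theoretic idiom.

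For absolute continuity, the paper verifies that $\sigma_f(\lambda^2)=\langle\hat E_{R,\lambda^2}[f],f\rangle$ is absolutely continuous for every $f$ in the dense set $C_0^\infty([0,b_R])$, appealing to the definition in \cite{AG80}, \S95. You instead check absolute continuity only for the generating vector and then pass through the unitary equivalence of Theorem~\ref{AGdiag} with multiplication on $L^2([0,1],\sigma)$. This is slightly cleaner once simplicity is in hand, since absolute continuity of the single measure $\sigma$ immediately gives pure absolute continuity of the multiplication operator. Both arguments rest on the same input, namely that $\sigma_R'$ from \eqref{sigmaPrime} is an honest $L^1$ density on $(0,1)$.
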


\begin{proof}
   To prove that the spectrum is simple we will show that 
    \begin{equation}\label{generatingVec}
        g(x):=\chi_R(x)
    \end{equation}
    is a generating vector.  So for any $f\in L^2([0,b_R])$ we want to show that
\begin{equation}
\lim_{n\to\infty}    \bigg\Vert f-\sum_{j=1}^n\alpha_{jn}\hat{E}_{R,I_{jn}}[g](x) \bigg\Vert_{L^2([0,b_R])}=0,
\end{equation}
where $\alpha_{jn}$ are some constants, and $I_{jn}:=[(j-1)/n,j/n)$.  Thus, the intervals $I_{jn}$, $1\le j\le n$, partition the spectral interval $[0,1]$. Using the properties of $\hat{E}_{R,\lambda^2}$, we calculate 
\begin{align}\label{fByG}
    f(x)-\sum_{j=1}^n\alpha_{jn}\hat{E}_{R,I_{jn}}[g](x)&=\int_0^1\phi_R(x,\mu^2)\left\{U_R[f](\mu^2)-\tilde{\phi}_n(\mu^2)U_R[g](\mu^2)\right\}~d\sigma_R(\mu^2)
\end{align}
where $\tilde{\phi}_n$ is the simple function
\begin{equation}\label{tildePhiN}
    \tilde{\phi}_n(\mu^2)=\sum_{j=1}^n \alpha_{jn}\chi_{I_{jn}}(\mu^2).
\end{equation}
  Using the properties of $\hat{E}_{R,\lambda^2}$, we can write the left hand side of \eqref{fByG} as 
\begin{equation}\label{hatEhelp}
    \sum_{j=1}^n\hat{E}_{R,I_{jn}}[f-\alpha_{jn}g](x).
\end{equation}
Now using  \eqref{UR-prp}, \eqref{tildePhiN},\eqref{hatEhelp}, and the fact that $\hat{E}_{R,\Delta_j}\hat{E}_{R,\Delta_k}=0$ whenever $\Delta_j\cap\Delta_k=\emptyset$ (see \cite{AG80}, p.214), we see that
\begin{align}
    &&&\bigg\Vert f-\sum_{j=1}^n\alpha_{jn}\hat{E}_{R,I_{jn}}[g](x) \bigg\Vert_{L^2([0,b_R])}^2=\bigg\Vert \sum_{j=1}^n\hat{E}_{R,I_{jn}}[f-\alpha_{jn}g](x)\bigg\Vert_{L^2([0,b_R])}^2= \nonumber \\
    &&& \sum_{j=1}^n\big\Vert\hat{E}_{R,I_{jn}}[f-\alpha_{jn}g](x)\big\Vert_{L^2([0,b_R])}^2 
    =\sum_{j=1}^n\big\Vert U_R[f]-\alpha_{jn}U_R[g]\big\Vert_{L^2(I_{jn},\sigma_R)}^2 
    =\big\Vert U_R[f]-\tilde{\phi}_n U_R[g]\big\Vert_{L^2([0,1],\sigma_R)}^2,
\end{align}
since the intervals $I_{jn}$ are disjoint. Now our goal is to show that any $U_R[f]\in L^2([0,1],\sigma_R)$ can be approximated by $\tilde{\phi}_n U_R[g]$.  Using statement (6) of Proposition~\ref{dldr_AppendixProp} in Appendix \ref{dldr_appendix} and \eqref{DRDL}, \eqref{phiLR}, it follows that 
\begin{equation}
        U_R[g](\lambda^2)=\int_0^{b_R}\phi_R(x,\lambda^2)g(x)dx=1.
    \end{equation}
    It is clear that any $U_R[f]$ can be approximated by a sequence of simple function $\tilde{\phi}_n$, so we have 
    \begin{equation}
    \bigg\Vert f-\sum_{j=1}^n\alpha_{jn}\hat{E}_{R,I_{jn}}[g](x) \bigg\Vert_{L^2([0,b_R])}^2=\big\Vert U_R[f]-\tilde{\phi}_n\big\Vert_{L^2([0,1],\sigma_R)}^2\to0,\ n\to\infty,
\end{equation}
   as desired.  Thus, the spectrum of $\mathcal{H}_R^*\mathcal{H}_R$ is simple and $g=\chi_R$ is a generating vector.  \\
    
Lastly, to show that the spectrum of $\mathcal{H}^*_R\mathcal{H}_R$ is purely absolutely continuous, we need to show that the function
\begin{equation}\label{sigmaf}
    \sigma_f(\lambda^2):=\langle \hat{E}_{R,\lambda^2}[f],f\rangle
\end{equation}    
is absolutely continuous for any $f\in C_0^\infty([0,b_R])$ (such functions are dense in $L^2([0,b_R])$), see \cite{AG80}, Vol. 2, Section 95.  Similarly to the proof of Lemma~\ref{isometry}, we have
    \begin{equation}
        \sigma_f(\lambda^2)=\int_0^{\lambda_0^2}\left|U_R[f](\mu^2)\right|^2d\sigma_R(\mu^2)+\int_{\lambda_0^2}^{\lambda^2}\left|U_R[f](\mu^2)\right|^2d\sigma_R(\mu^2)
    \end{equation}
for any $\lambda_0\in (0,1)$. The operator $\mathcal{H}^*_R\mathcal{H}_R$ has no eigenvalues, so $\sigma_f(\lambda^2)$ is continuous. Hence it suffices to show that $d\sigma_f(\lambda^2)/d\lambda^2$ is continuous on any interval $[\epsilon,1-\epsilon]$, $0<\epsilon<1$. Since both $d\sigma_R(\mu^2)/d\mu^2$ and the kernel of $U_R$ are real analytic for $\mu^2\in(0,1)$ (see Proposition \ref{dldr_AppendixProp}) and $f\in C_0^\infty([0,b_R])$, the desired assertion follows immediately.

\end{proof}

\subsection{Diagonalization of $\mathcal{H}_R^*\mathcal{H}_R$ and $\mathcal{H}_L^*\mathcal{H}_L$}

We are now ready to use Theorem \ref{AGdiag} and build the unitary operators which will diagonalize $\mathcal{H}_R^*\mathcal{H}_R$ and $\mathcal{H}_L^*\mathcal{H}_L$.  Recall from Theorem \ref{SimpSpec} that $\chi_R, \chi_L$ are generating vectors for $\mathcal{H}_R^*\mathcal{H}_R, \mathcal{H}_L^*\mathcal{H}_L$, respectively.  Following Theorem \ref{AGdiag} and Remark \ref{diagShort}, we define $U_L^*:L^2([0,1],\sigma_{\chi_L})\to L^2([b_L,0])$ and ${U_R^*:L^2([0,1],\sigma_{\chi_R})\to L^2([0,b_R])}$ by
\begin{align}\label{ULstarURstar}
    U_L^*[\tilde{g}](y):=\int_0^1 \tilde{g}(\lambda^2)~d\hat{E}_{L,\lambda^2}[\chi_L](y), ~~~ U_R^*[\tilde{f}](x):=\int_0^1 \tilde{f}(\lambda^2)~d\hat{E}_{R,\lambda^2}[\chi_R](x),
\end{align}
where $\hat{E}_{L,\lambda^2},\hat{E}_{R,\lambda^2}$ are the resolutions of the identity for $\mathcal{H}_L^*\mathcal{H}_L, \mathcal{H}_R^*\mathcal{H}_R$, respectively, see \eqref{ER_ResOfId}, \eqref{EL_ResOfId}.  The spectral measures are $\sigma_{\chi_L}(\lambda^2):=\langle \hat{E}_{L,\lambda^2}[\chi_L],\chi_L\rangle$ and $\sigma_{\chi_R}(\lambda^2):=\langle \hat{E}_{R,\lambda^2}[\chi_R],\chi_R\rangle$.

\begin{remark}\label{sigmaRemark}
    Using Proposition \ref{dldr_AppendixProp}, it can be verified that 
    \begin{equation}
        \frac{d\sigma_{\chi_L}(\lambda^2)}{d\lambda^2}=\frac{d\sigma_L(\lambda^2)}{d\lambda^2}, ~~~ \frac{d\sigma_{\chi_R}(\lambda^2)}{d\lambda^2}=\frac{d\sigma_R(\lambda^2)}{d\lambda^2},
    \end{equation}
    where $\sigma_L'(\lambda^2), \sigma_R'(\lambda^2)$ (here $'$ denotes differentiation with respect to $\lambda^2$) are defined in \eqref{sigmaPrime}.
\end{remark}

\begin{remark}\label{ULURRemark}
    Again, using Proposition \ref{dldr_AppendixProp}, it can be verified that 
    \begin{align}
        U_L^*[\tilde{g}](y)=\int_0^1\phi_L(y,\lambda)\tilde{g}(\lambda^2)~d\sigma_L(\lambda^2), ~~~ U_R^*[\tilde{f}](x)=\int_0^1\phi_R(x,\lambda)\tilde{f}(\lambda^2)~d\sigma_R(\lambda^2),
    \end{align}
    where $\phi_L, \phi_R$ are defined in \eqref{phiLR}.  It is now clear that the adjoint of $U_R$, defined in \eqref{UR}, is $U_R^*$, and 
    \begin{equation}\label{UL}
        U_L[g](\lambda^2)=\int_{b_L}^0\phi_L(y,\lambda^2)g(y)~dy.
    \end{equation}
\end{remark}

We conclude this section with the following main result.

\begin{theorem}\label{HstarHdiagHHstardiag}

    The operators $U_R:L^2([0,b_R])\to L^2([0,1],\sigma_R)$, $U_L:L^2([b_L,0])\to L^2([0,1],\sigma_L)$, defined in \eqref{UR}, \eqref{UL}, respectively, are unitary and  
    \begin{equation}
        U_R\mathcal{H}_R^*\mathcal{H}_RU_R^*=\lambda^2, ~~ U_L\mathcal{H}_L^*\mathcal{H}_LU_L^*=\lambda^2
    \end{equation}
    in the sense of operator equality on $L^2([0,1],\sigma_R)$, $L^2([0,1],\sigma_L)$, respectively, where $\lambda^2$ is to be understood as a multiplication operator.
    
\end{theorem}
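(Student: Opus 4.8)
The plan is to verify the two claimed properties—unitarity of $U_R$, $U_L$ and the diagonalization identity—by invoking the abstract machinery of Theorem \ref{AGdiag} together with the explicit resolution of the identity from Theorem \ref{ResOfIdThm}. I will treat only $U_R$; the argument for $U_L$ is word-for-word analogous after replacing $\chi_R$, $\phi_R$, $\sigma_R$, $b_R$ by their left-hand counterparts. First I would recall from Theorem \ref{SimpSpec} that $\mathcal{H}_R^*\mathcal{H}_R$ has simple, purely absolutely continuous spectrum equal to $[0,1]$, and that $g=\chi_R$ is a generating vector. This means Theorem \ref{AGdiag} and Remark \ref{diagShort} apply directly with $A=\mathcal{H}_R^*\mathcal{H}_R$, $\mathcal{K}=L^2([0,b_R])$, and $g=\chi_R$: the operator $U_R^*$ defined in \eqref{ULstarURstar}, equivalently (by Remark \ref{ULURRemark}) by
\begin{equation*}
U_R^*[\tilde f](x)=\int_0^1\phi_R(x,\lambda^2)\tilde f(\lambda^2)\,d\sigma_R(\lambda^2),
\end{equation*}
is an isometric bijection from $L^2([0,1],\sigma_{\chi_R})$ onto $L^2([0,b_R])$, hence unitary, and it conjugates the multiplication operator $\lambda^2$ to $\mathcal{H}_R^*\mathcal{H}_R$, i.e. $U_R^*\,\lambda^2\,(U_R^*)^{-1}=\mathcal{H}_R^*\mathcal{H}_R$ (using Remark \ref{resForm} to reconcile the nonstandard normalization of the resolvent with the standard one appearing in Theorem \ref{AGdiag}).

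The second step is to identify $\sigma_{\chi_R}$ with $\sigma_R$ and $(U_R^*)^{-1}$ with the operator $U_R$ of \eqref{UR}. By Remark \ref{sigmaRemark}, $d\sigma_{\chi_R}/d\lambda^2=d\sigma_R/d\lambda^2$, so the two measures coincide and $L^2([0,1],\sigma_{\chi_R})=L^2([0,1],\sigma_R)$. By Remark \ref{ULURRemark}, the adjoint of $U_R^*$ (computed with respect to these two $L^2$ inner products) is exactly $U_R[f](\lambda^2)=\int_0^{b_R}\phi_R(z,\lambda^2)f(z)\,dz$; this is a direct Fubini computation, pairing $U_R^*[\tilde f]$ with $f\in L^2([0,b_R])$ and using that $\phi_R$ is real-valued on the spectrum (real-analyticity of $D_R$, $D_L$ on $(0,1)$ per Proposition \ref{dldr_AppendixProp}). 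Since $U_R^*$ is unitary, its adjoint equals its inverse, so $U_R=(U_R^*)^{-1}$ is unitary as well; alternatively one may cite Lemma \ref{isometry} for the isometry property and the surjectivity of $U_R^*$ for the ``onto'' part.

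The final step is to transport the intertwining relation through the adjoint: from $U_R^*\,\lambda^2\,(U_R^*)^{-1}=\mathcal{H}_R^*\mathcal{H}_R$ and $U_R=(U_R^*)^{-1}$ one gets $U_R\,\mathcal{H}_R^*\mathcal{H}_R\,U_R^*=\lambda^2$ as an operator identity on $L^2([0,1],\sigma_R)$, which is the assertion. I expect no real obstacle here, as essentially all the work has been front-loaded into Theorems \ref{ResOfIdThm} and \ref{SimpSpec} and the two preceding remarks; the only point requiring mild care is the bookkeeping between the ``resolvent'' $\mathcal{R}_R(\lambda^2)$ of Proposition \ref{HstarH_res} and the genuine resolvent $(\lambda^2 I-\mathcal{H}_R^*\mathcal{H}_R)^{-1}$ appearing in the abstract formula \eqref{ResOfIdGeneral}, together with the change of variables relating the $\lambda$-plane jump of $\hat R$ to the $\lambda^2$-plane jump of $\mathcal{R}_R$ (the $\mathrm{sgn}(\lambda)$ and Jacobian factors), but this was already handled inside the proof of Theorem \ref{ResOfIdThm} and can simply be cited.
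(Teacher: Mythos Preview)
Your proposal is correct and follows essentially the same approach as the paper: invoke Theorem \ref{SimpSpec} for simplicity of the spectrum with generating vector $\chi_R$, then apply Theorem \ref{AGdiag} and Remark \ref{diagShort}, and use Remarks \ref{sigmaRemark} and \ref{ULURRemark} to identify $\sigma_{\chi_R}$ with $\sigma_R$ and the abstract unitary with the concrete $U_R$. You have simply spelled out a few more of the routine verifications (adjoint computation, the $U_R=(U_R^*)^{-1}$ step) than the paper does, but the logical structure is identical.
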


\begin{proof}
    
    We state the proof for $\mathcal{H}_R^*\mathcal{H}_R$ only as the proof for $\mathcal{H}_L^*\mathcal{H}_L$ is nearly identical.  The resolution of the identity of $\mathcal{H}_R^*\mathcal{H}_R$ was constructed in Theorem \ref{ResOfIdThm} and in Theorem \ref{SimpSpec} it was shown that the spectrum of $\mathcal{H}_R^*\mathcal{H}_R$ is simple and $\chi_R$ is a generating vector.  Notice that both $U_R^*$, defined in \eqref{ULstarURstar}, and $\sigma_{\chi_R}=\sigma_R$, defined in \eqref{sigmaPrime}, were constructed in accordance with Theorem \ref{AGdiag} and Remark \ref{diagShort}.  The combination of Remarks \ref{sigmaRemark}, \ref{ULURRemark} and Theorem \ref{AGdiag} complete the proof.

\end{proof}

We later obtain a different proof of this Theorem, see Corollary \ref{HLdiagCor} and Theorem \ref{diagEquiv}.

\section{Diagonalization of $\mathcal{H}_R, \mathcal{H}_L$ via Titchmarsh-Weyl Theory}\label{secODEDiag}

Using recent developments in the Titchmarsh-Weyl theory obtained in \cite{Ful08}, it was shown in \cite{KT16} that the operator
\begin{equation}\label{DiffOpL}
    Lf(x):=\left[P(x)f'(x)\right]'+2\left(x-\frac{b_R+b_L}{4}\right)^2f(x), ~~ P(x):=x^2(x-b_L)(x-b_R)
\end{equation}
has only continuous spectrum and commutes with the FHTs $\mathcal{H}_L,\mathcal{H}_R$, defined in \eqref{HLHRdef}.  We now state the main result of \cite{KT16} and refer the reader to this paper for more details. 
\begin{theorem}\label{KT16mainResult}
    The operators $U_1:L^2([b_L,0])\to L^2(J,\rho_1)$ and $U_2:L^2([0,b_R])\to L^2(J,\rho_2)$, where $J=\left[(b_L^2+b_R^2)/8,\infty\right)$, are isometric transformations.  Moreover, in the sense of operator equality on $L^2(J,\rho_2)$ one has
    \begin{equation}
        U_2\mathcal{H}_LU_1^*=\sigma(\omega),
    \end{equation}
    where
    \begin{equation}
        \sigma(\omega)=\frac{-b_R}{b_L\cosh(\mu(\omega)\pi)}\left(1+\BigO{\epsilon^{\frac{1}{2}-\delta}}\right), ~~~ \omega\to\infty,~~~  \epsilon=\omega^{-1/2},
    \end{equation}
    $\mu(\omega)=\sqrt{\frac{\omega-(b_L+b_R)^2/8}{-b_Lb_R}-\frac{1}{4}}$, $\rho_1'(\omega)=\frac{1}{b_L^2(b_R-b_L)}\left(1+\BigO{\epsilon^{\frac{1}{2}-\delta}}\right)$, $\rho_2'(\omega)=\frac{1}{b_R^2(b_R-b_L)}\left(1+\BigO{\epsilon^{\frac{1}{2}-\delta}}\right)$,  and $0<\delta<<1$ is fixed.
    
\end{theorem}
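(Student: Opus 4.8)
The plan follows \cite{KT16}, whose main theorem this is, so I would recall the structure of the argument rather than reprove it in full. The starting object is the Sturm--Liouville operator $L$ of \eqref{DiffOpL}. The first step is to verify that $L$ commutes with $\mathcal{H}_L$ and $\mathcal{H}_R$. Writing $L=L_x$ for the action in the variable $x$, one checks the pointwise identity $L_x\frac{1}{\pi(x-y)}=L_y\frac{1}{\pi(x-y)}$ away from $x=y$, using the explicit $P(x)=x^2(x-b_L)(x-b_R)$ and the zeroth-order term $2(x-(b_L+b_R)/4)^2$; integrating by parts twice in $x$ then transfers $L$ from inside the integral to the variable $y$, and the boundary terms at $x=b_L$ and $x=0$ vanish precisely because $P$ vanishes at $b_L$, at $b_R$, and to second order at $0$. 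That $L$ has purely continuous spectrum is recorded in the text preceding the theorem and is used below.

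The second step is to build the spectral theory of $L$. One realizes $L$ as a self-adjoint operator on $L^2([b_L,0])$ and on $L^2([0,b_R])$; all of the endpoints $b_L$, $b_R$ and the common endpoint $0$ are singular because $P$ degenerates there, so each endpoint must be classified in the limit-point/limit-circle sense and the appropriate self-adjoint boundary conditions imposed. The endpoint $0$, where $P$ has a double zero, is the one requiring the Titchmarsh--Weyl machinery of \cite{Ful08}. With the self-adjoint realization fixed, the generalized eigenfunction expansion furnishes the isometries $U_1:L^2([b_L,0])\to L^2(J,\rho_1)$ and $U_2:L^2([0,b_R])\to L^2(J,\rho_2)$ onto the spectral side, with $J=[(b_L^2+b_R^2)/8,\infty)$ the continuous spectrum and $\rho_1,\rho_2$ the Titchmarsh--Weyl spectral measures.

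The third step identifies the multiplier. Since $L$ commutes with $\mathcal{H}_L$, the operator $U_2\mathcal{H}_LU_1^*$ commutes with multiplication by $\omega$ on $L^2(J,\rho_2)$; simplicity of the spectrum then forces it to be multiplication by a scalar function $\sigma(\omega)$. To compute $\sigma$, one applies $\mathcal{H}_L$ to a generalized eigenfunction $\psi_\omega$ on $[b_L,0]$: the function $\mathcal{H}_L\psi_\omega$ solves $Lf=\omega f$ on $[0,b_R]$ with the correct endpoint behavior, hence equals $\sigma(\omega)$ times the corresponding eigenfunction there, and $\sigma(\omega)$ is read off as a connection/normalization coefficient between solutions of the hypergeometric-type ODE $Lf=\omega f$ across the shared endpoint $0$.

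The final step is the large-$\omega$ asymptotics. Treating $\omega$ as a large parameter, a Liouville--Green (WKB) analysis of $Lf=\omega f$, together with the asymptotics of the Titchmarsh--Weyl $m$-functions at each endpoint, yields $\rho_1'(\omega)=\frac{1}{b_L^2(b_R-b_L)}(1+\BigO{\epsilon^{\frac12-\delta}})$, $\rho_2'(\omega)=\frac{1}{b_R^2(b_R-b_L)}(1+\BigO{\epsilon^{\frac12-\delta}})$, and $\sigma(\omega)=\frac{-b_R}{b_L\cosh(\mu(\omega)\pi)}(1+\BigO{\epsilon^{\frac12-\delta}})$ with $\mu(\omega)=\sqrt{\frac{\omega-(b_L+b_R)^2/8}{-b_Lb_R}-\frac14}$ and $\epsilon=\omega^{-1/2}$. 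I expect the main obstacle to be the analysis at the degenerate endpoint $0$: establishing the correct limit-point classification there, checking that the hypotheses of \cite{Ful08} are met, and extracting the \emph{uniform} error term of size $\BigO{\epsilon^{\frac12-\delta}}$ from the WKB approximation near that endpoint, which is where the turning-point structure in $\omega$ is most delicate.
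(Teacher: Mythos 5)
Your proposal is a reasonable reconstruction of the argument in \cite{KT16}, but note that the paper itself supplies no proof of this statement at all: Theorem \ref{KT16mainResult} is quoted verbatim from \cite{KT16} (with a typo correction noted immediately afterwards), and the reader is referred to that reference. So the honest comparison is threefold. First, your outline — commutation of $L$ from \eqref{DiffOpL} with $\mathcal{H}_L,\mathcal{H}_R$, self-adjoint realization with the degenerate endpoint $0$ handled by the Titchmarsh--Weyl framework of \cite{Ful08}, identification of the multiplier via the intertwining of generalized eigenfunctions, and Liouville--Green asymptotics for the $\BigO{\epsilon^{1/2-\delta}}$ error terms — matches the route taken in \cite{KT16} and is consistent with everything the surrounding text says about that paper's method. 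Second, the present paper's Section \ref{secODEDiag} actually supersedes the asymptotic statement: by writing the eigenfunctions $\varphi_1,\varphi_2$ explicitly in terms of hypergeometric functions (Theorems \ref{phi2theta2Prop}, \ref{phi1theta1Prop}) and computing the Titchmarsh--Weyl $m$-functions in closed form (\eqref{mj}, \eqref{SpecMeasFormula}), it obtains the \emph{exact} formulas $U_2\mathcal{H}_LU_1^*=-\tfrac{b_R}{b_L}\,\mathrm{sech}(\mu\pi)$ and $\rho_j'(\omega)=\tanh(\mu\pi)/\bigl(b_j^2(b_R-b_L)\bigr)$ in \eqref{rho12Prime} and Theorem \ref{HLdiag}; since $\tanh(\mu\pi)=1+\BigO{e^{-2\pi\mu}}$, these imply the quoted asymptotics with room to spare, so no WKB analysis is needed if one is willing to use the explicit solutions. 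Third, the one place where your sketch glosses over a real issue is the claim that $\mathcal{H}_L\psi_\omega$ "solves $Lf=\omega f$ on $[0,b_R]$ with the correct endpoint behavior, hence equals $\sigma(\omega)$ times the corresponding eigenfunction": commutation of $L$ with $\mathcal{H}_L$ on a dense domain gives that $\mathcal{H}_L\psi_\omega$ is again a generalized eigenfunction, but pinning down \emph{which} solution of the two-dimensional solution space it is (i.e., that it is proportional to $\varphi_2$ and not contaminated by $\vartheta_2$) requires checking the boundary condition $P(x)f'(x)\to0$ at $x=b_R$ for the transformed function; the paper handles this by the explicit SVD identity of Proposition \ref{dldr_AppendixProp}(5) rather than by an abstract argument. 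With that caveat your outline is sound as a description of \cite{KT16}, but it is a proof by citation plus sketch, just as the paper's own treatment is.
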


There is a minor typo in this theorem in \cite{KT16}; when describing $\sigma(\lambda)$, the factor $\frac{a_2^3}{a_1}$ is incorrect and should be $-\frac{a_2}{a_1}$.  The operators $U_1, U_2$ in Theorem \ref{KT16mainResult} were obtained asymptotically when $\omega\to\infty$.  Here we obtain these operators explicitly.  According to \cite{Ful08}, the kernels of $U_1, U_2$ and the spectral measures $\rho_1, \rho_2$ are defined through particular solutions of $Lf=\omega f$.  Such solutions will be constructed in the following subsections.

\subsection{Interval $[0,b_R]$}

Define the function
\begin{align}\label{f}
    f_R(x,\omega):=\frac{b_R(b_R-b_L)}{x(b_R+b_L)-2b_Rb_L}M_4(x)^{-\frac{1}{2}+i\mu}\pFq{2}{1}{\frac{1}{4}+\frac{i\mu}{2},\frac{3}{4}+\frac{i\mu}{2}}{1+i\mu}{M_4^2(x)},
\end{align}
where $M_4(x)$ and $\mu$ are defined in Remark \ref{mobius} and Theorem \ref{KT16mainResult}, respectively.  Notice that $M_4(x)$ maps $b_L\to-1, 0\to0, b_R\to1$.  If we take $b_R=-b_L=a$ in \eqref{f}, where $a$ is a constant, we obtain (4.9) of \cite{KT16}.  Now define 
\begin{align}\label{phi2theta2}
    \varphi_2(x,\omega):=kf_R(x,\omega)+\overline{k}\overline{f}_R(x,\omega), ~~~ \vartheta_2(x,\omega):=l_2f_R(x,\omega)+\overline{l}_2\overline{f}_R(x,\omega),
\end{align}
where 
\begin{align}
    k&:=\frac{\Gamma(-i\mu)}{\Gamma\left(\frac{1}{4}-\frac{i\mu}{2}\right)\Gamma\left(\frac{3}{4}-\frac{i\mu}{2}\right)}, ~~~ l_2:=\frac{k}{b_R^2(b_R-b_L)}\left[2\gamma+2\Psi\left(\frac{1}{2}-i\mu\right)+\ln\left(\frac{-b_L}{b_R(b_R-b_L)}\right)\right]. \label{kl2}
\end{align}
Here $\gamma$ is Euler's constant and $\Psi$ is the Digamma function, see \cite{AS64} 6.3.1.

\begin{remark}\label{kTwoMod}
    Using properties of the Gamma functions, see \cite{AS64} 6.1.30, it can be shown that
    \begin{equation}
        |k|^2=\frac{\coth(\mu\pi)}{2\pi\mu},
    \end{equation}
    provided $\mu\geq0$.
\end{remark}

\begin{remark}\label{phi2DR}
    Notice that if we take 
    \begin{equation}\label{muOmegaLambda}
        i\mu(\omega)=a+\frac{1}{2}\implies \omega=\frac{(b_R+b_L)^2}{8}+b_Lb_Ra(a+1)
    \end{equation}
    for $\lambda\in[-1,1]$ (this implies $\mu\in[0,\infty)$ iff $\omega\in[(b_L^2+b_R^2)/8,\infty)$), where $a:=a_-(-|\lambda|/2)$, it can be verified via \cite{AS64} 15.3.16 that 
    \begin{equation}\label{fhprime}
        \frac{-b_R\alpha(-|\lambda|/2)}{x\sqrt{\pi}}h_\infty'\left(M_1(x)\right)=kf_R(x,\omega), ~~ \frac{-b_R\beta(-|\lambda|/2)}{x\sqrt{\pi}}s_\infty'\left(M_1(x)\right)=\overline{k}\overline{f}_R(x,\omega)
    \end{equation}
    where $M_1(x)$ is defined in Remark \ref{mobius},  $\alpha,\beta$ and $h_\infty',s_\infty'$ are defined in \eqref{alphabeta} and \eqref{hhprime},\eqref{ssprime}, respectively.  This relation immediately implies that 
    \begin{equation}
        \varphi_2(x,\omega)=\frac{-b_R}{x\sqrt{\pi}}D_R(x,\lambda)
    \end{equation}
    for $\lambda\in[-1,1]$, see \eqref{DRDL} for $D_R(x,\lambda)$.
\end{remark}

\begin{theorem}\label{phi2theta2Prop}
    The functions $\varphi_2, \vartheta_2$ defined in \eqref{phi2theta2} satisfy the following properties:
    \begin{enumerate}
        \item For $x\in[0,b_R]$ and $\omega\in[(b_L^2+b_R^2)/8,\infty)$, $\varphi_2(x,\omega),\vartheta_2(x,\omega)$ are linearly independent solutions of $Lf=\omega f$, where $L$ is defined in \eqref{DiffOpL},
        \item $\varphi_2(x,\omega),\vartheta_2(x,\omega)\in\mathbb{R}$, for all $x\in[0,b_R]$, $\omega\in\mathbb{R}$,
        \item $P(x)\varphi_2'(x,\omega)\to0$ as $x\to b_R^-$, 
        \item $P(x)W_x(\vartheta_2(x,\omega),\varphi_2(x,\omega))=1$ for all $x\in[0,b_R]$, $\omega\in\mathbb{C}$,
        \item $\ds{\lim_{x\to b_R^-}P(x)W_x(\vartheta_2(x,\omega),\varphi_2(x,\omega'))=1}$ for all $\omega,\omega'\in\mathbb{C}$.
    \end{enumerate}
\end{theorem}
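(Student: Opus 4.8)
The plan is to verify the five properties directly from the explicit hypergeometric representation \eqref{f} of $f_R$ and the definitions \eqref{phi2theta2}--\eqref{kl2} of $\varphi_2,\vartheta_2$, combined with the connection to $h_\infty',s_\infty'$ from Remark \ref{phi2DR}. First I would establish (1). Since $h_\infty,s_\infty$ solve the ODE \eqref{hsODE}, a routine change of variables via the M\"obius map $M_1$ shows that $x^{-1}h_\infty'(M_1(x))$ and $x^{-1}s_\infty'(M_1(x))$ solve $Lf=\omega f$ once one substitutes the relation $i\mu=a+\tfrac12$, i.e. $\omega=(b_R+b_L)^2/8+b_Lb_Ra(a+1)$ from \eqref{muOmegaLambda}; alternatively one checks directly that $f_R$ and $\overline f_R$ solve $Lf=\omega f$ using the hypergeometric equation satisfied by $\pFq{2}{1}{\tfrac14+\tfrac{i\mu}{2},\tfrac34+\tfrac{i\mu}{2}}{1+i\mu}{M_4^2}$. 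Linear independence of $\varphi_2$ and $\vartheta_2$ will follow from property (4) (the Wronskian is nonzero), so I would actually prove (4) first and deduce the independence half of (1) from it.

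Property (2) is the reality of $\varphi_2,\vartheta_2$: this is immediate from their construction as $kf_R+\overline{k}\,\overline{f}_R$ and $l_2 f_R+\overline{l}_2\overline{f}_R$, which are manifestly of the form $w+\overline w$ for $x,\omega\in\RR$ (note $M_4(x)\in\RR$ and $\mu\in\RR$ on the relevant ranges). Property (3), $P(x)\varphi_2'(x,\omega)\to0$ as $x\to b_R^-$, I would obtain from the known endpoint behavior: as $x\to b_R$, $M_4(x)\to1$, and near the singular point $1$ the hypergeometric combination entering $f_R$ has a logarithmic/bounded behavior; the precise cancellation built into the coefficient $k$ (which is exactly the Gamma-function prefactor in the connection formula \cite{AS64} 15.3.16 at the singularity) makes $\varphi_2$ behave so that $P(x)\varphi_2'(x,\omega)$, with $P(x)=x^2(x-b_L)(x-b_R)$ vanishing linearly at $b_R$, tends to $0$. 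This is essentially the content of the connection formula computation already referenced in Remark \ref{phi2DR}, so I would cite it and extract the boundary asymptotics.

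For (4), $P(x)W_x(\vartheta_2,\varphi_2)\equiv1$: since both functions solve the same second-order equation $Lf=\omega f$ in self-adjoint (Sturm--Liouville) form $[P f']'+qf=\omega f$, Abel's identity gives that $P(x)W_x(\vartheta_2,\varphi_2)$ is constant in $x$. To pin the constant to $1$ I would evaluate the Wronskian at a convenient point or in a convenient limit — most cleanly as $x\to0^+$, where the behaviors $f_R\sim cM_4(x)^{-1/2+i\mu}$ and $\overline f_R\sim\bar c M_4(x)^{-1/2-i\mu}$ are explicit, so $W_x(f_R,\overline f_R)$ and hence $W_x(\vartheta_2,\varphi_2)$ can be computed in closed form; the particular choice of $l_2$ in \eqref{kl2} (the $2\gamma+2\Psi(\tfrac12-i\mu)+\ln(\cdots)$ factor) is precisely engineered to normalize this constant to $1$. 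Finally, (5) is the ``mixed'' Wronskian $\lim_{x\to b_R^-}P(x)W_x(\vartheta_2(x,\omega),\varphi_2(x,\omega'))=1$ for possibly different spectral parameters $\omega\neq\omega'$: here $P(x)W_x$ is no longer $x$-independent, but by property (3) the contribution of $\varphi_2(\cdot,\omega')$ is controlled ($P\varphi_2'\to0$), so in the limit only the product of the boundary value of $\varphi_2(x,\omega')$ against $P(x)\vartheta_2'(x,\omega)$ survives, and this reduces to the same normalization constant computed in (4) because the leading endpoint behavior of $\varphi_2(x,\omega')$ at $b_R$ is independent of $\omega'$.

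The main obstacle I anticipate is (3) together with the $b_R$-endpoint part of (5): both require careful control of the hypergeometric connection formula at the singular point $M_4=1$, tracking which term is bounded and which carries the (cancelled) singularity, and verifying that the chosen normalization constants $k$ and $l_2$ produce exactly the clean limits $0$ and $1$. The interior computations — verifying the ODE and applying Abel's identity — are routine; the delicate bookkeeping is entirely at the endpoint $x=b_R$, and this is where I would spend the bulk of the proof, leaning on \cite{AS64} 15.3.10--15.3.16 for the logarithmic connection formulae.
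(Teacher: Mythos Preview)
Your proposal is essentially correct and closely mirrors the paper's proof; the main ingredients (reducing to the hypergeometric ODE via $M_1$, reality from the $w+\overline w$ structure, Abel's identity for the Wronskian, and endpoint expansions at $x=b_R$ via the connection formulae 15.3.6 and 15.3.10 of \cite{AS64}) are exactly what the paper uses.

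Two small points of divergence are worth flagging. First, for (4) the paper does not evaluate the Wronskian at $x\to0^+$ as you propose; instead it writes $W_x[\vartheta_2,\varphi_2]=2i\,\Im[\overline{k}l_2]\,W_x[f_R,\overline f_R]$ and computes each factor globally, using \eqref{appendix_detHatGamma} for $W_x[f_R,\overline f_R]$ and Remark~\ref{kTwoMod} together with the Digamma reflection identity (\cite{AS64} 6.3.12) for $\Im[\overline{k}l_2]$. Your limiting evaluation at $x\to0^+$ would also work, but note that your explanation of the role of $l_2$ is slightly off: in the product $\overline{k}l_2$ only the imaginary part enters the Wronskian, and that imaginary part comes solely from $\Psi(\tfrac12-i\mu)$; the $2\gamma$ and $\ln$ terms in $l_2$ are real (relative to the prefactor) and drop out entirely from (4). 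They are there for a different reason, namely to arrange $m_2=-l_2/k$ in \eqref{mj}. Second, for (3) the paper gets the analyticity of $\varphi_2$ at $b_R$ more directly, by invoking the identification $\varphi_2(x,\omega)=-\tfrac{b_R}{x\sqrt\pi}D_R(x,\lambda)$ from Remark~\ref{phi2DR} and the analyticity of $D_R$ at $b_R$ (Proposition~\ref{dldr_AppendixProp}), rather than working through the connection formula at $M_4=1$; either route is fine.
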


\begin{proof}
\begin{enumerate}
    \item By definition, $\varphi_2, \vartheta_2$ are linear combinations of $f_R$ and $\overline{f}_R$, which can be expressed in terms of $h_\infty', s_\infty'$, see Remark \ref{phi2DR}.  Recall that $h_\infty(x), s_\infty(x)$ are solutions of \eqref{hsODE}.  It can now be verified that $\frac{1}{x}h_\infty'\left(M_1(x)\right), \frac{1}{x}s_\infty'\left(M_1(x)\right)$ are solutions of
    \begin{equation}
        L[g](x)=\left[\frac{(b_L+b_R)^2}{8}+b_Lb_R\cdot a(a+1)\right]g(x),
    \end{equation}
    where $a:=a_-(-|\lambda|/2)$.  Thus $\varphi_2, \vartheta_2$ solve $Lf=\omega f$ for $x\in[0,b_R]$ and $\omega\in[(b_L^2+b_R^2)/8,\infty)$.  To show $\varphi_2, \vartheta_2$ are linearly independent, we compute their Wronskian.  It is a simple exercise to show that 
    \begin{equation}
        W_x[\vartheta_2,\varphi_2]=2i\Im[\overline{k}l_2]W_x[f_R,\overline{f}_R].
    \end{equation}
    Using Remark \ref{kTwoMod}, \cite{AS64} 6.3.12 and Remark \ref{phi2DR}, \eqref{hypergeoODE}, \eqref{appendix_detHatGamma}, it can be verified that
    \begin{equation}\label{l2k_fRWron}
        \Im[\overline{k}l_2]=\frac{-1}{2\mu b_R^2(b_R-b_L)}, ~~~ W_x[f_R,\overline{f}_R]=\frac{i\mu b_R^2(b_R-b_L)}{P(x)}.
    \end{equation}
    Thus $\varphi_2, \vartheta_2$ are linearly independent since $W_x[\vartheta_2,\varphi_2]=1/P(x)$.
    
    \item This is clear by the definition of $\vartheta_2,\varphi_2$, see \eqref{phi2theta2}.
    
    \item This follows from Remark \ref{phi2DR} and Proposition \ref{dldr_AppendixProp} because $D_R(x,\lambda)$ is analytic at $x=b_R$.  Moreover, using \cite{AS64} 15.3.6, it can be shown that $\varphi_2(b_R,\omega)=1$.
    
    \item We have previously shown that $P(x)W_x[\vartheta_2(x,\omega),\varphi_2(x,\omega)]=1$ for $x\in(0,b_R)$, ${\omega\in[(b_L^2+b_R^2)/8,\infty)}$.  This can be extended to the entire complex $\omega$ plane.
    
    \item Using \cite{AS64} 15.3.6, 15.3.10 and Remark \ref{phi2DR}, for $\omega\in((b_L^2+b_R^2)/8,\infty)$ (which implies $\mu\in(0,\infty)$), we have 
    \begin{align}
        \varphi_2(x,\omega)&=1+\BigO{x-b_R}, ~~~ \vartheta_2(x,\omega)=-2\Re[i\mu l_2\overline{k}]\ln(b_R-x)+\text{o}(1)
    \end{align}
    as $x\to b_R^-$.  Thus for $\omega,\omega'\in((b_L^2+b_R^2)/8,\infty)$ we have 
    \begin{align}\label{wron_xtobr}
        W_x\left[\vartheta_2(x,\omega),\varphi_2(x,\omega')\right]=\frac{2\Re[i\mu l_2\overline{k}]}{b_R-x}+\BigO{\ln(b_R-x)}
    \end{align}
    as $x\to b_R^-$.  So by \eqref{wron_xtobr},
    \begin{equation}
        \lim_{x\to b_R^-}P(x)W_x\left[\vartheta_2(x,\omega),\varphi_2(x,\omega')\right]=1,
    \end{equation}
    which holds for $\omega,\omega'\in((b_L^2+b_R^2)/8,\infty)$, and can be extended to any $\omega,\omega'\in\mathbb{C}$.
    \end{enumerate}    
\end{proof}

\subsection{Interval $[b_L,0]$}

This subsection will be similar to the last so many proofs will be omitted, as the ideas have been previously presented.  Define the function
\begin{align}\label{fL}
    f_L(x,\omega):=\frac{-b_L(b_R-b_L)}{x(b_R+b_L)-2b_Rb_L}(-M_4(x))^{-\frac{1}{2}+i\mu}\pFq{2}{1}{\frac{1}{4}+\frac{i\mu}{2},\frac{3}{4}+\frac{i\mu}{2}}{1+i\mu}{M_4^2(x)},
\end{align}
where $M_4(x)$ and $\mu(\omega)$ are defined in Remark \ref{mobius} and Theorem \ref{KT16mainResult}, respectively.  Now define
\begin{align}\label{phi1theta1}
    \varphi_1(x,\omega)&:=kf_L(x,\omega)+\overline{k}\overline{f}_L(x,\omega), ~~~ \vartheta_1(x,\omega):=l_1f_L(x,\omega)+\overline{l}_1\overline{f}_L(x,\omega),
\end{align}
where $k$ is defined in \eqref{kl2} and 
\begin{equation}\label{l1}
    l_1:=\frac{k}{b_L^2(b_R-b_L)}\left[2\gamma+2\Psi\left(\frac{1}{2}-i\mu\right)+\ln\left(\frac{-b_R}{b_L(b_R-b_L)}\right)\right].
\end{equation}

\begin{remark}\label{phi1DL}
    The functions $f_L, f_R$, defined in \eqref{fL}, \eqref{f}, respectively, share the relation
    \begin{equation}
        \frac{b_L^2f_R\left(M_2(x),\omega\right)}{x(b_R+b_L)-b_Lb_R}=f_L(x,\omega),
    \end{equation}
    where $M_2(x)$ is defined in \eqref{mobius}.  This relation combined with Remark \ref{phi2DR} shows that
    \begin{equation}
        \varphi_1(x,\omega)=\frac{b_LM_2(x)}{b_Rx}\varphi_2(M_2(x),\omega)=\frac{-b_L}{x\sqrt{\pi}}D_L(x,\lambda),
    \end{equation}
    where $D_L, \varphi_2$ are defined in \eqref{DRDL}, \eqref{phi2theta2}, respectively, and the relation between $\omega$ and $\lambda$ is described in \eqref{muOmegaLambda}.
\end{remark}

\begin{theorem}\label{phi1theta1Prop}
    The functions $\varphi_1, \vartheta_1$ defined in \eqref{phi2theta2} satisfy the following properties:
    \begin{enumerate}
        \item For $x\in[b_L,0]$ and $\omega\in[(b_L^2+b_R^2)/8,\infty)$, $\varphi_1(x,\omega), \vartheta_1(x,\omega)$ are linearly independent solutions of $Lf=\omega f$, where $L$ is defined in \eqref{DiffOpL},
        \item $\varphi_1(x,\omega),\vartheta_1(x,\omega)\in\mathbb{R}$, for all $x\in[b_L,0]$, $\omega\in\mathbb{R}$,
        \item $P(x)\varphi_1'(x,\omega)\to0$ as $x\to b_L^+$, 
        \item $-P(x)W_x(\vartheta_1(x,\omega),\varphi_1(x,\omega))=1$ for all $x\in[b_L,0]$, $\omega\in\mathbb{C}$,
        \item $\ds{\lim_{x\to b_L^+}-P(x)W_x(\vartheta_1(x,\omega),\varphi_1(x,\omega'))=1}$ for all $\omega,\omega'\in\mathbb{C}$.
    \end{enumerate}
\end{theorem}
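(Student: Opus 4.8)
The plan is to repeat the proof of Theorem \ref{phi2theta2Prop} essentially verbatim, transporting every assertion from $[0,b_R]$ to $[b_L,0]$ by means of Remark \ref{phi1DL}. Two structural inputs do all the work. First, the prefactored substitution in Remark \ref{phi1DL}: since $\frac{b_L^2}{x(b_R+b_L)-b_Lb_R}=\frac{b_LM_2(x)}{b_Rx}$, that remark says $f_L(x,\omega)=\frac{b_LM_2(x)}{b_Rx}f_R(M_2(x),\omega)$, equivalently $\varphi_1(x,\omega)=\frac{b_LM_2(x)}{b_Rx}\varphi_2(M_2(x),\omega)=\frac{-b_L}{x\sqrt\pi}D_L(x,\lambda)$. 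Second, the identity $M_3=M_1\circ M_2=1-M_1$, immediate from Remark \ref{mobius}, which together with the $z\mapsto 1-z$ symmetry of the ODE \eqref{hsODE} shows that $h_\infty'(M_3(x))$ and $s_\infty'(M_3(x))$ are $x$-independent linear combinations of $h_\infty'(M_1(x))$ and $s_\infty'(M_1(x))$; hence, via Remark \ref{phi2DR}, so are $f_L,\overline{f_L}$ (up to the algebraic prefactor $1/x$), and therefore so are $\varphi_1$ and $\vartheta_1$. Throughout, $\omega$ and $\lambda$ are tied by \eqref{muOmegaLambda}.

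For item (1): the identity verified in the proof of Theorem \ref{phi2theta2Prop}(1) — that $\frac1x h_\infty'(M_1(x))$ and $\frac1x s_\infty'(M_1(x))$ solve $L[g]=\left[\frac{(b_L+b_R)^2}{8}+b_Lb_R\,a(a+1)\right]g$ — is a rational-function identity, valid on $(b_L,0)$ just as on $(0,b_R)$. By the second input above, $\varphi_1$ and $\vartheta_1$ are combinations of these two solutions, hence solve $Lf=\omega f$ on $(b_L,0)$ for $\omega\in[(b_L^2+b_R^2)/8,\infty)$. Linear independence I would obtain by the same Wronskian computation as in the right-interval case: bilinearity and \eqref{phi1theta1} give $W_x[\vartheta_1,\varphi_1]=2i\,\Im[\overline k\,l_1]\,W_x[f_L,\overline{f_L}]$, and the two factors are evaluated with Remark \ref{kTwoMod}, \cite{AS64} 6.3.12, the standard hypergeometric Wronskian, \eqref{hypergeoODE} and \eqref{appendix_detHatGamma}, exactly as in \eqref{l2k_fRWron} but with $b_R^2$ replaced by $b_L^2$ (coming from the definition \eqref{l1} of $l_1$) and with an extra overall sign produced by the factor $M_2'(x)$ under the substitution; the net result is $-P(x)W_x[\vartheta_1,\varphi_1]=1$. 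This proves linear independence, hence (1), and simultaneously gives (4): the identity, first obtained for real $\omega$, extends to all $\omega\in\mathbb{C}$ by analytic continuation since its left-hand side is entire in $\omega$ while $P(x)$ does not depend on $\omega$.

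Items (2), (3) and (5) then follow as in Theorem \ref{phi2theta2Prop}. Item (2) is immediate from the conjugate-symmetric form \eqref{phi1theta1}. For (3), Remark \ref{phi1DL} expresses $P(x)\varphi_1'(x,\omega)$ as a constant multiple of $P(x)\frac{d}{dx}\left[\frac{D_L(x,\lambda)}{x}\right]$, which vanishes as $x\to b_L^+$ because $D_L(x,\lambda)$ is analytic at $b_L$ (Proposition \ref{dldr_AppendixProp}) and $P(x)=x^2(x-b_L)(x-b_R)$ carries the factor $(x-b_L)$; as in the right case, the value $\varphi_1(b_L,\omega)$ can be read off from \cite{AS64} 15.3.6. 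For (5), the connection formulas \cite{AS64} 15.3.6, 15.3.10 give, as $x\to b_L^+$, $\varphi_1(x,\omega)=1+\BigO{x-b_L}$ and $\vartheta_1(x,\omega)=c(\omega)\ln(x-b_L)+\text{o}(1)$ with $c(\omega)$ a real multiple of $\Re[i\mu l_1\overline k]$ determined by \eqref{l1}; since $P(x)\varphi_1'\to0$, the leading behaviour of $W_x[\vartheta_1(\cdot,\omega),\varphi_1(\cdot,\omega')]$ is $-\vartheta_1'(\cdot,\omega)\varphi_1(\cdot,\omega')\sim -c(\omega)/(x-b_L)$, so multiplying by $-P(x)\sim b_L^2(b_R-b_L)(x-b_L)$ and inserting the value of $c(\omega)$ yields $\lim_{x\to b_L^+}-P(x)W_x[\vartheta_1(\cdot,\omega),\varphi_1(\cdot,\omega')]=1$, again extended to all $\omega,\omega'\in\mathbb{C}$ by analytic continuation.

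The one point that needs genuine care, and the main potential obstacle, is the sign bookkeeping: one must verify that $W_x[f_L,\overline{f_L}]$ and $\Im[\overline k\,l_1]$ combine to flip the $P(x)W_x$ of Theorem \ref{phi2theta2Prop}(4)--(5) into $-P(x)W_x$ here, with the sign tracked consistently through the M\"obius substitution $M_2$ (via $M_2'(x)$) and the relation $M_3=1-M_1$. Once this is handled, the remainder is a transcription of the $[0,b_R]$ argument with $b_R$, $l_2$ replaced by $b_L$, $l_1$ in the appropriate places.
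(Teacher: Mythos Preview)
Your proposal is correct and matches the paper's approach: the paper explicitly omits the proof of Theorem \ref{phi1theta1Prop}, stating that the ideas are the same as for Theorem \ref{phi2theta2Prop}, and your sketch is precisely the transcription of that argument to $[b_L,0]$ via Remark \ref{phi1DL} and the relation $M_3=1-M_1$, with the sign in items (4)--(5) tracked through $l_1$ in place of $l_2$ and the substitution $M_2$.
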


\subsection{Diagonalization of $\mathcal{H}_L, \mathcal{H}_R$}\label{HLdiagSec}

According to the spectral theory developed in \cite{Ful08}, we have gathered nearly all the necessary ingredients to diagonalize $\mathcal{H}_L, \mathcal{H}_R$.  It remains to construct two functions $m_1(\omega)$ and $m_2(\omega)$ so that 
\begin{equation}
    \vartheta_1(x,\omega)+m_1(\omega)\varphi_1(x,\omega)\in L^2([b_L,0]), ~~~ \vartheta_2(x,\omega)+m_2(\omega)\varphi_2(x,\omega)\in L^2([0,b_R])
\end{equation}
whenever $\Im\omega>0$.  It can be verified that 
\begin{equation}\label{mj}
    m_j=-\frac{l_j}{k}, ~~ j=1,2
\end{equation}
where $l_1$ and $l_2, k$ are defined in \eqref{l1}, \eqref{kl2}, respectively.  The spectral measures $\rho_1,\rho_2$ are constructed via the formula 
\begin{equation}\label{SpecMeasFormula}
    \rho_j(\omega_2)-\rho_j(\omega_1)=\lim_{\epsilon\to0^+}\frac{1}{\pi}\int_{\omega_1}^{\omega_2}\Im m_j(s+i\epsilon)~ ds,
\end{equation}
for $j=1,2$ (see \cite{Ful08} for more details).  From \eqref{SpecMeasFormula} we obtain
\begin{align}\label{rho12Prime}
     \rho_1'(\omega)=\frac{\tanh(\mu\pi)}{b_L^2(b_R-b_L)}, ~~~ \rho_2'(\omega)=\frac{\tanh(\mu\pi)}{b_R^2(b_R-b_L)},
\end{align}
where we have used \eqref{mj}, Remark \ref{kTwoMod}, and \eqref{l2k_fRWron}.  Define the operators $U_1:L^2([b_L,0])\to L^2(J,\rho_1)$ and $U_2:L^2([0,b_R])\to L^2(J,\rho_2)$, where $J=\left(\frac{b_R^2+b_L^2}{8},\infty\right)$, as
\begin{align}
    U_1[f](\omega)&:=\int_{b_L}^0\varphi_1(x,\omega)f(x)~dx, ~~~ U_1^*[f](\omega)=\int_J\varphi_1(x,\omega)\tilde{f}(\omega)~d\rho_1(\omega), \label{U1} \\
    U_2[f](\omega)&:=\int_0^{b_R}\varphi_2(x,\omega)f(x)~dx, ~~~ U_2^*[\tilde{f}](\omega)=\int_J\varphi_2(x,\omega)\tilde{f}(\omega)~d\rho_2(\omega), \label{U2}
\end{align}
where $\varphi_1,\varphi_2$ are defined in \eqref{phi1theta1}, \eqref{phi2theta2}, respectively.  We are now ready to prove the main result of this section.

\begin{theorem}\label{HLdiag}
    The operators $U_1, U_2$, defined in \eqref{U1}, \eqref{U2}, are unitary and in the sense of operator equality on $L^2(J,\rho_2)$, where $J=\left((b_R^2+b_L^2)/8,\infty\right)$, one has 
    \begin{equation}
        U_2\mathcal{H}_LU_1^*=-\frac{b_R}{b_L}\emph{sech}(\mu\pi),
    \end{equation}
    where $\rho_2'$ is defined in \eqref{rho12Prime}.
\end{theorem}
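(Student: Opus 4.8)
The plan is to use the commutation $L\mathcal{H}_L=\mathcal{H}_LL$ established in \cite{KT16} (with $L$ the operator \eqref{DiffOpL}) to reduce the assertion to identifying one scalar function of $\omega$. The unitarity of $U_1,U_2$ comes from the Titchmarsh--Weyl theory of \cite{Ful08}: by item (3) of Theorems \ref{phi1theta1Prop} and \ref{phi2theta2Prop} the functions $\varphi_1,\varphi_2$ are the principal solutions of $Lf=\omega f$ at the limit-circle endpoints $b_L,b_R$ (and so fix the boundary condition there), $\vartheta_1,\vartheta_2$ are the conjugate solutions normalized by the Lagrange-bracket conditions of items (4)--(5), and $m_1,m_2$ of \eqref{mj} are the corresponding Weyl functions, while the endpoint $0$ is limit-point for $L$ (both solutions being $\BigO{|x|^{-1/2}}$ there), so no condition is imposed at $0$. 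With these data the expansion theorem of \cite{Ful08} makes $U_1,U_2$ isometric and onto, hence unitary, with $U_1^*,U_2^*$ and spectral measures $\rho_1,\rho_2$ as in \eqref{U1}, \eqref{U2}, \eqref{rho12Prime}; in particular $U_2U_2^*=I$ gives the reproducing property that $\int_0^{b_R}\varphi_2(x,\omega)\varphi_2(x,\omega')\,dx$ acts as $\delta(\omega-\omega')/\rho_2'(\omega)$ against $d\rho_2(\omega')$, and likewise for $\varphi_1,\rho_1$.

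Next I would prove the intertwining identity: for each $\omega$ there is a scalar $\nu(\omega)$ with $\mathcal{H}_L[\varphi_1(\cdot,\omega)](x)=\nu(\omega)\varphi_2(x,\omega)$ on $(0,b_R)$. Although $\varphi_1(\cdot,\omega)=\BigO{|x|^{-1/2}}$ as $x\to 0^-$ and so is not in $L^2([b_L,0])$, it is integrable there, so $v:=\mathcal{H}_L[\varphi_1(\cdot,\omega)]$ is a genuine function on $(0,b_R)$; by the commutation it solves $Lv=\omega v$ there, and it is real-analytic near $b_R$ because $\varphi_1(\cdot,\omega)$ is supported at positive distance from $b_R$. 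Hence $v$ has no $\vartheta_2(\cdot,\omega)$-component, since $\vartheta_2(\cdot,\omega)$ is logarithmically singular at $b_R$ by item (5) of Theorem \ref{phi2theta2Prop}, so $v=\nu(\omega)\varphi_2(\cdot,\omega)$; evaluating at $x=b_R$ with $\varphi_2(b_R,\omega)=1$ gives $\nu(\omega)=\tfrac1\pi\int_{b_L}^0\varphi_1(x,\omega)/(x-b_R)\,dx$. The same argument for $\mathcal{H}_R$ (using the analog $\varphi_1(b_L,\omega)=1$ of $\varphi_2(b_R,\omega)=1$) gives $\mathcal{H}_R[\varphi_2(\cdot,\omega)]=\widetilde\nu(\omega)\varphi_1(\cdot,\omega)$ with $\widetilde\nu(\omega)=\tfrac1\pi\int_0^{b_R}\varphi_2(y,\omega)/(y-b_L)\,dy$.

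To finish, feeding the intertwining identity into $U_2\mathcal{H}_LU_1^*[\tilde g]$, interchanging $\mathcal{H}_L$ with the $\rho_1$-integral defining $U_1^*$, and using the $\varphi_2$-reproducing property together with $d\rho_1/d\rho_2=\rho_1'/\rho_2'=b_R^2/b_L^2$ (from \eqref{rho12Prime}) shows that $U_2\mathcal{H}_LU_1^*$ is multiplication by $\nu(\omega)\,b_R^2/b_L^2$ on $L^2(J,\rho_2)$, so it remains to show $\nu(\omega)=-\tfrac{b_L}{b_R}\operatorname{sech}(\mu\pi)$. The Möbius involution $u=M_2(x)$ of Remark \ref{mobius} maps $[b_L,0]$ onto $[0,b_R]$ and, via $\varphi_1(x,\omega)=\tfrac{b_LM_2(x)}{b_Rx}\varphi_2(M_2(x),\omega)$ from Remark \ref{phi1DL}, converts the integrand for $\nu(\omega)$ into $\tfrac{b_L^2}{b_R^2}\varphi_2(u,\omega)/(u-b_L)$, whence $\nu(\omega)=-(b_L^2/b_R^2)\widetilde\nu(\omega)$. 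Moreover $\mathcal{H}_R\mathcal{H}_L\varphi_1(\cdot,\omega)=\nu(\omega)\widetilde\nu(\omega)\varphi_1(\cdot,\omega)$, and since $\mathcal{H}_L^*=-\mathcal{H}_R$ this reads $\mathcal{H}_L^*\mathcal{H}_L\varphi_1(\cdot,\omega)=-\nu(\omega)\widetilde\nu(\omega)\varphi_1(\cdot,\omega)$; but $\varphi_1(\cdot,\omega)$ is proportional to $\phi_L(\cdot,\lambda^2)$ by Remark \ref{phi1DL} and \eqref{phiLR}, and $\mathcal{H}_L^*\mathcal{H}_L\phi_L(\cdot,\lambda^2)=\lambda^2\phi_L(\cdot,\lambda^2)$ by Theorem \ref{HstarHdiagHHstardiag} (with $\omega,\lambda$ related by \eqref{muOmegaLambda}), so $\nu(\omega)\widetilde\nu(\omega)=-\lambda^2$. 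Combining, $\nu(\omega)^2=(b_L^2/b_R^2)\lambda^2$; the sign is pinned down (everything being real-analytic in $\omega$) by matching the $\omega\to\infty$ asymptotics of Theorem \ref{KT16mainResult}, and $\operatorname{sech}(\mu\pi)=|\lambda|$ follows from \eqref{aFunc} and \eqref{muOmegaLambda}. Therefore $U_2\mathcal{H}_LU_1^*=\nu(\omega)\,b_R^2/b_L^2=-\tfrac{b_R}{b_L}\operatorname{sech}(\mu\pi)$.

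The crux is this evaluation of $\nu(\omega)$: the commutation argument compresses the whole problem into the single Cauchy-type integral $\tfrac1\pi\int_{b_L}^0\varphi_1(x,\omega)/(x-b_R)\,dx$. One may evaluate it head-on from the hypergeometric form \eqref{fL}, \eqref{phi1theta1} using connection and contiguous-relation identities (self-contained but computationally heavy), or run the ``eigenvalue'' bootstrap above, whose delicate point is instead the rigorous treatment of the non-$L^2$ generalized eigenfunctions $\varphi_1,\varphi_2$ in the intertwining and reproducing-kernel steps, which is handled by the usual regularization/limiting arguments. Everything else is bookkeeping with the measures $\rho_1,\rho_2$ and the relation \eqref{muOmegaLambda}.
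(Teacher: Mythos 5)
Your argument is correct in substance, but it reaches the key identity by a genuinely different route than the paper. The paper's proof is essentially one line once the machinery is in place: it quotes the explicit SVD relations of Proposition \ref{dldr_AppendixProp}(5) (proved in Appendix D by contour deformation and residues applied to the hypergeometric functions $d_L,d_R$) together with Remarks \ref{phi2DR} and \ref{phi1DL} to get $\mathcal{H}_L[\varphi_1(\cdot,\omega)]=-\tfrac{b_L}{b_R}\operatorname{sech}(\mu\pi)\,\varphi_2(\cdot,\omega)$ outright, and then does exactly the measure bookkeeping ($\rho_1'/\rho_2'=b_R^2/b_L^2$) that you do. You instead obtain only the proportionality $\mathcal{H}_L\varphi_1=\nu(\omega)\varphi_2$ from the commuting differential operator of \cite{KT16} (via regularity of $\mathcal{H}_L\varphi_1$ at $b_R$ versus the logarithmic singularity of $\vartheta_2$ there), and then pin down $\nu$ by the M\"obius symmetry $\nu=-(b_L^2/b_R^2)\widetilde\nu$ (which I checked: the substitution $u=M_2(x)$ does produce that factor) together with the eigenvalue relation $\nu\widetilde\nu=-\lambda^2$ and a sign fixed by Theorem \ref{KT16mainResult}. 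What your route buys is that it bypasses the residue computations of Appendix D entirely; what it costs is threefold. First, it imports the commutation $L\mathcal{H}_L=\mathcal{H}_LL$ from \cite{KT16} and applies it to the non-$L^2$ function $\varphi_1=\BigO{|x|^{-1/2}}$, which needs the integration-by-parts justification (the boundary terms do vanish since $P(x)=\BigO{x^2}$ at $0$, but this should be said). Second, and more seriously, the step $\nu\widetilde\nu=-\lambda^2$ uses Theorem \ref{HstarHdiagHHstardiag} as a \emph{pointwise} eigenvalue equation $\mathcal{H}_L^*\mathcal{H}_L\phi_L(\cdot,\lambda^2)=\lambda^2\phi_L(\cdot,\lambda^2)$ for a generalized eigenfunction that is not in $L^2$; you acknowledge this but it is the real work in your version, and it is cleaner to phrase it at the operator level (once $U_2\mathcal{H}_LU_1^*$ is known to be multiplication by $\nu\,b_R^2/b_L^2$, compare $U_1\mathcal{H}_L^*\mathcal{H}_LU_1^*=\nu^2 b_R^2/b_L^2$ with the spectrum $[0,1]$ of $\mathcal{H}_L^*\mathcal{H}_L$ under \eqref{muOmegaLambda}). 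Third, because you lean on Theorem \ref{HstarHdiagHHstardiag}, your proof cannot play the role the paper assigns to Section 4, namely an independent re-derivation of that theorem (the remark after Corollary \ref{HLdiagCor} would become circular); as a proof of Theorem \ref{HLdiag} alone, however, the dependence is legitimate since Theorem \ref{HstarHdiagHHstardiag} is established earlier by the RHP method.
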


\begin{proof}
    First, the operators $U_1, U_2$ are unitary by \cite{Ful08}.  According to Proposition \ref{dldr_AppendixProp} and Remark \ref{phi1DL}, 
\begin{align}\label{HLphi1}
    \mathcal{H}_L[\varphi_1](y,\omega)&=\mathcal{H}_L\left[\frac{-b_LD_L(x;\lambda)}{x\sqrt{\pi}}\right](y,\omega)=\frac{-b_L}{b_R}\text{sech}(\mu\pi)\varphi_2(x,\omega),
\end{align}
since \eqref{muOmegaLambda} implies that $|\lambda|=\text{sech}(\mu\pi)$.  Using \eqref{HLphi1}, we calculate that
\begin{align}
    \mathcal{H}_LU_1^*[\tilde{f}](x)&=U_2^*\left[\frac{-b_L\rho_1'(\omega)}{b_R\rho_2'(\omega)}\text{sech}(\mu\pi)\tilde{f}(\omega)\right](x)
\end{align}
for any $\tilde{f}\in L^2(J,\rho_1)$, which is equivalent to 
\begin{equation}
    U_2\mathcal{H}_LU_1^*=-\frac{b_R}{b_L}\text{sech}(\mu\pi).
\end{equation}
\end{proof}

Since the adjoint of $\mathcal{H}_L$ is $-\mathcal{H}_R$, we have an immediate Corollary.

\begin{corollary}\label{HLdiagCor}

    In the sense of operator equality on $L^2(J,\rho_1)$ one has
    \begin{equation}
        U_1\mathcal{H}_RU_2^*=\frac{b_L}{b_R}\emph{sech}(\mu\pi), ~~ U_1\mathcal{H}_L^*\mathcal{H}_LU_1^*=\emph{sech}^2(\mu\pi),
    \end{equation}
    and in the sense of operator equality on $L^2(J,\rho_2)$ one has
    \begin{equation}
        U_2\mathcal{H}_R^*\mathcal{H}_RU_2^*=\emph{sech}^2(\mu\pi).
    \end{equation}
\end{corollary}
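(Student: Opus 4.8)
The plan is to obtain all three operator equalities from Theorem~\ref{HLdiag} together with three elementary facts already at our disposal: $U_1,U_2$ are unitary, the adjoint of $\mathcal{H}_L$ is $-\mathcal{H}_R$ (hence also $\mathcal{H}_R^*=-\mathcal{H}_L$), and the spectral densities in \eqref{rho12Prime} satisfy $\rho_2'(\omega)/\rho_1'(\omega)=b_L^2/b_R^2$. The only step that is not purely formal is that a multiplication operator acting \textit{between} the two differently weighted spaces $L^2(J,\rho_1)$ and $L^2(J,\rho_2)$ is not self-adjoint: computing its Hilbert-space adjoint produces the Radon--Nikodym factor $\rho_2'/\rho_1'$ (or its reciprocal), which by \eqref{rho12Prime} is the constant $b_L^2/b_R^2$. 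This is exactly what converts the coefficient $-b_R/b_L$ of Theorem~\ref{HLdiag} into $b_L/b_R$, and then into $\mathrm{sech}^2(\mu\pi)$.

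\textbf{The first identity.} I would take the Hilbert-space adjoint of $U_2\mathcal{H}_L U_1^*=-\tfrac{b_R}{b_L}\mathrm{sech}(\mu\pi)$ from Theorem~\ref{HLdiag}. Since $(U_1^*)^*=U_1$ and $\mathcal{H}_L^*=-\mathcal{H}_R$, the left-hand side becomes $-U_1\mathcal{H}_R U_2^*$. On the right-hand side, the adjoint of multiplication by the real function $-\tfrac{b_R}{b_L}\mathrm{sech}(\mu\pi)$, regarded as a bounded operator $L^2(J,\rho_1)\to L^2(J,\rho_2)$, is multiplication by $-\tfrac{b_R}{b_L}\mathrm{sech}(\mu\pi)\cdot(\rho_2'/\rho_1')=-\tfrac{b_L}{b_R}\mathrm{sech}(\mu\pi)$ as an operator $L^2(J,\rho_2)\to L^2(J,\rho_1)$, by \eqref{rho12Prime}. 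Equating the two sides gives $U_1\mathcal{H}_R U_2^*=\tfrac{b_L}{b_R}\mathrm{sech}(\mu\pi)$.

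\textbf{The two remaining identities.} Inserting $U_2^*U_2=I$ on $L^2([0,b_R])$, resp. $U_1^*U_1=I$ on $L^2([b_L,0])$, and using $\mathcal{H}_L^*=-\mathcal{H}_R$ and $\mathcal{H}_R^*=-\mathcal{H}_L$, one has the factorizations
\begin{equation*}
U_1\mathcal{H}_L^*\mathcal{H}_L U_1^*=\left(U_2\mathcal{H}_L U_1^*\right)^*\left(U_2\mathcal{H}_L U_1^*\right),\qquad U_2\mathcal{H}_R^*\mathcal{H}_R U_2^*=\left(U_1\mathcal{H}_R U_2^*\right)^*\left(U_1\mathcal{H}_R U_2^*\right).
\end{equation*}
By Theorem~\ref{HLdiag} and the first step, the right-hand sides are of the form $M^*M$, where $M$ is multiplication by $-\tfrac{b_R}{b_L}\mathrm{sech}(\mu\pi)$ from $L^2(J,\rho_1)$ into $L^2(J,\rho_2)$ in the first case, and by $\tfrac{b_L}{b_R}\mathrm{sech}(\mu\pi)$ from $L^2(J,\rho_2)$ into $L^2(J,\rho_1)$ in the second. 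A one-line computation then gives $M^*M$ equal to multiplication by $\left(\tfrac{b_R}{b_L}\right)^2\mathrm{sech}^2(\mu\pi)\cdot(\rho_2'/\rho_1')=\mathrm{sech}^2(\mu\pi)$, respectively $\left(\tfrac{b_L}{b_R}\right)^2\mathrm{sech}^2(\mu\pi)\cdot(\rho_1'/\rho_2')=\mathrm{sech}^2(\mu\pi)$, once more by \eqref{rho12Prime}. This establishes $U_1\mathcal{H}_L^*\mathcal{H}_L U_1^*=\mathrm{sech}^2(\mu\pi)$ on $L^2(J,\rho_1)$ and $U_2\mathcal{H}_R^*\mathcal{H}_R U_2^*=\mathrm{sech}^2(\mu\pi)$ on $L^2(J,\rho_2)$.

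\textbf{Main obstacle.} There is no real analytic difficulty here; the whole corollary is a formal consequence of Theorem~\ref{HLdiag}. The one point requiring care is the non-trivial adjoint of a multiplication operator between the differently weighted spaces $L^2(J,\rho_1)$ and $L^2(J,\rho_2)$ — this is where the Radon--Nikodym constant $b_L^2/b_R^2$ enters and makes the coefficients come out exactly as claimed.
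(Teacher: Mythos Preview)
Your proposal is correct and follows essentially the same approach as the paper: take the Hilbert-space adjoint of the identity in Theorem~\ref{HLdiag}, use $\mathcal{H}_L^*=-\mathcal{H}_R$, and track the Radon--Nikodym factor $\rho_2'/\rho_1'=b_L^2/b_R^2$ when computing the adjoint of a multiplication operator between the two differently weighted $L^2$ spaces. The paper's proof is terser (it writes out only the adjoint computation and leaves the $\mathrm{sech}^2$ identities implicit), but your more detailed derivation via $M^*M$ is the same argument spelled out in full.
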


\begin{proof}
    
    The proof follows quickly from Theorem \ref{HLdiag} because 
    \begin{align}
        (U_2\mathcal{H}_LU_1^*)^*=U_1\mathcal{H}_L^*U_2^*
    \end{align}
    and (what follows is the multiplication operator)
    \begin{align}
        \left(-\frac{b_R}{b_L}\text{sech}(\mu\pi)\right)^*=-\frac{b_R}{b_L}\text{sech}(\mu\pi)\cdot\frac{\rho_2'(\omega)}{\rho_1'(\omega)}=-\frac{b_L}{b_R}\text{sech}(\mu\pi).
    \end{align}
    
\end{proof}

This corollary can be used to recover Theorem \ref{HstarHdiagHHstardiag}.  We have now obtained two (seemingly) different diagonalizations of $\mathcal{H}^*_R\mathcal{H}_R$ and $\mathcal{H}^*_L\mathcal{H}_L$ in Theorem \ref{HstarHdiagHHstardiag} and Corollary \ref{HLdiagCor}.  We show that these diagonalizations are equivalent in the sense of change of spectral variable.

\begin{theorem}\label{diagEquiv}

The two diagonalizations of $\mathcal{H}^*_L\mathcal{H}_L$ obtained in Theorem \ref{HstarHdiagHHstardiag} and Corollary \ref{HLdiagCor} are equivalent; that is, 
\begin{equation}
    U_1^*\emph{sech}^2(\mu\pi)U_1=U_L^*\lambda^2U_L
\end{equation}
in the sense of operator equality on $L^2([b_L,0])$.  The operators $U_1,U_L$ are defined in \eqref{U1},\eqref{UL}, respectively and $\emph{sech}^2(\mu\pi),\lambda^2$ are to be understood as multiplication operators.  An identical statement about $U_2$ and $U_R$, defined in \eqref{U2}, \eqref{UR}, respectively, can be made.

\end{theorem}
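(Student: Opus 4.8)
The quickest route to the identity is simply to note that Theorem~\ref{HstarHdiagHHstardiag} gives $U_L^*\lambda^2U_L=\mathcal{H}_L^*\mathcal{H}_L$ while Corollary~\ref{HLdiagCor} gives $U_1^*\mathrm{sech}^2(\mu\pi)U_1=\mathcal{H}_L^*\mathcal{H}_L$, so the two coincide. This, however, neither exhibits the change of spectral variable relating the two pictures nor avoids invoking Theorem~\ref{HstarHdiagHHstardiag} (which we wish to re-derive, cf.\ the remark following Theorem~\ref{HstarHdiagHHstardiag}), so the plan is to give the explicit argument. The starting point is that the kernels of $U_1$ (see \eqref{U1}) and $U_L$ (see \eqref{UL}) are proportional under the change of variable $\omega\leftrightarrow\lambda^2$ prescribed by \eqref{muOmegaLambda}, equivalently $\lambda^2=\mathrm{sech}^2(\mu(\omega)\pi)$; this is a strictly decreasing bijection of $J=\bigl((b_R^2+b_L^2)/8,\infty\bigr)$ onto $(0,1)$ with inverse $\omega=\tfrac{(b_R+b_L)^2}{8}-\tfrac{b_Lb_R}{4}-b_Lb_R\mu^2$, and combining Remark~\ref{phi1DL} with \eqref{phiLR} and \eqref{DRDL} yields
\[
\varphi_1(x,\omega)=c(\omega)\,\phi_L\bigl(x,\lambda^2(\omega)\bigr),\qquad c(\omega):=-b_L\sqrt{\pi}\,\mathrm{sech}(\mu\pi)\,D_R(\infty;\lambda).
\]

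Since $\phi_L(\cdot,\lambda^2)\in L^2([b_L,0])$ (by the endpoint behaviour recorded just after \eqref{phiLR}), the same holds for $\varphi_1(\cdot,\omega)$, and integrating the displayed identity against $f\in L^2([b_L,0])$ gives $U_1[f](\omega)=c(\omega)\,U_L[f]\bigl(\lambda^2(\omega)\bigr)$. Defining $V:L^2([0,1],\sigma_L)\to L^2(J,\rho_1)$ by $V[\tilde g](\omega):=c(\omega)\,\tilde g(\lambda^2(\omega))$, we thus obtain $U_1=V U_L$; and since $\lambda^2(\omega)=\mathrm{sech}^2(\mu(\omega)\pi)$, the operator $V$ intertwines multiplication by $\lambda^2$ on $L^2([0,1],\sigma_L)$ with multiplication by $\mathrm{sech}^2(\mu\pi)$ on $L^2(J,\rho_1)$.

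The substantive step is to check that $V$ is unitary. A change of variables in the $L^2$-norms shows that $V$ is an isometry precisely when $|c(\omega)|^2\rho_1'(\omega)=\sigma_L'\bigl(\lambda^2(\omega)\bigr)\bigl|\tfrac{d\lambda^2}{d\omega}\bigr|$ on $J$, after which surjectivity is automatic because $\omega\mapsto\lambda^2$ is a bijection and $c$ is nonvanishing on $J$. To verify this weight identity I would substitute $\rho_1'$ from \eqref{rho12Prime} and $\sigma_L'$ from \eqref{sigmaPrime}: the common factor $D_R^2(\infty;\lambda)$ and the power $b_L^2$ drop out, and using $a+\tfrac12=i\mu$ (which reduces $\sigma_L'$ to $\tfrac{|b_L|b_R\mu}{b_R-b_L}D_R^2(\infty;\lambda)$) together with the two relations $\lambda^2=\mathrm{sech}^2(\mu\pi)$ and $\omega=\tfrac{(b_R+b_L)^2}{8}-\tfrac{b_Lb_R}{4}-b_Lb_R\mu^2$ to evaluate $\tfrac{d\lambda^2}{d\omega}=\tfrac{\pi\,\mathrm{sech}^2(\mu\pi)\tanh(\mu\pi)}{b_Lb_R\mu}$, both sides collapse to $\tfrac{\pi\,\mathrm{sech}^2(\mu\pi)\tanh(\mu\pi)}{b_R-b_L}D_R^2(\infty;\lambda)$. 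The one subtlety is that $\omega\mapsto\lambda^2$ is orientation-reversing, so the Jacobian must be taken in absolute value (recall $b_Lb_R<0$); this is exactly what makes the signs match. I expect this elementary but sign-sensitive computation to be the only real obstacle in the proof.

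Granting $V^*V=I$ and the intertwining relation, one gets $V^*\,\mathrm{sech}^2(\mu\pi)\,V=\lambda^2$ on $L^2([0,1],\sigma_L)$, hence
\[
U_1^*\,\mathrm{sech}^2(\mu\pi)\,U_1=U_L^*\,V^*\,\mathrm{sech}^2(\mu\pi)\,V\,U_L=U_L^*\,\lambda^2\,U_L,
\]
which is the assertion; moreover $U_L=V^{-1}U_1$ is then unitary (as $U_1$ is), so together with $U_1^*\,\mathrm{sech}^2(\mu\pi)\,U_1=\mathcal{H}_L^*\mathcal{H}_L$ from Corollary~\ref{HLdiagCor} this re-derives Theorem~\ref{HstarHdiagHHstardiag} independently. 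The statement for $U_2$ and $U_R$ follows verbatim, with Remark~\ref{phi2DR} replacing Remark~\ref{phi1DL}, $\rho_2'$ and $\sigma_R'$ replacing $\rho_1'$ and $\sigma_L'$, and $D_L(\infty;\lambda)$ replacing $D_R(\infty;\lambda)$.
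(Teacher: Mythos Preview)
Your proof is correct and follows essentially the same route as the paper: the change of spectral variable $\omega\leftrightarrow\lambda^2$ from \eqref{muOmegaLambda} together with the kernel proportionality $\varphi_1=c(\omega)\phi_L$ coming from Remark~\ref{phi1DL}, yielding the intertwining relation $U_1=VU_L$ and hence the operator identity. The paper's proof records the same two ingredients (the relation $d\rho_1=\tfrac{b_R(a+1/2)}{i\pi\lambda^2 b_L(b_R-b_L)}d\lambda^2$ and $U_L[f](\lambda^2)=c(\omega)^{-1}U_1[f](\omega)$) and stops there; your version is slightly more explicit in that you package the change of variable as an operator $V$ and verify its unitarity directly via the weight identity $|c|^2\rho_1'=\sigma_L'\,|d\lambda^2/d\omega|$, which the paper leaves implicit.
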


\begin{proof}

    We will relate the operators $U_L,U_1$ by using the change of variable $\lambda\to\omega$ in \eqref{muOmegaLambda}, which implies that 
    \begin{equation}
        \text{sech}^2(\mu(\omega)\pi)=\lambda^2, ~~~  \frac{b_R(a+1/2)}{i\pi\lambda^2b_L(b_R-b_L)}d\lambda^2=d\rho_1(\omega).
    \end{equation}
    Now using this change of variable, we see that
    \begin{align}\label{UL*U1*}
        U_L^*[\tilde{f}](x)&=U_1^*[c(\omega)\tilde{f}(\text{sech}^2(\mu\pi)](x),
    \end{align}
    where $c(\omega)=-b_L\sqrt{\pi}|\lambda|D_R(\infty;\lambda)$.  Similarly, 
    \begin{align}\label{ULU1}
        U_L[f](\lambda^2)&=\frac{1}{c(\omega)}U_1[f](\omega).
    \end{align}
    So using \eqref{UL*U1*} and \eqref{ULU1} we obtain
    \begin{align}
        U_L^*\lambda^2U_L[f](x)&=U_1^*\text{sech}^2(\mu\pi)U_1[f](x),
    \end{align}
    as desired.

\end{proof}

\section{Small $\lambda$ asymptotics of $\Gamma(z;\lambda)$}\label{secGammaAsymp}

In this section we only consider the symmetric scenario when $b_R=-b_L=1$, but the results can be obtained for general endpoints via M\"obius transformations.  The main results of this section are Theorem \ref{GammaAsmptotics} and Theorem \ref{GammaAsympMainResult}, which describes the small $\lambda$ asymptotics of $\Gamma(z;\lambda)$ first in a small annulus around $z=0$ and then in the rest of $\C$ respectively. The proof of Theorem \ref{GammaAsmptotics} is based on the asymptotics of hypergeometric functions that appear in $\Gamma(z;\lambda)$, whereas the prove of Theorem \ref{GammaAsympMainResult} is based on Theorem \ref{GammaAsmptotics} and the Deift-Zhou nonlinear steepest descent method (\cite{DZ93}).

\begin{remark}
    Everywhere in this section we consider  $\lambda\in\mathbb{C}\setminus[-1/2,1/2]$, where $\l$ on upper/lower shores of $[-1/2,1/2]$ is also allowed; such values will be denoted by   $\lambda_\pm$ respectively.
\end{remark}

\subsection{Modified saddle point  method uniform with respect to parameters}

According to \eqref{RHPSolution}, we are interested in $h_\infty(\eta)$, where
\begin{equation}\label{eta}
    \eta=\frac{z+1}{2z}.
\end{equation}
In view of the integral representation (\cite{AS64}, 15.3.1) 
\begin{align}\label{hinf-int}
    h_\infty(\eta):=e^{a\pi i}\eta^{-a}\pFq{2}{1}{a,a+1}{2a+2}{\frac{1}{\eta}}=e^{a\pi i}\eta^{-a}\frac{\Gamma(2a+2)}{\Gamma(a+1)^2}\int_0^1\left(\frac{t(1-t)}{1-t/\eta}\right)^{a}dt
\end{align}
of $h_\infty(\eta)$, given by \eqref{hhprime}, where $a(\l)\to\infty$ as $\l\to 0$, we want to use the saddle point method to find the small $\l$ asymptotics of $h_\infty(\eta)$.  We start with the case $\Im\lambda\geq0$ which implies $\Im[a]\to-\infty$ as $\lambda\to0$, see Appendix \ref{aAppendix} for more information about $a(\lambda)$.  For $\Im\lambda\leq0$ the results are similar, see Remark \ref{lambdaLHP}.  With that in mind, define function
\begin{equation}\label{S}
S_\eta(t)=S\left(t,\eta\right):=-i\ln\left(\frac{t(1-t)}{1-\frac{t}{\eta}}\right)
\end{equation}
where the branch cuts of $S_\eta(t)$ in $t$ variable are chosen to be $(-\infty,0)$, $(1,\infty)$, and the ray from $t=\eta$ to $t=\infty$ with angle $\arg{\eta}$.  The integral from \eqref{hinf-int} can be now written as
\begin{align}\label{int2}
\int_0^1\left(\frac{t(1-t)}{1-\frac{t}{\eta}}\right)^{a(\lambda)}~dt=\int_0^1e^{i\Re[a(\lambda)]S_\eta(t)}e^{-\Im[a(\lambda)]S_\eta(t)}~dt.
\end{align}
Define closed regions
\begin{align}
    \Omega&:=\left\{\eta=\frac{z+1}{2z}:~M\leq\eta\leq2M\right\}, \label{Omega} \\
    \Omega_+&:=\left\{\eta=\frac{z+1}{2z}:~M\leq\eta\leq2M, ~ 0\leq\arg(\eta)\leq\pi\right\}, \label{OmegaPlus}
\end{align}
where $M$ is a large, positive, fixed number that is to be determined.  Notice that the set of all $z$ such that $\frac{z+1}{2z}\in\Omega$ is a small annulus about the origin.  The large $a$ asymptotics of the integral in \eqref{int2} that is uniform in  $\eta\in\Omega$ is technically not covered by standard saddle point theorems (see, for example, \cite{Olv74}, \cite{Tem14}, \cite{Fed87}). Therefore, in Appendix \ref{appendix_saddlePt} we present a proof of Theorem \ref{result} for such integrals, that will be used later for the small lambda asymptotics of hypergeometric functions $h_\infty(\eta), s_\infty(\eta)$ and their derivatives.  The obtained results in Theorem \ref{result}  leading order term of the hypergeometric function is consistent with the results of Paris \cite{Par13}, where the formal asymptotic expansion in the large parameter $a(\l)$ was derived, but the error estimates and uniformity in $\eta$ were not addressed.

The following proposition identifying the saddle points of $S_\eta(t)$ is a simple exercise. We need the saddle point $ t_-^*(\eta)$ to state Theorem \ref{result}.

\begin{proposition}\label{saddlepts}
    For $\eta\in\Omega_+$, the function $S_\eta(t)$ has exactly two simple saddle points $t_\pm^*(\eta)$ defined by $S_\eta'(t_\pm^*(\eta))=0$.  Explicitly, 
    \begin{align}
    t_+^*(\eta)&=\eta+\sqrt{\eta^2-\eta}=2\eta+\BigO{1} ~ \text{ as } \eta\to\infty, \\
    t_-^*(\eta)&=\eta-\sqrt{\eta^2-\eta}=\frac{1}{2}+\BigO{\eta^{-1}} ~ \text{ as } \eta\to\infty,
\end{align}
where the branchcut for $t_\pm^*(\eta)$ is $[0,1]$.  Moreover,
\begin{align}\label{Satsaddle}
S_\eta(t^*_\pm(\eta))=-2i\ln\left(t_\pm^*(\eta)\right) ~ \text{ and } ~~ S_\eta ''(t^*_-(\eta))=\frac{2i}{t^*_-(\eta)\left(1-t^*_-(\eta)\right)}.
\end{align}
\end{proposition}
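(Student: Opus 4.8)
The plan is to reduce everything to elementary algebra on the explicit logarithmic derivative of $S_\eta$. First I would differentiate $S_\eta(t)=-i\ln\bigl(t(1-t)/(1-t/\eta)\bigr)$ term by term, obtaining $S_\eta'(t)=-i\bigl(\tfrac1t-\tfrac1{1-t}+\tfrac1{\eta-t}\bigr)$, and then put this over the common denominator $t(1-t)(\eta-t)$ to get $S_\eta'(t)=-i\,\dfrac{t^2-2\eta t+\eta}{t(1-t)(\eta-t)}$. Hence the saddle points are exactly the roots of the quadratic $t^2-2\eta t+\eta=0$, namely $t_\pm^*(\eta)=\eta\pm\sqrt{\eta^2-\eta}$; taking the square root with branch cut on $[0,1]$ (where $\eta^2-\eta=\eta(\eta-1)<0$) makes it single valued and positive for large positive real $\eta$. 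To justify that there are \emph{exactly two simple} saddle points on $\Omega_+$ I would note that the discriminant $4\eta(\eta-1)$ is nonzero there (since $M$ is large, so $|\eta|\ge M>1$), so the two roots are distinct, and that neither root can coincide with a pole of $S_\eta'$: substituting $t=0$, $t=1$, or $t=\eta$ into $t^2-2\eta t+\eta$ gives $\eta$, $1-\eta$, and $-\eta(\eta-1)$ respectively, each of which forces $\eta\in\{0,1\}$. Simplicity (i.e.\ $S_\eta''(t_\pm^*)\ne0$) will follow from the formula for $S_\eta''$ derived below.

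The asymptotics as $\eta\to\infty$ are immediate from the binomial expansion $\sqrt{\eta^2-\eta}=\eta\sqrt{1-1/\eta}=\eta-\tfrac12+\BigO{\eta^{-1}}$, which yields $t_+^*(\eta)=\eta+\sqrt{\eta^2-\eta}=2\eta+\BigO{1}$ and $t_-^*(\eta)=\eta-\sqrt{\eta^2-\eta}=\tfrac12+\BigO{\eta^{-1}}$.

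For the value at the saddle I would use the defining relation $t^2-2\eta t+\eta=0$ rewritten as $(1-t)\eta=t(\eta-t)$, which gives $\dfrac{t(1-t)}{1-t/\eta}=\dfrac{t(1-t)\eta}{\eta-t}=t^2$ at $t=t_\pm^*(\eta)$, and therefore $S_\eta(t_\pm^*)=-i\ln\bigl((t_\pm^*)^2\bigr)=-2i\ln\bigl(t_\pm^*\bigr)$. The only point needing care is that the branch of $S_\eta$ fixed by the prescribed cuts (at $(-\infty,0)$, $(1,\infty)$, and the ray from $\eta$ to $\infty$ with argument $\arg\eta$) actually equals $2\ln(t_\pm^*)$ on all of $\Omega_+$; I would check this by continuity, starting from a reference $\eta$ large and real (where $t_-^*\approx\tfrac12$ and $t_+^*\approx2\eta$ sit in transparent positions relative to the cuts) and tracking the argument of $t(1-t)/(1-t/\eta)$ as $\eta$ sweeps through $\Omega_+$.

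Finally, for the second derivative I would differentiate once more to obtain $S_\eta''(t)=i\bigl(\tfrac1{t^2}+\tfrac1{(1-t)^2}-\tfrac1{(\eta-t)^2}\bigr)$, and then eliminate $\eta$ via the saddle relation $\tfrac1{\eta-t}=\tfrac1{1-t}-\tfrac1t$: squaring gives $\tfrac1{(\eta-t)^2}=\tfrac1{(1-t)^2}-\tfrac2{t(1-t)}+\tfrac1{t^2}$, and substituting makes the $1/t^2$ and $1/(1-t)^2$ terms cancel, leaving $S_\eta''(t_\pm^*)=\dfrac{2i}{t_\pm^*(1-t_\pm^*)}$, which in particular yields the stated identity at $t_-^*(\eta)$; since $t_\pm^*\ne0,1$ this is nonzero, so the saddles are simple. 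Everything here is routine manipulation of rational functions; the one genuinely delicate step is the branch bookkeeping in the evaluation of $S_\eta$ at the saddle points, which is where I would spend the most care.
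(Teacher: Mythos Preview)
Your proof is correct and is precisely the direct computation the paper has in mind; the paper itself declares the proposition ``a simple exercise'' and omits the argument entirely. Your bookkeeping on the branch of $S_\eta$ at the saddle is a nice extra touch beyond what the paper bothers to mention.
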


Let $B(\zeta,r)$, $B^0(\zeta,r)$ denote an open disc and a punctured open disc
respectively of radius $r>0$ 
centered at $\zeta\in \C$.

\begin{theorem}\label{result}
Fix a sufficiently small $\epsilon>0$, a sufficiently large $M$ (see \eqref{Omega}) and suppose $F(t,\eta,\lambda)$ satisfies the following properties:
\begin{enumerate}
    \item  For every $(\eta,\lambda)\in\Omega_+\times {B}^0(0,\epsilon)$
    , $F(t,\eta,\lambda)$ is analytic in   $t\in B(1/2,1/2)$;
    \item  $F(t,\eta,\lambda)$ is continuous in all variables   in $B(1/2,1/2)\times \Omega_+\times {B^0}(0,\epsilon)$ and 
        for every $t\in B(1/2,1/2)$ it is bounded in 
    $(\eta,\lambda)\in\Omega_+\times {B^0}(0,\epsilon)$;
     \item   $F(t,\eta,\lambda)=\BigO{t^{c_0}}$ as $t\to0$, where $c_0>-1$   uniformly in  $(\eta,\lambda)\in\Omega_+\times {B^0}(0,\epsilon)$;
        \item  $F(t,\eta,\lambda)=\BigO{(1-t)^{c_1}}$ as $t\to1$, where $c_1>-1$   uniformly in  $(\eta,\lambda)\in\Omega_+\times {B^0}(0,\epsilon)$;
    \item $|F(t_-^*(\eta),\eta,\lambda)|$ is separated from zero   for all $(\eta,\lambda)\in\Omega_+\times {B^0}(0,\epsilon)$.
\end{enumerate}
Then    
\begin{align}
    &\int_0^1F(t,\eta,\lambda)e^{-\Im[a(\lambda)]S_\eta(t)}dt=e^{-\Im[a(\lambda)]S_\eta(t^*_-(\eta))}F\left(t^*_-(\eta),\eta,\lambda\right)\sqrt{\frac{2\pi}{\Im[a(\lambda)]S''_\eta(t^*_-(\eta))}}\left[1+\BigO{\frac{M^2}{\Im[a(\lambda)]}}\right], \label{int}
\end{align}
as $\lambda\to0$, where  $a(\lambda)$ is defined in \eqref{aFunc}.  This approximation is uniform for $\eta\in\Omega_+$.
\end{theorem}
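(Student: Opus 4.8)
The plan is to read \eqref{int} as a Laplace‑type integral with large positive parameter $\Lambda:=-\Im[a(\lambda)]$. By the properties of $a(\lambda)$ collected in Appendix~\ref{aAppendix}, in the regime $\Im\lambda\ge0$ one has $\Lambda\to+\infty$ as $\lambda\to0$ while $\Re[a(\lambda)]$ stays bounded, so the claimed relative error $\BigO{M^2/\Im[a]}$ is $\BigO{M^2/\Lambda}$. First I would record the two structural facts that single out $\Omega_+$ as the right domain. For $t\in(0,1)$ and $\eta\in\Omega_+$, a direct computation from \eqref{S} gives $\Re S_\eta(t)=-\arg(1-t/\eta)\le0$ (here $1-t/\eta$ lies in the closed first quadrant because $t<1\le M\le|\eta|$ and $\Im\eta\ge0$), so the whole integrand satisfies $|e^{-\Im[a]S_\eta(t)}|=e^{\Lambda\Re S_\eta(t)}\le1$ uniformly on $(0,1)\times\Omega_+$. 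And by Proposition~\ref{saddlepts} the stationary point $t^*_-(\eta)=\hf+\BigO{1/M}$ lies in a fixed compact subset $\mathcal K$ of the disc $B(1/2,1/2)$ on which, by \eqref{Satsaddle}, $S''_\eta(t^*_-(\eta))=2i/(t^*_-(1-t^*_-))$ is bounded away from $0$ and $\infty$; moreover $\Re S_\eta(t^*_-(\eta))=2\arg t^*_-(\eta)\le0$, with $|\Re S_\eta(t^*_-)|=\BigO{1/M}$.

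Next I would deform the contour. Since $F(\cdot,\eta,\lambda)$ is analytic in $B(1/2,1/2)$ (hypotheses on analyticity and boundedness) and $S_\eta$ is analytic there as well (its branch cuts $(-\infty,0]$, $[1,\infty)$ and the ray from $\eta$ stay off the disc for $|\eta|\ge M$), while the endpoint factors $\BigO{t^{c_0}}$ and $\BigO{(1-t)^{c_1}}$ are integrable because $c_0,c_1>-1$, one may replace $[0,1]$ by any path $\Sigma_\eta$ from $0$ to $1$ inside $\overline{B(1/2,1/2)}$ that is homotopic to $[0,1]$. I would choose $\Sigma_\eta$ to leave $0$ into the lower half‑plane (along $\arg t\approx-\pi/4$) and enter $1$ from the upper half‑plane (along $\arg(1-t)\approx-\pi/4$), to pass through $t^*_-(\eta)$ along a steepest descent ray of $S_\eta$, and to satisfy $\Re S_\eta(t)<\Re S_\eta(t^*_-(\eta))$ at every other point of $\Sigma_\eta$; near the endpoints this forces $\Re S_\eta\le-c<0$, giving an exponential gain $e^{-c\Lambda}$ there. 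The existence of such a $\Sigma_\eta$, depending continuously on $\eta\in\Omega_+$, and the uniform validity of these inequalities, are the technically substantive points.

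On the microscopic arc $|t-t^*_-(\eta)|\le\Lambda^{-1/2}\log\Lambda$ I would apply the Morse change of variable $S_\eta(t)-S_\eta(t^*_-)=-w^2$: this map is analytic and invertible near $t^*_-$, with inverse $t=t^*_-+\phi_\eta(w)$, $\phi'_\eta(0)=\sqrt{-2/S''_\eta(t^*_-)}$, and with bounds uniform over $\mathcal K$ since the nearest singularities of $S_\eta$ are at distance $\approx\hf$; its restriction to the steepest descent arc is a real segment through $w=0$. The arc integral becomes $e^{-\Im[a]S_\eta(t^*_-)}\int G_\eta(w)e^{-\Lambda w^2}\,dw$ with $G_\eta:=F(t^*_-+\phi_\eta(\cdot),\eta,\lambda)\,\phi'_\eta(\cdot)$ analytic and uniformly bounded, and $G_\eta(0)=F(t^*_-(\eta),\eta,\lambda)\sqrt{-2/S''_\eta(t^*_-)}$ bounded away from $0$ by the hypothesis that $|F(t^*_-,\cdot,\cdot)|$ is separated from zero. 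Extending the $w$‑integral to $\RR$ (the tails $\int_{|w|>\Lambda^{-1/2}\log\Lambda}e^{-\Lambda w^2}dw$ are $\BigO{e^{-\log^2\Lambda}}$) and applying Watson's lemma yields $e^{-\Im[a]S_\eta(t^*_-)}F(t^*_-)\sqrt{-2\pi/(\Lambda S''_\eta(t^*_-))}\,(1+\BigO{1/\Lambda})$, which on restoring $-\Lambda=\Im[a]$ is exactly the right‑hand side of \eqref{int}. The remainder of $\Sigma_\eta$ contributes $\BigO{e^{-c\log^2\Lambda}}$ relative to the main term by the strict local descent bound $\Re S_\eta(t)-\Re S_\eta(t^*_-)\le-c|t-t^*_-|^2$, and the arcs near $0$ and $1$ contribute $\BigO{e^{-c\Lambda}}$; both are negligible since $|\Re S_\eta(t^*_-)|=\BigO{1/M}$. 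Finally, all $\eta$‑dependence enters only through $t^*_-(\eta)$, $S''_\eta(t^*_-(\eta))$ and sup‑norms of $S_\eta$ and its first few derivatives on $\mathcal K$, each controlled by a fixed power of $M$; propagating these through the Morse reduction and the Watson expansion upgrades the error $\BigO{1/\Lambda}$ to the asserted $\BigO{M^2/\Im[a]}$.

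The main obstacle is precisely the uniformity in $\eta$ across all of $\Omega_+$: the global shape of the steepest descent contour $\Sigma_\eta$ is $\eta$‑dependent and must be constructed continuously up to the real boundary $\arg\eta\in\{0,\pi\}$, where $\Re S_\eta\equiv0$ on the whole segment $(0,1)$, the steepest‑descent picture degenerates into a stationary‑phase one, and the off‑saddle smallness can no longer rely on any pointwise exponential gain but must be produced by the deformation and the oscillation alone; tracking the $M$‑dependence of every constant so that the error comes out as $\BigO{M^2/\Im[a]}$ rather than merely $\BigO{1/\Im[a]}$ is a secondary but persistent nuisance. The complementary regime $\Im\lambda\le0$ (where $\Im[a]\to+\infty$) is then obtained from the conjugation symmetry of $a(\lambda)$ in \eqref{aFunc}, cf. Appendix~\ref{aAppendix} and Remark~\ref{lambdaLHP}.
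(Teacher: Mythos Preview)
Your proposal is correct and follows essentially the same steepest descent strategy as the paper: deform $[0,1]$ to a contour $\gamma_\eta\subset B(1/2,1/2)$ through $t^*_-(\eta)$ on which $\Re[S_\eta(t)-S_\eta(t^*_-)]\le0$, use the Morse change of variable $v_\eta(t)=\sqrt{S_\eta(t^*_-)-S_\eta(t)}$ on a small arc, and bound the rest by a uniform exponential gap $c_*<0$ obtained by compactness of $\Omega_+$. The only substantive difference is bookkeeping: the paper works with a \emph{fixed} neighborhood $v\in[-\delta,\delta]$ (with $\delta=\delta_*/2$ and $\delta_*=O(1/M)$ coming from the biholomorphic radius of $v_\eta$ on $B(1/2,2r_M)$, $r_M=O(1/M)$) and evaluates the Gaussian via incomplete Gamma functions, whereas you use a shrinking window of width $\Lambda^{-1/2}\log\Lambda$ and invoke Watson's lemma; both yield the same leading term and the same order of error. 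The concrete source of the $M^2$ in the paper's argument, which you leave implicit, is that the Cauchy bound on the Taylor remainder $R_2$ picks up the factor $1/(\delta_*^2-\delta^2)=O(M^2)$; this is what ``propagating the $M$-dependence'' amounts to. Your worry about the boundary $\arg\eta\in\{0,\pi\}$ is addressed in the paper exactly as you suggest, by deforming off the real axis (cf.\ their Lemma on level curves and the figures), and the case $\Im\lambda\le0$ is handled by the symmetry in Remark~\ref{lambdaLHP}.
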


The idea of the proof is as follows:  we deform the contour of integration in \eqref{int} from $[0,1]$ to a path we call $\gamma_\eta$, which passes through a relevant saddle point $t_-^*(\eta)$ of $S_\eta(t)$.  We then show that the leading order contribution in \eqref{int} comes from a small neighborhood of $t_-^*(\eta)$.

\begin{remark}
 One can simplify equation \eqref{int} in Theorem \ref{result} by substituting \eqref{Satsaddle}.
\end{remark}

\subsection{Small $\lambda$ asymptotics of $\Gamma(z;\lambda)$ for $z\in\tilde{\Omega}$}

In this subsection we use Theorem \ref{result} to calculate the leading order asymptotics of $\Gamma(z;\lambda)$ given by \eqref{RHPSolution}
as $\l\to 0$, provided that $z\in\tilde{\Omega}$, where
\begin{align}\label{OmegaTilde}
    \tilde{\Omega}&:=\left\{z:~\frac{z+1}{2z}\in\Omega\right\}, ~~~ \tilde{\Omega}_+:=\left\{z:~\frac{z+1}{2z}\in\Omega_+\right\}.
\end{align}
Notice that $\tilde{\Omega}$ is a small annulus about the origin.  In this section, we will often use the variable 
\begin{equation}
    \varkappa=-\ln\lambda
\end{equation}
instead of $\l$ and the function
\begin{equation}\label{gFunc}
g(z):=a\left(-\frac{z}{2}\right)=\frac{1}{i\pi}\ln\left(\frac{i+\sqrt{z^2-1}}{-z}\right).
\end{equation}
The properties of the $g$-function can be found in Proposition \ref{gproperties}.

\begin{corollary}\label{hhprimessprime}
    In the limit $\lambda=e^{-\varkappa}\to0$
\begin{align}
    h_\infty\left(\frac{z+1}{2z}\right)&=
    i4^a\frac{\sqrt{2z}(1-z^2)^{1/4}}{1+\sqrt{1-z^2}}e^{\pm \varkappa\left(\frac1{2} -g(z)\right) }\left(1+\BigO{\frac{M^2}{\varkappa}}\right), \\
    h_\infty'\left(\frac{z+1}{2z}\right)&=\mp
    4^a\frac{\varkappa(2z)^{\frac{3}{2}}e^{\pm\varkappa\left(\frac{1}{2}-g(z)\right)}}{\pi(1-z^2)^{1/4}(1+\sqrt{1-z^2})}\left(1+\BigO{\frac{M^2}{\varkappa}}\right), \\
    s_\infty\left(\frac{z+1}{2z}\right)&=
    i4^{-a}\frac{(1-z^2)^{1/4}(1+\sqrt{1-z^2})}{(2z)^{\frac{3}{2}}}e^{\pm\varkappa\left( g(z)-\frac{1}{2}\right)}\left(1+\BigO{\frac{M^2}{\varkappa}}\right), \\
    s_\infty'\left(\frac{z+1}{2z}\right)&=\pm
    4^{-a}\frac{\varkappa(1+\sqrt{1-z^2})}{\pi\sqrt{2z}(1-z^2)^{1/4}}e^{\pm\varkappa\left(g(z)-\frac{1}{2}\right)}\left(1+\BigO{\frac{M^2}{\varkappa}}\right),      
\end{align}
provided $\pm\Im\lambda\geq0$, where each approximation is uniform in $z\in\tilde{\Omega}_+$.  The functions $h_\infty,h_\infty'$ and $s_\infty,s_\infty'$ are defined in \eqref{hhprime},\eqref{ssprime}, respectively.  The functions $\sqrt{1-z^2}$ and $(1-z^2)^{1/4}$ have branch cuts on $(-1,1)$ and $(-\infty,1)$ respectively.
\end{corollary}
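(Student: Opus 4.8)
The plan is to read off all four estimates from the Euler integral representations of the hypergeometric functions in \eqref{hhprime}, \eqref{ssprime} together with a single application of Theorem~\ref{result}, and then to rewrite the saddle--point output in the variable $z$ using $\eta=\frac{z+1}{2z}$. For $h_\infty$, formula \eqref{hinf-int} gives $h_\infty(\eta)=e^{a\pi i}\eta^{-a}\frac{\Gamma(2a+2)}{\Gamma(a+1)^2}\int_0^1 e^{iaS_\eta(t)}\,dt$; I split $e^{iaS_\eta(t)}=e^{i\Re[a]S_\eta(t)}e^{-\Im[a]S_\eta(t)}$ and take $F(t,\eta,\lambda):=e^{i\Re[a]S_\eta(t)}$ as the amplitude in Theorem~\ref{result}. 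Its hypotheses hold: the branch cuts of $S_\eta$ are $(-\infty,0)$, $(1,\infty)$ and the ray from $\eta$, all disjoint from $B(1/2,1/2)$ since $|\eta|\geq M$, giving analyticity and continuity; $|\Re[a]|<1/2$ gives boundedness; $S_\eta(t)=-i\ln t+\BigO{t}$ near $0$ and $S_\eta(t)=-i\ln(1-t)+\BigO{1-t}$ near $1$ give the endpoint bounds with exponents $\Re[a]>-1$; and $t_-^*(\eta)=\tfrac12+\BigO{\eta^{-1}}$ together with $S_\eta(t_-^*(\eta))=-2i\ln t_-^*(\eta)$ (Proposition~\ref{saddlepts}) keeps $|F(t_-^*(\eta),\eta,\lambda)|$ bounded away from $0$. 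For $h_\infty'$ the Euler integral of the $_2F_1$ in \eqref{hhprime} merely appends the bounded analytic factor $(1-t/\eta)^{-1}$ to $F$, and $1-t_-^*(\eta)/\eta=\sqrt{\eta^2-\eta}/\eta\neq0$. For $s_\infty,s_\infty'$ one uses $s_\infty(\cdot\,;a)=h_\infty(\cdot\,;-1-a)$ and $s_\infty'(\cdot\,;a)=h_\infty'(\cdot\,;-1-a)$ (immediate from \eqref{hhprime}, \eqref{ssprime} and $e^{-\pi i}=-1$), which, since $a\mapsto-1-a$ reverses the sign of $\Im[a]$, reduces them to the previous functions in the $\Im\lambda\leq0$ regime and flips the exponential $e^{\pm\varkappa(\frac12-g(z))}\mapsto e^{\mp\varkappa(\frac12-g(z))}$.

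Theorem~\ref{result} then gives the integral as $F(t_-^*(\eta))\,e^{-\Im[a]S_\eta(t_-^*(\eta))}\sqrt{2\pi/(\Im[a]\,S_\eta''(t_-^*(\eta)))}\,(1+\BigO{M^2/\varkappa})$; here $F(t_-^*)e^{-\Im[a]S_\eta(t_-^*)}=e^{iaS_\eta(t_-^*)}\cdot(\text{bounded factor})=(t_-^*(\eta))^{2a}(1+\BigO{\eta^{-1}})$ by $S_\eta(t_-^*)=-2i\ln t_-^*$, and $S_\eta''(t_-^*)=2i/(t_-^*(1-t_-^*))$ by \eqref{Satsaddle}. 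For the prefactor I use the duplication formula $\frac{\Gamma(2a+2)}{\Gamma(a+1)^2}=\frac{4^{a+1}}{2\sqrt\pi}\,\frac{\Gamma(a+3/2)}{\Gamma(a+1)}$ and Stirling, $\frac{\Gamma(a+3/2)}{\Gamma(a+1)}=a^{1/2}(1+\BigO{1/a})$; by Appendix~\ref{aAppendix}, $a(\lambda)=\tfrac12-\tfrac{i\varkappa}{\pi}+o(1)$ as $\lambda\to0$ with $\Im\lambda\geq0$, so $a\sim i\,\Im[a]$, the Stirling $\sqrt a$ cancels the saddle's $1/\sqrt{\Im[a]}$, and the residual $\BigO{1/a}$, $\BigO{\eta^{-1}}=\BigO{1/M}$ errors are absorbed into $\BigO{M^2/\varkappa}$ since $M$ is fixed while $\varkappa\to\infty$.

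The last step is algebraic. With $\eta=\frac{z+1}{2z}$ one has $\eta^2-\eta=\frac{1-z^2}{4z^2}$, hence $t_\pm^*(\eta)=\frac{z+1\pm\sqrt{1-z^2}}{2z}$, $1-t_-^*(\eta)/\eta=\frac{\sqrt{1-z^2}}{z+1}$, and (using $(1-\sqrt{1-z^2})(1+\sqrt{1-z^2})=z^2$) $t_-^*(1-t_-^*)=\frac{\sqrt{1-z^2}(1-\sqrt{1-z^2})}{2z^2}$ and $\frac{4\,t_-^*(\eta)^2}{\eta}=\frac{4z}{1+\sqrt{1-z^2}}$; from \eqref{gFunc} and Proposition~\ref{gproperties}, $e^{i\pi g(z)}=\frac{-i(1+\sqrt{1-z^2})}{z}$, so $e^{a\pi i}\eta^{-a}(t_-^*(\eta))^{2a}=e^{i\pi a(\frac12-g(z))}$. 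Since $i\pi a=\varkappa+\tfrac{i\pi}{2}+o(1)$ this equals $e^{\varkappa(\frac12-g(z))}\,e^{\frac{i\pi}{2}(\frac12-g(z))}(1+o(1))$ with $e^{\frac{i\pi}{2}(\frac12-g(z))}=e^{i\pi/4}\bigl(iz/(1+\sqrt{1-z^2})\bigr)^{1/2}$, and combining this with $\sqrt{t_-^*(1-t_-^*)}=(1-z^2)^{1/4}/\sqrt{2(1+\sqrt{1-z^2})}$, the factor $\tfrac{4^{a+1}}{2}$, and $\sqrt2\,e^{i\pi/4}\sqrt i=\sqrt2\,e^{i\pi/2}=i\sqrt2$ yields exactly $i\,4^a\frac{\sqrt{2z}(1-z^2)^{1/4}}{1+\sqrt{1-z^2}}$ for $h_\infty$. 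For $h_\infty'$ one has in addition $h_\infty'(\eta)=-a\,\eta^{-1}(1-t_-^*(\eta)/\eta)^{-1}h_\infty(\eta)(1+o(1))$ with $\eta^{-1}(1-t_-^*/\eta)^{-1}=\frac{2z}{\sqrt{1-z^2}}$ and $-ai\sim-\varkappa/\pi$, which converts the $h_\infty$ prefactor into $\mp\frac{\varkappa}{\pi}\frac{(2z)^{3/2}}{(1-z^2)^{1/4}(1+\sqrt{1-z^2})}$; and for $s_\infty,s_\infty'$ the substitution $a\mapsto-1-a$ replaces $4^a$ by $4^{-a}$ (up to an absorbed numerical constant) and inverts the algebraic factor, producing the last two formulas. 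Uniformity in $z\in\tilde\Omega_+$ is inherited from that in $\eta\in\Omega_+$ in Theorem~\ref{result}; the lower sign is the $\Im\lambda\leq0$ case (Remark~\ref{lambdaLHP}).

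I expect the main obstacle to be this last, purely bookkeeping step: getting the branch cuts of $\sqrt{1-z^2}$, $(1-z^2)^{1/4}$, $\sqrt{2z}$, $(2z)^{3/2}$ and the numerical constants ($e^{i\pi/4}$, $\sqrt i$, powers of $4$ and of $2z$) to assemble correctly, and confirming that the $\varkappa$--dependence resides solely in $e^{\pm\varkappa(\frac12-g(z))}$ with nothing hidden in the algebraic prefactor---this is exactly where the identity $i\pi a=\varkappa+\tfrac{i\pi}{2}+o(1)$ and the precise branch of $g$ in \eqref{gFunc} are needed. A secondary point is justifying the $a\mapsto-1-a$ reduction for $s_\infty,s_\infty'$, whose Euler integral is not absolutely convergent in the relevant parameter range, so one must either deform the contour or argue by analytic continuation in the parameter that the uniform asymptotics persist.
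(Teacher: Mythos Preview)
Your approach is essentially the paper's: apply Theorem~\ref{result} to the Euler integral \eqref{hinf-int} with $F=e^{i\Re[a]S_\eta}$ (respectively $F=e^{i\Re[a]S_\eta}/(1-t/\eta)$ for the derivative), then convert the saddle-point output to the variable $z$ via \eqref{hInfeq1}--\eqref{hInfeq2}, the duplication/Stirling expansion of $\Gamma(2a+2)/\Gamma(a+1)^2$, and the asymptotics of $a(\lambda)$ from Proposition~\ref{aProp}. Your algebraic sketch matches the paper's, and your identification of the branch bookkeeping as the main nuisance is accurate.

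The one place where your proposal is underspecified, and where the paper does something you do not, is the treatment of $s_\infty,s_\infty'$. You note correctly that $s_\infty(\eta;a)=h_\infty(\eta;-1-a)$ and that the Euler integral for the latter fails to converge when $\Re[a]\geq0$; you suggest contour deformation or analytic continuation in the parameter. The paper instead uses \emph{contiguous relations} (\cite{DLMF} 15.5.19, applied twice with shifted parameters) to write $\pFq{2}{1}{-a,-a-1}{-2a}{1/\eta}$ as an explicit linear combination of $\pFq{2}{1}{-a+3,-a+2}{-2a+3}{1/\eta}$ and $\pFq{2}{1}{-a+2,-a+1}{-2a+2}{1/\eta}$, whose Euler integrals converge for the full range $|\Re[a]|\leq 1/2$. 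One then applies Theorem~\ref{result} to each of these, and the rational coefficients recombine to give the stated leading term with the same uniform error. This is the concrete missing step; the ``analytic continuation in $a$'' route would require an additional argument (e.g.\ Phragm\'en--Lindel\"of in the strip) to propagate the \emph{uniform} $\BigO{M^2/\varkappa}$ error, which the contiguous-relation trick avoids entirely.
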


\begin{proof}
According to Theorem \ref{result},
\begin{equation}\label{hInfproof}
h_\infty\left(\frac{z+1}{2z}\right)=\frac{e^{a\pi i}\Gamma(2a+2)}{\Gamma^2(a+1)}\sqrt{\frac{\pi}{a}}\left(\frac{1-z}{1+z}\right)^{\frac{1}{4}}\left(\frac{2z}{z+1}\right)^{a}\left(1+\sqrt{\frac{1-z}{1+z}}\right)^{-1-2a}\left[1+\BigO{\frac{M^2}{a}}\right],
\end{equation}
where we have used   $F(t,\eta,\lambda)=e^{i\Re[a(\lambda)]S_\eta(t)}$ and $\eta=\frac{z+1}{2z}$. 
By  Proposition \ref{aProp}
\begin{align}
\left(\frac{z+1}{2z}\right)^{-a}\left(1+\sqrt{\frac{1-z}{1+z}}\right)^{-2a}&=\exp\left[-a\pi ig(z)-\frac{a\pi i}{2}\right] \\
&=\frac{\sqrt{z(1+z)}}{\sqrt{2}(1+\sqrt{1-z^2})}e^{-\varkappa g(z)-\varkappa/2}\left(1+\BigO{\lambda^2}\right) \label{hInfeq1}
\end{align}
as $\lambda\to0$ with $\Im\lambda\geq0$ and $z\in\tilde{\Omega}_+$.  Using \cite{DLMF} 5.5.5, 5.11.13 we have
\begin{equation}\label{hInfeq2}
\frac{\Gamma(2a+2)}{\Gamma(a+1)^2}=\frac{4^{a+1/2}}{\sqrt{\pi}}\cdot\frac{\Gamma\left(a+\frac{3}{2}\right)}{\Gamma(a+1)}=4^{a+1/2}\sqrt{\frac{a}{\pi}}\left(1+\BigO{\frac{1}{a}}\right)
\end{equation}
as $a\to -i\infty$.  Now plugging \eqref{hInfeq1}, \eqref{hInfeq2} into \eqref{hInfproof}, we obtain the result for $h_\infty\left(\frac{z+1}{2z}\right)$ when $\Im\lambda\geq0$.  The approximation when $\Im\lambda\leq0$ can be found in an similar manner.

For $h_\infty'(\eta)$ we  again  use Theorem \ref{result} with $F(t,\eta,\lambda)=\frac{e^{i\Re[a(\lambda)]S_\eta(t)}}{1-t/\eta}$ to obtain
\begin{align}
h_\infty'(\eta)&=-e^{a\pi i}\frac{a\Gamma(2a+2)}{\eta^{a+1}\Gamma(a+1)^2}\int_0^1\frac{1}{1-t/\eta}\left(\frac{t(1-t)}{1-t/\eta}\right)^adt \cr
&=-2ae^{a\pi i}4^a\eta^{-a-1}\left(1-\frac{1}{\eta}\right)^{-1/4}\left(1+\sqrt{1-\frac{1}{\eta}}\right)^{-1-2a(\lambda)}\left(1+\BigO{\frac{M^2}{a}}\right) \label{hInfPrimeAsymp},
\end{align}
which is equivalent to the stated result.  Notice that the functions $s_\infty(\eta),s_\infty'(\eta)$, as written, only have the integral representation \cite{AS64} 15.3.1 for $-1/2\leq\Re[a(\lambda)]<0$.  In this case, we obtain the stated result immediately via the observation ${h_\infty(\eta)\big|_{a\to-a-1}=s_\infty(\eta)}$.  To obtain the results when $0\leq\Re[a(\lambda)]\leq1/2$, use \cite{DLMF} 15.5.19 with $z\to1/\eta$, $a\to-a$, $b\to-a-1$, and $c\to-2a$ to obtain
\begin{align}
&\frac{a(a-1)}{\eta}\left(1-\frac{1}{\eta}\right)\pFq{2}{1}{-a+2,-a+1}{-2a+2}{\frac{1}{\eta}}+2a(2a-1)\left(1-\frac{1}{\eta}\right)\pFq{2}{1}{-a+1,-a}{-2a+1}{\frac{1}{\eta}} \nonumber \\
&=2a(2a-1)\pFq{2}{1}{-a,-a-1}{-2a}{\frac{1}{\eta}}.
\end{align}
Now with $z\to1/\eta$, $a\to-a+1$, $b\to-a$, and $c\to-2a+1$, we have
\begin{align}
&\frac{a(a-2)}{\eta}\left(1-\frac{1}{\eta}\right)\pFq{2}{1}{-a+3,-a+2}{-2a+3}{\frac{1}{\eta}}+2(1-a)\left(1-2a+\frac{2(a-1)}{\eta}\right)\pFq{2}{1}{-a+2,-a+1}{-2a+2}{\frac{1}{\eta}} \nonumber \\
&=2(1-a)(1-2a)\pFq{2}{1}{-a+1,-a}{-2a+1}{\frac{1}{\eta}}.
\end{align}
Combining the two previous equations, we see that 
\begin{align}
\pFq{2}{1}{-a,-a-1}{-2a}{\frac{1}{\eta}}&=\frac{a(a-2)}{2\eta(a-1)(2a-1)}\left(1-\frac{1}{\eta}\right)^2\pFq{2}{1}{-a+3,-a+2}{-2a+3}{\frac{1}{\eta}} \nonumber \\
&+a\left(1-\frac{1}{\eta}\right)\left[2(2a-1)+\frac{3(1-a)}{\eta}\right]\pFq{2}{1}{-a+2,-a+1}{-2a+2}{\frac{1}{\eta}}.
\end{align}
The perk of this equation is that the right hand side has an integral representation for ${-1/2\leq\Re[a]\leq1/2}$.  Thus we can apply Theorem \ref{result} twice and obtain the leading order asymptotics.  So we have shown
\begin{align}
s_\infty(\eta)&=-e^{-a\pi i}\eta^{a+1}\pFq{2}{1}{-a-1,-a}{-2a}{\frac{1}{\eta}} \cr
&=-\frac{1}{2}e^{-a\pi i}4^{-a}\eta^{a+1}\left(1+\sqrt{1-\frac{1}{\eta}}\right)^{1+2a}\left(1-\frac{1}{\eta}\right)^{1/4}\left[1+\BigO{\frac{M^2}{a}}\right].
\end{align}
A similar process can be repeated for $s_\infty '(\eta)$ and we obtain
\begin{align}
    s_\infty'(\eta)&=\frac{a+1}{-e^{a\pi i}}\eta^a\pFq{2}{1}{-a,-a}{-2a}{\frac{1}{\eta}} \cr
    &=-2(a+1)e^{-a\pi i}4^{-a-1}\eta^{a}\left(1-\frac{1}{\eta}\right)^{-1/4}\left(1+\sqrt{1-\frac{1}{\eta}}\right)^{1+2a(\lambda)}\left[1+\BigO{\frac{M^2}{a}}\right]. \label{sInfPrimeAsymp}
\end{align}

\end{proof}

We have an immediate Corollary.

\begin{corollary}\label{GammaHat}
In the limit $\lambda=e^{-\varkappa}\to0$,
\begin{align}
    \hat{\Gamma}\left(\frac{z+1}{2z}\right)&=i(1-z^2)^{\sigma_3/4}\begin{bmatrix} 1 & 0 \\ 0 & \pm\frac{2z\varkappa}{i\pi} \end{bmatrix}(\1+i\sigma_2)\left(\frac{\sqrt{2z}}{1+\sqrt{1-z^2}}\right)^{\sigma_3}\begin{bmatrix} 1 & 0 \\ 0 & \frac{1}{2z} \end{bmatrix}\times \nonumber \\
    &\times\left(\1+\BigO{\frac{M^2}{\varkappa}}\right)4^{a\sigma_3}e^{\pm\varkappa\left(\frac{1}{2}-g(z)\right)\sigma_3},
\end{align}
provided $\pm\Im\lambda\geq0$, which is uniform for $z\in\tilde{\Omega}_+$.

\end{corollary}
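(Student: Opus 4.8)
The plan is to deduce Corollary~\ref{GammaHat} by inserting the four scalar asymptotics of Corollary~\ref{hhprimessprime} into the definition of $\hat\Gamma$ from \eqref{QGammahat}, evaluated at the argument $w:=\frac{z+1}{2z}$, and then recognizing the resulting leading-order matrix as the asserted product of explicit diagonal and triangular factors. I carry out the computation for $\Im\lambda\ge0$ (the upper sign in Corollary~\ref{hhprimessprime}); the case $\Im\lambda\le0$ is verbatim with every $\pm$ reversed.

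First, abbreviate $q:=(1-z^2)^{1/4}$, $p:=1+\sqrt{1-z^2}$, $r:=\sqrt{2z}$ and $E:=e^{\varkappa(\frac12-g(z))}$ with the branch choices of the statement, and note $(2z)^{3/2}=(2z)r$. With these abbreviations Corollary~\ref{hhprimessprime} reads, uniformly on $\tilde\Omega_+$,
\begin{align*}
h_\infty(w)&=\frac{i\,4^{a}qr}{p}\,E\,(1+\epsilon_{11}), & s_\infty(w)&=\frac{i\,4^{-a}qp}{2z\,r}\,E^{-1}(1+\epsilon_{12}),\\
h_\infty'(w)&=-\frac{4^{a}(2z)\varkappa\,r}{\pi qp}\,E\,(1+\epsilon_{21}), & s_\infty'(w)&=\frac{4^{-a}\varkappa\,p}{\pi rq}\,E^{-1}(1+\epsilon_{22}),
\end{align*}
where $|\epsilon_{jk}|=\BigO{M^2/\varkappa}$ uniformly on $\tilde\Omega_+$. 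Let $\hat\Gamma^{(0)}(w)$ be the matrix $\hat\Gamma(w)$ with all four factors $(1+\epsilon_{jk})$ deleted.

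The next step is the factorization of the leading term. One has $i(1-z^2)^{\sigma_3/4}\operatorname{diag}(1,\tfrac{2z\varkappa}{i\pi})(\1+i\sigma_2)=i\left[\begin{smallmatrix} q & q\\ -\frac{2z\varkappa}{i\pi q} & \frac{2z\varkappa}{i\pi q}\end{smallmatrix}\right]$, and multiplying this on the right successively by $(r/p)^{\sigma_3}$, by $\operatorname{diag}(1,\tfrac1{2z})$, by $4^{a\sigma_3}$ and by $e^{\varkappa(\frac12-g(z))\sigma_3}$ reproduces $\hat\Gamma^{(0)}(w)$ entry by entry (a direct check of four scalar identities). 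Hence the Corollary holds with the error factor $\1+\BigO{M^2/\varkappa}$ replaced by $\1$.

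It remains to account for the errors. Write $\hat\Gamma^{(0)}=LR$, where $R:=4^{a\sigma_3}e^{\varkappa(\frac12-g(z))\sigma_3}$ is diagonal and $L$ is the product of the preceding five explicit factors (including the scalar $i$); I must produce $\mathcal N=\BigO{M^2/\varkappa}$ with $\hat\Gamma(w)=L(\1+\mathcal N)R$, since the error factor in the Corollary sits precisely between $\operatorname{diag}(1,\tfrac1{2z})$ and $4^{a\sigma_3}$. By the displayed asymptotics, $\hat\Gamma(w)-\hat\Gamma^{(0)}(w)$ has $(j,k)$ entry $\hat\Gamma^{(0)}_{jk}\epsilon_{jk}$; since $R$ is diagonal and $\hat\Gamma^{(0)}_{jk}=L_{jk}R_{kk}$, the matrix $\mathcal N=L^{-1}(\hat\Gamma-\hat\Gamma^{(0)})R^{-1}$ has entries $\mathcal N_{il}=\sum_j(L^{-1})_{ij}L_{jl}\,\epsilon_{jl}$, whence $|\mathcal N_{il}|\le\big(\max_j|\epsilon_{jl}|\big)\sum_j|(L^{-1})_{ij}||L_{jl}|$. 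The only $\varkappa$-dependent factor in $L$ is $\operatorname{diag}(1,\tfrac{2z\varkappa}{i\pi})$, which scales the second row of $L$ by $\varkappa$ and the second column of $L^{-1}$ by $\varkappa^{-1}$, so these powers cancel in each product $(L^{-1})_{ij}L_{jl}$; all remaining explicit factors are bounded above and below on the fixed annulus $\tilde\Omega_+$, which stays away from $z=0,\pm1$ since $|\eta|\in[M,2M]$ forces $|z|=|2\eta-1|^{-1}\asymp 1/M$. Therefore $\sum_j|(L^{-1})_{ij}||L_{jl}|=\BigO{1}$ uniformly, so $\mathcal N=\BigO{M^2/\varkappa}$ uniformly on $\tilde\Omega_+$, which yields the stated formula. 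The argument is essentially bookkeeping; the one place needing genuine care is this last paragraph, namely verifying that the four per-entry uniform errors of Corollary~\ref{hhprimessprime} consolidate into a single matrix factor $\1+\BigO{M^2/\varkappa}$ inserted at exactly the indicated position, uniformly on $\tilde\Omega_+$.
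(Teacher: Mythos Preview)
Your proof is correct and follows the only natural approach: substitute the four scalar asymptotics of Corollary~\ref{hhprimessprime} into the definition \eqref{QGammahat} of $\hat\Gamma$ and recognize the factorization. The paper itself gives no argument beyond calling the result an ``immediate Corollary'', so you have in fact supplied the details the paper omits, including the careful point that the four independent per-entry errors $(1+\epsilon_{jk})$ consolidate into a single matrix factor $\1+\mathcal N$ at the indicated position. Your key identity $(L^{-1})_{ij}L_{jl}=(B^{-1})_{ij}B_{jl}$, obtained because the diagonal $\varkappa$-dependent factor cancels, is exactly what makes this work.

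One small remark on the $M$-dependence: your bound $\sum_j|(L^{-1})_{ij}||L_{jl}|=\BigO{1}$ is correct for fixed $M$, but if one tracks the $M$-dependence of the constant, the factors $(r/p)^{\sigma_3}$ and $\operatorname{diag}(1,1/(2z))$ contribute powers of $M$ (since $|z|\asymp 1/M$ on $\tilde\Omega_+$), so that for instance $\sum_j|(B^{-1})_{1j}||B_{j2}|$ is of order $M^2$ rather than $1$. Your argument therefore yields $\mathcal N=\BigO{M^4/\varkappa}$ with an $M$-independent implied constant, or equivalently $\BigO{M^2/\varkappa}$ with an $M$-dependent constant. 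Since $M$ is fixed throughout the paper and the $M^2$ factor is simply inherited bookkeeping from Theorem~\ref{result}, this does not affect the validity of the Corollary as stated or any of its subsequent uses.
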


It remains to find the small $\lambda$ leading order asymptotics of the remaining factors of $\Gamma(z;\lambda)$.  This is a tedious, but straightforward exercise.

\begin{lemma}\label{hatGammaOneHalf}
    
    We have 
    \begin{equation}
        \hat{\Gamma}^{-1}\left(\frac{1}{2}\right)=e^{-\frac{a\pi i}{2}\sigma_3}4^{-a\sigma_3}\begin{bmatrix} \frac{\Gamma\left(-\frac{a}{2}\right)\Gamma\left(-\frac{1}{2}-a\right)}{4\Gamma\left(\frac{1}{2}-\frac{a}{2}\right)\Gamma(-1-a)} & \frac{i\Gamma\left(\frac{1}{2}-\frac{a}{2}\right)\Gamma\left(-\frac{1}{2}-a\right)}{8\Gamma\left(1-\frac{a}{2}\right)\Gamma(-a)} \\ \frac{i\Gamma(\frac{a}{2}+\frac{1}{2})\Gamma(a+\frac{1}{2})}{\Gamma(\frac{a}{2}+1)\Gamma(a)} & \frac{-\Gamma\left(\frac{a}{2}+1\right)\Gamma\left(a+\frac{1}{2}\right)}{\Gamma\left(\frac{a}{2}+\frac{1}{2}\right)\Gamma(a+2)} \end{bmatrix}.
    \end{equation}  
    Moreover, in the limit $\lambda=e^{-\varkappa}\to0$,
    \begin{equation}
        \hat{\Gamma}^{-1}\left(\frac{1}{2}\right)=\sqrt{\frac{2}{i}}e^{\mp\frac{\varkappa}{2}\sigma_3}4^{-a\sigma_3}\begin{bmatrix}\frac{1}{4} & 0 \\ 0 & 1 \end{bmatrix}\left(\1+i\sigma_2\right)\left(\1+\BigO{\varkappa^{-1}}\right)\begin{bmatrix} 1 & 0 \\ 0 & \pm \frac{\pi}{2\varkappa} \end{bmatrix},
    \end{equation}
    provided $\pm\Im\lambda\geq0$.
\end{lemma}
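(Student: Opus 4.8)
Write $\hat\Gamma(z)$ as in \eqref{QGammahat}. Since the ODE \eqref{hsODE} has no first–order term, the Wronskian $\det\hat\Gamma(z)=h_\infty(z)s_\infty'(z)-s_\infty(z)h_\infty'(z)$ is independent of $z$ and is given explicitly by \eqref{appendix_detHatGamma}; hence by the adjugate formula
\[
\hat\Gamma^{-1}\!\left(\tfrac12\right)=\frac{1}{\det\hat\Gamma}\begin{bmatrix} s_\infty'(\tfrac12) & -s_\infty(\tfrac12)\\[1mm] -h_\infty'(\tfrac12) & h_\infty(\tfrac12)\end{bmatrix},
\]
and it suffices to evaluate $h_\infty,h_\infty'$ at $z=\tfrac12$: the $s_\infty$–column then follows from the symmetry $h_\infty(z)\big|_{a\to-a-1}=s_\infty(z)$, $h_\infty'(z)\big|_{a\to-a-1}=s_\infty'(z)$ already used in the excerpt.

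By \eqref{hhprime}, $h_\infty(\tfrac12)=e^{a\pi i}2^{a}F\!\left(a,a+1;2a+2;2\right)$ and $h_\infty'(\tfrac12)=-a\,e^{a\pi i}2^{a+1}F\!\left(a+1,a+1;2a+2;2\right)$. In both hypergeometric functions the lower parameter equals twice a numerator parameter, $2a+2=2(a+1)$, so the quadratic transformation $F(\alpha,\beta;2\beta;\zeta)=(1-\zeta)^{-\alpha/2}F\!\big(\tfrac\alpha2,\beta-\tfrac\alpha2;\beta+\tfrac12;\tfrac{\zeta^2}{4(\zeta-1)}\big)$ (\cite{DLMF} 15.8.13) applies. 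At $\zeta=2$ the transformed argument $\tfrac{\zeta^2}{4(\zeta-1)}$ is exactly $1$, and the resulting $F$'s are summed by Gauss's theorem $F(\alpha,\beta;\gamma;1)=\Gamma(\gamma)\Gamma(\gamma-\alpha-\beta)/\big(\Gamma(\gamma-\alpha)\Gamma(\gamma-\beta)\big)$; here $\gamma-\alpha-\beta=\tfrac12$ in every case, so the hypothesis $\Re(\gamma-\alpha-\beta)>0$ holds throughout. This produces $h_\infty(\tfrac12)$ and $h_\infty'(\tfrac12)$ as explicit products of Gamma functions. Applying the duplication formula $\Gamma(2w)=\pi^{-1/2}2^{2w-1}\Gamma(w)\Gamma(w+\tfrac12)$ to $\Gamma(2a+2)$ and to the arguments that appear recasts these values into the half–integer–shifted form; dividing by $\det\hat\Gamma$ (given by \eqref{appendix_detHatGamma}) and collecting the scalar powers of $2$ and the phases $e^{a\pi i}$ into the common prefactor $e^{-\frac{a\pi i}{2}\sigma_3}4^{-a\sigma_3}$ then yields exactly the first displayed formula of the Lemma; substituting $a\mapsto-a-1$ supplies the remaining column. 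A convenient consistency check along the way is that the determinant of the candidate matrix must reproduce $1/\det\hat\Gamma$.

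For the asymptotics, fix $\Im\lambda\ge0$, so that by Appendix \ref{aAppendix} (Proposition \ref{aProp}) $a=a(\lambda)\to-i\infty$ and, more precisely, $a\pi i=\varkappa+\tfrac{i\pi}2+\BigO{\lambda^2}$ as $\lambda=e^{-\varkappa}\to0$; hence $e^{-\frac{a\pi i}{2}}=e^{-i\pi/4}e^{-\varkappa/2}\big(1+\BigO{\lambda^2}\big)$, which delivers the scalar $\sqrt{2/i}$ and the factor $e^{\mp\frac\varkappa2\sigma_3}$ (the factor $4^{-a\sigma_3}$ is kept unexpanded). For the matrix of Gamma ratios I would apply Stirling in the ratio form $\Gamma(w+\alpha)/\Gamma(w+\beta)=w^{\alpha-\beta}\big(1+\BigO{1/w}\big)$ along the ray $w\to-i\infty$: the $(1,1)$ and $(2,1)$ entries tend to nonzero constants, while the $(1,2)$ and $(2,2)$ entries are smaller by a factor $\BigO{1/a}=\BigO{1/\varkappa}$. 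Factoring $\operatorname{diag}\!\big(1,\pm\tfrac{\pi}{2\varkappa}\big)$ out on the right (using $1/a=\mp i\pi/\varkappa+\cdots$) normalizes the second column to $\BigO1$, the leading constant matrix then equals $\operatorname{diag}(\tfrac14,1)(\1+i\sigma_2)$, and the $\BigO{1/\varkappa}$ remainders are absorbed into the factor $\big(\1+\BigO{\varkappa^{-1}}\big)$. This gives the stated expansion; the case $\Im\lambda\le0$ is identical with $a\to+i\infty$, which accounts for the $\mp/\pm$ signs.

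The step I expect to be the real obstacle is branch bookkeeping: $\zeta=2$ lies on the cut $[1,\infty)$ of the standard quadratic and connection formulas, so \cite{DLMF} 15.8.13 must be read as the appropriate boundary value of an identity valid on $\C\setminus[1,\infty)$, and the induced phases — together with the fractional powers $(1-\zeta)^{-\alpha/2}$, $2^{a}$, $e^{a\pi i}$ — must be tracked consistently so that the final matrix agrees both with the explicitly known Wronskian \eqref{appendix_detHatGamma} and with the involution $a\mapsto-a-1$. An alternative is to build a basis of solutions of \eqref{hsODE} directly about the ordinary point $z=\tfrac12$, but matching their normalization back to $h_\infty,s_\infty$ requires a connection computation of comparable delicacy, so I would keep the route above.
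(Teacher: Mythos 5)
Your proposal is correct and follows essentially the same route as the paper: the explicit entries come from the quadratic transformation $F(a,b;2b;\zeta)$ at $\zeta=2$ (reducing the argument to $1$) followed by Gauss's summation, and the asymptotics come from Stirling's formula together with Proposition \ref{aProp}. The extra steps you make explicit — the adjugate formula with the constant Wronskian \eqref{appendix_detHatGamma}, the involution $a\mapsto-a-1$ for the second column, and the branch bookkeeping in $(-1)^{-a/2}=e^{-a\pi i/2}$ — are exactly what the paper leaves implicit.
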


\begin{proof}

Using \cite{AS64} 15.3.15 then 15.1.20, we have 
    \begin{align}
        \pFq{2}{1}{a,a+1}{2a+2}{2}=(-1)^{-a/2}\pFq{2}{1}{\frac{a}{2},\frac{a}{2}+1}{a+\frac{3}{2}}{1}&=e^{-a\pi i/2}\frac{\sqrt{\pi}\Gamma\left(a+\frac{3}{2}\right)}{\Gamma\left(\frac{a}{2}+\frac{3}{2}\right)\Gamma\left(\frac{a}{2}+\frac{1}{2}\right)},
    \end{align}
    where  
    $(-1)^{-a/2}=e^{-a\pi i/2}$. Thus, in view of Theorem \ref{ThmRHPSol} and   \eqref{hhprime}, we  obtain the  (1,1) entry of $\hat{\Gamma}\left(\frac{1}{2}\right)$.  Repeating this process for $h_\infty',s_\infty,s_\infty'$, we obtain our explicit result.  The asymptotics directly follow from the use of Stirling's formula and Proposition \ref{aProp}.
    
\end{proof}

According to Theorem \ref{ThmRHPSol},
\begin{equation}
Q(\lambda)=D(\1+ie^{a\pi i}\sigma_2),~~~\text{
where}~~~
D:=\begin{bmatrix} -\tan(a\pi) & 0 \\ 0 & 4^{2a+1}e^{a\pi i}\frac{\Gamma(a+3/2)\Gamma(a+1/2)}{\Gamma(a)\Gamma(a+2)} \end{bmatrix}.
\end{equation}
Combining that with Proposition \ref{aProp} and  5.11.13 from \cite{DLMF}, we obtain the following Lemma.

\begin{lemma}\label{Dasymp}
In the limit $\lambda=e^{-\varkappa}\to0$,
\begin{equation}
    D=i\begin{bmatrix} \pm1 & 0 \\ 0 & 4^{2a+1}e^{\pm\varkappa} \end{bmatrix}\left(\1+\BigO{\varkappa^{-1}}\right),
\end{equation}
provided $\pm\Im\lambda\geq0$.
\end{lemma}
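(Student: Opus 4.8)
The plan is to read off the explicit form of $D$ stated just before the lemma — the diagonal matrix with entries $-\tan(a\pi)$ and $4^{2a+1}e^{a\pi i}\tfrac{\Gamma(a+3/2)\Gamma(a+1/2)}{\Gamma(a)\Gamma(a+2)}$, $a=a(\lambda)$ — and to estimate the two diagonal entries separately, using only the small-$\lambda$ behaviour of $a(\lambda)$ recorded in Proposition \ref{aProp} and the gamma-quotient expansion \cite{DLMF} 5.11.13. First I would extract from Proposition \ref{aProp} that, as $\lambda=e^{-\varkappa}\to0$ with $\pm\Im\lambda\ge0$, one has $a(\lambda)=\tfrac12\mp\tfrac{i\varkappa}{\pi}+\BigO{\lambda^2}$, hence $a\pi i=\pm\varkappa+\tfrac{i\pi}{2}+\BigO{\lambda^2}$, $\Im a\to\mp\infty$, and $\arg a\to\mp\tfrac{\pi}{2}$ stays bounded away from $\pm\pi$, so the Stirling-type asymptotics below are uniform.

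For the $(1,1)$ entry I would substitute $a\pi=\tfrac{\pi}{2}\mp i\varkappa+\BigO{\lambda^2}$ and use $\tan(\tfrac{\pi}{2}+x)=-\cot x$: since $\cot(\mp i\varkappa)=\pm i\coth\varkappa$ and $\cot$ varies near $\mp i\varkappa$ by only an exponentially small amount (its derivative $\csc^2$ being $\BigO{e^{-2\Re\varkappa}}$ there), this gives $-\tan(a\pi)=\pm i\coth\varkappa+\BigO{\lambda^2}=\pm i\bigl(1+\BigO{\varkappa^{-1}}\bigr)$. For the $(2,2)$ entry I would keep the exact factor $4^{2a+1}$ and estimate the rest: from the asymptotics of $a\pi i$ one gets $e^{a\pi i}=i\,e^{\pm\varkappa}\bigl(1+\BigO{\lambda^2}\bigr)$, while \cite{DLMF} 5.11.13 yields $\tfrac{\Gamma(a+1/2)}{\Gamma(a)}=a^{1/2}\bigl(1+\BigO{a^{-1}}\bigr)$ and $\tfrac{\Gamma(a+3/2)}{\Gamma(a+2)}=a^{-1/2}\bigl(1+\BigO{a^{-1}}\bigr)$, so their product is $1+\BigO{a^{-1}}=1+\BigO{\varkappa^{-1}}$ (as $|a|\sim|\varkappa|/\pi\to\infty$). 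Thus the $(2,2)$ entry equals $i\,4^{2a+1}e^{\pm\varkappa}\bigl(1+\BigO{\varkappa^{-1}}\bigr)$, and assembling the two entries gives $D=i\begin{bmatrix}\pm1 & 0\\ 0 & 4^{2a+1}e^{\pm\varkappa}\end{bmatrix}\bigl(\1+\BigO{\varkappa^{-1}}\bigr)$, which is the claim.

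There is no genuine obstacle here; the computation is a two-line Stirling estimate together with elementary trigonometric identities. The only point demanding care is the branch bookkeeping for $a(\lambda)$ as $\lambda\to0$ from the two sides of the cut $[-1/2,1/2]$ — this is precisely what fixes the sign in $a\pi i=\pm\varkappa+\tfrac{i\pi}{2}$ and hence both occurrences of $\pm$ in the statement — along with the uniformity of the error in \cite{DLMF} 5.11.13 on the ray $\arg a\to\mp\tfrac{\pi}{2}$; both are supplied by Proposition \ref{aProp} and the standard error bounds accompanying \cite{DLMF} 5.11.
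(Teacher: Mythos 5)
Your proof is correct and follows exactly the route the paper takes: the paper's own justification is the one-line remark that the lemma follows from combining the explicit form of $D$ with Proposition \ref{aProp} and \cite{DLMF} 5.11.13, which is precisely the computation you carry out. The sign bookkeeping $a\pi i=\pm\varkappa+\tfrac{i\pi}{2}+\BigO{\lambda^2}$ and the cancellation of the two gamma-quotients to $1+\BigO{\varkappa^{-1}}$ are both right.
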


We are ready to put the pieces from this section together and obtain the asymptotics of $\Gamma(z;\lambda)$ as $\lambda\to0$ for $z\in\tilde{\Omega}$.  Define the matrix 
\begin{align}\label{Phi}
\Phi(z)&:=\frac{1}{2\sqrt{z}(z^2-1)^{1/4}}\begin{bmatrix} i+z+\sqrt{z^2-1} & -i-z+\sqrt{z^2-1} \\ i-z+\sqrt{z^2-1} & -i+z+\sqrt{z^2-1}\end{bmatrix}=\left(\1+\frac{i}{2z}(\sigma_3-i\sigma_2)\right)\left(\frac{z^2-1}{z^2}\right)^{\sigma_1/4}.
\end{align}
This matrix is a particular solution of 
 the so-called model RHP  \ref{modelRHP}  corresponding to $x=y=i/2$ in \eqref{solmod}. 
 As we will see in Theorem \ref{GammaAsmptotics} below, the limit of $\Gamma(z;\lambda)$ as $\lambda\to0$, $\Im \lambda >0$ distinguishes $\Phi(z)$ among 
 all other solutions of the model RHP.

\begin{lemma}\label{leadingOrderModelProb}
In the limit $\lambda=e^{-\varkappa}\to0$
\begin{equation}
    D^{-1}\hat{\Gamma}^{-1}\left(\frac{1}{2}\right)\begin{bmatrix} 1 & \frac{-1}{2z(a+1)} \\ 0 & 1 \end{bmatrix}\hat{\Gamma}\left(\frac{z+1}{2z}\right)e^{\varkappa g(z)\sigma_3}D=
    \begin{cases}
        \Phi(z)\left(\1+\BigO{\frac{M^2}{\varkappa}}\right), &\Im\lambda\geq0 \\
        \sigma_3\Phi(z)\sigma_3\left(\1+\BigO{\frac{M^2}{\varkappa}}\right), &\Im\lambda\leq0
    \end{cases}
\end{equation}
uniformly in  $z\in\tilde{\Omega}_+$.

\end{lemma}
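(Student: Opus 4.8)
The proof is a direct substitution of the asymptotic expansions already in hand — $\hat\Gamma\!\left(\tfrac{z+1}{2z}\right)$ from Corollary \ref{GammaHat}, $\hat\Gamma^{-1}(1/2)$ from Lemma \ref{hatGammaOneHalf}, and $D$ together with $D^{-1}$ from Lemma \ref{Dasymp} — followed by a tedious but routine simplification of the resulting product of $2\times2$ matrices. The plan is to carry out the case $\Im\lambda\ge0$ in full and then obtain $\Im\lambda\le0$ by repeating the argument with the lower-sign branches throughout (equivalently, via the symmetry $\sigma_3\Gamma(z;-\lambda)\sigma_3=\Gamma(z;\lambda)$ of Remark \ref{GammaSymmetry}), which merely conjugates the answer by $\sigma_3$ and turns $\Phi(z)$ into $\sigma_3\Phi(z)\sigma_3$. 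Two elementary facts are used repeatedly: on $\tilde\Omega_+$ the variable $z$ is bounded away from $0$ with $|z|^{-1}=\BigO{M}$; and by Proposition \ref{aProp}, $|a+1|\asymp\varkappa$ and $\varkappa/(a+1)\to i\pi$. Hence $\tfrac{-1}{2z(a+1)}=\BigO{M/\varkappa}$, so the triangular middle factor $\begin{bmatrix}1&\tfrac{-1}{2z(a+1)}\\0&1\end{bmatrix}$ equals $\1+\BigO{M/\varkappa}$, with leading correction governed by the limit $\tfrac{2z\varkappa}{i\pi}\cdot\tfrac{-1}{2z(a+1)}\to-1$.

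I would group the left-hand side as $\big[D^{-1}\hat\Gamma^{-1}(1/2)\big]\cdot\begin{bmatrix}1&*\\0&1\end{bmatrix}\cdot\big[\hat\Gamma\!\left(\tfrac{z+1}{2z}\right)e^{\varkappa g(z)\sigma_3}D\big]$ and exploit the collapse of the exponentially large and small diagonal factors. On the right, the tail $4^{a\sigma_3}e^{\varkappa(1/2-g(z))\sigma_3}$ of $\hat\Gamma\!\left(\tfrac{z+1}{2z}\right)$, multiplied by $e^{\varkappa g(z)\sigma_3}$, becomes $4^{a\sigma_3}e^{\varkappa\sigma_3/2}$ — this is exactly where the $g$-function of \eqref{gFunc} does its work, killing the $z$-dependent exponential — and it then merges with the diagonal part $i\,\mathrm{diag}(1,4^{2a+1}e^\varkappa)$ of $D$ into $i\,4^{a}e^{\varkappa/2}\,\mathrm{diag}(1,4)$. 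Symmetrically, on the left the head $e^{-\varkappa\sigma_3/2}4^{-a\sigma_3}$ of $\hat\Gamma^{-1}(1/2)$ merges with the diagonal part of $D^{-1}$ and the factor $\mathrm{diag}(1/4,1)$ into the scalar $-i\sqrt{2/i}\,4^{-a-1}e^{-\varkappa/2}\,\1$; the scalars $4^{a}e^{\varkappa/2}$ and $4^{-a-1}e^{-\varkappa/2}$ multiply to a fixed constant independent of $z$ and $\varkappa$. After this collapse the left-hand side is a fixed scalar times a product of bounded matrices: the two copies of $\1+i\sigma_2$, the powers $(1-z^2)^{\sigma_3/4}$ and $\big(\tfrac{\sqrt{2z}}{1+\sqrt{1-z^2}}\big)^{\sigma_3}$, the factors $\mathrm{diag}(1,\tfrac{2z\varkappa}{i\pi})$, $\mathrm{diag}(1,\tfrac{\pi}{2\varkappa})$ and $\mathrm{diag}(1,\tfrac1{2z})$ (absorbing the leftover $\mathrm{diag}(1,4)$), and the triangular middle factor, whose only leading-order effect — after being flanked by $\mathrm{diag}(1,\tfrac{\pi}{2\varkappa})$ on the left and $(1-z^2)^{\sigma_3/4}\mathrm{diag}(1,\tfrac{2z\varkappa}{i\pi})$ on the right — is to insert the limit $-1$ recorded above in the $(1,2)$ slot. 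Multiplying these bounded factors out is a finite $2\times2$ computation; I expect it to collapse to exactly $\Phi(z)$ as written in \eqref{Phi} (the passage from $(1-z^2)^{\sigma_3/4}$ to the $(z^2-1)^{\sigma_1/4}$ in \eqref{Phi} being produced by conjugation through the $\1+i\sigma_2$ rotations). The overall scalar can be cross-checked instantly from $\det\Phi(z)=1$: the scalar prefactor, the two factors $\det(\1+i\sigma_2)=2$, and the determinants $\tfrac{2z\varkappa}{i\pi}$, $\tfrac\pi{2\varkappa}$, $\tfrac2z$ of the diagonal factors cancel.

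What remains is the error accounting, and this is where the \textbf{main obstacle} lies. Each ingredient carries a relative error — $\1+\BigO{\varkappa^{-1}}$ from Lemmas \ref{hatGammaOneHalf} and \ref{Dasymp}, $\1+\BigO{M^2/\varkappa}$ from Corollary \ref{GammaHat}, and $\BigO{M/\varkappa}$ from the triangular factor — and these must be commuted past the bounded factors and collected into a single right-hand factor $\1+\BigO{M^2/\varkappa}$. One must check, first, that the collapse of exponentials is \emph{exact} at leading order, so that the individually unbounded diagonal blocks $\mathrm{diag}(1,4^{\pm(2a+1)}e^{\pm\varkappa})$ never act on an error term; this is precisely why the $\1+\BigO{\varkappa^{-1}}$ corrections in Lemmas \ref{hatGammaOneHalf} and \ref{Dasymp} are positioned on the side of those blocks away from the point of collapse. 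Second, one must check that conjugating the $\1+\BigO{M^2/\varkappa}$ of Corollary \ref{GammaHat} by the adjacent $z$-dependent diagonal factor $\mathrm{diag}(1,\tfrac1{2z})$, whose operator norm is $\BigO{M}$, amplifies it by at most one further power of $M$; together with the $\BigO{M/\varkappa}$ from the triangular factor this produces net amplification $\BigO{M^2/\varkappa}$, as claimed. Granting this bookkeeping, the $\Im\lambda\ge0$ case is complete. For $\Im\lambda\le0$ the identical computation with the lower signs simply carries one extra $\sigma_3$ through the collapse (coming from the sign flips in $\mathrm{diag}(1,4^{2a+1}e^{\pm\varkappa})$ and in $e^{\mp\varkappa\sigma_3/2}$), whence $\sigma_3\Phi(z)\sigma_3$; alternatively this follows from the symmetry $\sigma_3\Gamma(z;-\lambda)\sigma_3=\Gamma(z;\lambda)$ of Remark \ref{GammaSymmetry}.
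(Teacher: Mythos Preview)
Your proposal is correct and follows essentially the same route as the paper: substitute the asymptotics of Corollary \ref{GammaHat}, Lemma \ref{hatGammaOneHalf}, and Lemma \ref{Dasymp}, let the exponential diagonal blocks collapse exactly, and multiply the remaining bounded $2\times2$ factors to obtain $\Phi(z)$, with the $\Im\lambda\le0$ case handled by the $\sigma_3$-conjugation symmetry. The paper's proof differs only in emphasis: it actually writes out the leading-order product
\[
\frac{\sqrt{2i}}{4}(\1+i\sigma_2)\begin{bmatrix}1&-1\\0&-iz\end{bmatrix}(1-z^2)^{\sigma_3/4}(\1+i\sigma_2)\left(\frac{\sqrt{2z}}{1+\sqrt{1-z^2}}\right)^{\sigma_3}\begin{bmatrix}1&0\\0&\frac{2}{z}\end{bmatrix}
\]
and simplifies it in one line to $\Phi(z)$, whereas you leave this as ``I expect it to collapse to exactly $\Phi(z)$''; conversely, the paper dispatches the error in a single sentence (``since $\Phi(z)$ is uniformly bounded away from $0$ when $z\in\tilde\Omega$, we immediately obtain the lower order term''), while you give a more careful accounting of how the errors commute past the bounded factors after the exponential collapse. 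Both are the same argument.
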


\begin{proof}

First take $\Im\lambda\geq0$; the leading order term of $D^{-1}\hat{\Gamma}^{-1}\left(\frac{1}{2}\right)\begin{bmatrix} 1 & \frac{-1}{2z(a+1)} \\ 0 & 1 \end{bmatrix}\hat{\Gamma}\left(\frac{z+1}{2z}\right)e^{\varkappa g\sigma_3}D$ from Lemmas and Corollaries \ref{GammaHat}, \ref{hatGammaOneHalf}, \ref{Dasymp}, we have 
\begin{align*}
    &\frac{\sqrt{2i}}{4}(\1+i\sigma_2)\begin{bmatrix} 1 & -1 \\ 0 & -iz \end{bmatrix}(1-z^2)^{\sigma_3/4}(\1+i\sigma_2)\left(\frac{\sqrt{2z}}{1+\sqrt{1-z^2}}\right)^{\sigma_3}\begin{bmatrix} 1 & 0 \\ 0 & \frac{2}{z} \end{bmatrix} \\
    &=\frac{(z^2-1)^{-1/4}}{2z^{3/2}}\left[\left(\1-\sigma_1\right)+iz\left(\sigma_3-i\sigma_2\right)+\sqrt{1-z^2}\left(\sigma_3+i\sigma_2\right)\right]\cdot\left[\1-\sigma_3\sqrt{1-z^2}\right] \\
    &=\Phi(z).
\end{align*}
When $\Im\lambda\leq0$, observe that the leading order term of $D^{-1}\hat{\Gamma}^{-1}\left(\frac{1}{2}\right)\begin{bmatrix} 1 & \frac{-1}{2z(a+1)} \\ 0 & 1 \end{bmatrix}\hat{\Gamma}\left(\frac{z+1}{2z}\right)e^{\varkappa g\sigma_3}D$ is now 
\begin{equation}
    \sigma_3\cdot\frac{\sqrt{2i}}{4}(\1+i\sigma_2)\begin{bmatrix} 1 & -1 \\ 0 & -iz \end{bmatrix}(1-z^2)^{\sigma_3/4}(\1+i\sigma_2)\left(\frac{\sqrt{2z}}{1+\sqrt{1-z^2}}\right)^{\sigma_3}\begin{bmatrix} 1 & 0 \\ 0 & \frac{2}{z} \end{bmatrix}\cdot\sigma_3
\end{equation}
and thus we have the leading order term.  Since $\Phi(z)$ is uniformly bounded away from $0$ when $z\in\tilde{\Omega}$, we immediately obtain the lower order term.

\end{proof}

Define matrix 
\begin{align}\label{Psi}
\Psi_0(z;\varkappa)=\begin{cases}
\Phi(z), &\text{ for } \Im[\varkappa]\leq0, \\
\sigma_1\Phi(z)\sigma_1, &\text{ for } \Im[\varkappa]\geq0.
\end{cases}
\end{align}
Note that the matrix $\sigma_1\Phi(z)\sigma_1$ is also a solution to RHP \ref{modelRHP} with  $x=y=-i/2$ in \eqref{solmod}. 
Now we are ready to prove one of the main results of this section.

\begin{theorem}\label{GammaAsmptotics}

Let $\theta\in(0,\pi/2)$ be fixed. Then
\begin{equation}
\Gamma(z;\lambda) =
  \begin{cases}
   \Psi_0(z;\varkappa)\left(\1+\BigO{\frac{M^2}{\varkappa}}\right)e^{-\varkappa g(z)\sigma_3}, ~ &z\in\tilde{\Omega}, \theta<|\arg(z)|<\pi-\theta, \\
   
   \Psi_0(z;\varkappa)\left(\1+\BigO{\frac{M^2}{\varkappa}}\right)\begin{bmatrix} 1 & 0 \\ \pm ie^{\varkappa(2g(z)-1)} & 1 \end{bmatrix}e^{-\varkappa g(z)\sigma_3}, &z\in\tilde{\Omega}, ~ \pi-\theta<\arg(z)<\pi+\theta, \\
   
   \Psi_0(z;\varkappa)\left(\1+\BigO{\frac{M^2}{\varkappa}}\right)\begin{bmatrix} 1 & \mp ie^{-\varkappa(2g(z)+1)} \\ 0 & 1 \end{bmatrix}e^{-\varkappa g(z)\sigma_3}, &z\in\tilde{\Omega}, ~ -\theta<\arg(z)<\theta
  \end{cases} \nonumber 
\end{equation}
as   $\lambda=e^{-\varkappa}\to0$ uniformly in $z\in\tilde{\Omega}$, 
provided $\pm\Im\lambda\geq0$.  See Figure \ref{figLenseOmegaTilde} for $\theta, \tilde{\Omega}$.
\end{theorem}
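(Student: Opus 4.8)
The plan is to obtain the asymptotics directly from the closed form \eqref{RHPSolution} (in the symmetric case $b_R=-b_L=1$), by first reorganizing it into the combination already analyzed in Lemma~\ref{leadingOrderModelProb} and then tracking the remaining $\lambda$-dependent conjugating matrices together with the factor $e^{-\varkappa g(z)\sigma_3}$. The only genuinely new work is a case analysis according to whether $z$ sits in the interior of $\tilde\Omega$ or near one of the two segments $(b_L,0)$, $(0,b_R)$.

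\textbf{Reduction to the model factor.} With $b_R=-b_L=1$ we have $M_1(z)=\frac{z+1}{2z}$, $M_1(\infty)=\frac12$ and $\frac{b_Lb_R}{z(b_R-b_L)(a+1)}=\frac{-1}{2z(a+1)}$. Factoring $Q(\lambda)=D\,N$ with $N:=\1+ie^{a\pi i}\sigma_2$ (the splitting used before Lemma~\ref{Dasymp}), inserting $e^{\varkappa g(z)\sigma_3}e^{-\varkappa g(z)\sigma_3}=\1$ between $\hat\Gamma(\tfrac{z+1}{2z})$ and $Q(\lambda)$ in \eqref{RHPSolution}, and commuting the diagonal $D$ through $e^{-\varkappa g\sigma_3}$, one arrives at
\begin{equation*}
\Gamma(z;\lambda)=\sigma_2 N^{-1}\Big[D^{-1}\hat\Gamma^{-1}(\tfrac12)\begin{bmatrix}1&\tfrac{-1}{2z(a+1)}\\0&1\end{bmatrix}\hat\Gamma(\tfrac{z+1}{2z})\,e^{\varkappa g(z)\sigma_3}D\Big]\,e^{-\varkappa g(z)\sigma_3}\,N\sigma_2 .
\end{equation*}
By Lemma~\ref{leadingOrderModelProb} the bracket is $\Phi(z)(\1+\BigO{M^2/\varkappa})$ if $\Im\lambda\ge0$ and $\sigma_3\Phi(z)\sigma_3(\1+\BigO{M^2/\varkappa})$ if $\Im\lambda\le0$, uniformly for $z\in\tilde\Omega_+$; since $\Phi,\Phi^{-1}$ are uniformly bounded on $\tilde\Omega$, it remains to evaluate the outer product $\sigma_2 N^{-1}\,\Phi(z)\,e^{-\varkappa g(z)\sigma_3}\,N\sigma_2$ (and its $\sigma_3$-analogue) with controlled error.

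\textbf{The interior regime.} Using $\sigma_2N^{-1}=(1+e^{2a\pi i})^{-1}(\sigma_2-ie^{a\pi i}\1)$, $N\sigma_2=ie^{a\pi i}\1+\sigma_2$, the identity $e^{-\varkappa g\sigma_3}\sigma_2=\sigma_2e^{\varkappa g\sigma_3}$, and $e^{a\pi i}=ie^{\pm\varkappa}(1+\BigO{\lambda^2})$ (from \eqref{aFunc} and Proposition~\ref{aProp}, with $+$ for $\Im\lambda\ge0$), one expands the outer product into a dominant diagonal part plus subdominant off-diagonal parts whose exponents, after using the cut relations $g(z_+)+g(z_-)=\mp1$ on $(0,b_R)$ and $(b_L,0)$ from Proposition~\ref{gproperties}, reduce to $\varkappa(2g(z)-1)$ and $-\varkappa(2g(z)+1)$. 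On $\{\theta<|\arg z|<\pi-\theta\}\cap\tilde\Omega_+$ these have negative real part bounded away from $0$ (since $\Re\varkappa=-\ln|\lambda|\to\infty$ and $\Re(2g\mp1)$ is of fixed sign there by Proposition~\ref{gproperties}), so the subdominant parts are $\BigO{M^2/\varkappa}$ and the product collapses to $\Psi_0(z;\varkappa)(\1+\BigO{M^2/\varkappa})e^{-\varkappa g(z)\sigma_3}$; the $\sigma_2$-conjugation of the inner factor, together with the asymptotics of $e^{a\pi i}$, is exactly what carries $\Phi$ or $\sigma_3\Phi\sigma_3$ onto the $\Psi_0$ of \eqref{Psi}, which is the reason for the case split in \eqref{Psi}.

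\textbf{The boundary regime and conclusion.} As $z$ approaches the segment $(b_L,0)$ (the anti-Stokes line $\Re(2g(z)-1)=0$) the off-diagonal part carrying $e^{\varkappa(2g(z)-1)}$ is only $\BigO{1}$ and must be kept, producing the lower-triangular factor $\left[\begin{smallmatrix}1&0\\ \pm ie^{\varkappa(2g(z)-1)}&1\end{smallmatrix}\right]$; symmetrically, near $(0,b_R)$ (where $\Re(2g(z)+1)=0$) one gets $\left[\begin{smallmatrix}1&\mp ie^{-\varkappa(2g(z)+1)}\\0&1\end{smallmatrix}\right]$. The constants $\mp i$ and the exponents are fixed either by expanding \eqref{Phi} explicitly, or, more transparently, by noting that $\Gamma e^{\varkappa g\sigma_3}$ must carry the jump $e^{-\varkappa g_-\sigma_3}\left[\begin{smallmatrix}1&0\\ i/\lambda&1\end{smallmatrix}\right]e^{\varkappa g_+\sigma_3}=\left[\begin{smallmatrix}e^{\varkappa(g_+-g_-)}&0\\ i&e^{-\varkappa(g_+-g_-)}\end{smallmatrix}\right]$ across $(0,b_R)$ (and the analogue across $(b_L,0)$), which, together with the jump of $\Psi_0$ coming from \eqref{Phi} and the requirement that the triangular factor tend to $\1$ where the boundary sector meets the interior, determines it uniquely. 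Finally, $\tilde\Omega_+$ is (essentially) the lower half of the annulus, so for $z\in\tilde\Omega\setminus\tilde\Omega_+$ one uses the Schwarz symmetry $\overline{\Gamma(\bar z;\bar\lambda)}=\Gamma(z;\lambda)$ of Remark~\ref{GammaSymmetry} with $\bar z\in\tilde\Omega_+$, together with $\overline{g(\bar z)}=g(z)$ and $\overline{\Phi(\bar z)}=\sigma_1\Phi(z)\sigma_1$ (checked from \eqref{Phi}) and the definition \eqref{Psi}, to recover the stated formulas; alternatively one may invoke $\sigma_3\Gamma(z;-\lambda)\sigma_3=\Gamma(z;\lambda)$ to reduce the case $\Im\lambda\le0$ to $\Im\lambda\ge0$. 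The main obstacle is the boundary regime: singling out the one subdominant term that survives near each of $(b_L,0)$, $(0,b_R)$ and pinning down its coefficient $\mp i\,e^{\mp\varkappa(2g\pm1)}$, which requires keeping the branches of $e^{a\pi i}$, $\varkappa$ and $g(z)$ mutually consistent across the cut $(-1,1)$; the interior regime and the reduction are then routine given Lemmas~\ref{hatGammaOneHalf}, \ref{Dasymp}, \ref{leadingOrderModelProb} and the sign information in Proposition~\ref{gproperties}.
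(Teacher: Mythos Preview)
Your approach is essentially the paper's: factor $Q=DN$, isolate the bracket handled by Lemma~\ref{leadingOrderModelProb}, then simplify the outer conjugation $\sigma_2N^{-1}[\cdot]\,e^{-\varkappa g\sigma_3}N\sigma_2$ using $e^{a\pi i}\approx ie^{\pm\varkappa}$, and finish with the Schwarz symmetry of Remark~\ref{GammaSymmetry} to cover the other half of the annulus. The paper's execution is a little more direct than yours: rather than splitting into interior and boundary regimes up front, it multiplies out once to obtain the single formula
\[
\Phi(z)\Bigl(\1+\BigO{\tfrac{M^2}{\varkappa}}\Bigr)\begin{bmatrix}1&ie^{-\varkappa(2g+1)}\\-ie^{\varkappa(2g-1)}&1\end{bmatrix}e^{-\varkappa g\sigma_3}
\]
valid throughout $\tilde\Omega_+$ (and its $\sigma_1$-conjugate for $\Im\lambda\le0$), and the three cases of the statement then follow simply by absorbing whichever off-diagonal entry is exponentially small in the given sector; no separate jump-matching argument is needed to pin down the coefficients. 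One small correction: your appeal to the cut relations $g_++g_-=\mp1$ to identify the exponents is misplaced---the combinations $\varkappa(2g-1)$ and $-\varkappa(2g+1)$ appear purely algebraically, as $e^{-\varkappa}e^{\pm2\varkappa g}$ coming from $e^{a\pi i}\approx ie^{\varkappa}$ together with $e^{-\varkappa g\sigma_3}\sigma_2=\sigma_2 e^{\varkappa g\sigma_3}$, and this holds for all $z\in\tilde\Omega$ with no boundary values of $g$ entering.
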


\begin{proof}

First assume $\Im\lambda\geq0$ and $z\in\tilde{\Omega}_+$ (this implies $z\in\tilde{\Omega}$ and $\Im z\leq0$).  The following calculation
\begin{align}
e^{-\varkappa g\sigma_3}Q&=e^{-\varkappa g\sigma_3}D\left(\1+ie^{a\pi i}\sigma_2\right)e^{-\varkappa g\sigma_3}e^{\varkappa g\sigma_3}=D\left(e^{-2\varkappa g\sigma_3}+ie^{a\pi i}\sigma_2\right)e^{\varkappa g\sigma_3}
\end{align}
and use of Lemma \ref{leadingOrderModelProb} give us
\begin{align}
\Gamma(z;\lambda)&=\frac{\sigma_2}{1+e^{2a\pi i}}(\1-ie^{a\pi i}\sigma_2)D^{-1}\hat{\Gamma}^{-1}(\infty)\begin{bmatrix} 1 & \frac{-1}{2z(a+1)} \\ 0 & 1 \end{bmatrix}\hat{\Gamma}\left(\frac{z+1}{2z}\right)e^{\varkappa g\sigma_3}D(\sigma_2e^{2\varkappa g\sigma_3}+ie^{a\pi i}I)e^{-\varkappa g\sigma_3} \nonumber \\
&=\Phi(z)\left(\1+\BigO{\frac{M^2}{\varkappa}}\right)\begin{bmatrix} 1 & ie^{-\varkappa(2g+1)} \\ -ie^{\varkappa(2g-1)} & 1 \end{bmatrix}e^{-\varkappa g\sigma_3},
\end{align}
as desired.  Now take $\Im\lambda\leq0$ and proceed similar to above.  We use the calculation 
\begin{align}
e^{\varkappa g\sigma_3}Q&=e^{\varkappa g\sigma_3}D(\1+ie^{a\pi i}\sigma_2)e^{-\varkappa g\sigma_3}e^{\varkappa g\sigma_3}=D(\1+ie^{a\pi i}e^{2\varkappa g\sigma_3}\sigma_2)e^{\varkappa g\sigma_3}
\end{align}
and Lemma \ref{leadingOrderModelProb} to obtain
\begin{align}
\Gamma(z;\lambda)&=\sigma_1\Phi(z)\sigma_1\left(\1+\BigO{\frac{M^2}{\varkappa}}\right)\begin{bmatrix} 1 & ie^{-\varkappa(2g+1)} \\ -ie^{\varkappa(2g-1)} & 1 \end{bmatrix}e^{-\varkappa g\sigma_3}.
\end{align}
The results for $z\in\tilde{\Omega}$ with $\Im z\geq0$ are immediate via use of the symmetry $\overline{\Gamma(\bar{z};\bar{\lambda})}=\Gamma(z;\lambda)$.  Recall that ${\overline{\Phi(\bar{z})}=\sigma_1\Phi(z)\sigma_1}$ from Remark \ref{modelProbSym}, $\overline{g(\bar{z})}=g(z)$ and $\varkappa(\lambda)=\overline{\varkappa(\bar{\lambda})}$.

\end{proof}

\subsection{Deift-Zhou steepest descent method}

The $g$-function, defined in \eqref{gFunc}, will play an important role so we list its relevant properties, all of which follow directly from 
Proposition \ref{aProp}.

\begin{proposition}\label{gproperties}
$g(z)$ has the following properties:
\begin{enumerate}
\item $g(z)$ is analytic on $\mathbb{\bar C}\setminus [-1,1]$,  Schwarz symmetric and  $g(\infty)=0$,
\item $g_+(z)+g_-(z)=1$ for $z\in [-1,0]$,  \,  $g_+(z)+g_-(z)=-1$ for $z\in [0,1]$,
\item $\Re{(2g(z)-1)}=0$ for $z\in[-1,0]$ and $\Re{(2g(z)-1)}<0$ for $z\in\overline{\mathbb{C}}\setminus[-1,0]$,
\item $\Re{(2g(z)+1)}=0$ for $z\in[0,1]$ and $\Re{(2g(z)+1)}>0$ for $z\in\overline{\mathbb{C}}\setminus[0,1]$,
\end{enumerate}
\end{proposition}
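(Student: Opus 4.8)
The plan is to push everything through the identity $g(z)=a(-z/2)$ from \eqref{gFunc}, so that each item reduces to the corresponding property of $a$ recorded in Proposition~\ref{aProp}. The affine involution $z\mapsto -z/2$ of $\overline{\C}$ carries $[-1,1]$ onto $[-1/2,1/2]$, the subinterval $[-1,0]$ onto $[0,1/2]$ and $[0,1]$ onto $[-1/2,0]$, fixes $\infty$, and reverses the orientation of $\RR$. Item~1 is then immediate: $g$ inherits analyticity on $\overline{\C}\setminus[-1,1]$ and $g(\infty)=a(\infty)=0$ from $a$, and the Schwarz symmetry $\overline{g(\bar z)}=g(z)$ follows from $\overline{a(\bar\lambda)}=a(\lambda)$ together with $\overline{-z/2}=-\bar z/2$.

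For item~2 I would read the boundary values directly off the logarithmic formula \eqref{gFunc}. On $(-1,1)$ one has $z^2-1<0$, and the two shore-values of $\sqrt{z^2-1}$ are $\mp i\sqrt{1-z^2}$ with $\sqrt{1-z^2}>0$ (which shore gets which sign is fixed by the normalization $\sqrt{z^2-1}\sim -z$ at infinity and by analytic continuation, there being no branch point at $z=0$). Writing $w_\pm$ for the corresponding shore-values of $(i+\sqrt{z^2-1})/(-z)$, one computes $w_+w_-=-1$, hence $|w_+w_-|=1$; moreover both $w_\pm$ lie on the positive imaginary axis when $z\in(-1,0)$ and both on the negative imaginary axis when $z\in(0,1)$. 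Since $g_\pm=\tfrac1{i\pi}\ln w_\pm$ (principal branch) yields $\Re g_\pm=\tfrac1\pi\arg w_\pm$ and $\Im(g_++g_-)=-\tfrac1\pi\ln|w_+w_-|=0$, we get $\Re g_\pm=\tfrac12$ and $g_++g_-=1$ on $(-1,0)$, and $\Re g_\pm=-\tfrac12$ and $g_++g_-=-1$ on $(0,1)$. (Equivalently, this transfers the jump relations $a_++a_-=1$ on $(0,1/2)$ and $a_++a_-=-1$ on $(-1/2,0)$ of Proposition~\ref{aProp}, the shore-swap being the orientation reversal of $z\mapsto -z/2$.)

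For items~3 and~4 the boundary equalities follow from item~2 and Schwarz symmetry: for $x\in(-1,0)$ one has $g_-(x)=\overline{g_+(x)}$, so $2\Re g_+(x)=g_+(x)+g_-(x)=1$, i.e.\ $\Re(2g-1)=0$ on $[-1,0]$ (the endpoints being covered by $g(-1)=a(1/2)=\tfrac12$ and $g(1)=a(-1/2)=-\tfrac12$), and likewise $\Re(2g+1)=0$ on $[0,1]$. For the strict inequalities I would use the maximum principle. Put $u:=\Re(2g-1)$ and $v:=\Re(2g+1)$; both are harmonic on $\overline{\C}\setminus[-1,1]$, bounded there (since $\Re g=\Re a(-z/2)$ is bounded, using $|\Re a|<\tfrac12$, even though $\Im g$ is unbounded at $z=0$) and bounded near the branch points $\{-1,0,1\}$, and their boundary values on $[-1,1]$ are, by item~2, equal to $0$ on one of the two subintervals and to $\mp2$ on the other, so $u\le0$ and $v\ge0$ on the boundary. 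By the maximum principle on $\overline{\C}\setminus[-1,1]$ (which contains $\infty$, where $u(\infty)=-1\neq0$ and $v(\infty)=1\neq0$), neither $u$ nor $v$ can attain its boundary extremum in the interior, so $u<0$ and $v>0$ throughout $\overline{\C}\setminus[-1,1]$; together with the boundary values this gives $\Re(2g-1)<0$ on $\overline{\C}\setminus[-1,0]$ and $\Re(2g+1)>0$ on $\overline{\C}\setminus[0,1]$. (If the inequalities $\Re a<\tfrac12$ off $[0,1/2]$ and $\Re a>-\tfrac12$ off $[-1/2,0]$ are themselves recorded in Proposition~\ref{aProp}, items~3--4 become a direct transfer with no further work.)

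The step I expect to be the main obstacle is making the maximum-principle argument clean at the three branch points, and in particular at the common endpoint $z=0$, where $g$ itself is unbounded: one must show that only $\Im g$ blows up there while $u$ and $v$ stay bounded, and one must be careful to run the maximum principle on $\overline{\C}\setminus[-1,1]$ rather than on $\overline{\C}\setminus[-1,0]$ — although $\Re g$ is continuous across the arcs $(-1,0)$ and $(0,1)$, it is not harmonic across them, so those arcs must be kept as part of the boundary. The item~2 computation is routine once the shore-values of $\sqrt{z^2-1}$ are pinned down, which is the only genuine branch bookkeeping involved.
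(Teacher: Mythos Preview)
Your approach is correct and matches the paper's, which simply states that all four items ``follow directly from Proposition~\ref{aProp}'' via the relation $g(z)=a(-z/2)$. In particular, the strict inequality $-\tfrac12<\Re a<\tfrac12$ on $\overline{\C}\setminus[-\tfrac12,\tfrac12]$ is already recorded as item~3 of Proposition~\ref{aProp}, so your maximum-principle alternative for items~3--4, while valid, is unnecessary---the direct transfer you sketch in your final parenthetical is exactly what the paper intends.
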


\subsubsection{Transformation $\Gamma(z;\lambda)\to Z(z;\varkappa)$}\label{RHPtrans}

Our first transformation will be  
\begin{equation}\label{Y}
Y(z;\varkappa):=\Gamma(z;e^{-\varkappa})e^{\varkappa g(z)\sigma_3},
\end{equation}
where $\Gamma(z;\lambda)$ was defined in \eqref{RHPSolution}.  Since $\Gamma(z;\lambda)$ is the solution of RHP \ref{Gamma4RHP}, it is easy to show that $Y(z;\varkappa)$ solves the following RHP.

\begin{problem}\label{YRHP}
Find a matrix $Y(z;\varkappa)$, $e^{-\varkappa}=\lambda\in\mathbb{C}\setminus\{0\}$, analytic for $z\in\bar{\mathbb{C}}\setminus[-1,1]$ and satisfying the following conditions:
\begin{align}
Y(z_+;\varkappa)=&Y(z_-;\varkappa)\begin{bmatrix}e^{\varkappa(g_+-g_-)} & -ie^{-\varkappa(g_++g_--1)} \\ 0 & e^{-\varkappa(g_+-g_-)}\end{bmatrix}, ~ z\in(-1,0) \label{rhpY1} \\
Y(z_+;\varkappa)=&Y(z_-;\varkappa)\begin{bmatrix}e^{\varkappa(g_+-g_-)} & 0 \\ ie^{\varkappa(g_++g_-+1)} & e^{-\varkappa(g_+-g_-)}\end{bmatrix}, ~ z\in(0,1) \label{rhpY3} \\
Y(z;\varkappa) =& 1+\BigO{z^{-1}} \text{ as } z\to\infty, \\
Y(z;\varkappa)=&\begin{bmatrix} \BigO{1} & \BigO{\log(z+1)} \end{bmatrix} \text{ as } z\to-1, \label{rhpYendpt2}\\
Y(z;\varkappa)=&\begin{bmatrix} \BigO{\log(z-1)} & \BigO{1} \end{bmatrix} \text{ as } z\to1, \\
Y(z;\varkappa)\in&L^2_{loc} \text{ as } z\to0.
\end{align}
The endpoint behavior is listed column-wise.

\end{problem}

The jumps for $Y(z;\varkappa)$ on $(-1,0)$ and $(0,1)$ can be written as
\begin{align}
Y(z_+;\varkappa)&=Y(z_-;\varkappa)\begin{bmatrix} 1 & 0 \\ ie^{\varkappa(2g_-(z)-1)} & 1 \end{bmatrix}(-i\sigma_1)\begin{bmatrix} 1 & 0 \\ ie^{\varkappa(2g_+(z)-1)} & 1 \end{bmatrix}, ~ z\in(-1,0) \\
Y(z_+;\varkappa)&=Y(z_-;\varkappa)\begin{bmatrix} 1 & \frac{1}{i}e^{-\varkappa(2g_-(z)+1)} \\ 0 & 1 \end{bmatrix}(i\sigma_1)\begin{bmatrix} 1 & \frac{1}{i}e^{-\varkappa(2g_+(z)+1)} \\ 0 & 1 \end{bmatrix}, ~ z\in(0,1).
\end{align}
This decomposition can be verified by direct matrix multiplication and by using the jump properties of $g(z)$ in Proposition \ref{gproperties}.  We define the `lense' regions $\mathcal{L}_{L,R}^{(\pm)}$ as in Figure \ref{lenseFigure}.

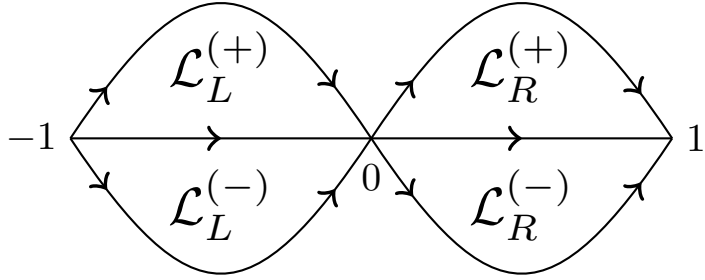
\begin{figure}
\begin{center}
\begin{tikzpicture}[scale=0.8]

\draw[black,thick,postaction = {decorate, decoration = {markings, mark = at position .25 with {\arrow[black,thick,scale=2]{>};}}},,postaction = {decorate, decoration = {markings, mark = at position .75 with {\arrow[black,thick,scale=2]{>};}}}] (-5,0) -- (5,0);

\draw[black,thick,smooth,postaction = {decorate, decoration = {markings, mark = at position .15 with {\arrow[black,thick,scale=2]{>};}}},postaction = {decorate, decoration = {markings, mark = at position .85 with {\arrow[black,thick,scale=2]{>};}}}] (-5,0) .. controls (-3,3) and (-2,3) .. (0,0);
\draw[black,thick,smooth,postaction = {decorate, decoration = {markings, mark = at position .15 with {\arrow[black,thick,scale=2]{>};}}},postaction = {decorate, decoration = {markings, mark = at position .85 with {\arrow[black,thick,scale=2]{>};}}}] (-5,0) .. controls (-3,-3) and (-2,-3) .. (0,0);

\draw[black,thick,smooth,postaction = {decorate, decoration = {markings, mark = at position .17 with {\arrow[black,thick,scale=2]{>};}}},postaction = {decorate, decoration = {markings, mark = at position .87 with {\arrow[black,thick,scale=2]{>};}}}] (0,0) .. controls (2,3) and (3,3) .. (5,0);
\draw[black,thick,smooth,postaction = {decorate, decoration = {markings, mark = at position .17 with {\arrow[black,thick,scale=2]{>};}}},postaction = {decorate, decoration = {markings, mark = at position .87 with {\arrow[black,thick,scale=2]{>};}}}] (0,0) .. controls (2,-3) and (3,-3) .. (5,0);

\node[left,scale=1.5] at (-5,0) {$-1$};
\node[below,scale=1.5] at (0,-0.2) {$0$};
\node[right,scale=1.5] at (5,0) {$1$};

\node[scale=2] at (-2.5,1.2) {$\mathcal{L}_L^{(+)}$};
\node[scale=2] at (-2.5,-1.1) {$\mathcal{L}_L^{(-)}$};
\node[scale=2] at (2.5,1.2) {$\mathcal{L}_R^{(+)}$};
\node[scale=2] at (2.5,-1.1) {$\mathcal{L}_R^{(-)}$};

\end{tikzpicture}

\end{center}
\vspace{-10mm}
\caption{\hspace{.05in}Lense regions $\mathcal{L}_{L,R}^{(\pm)}$.} \label{lenseFigure}
\end{figure}

Recall, from Proposition \ref{gproperties}, that $\Re[2g(z)+1]\geq0$ with equality only for $z\in(0,1)$ and $\Re[2g(z)-1]\leq0$ with equality only for $z\in(-1,0)$.  Our second and final transformation is
\begin{equation}\label{Z}
Z(z;\varkappa) :=
  \begin{cases}
   Y(z;\varkappa), & \text{$z$ outside the lenses} \\
   Y(z;\varkappa)\begin{bmatrix} 1 & 0 \\ \mp ie^{\varkappa(2g(z)-1)} & 1 \end{bmatrix}, & z\in\mathcal{L}_L^{(\pm)} \\
   Y(z;\varkappa)\begin{bmatrix} 1 & \mp\frac{1}{i}e^{-\varkappa(2g(z)+1)} \\ 0 & 1 \end{bmatrix}, & z\in\mathcal{L}_R^{(\pm)}.
  \end{cases}
\end{equation}
Since $Y(z;\varkappa)$ solves RHP \ref{YRHP}, it is a direct calculation to show $Z(z;\varkappa)$ solves the following RHP.

\begin{problem}\label{ZRHP}
Find a matrix $Z(z;\varkappa)$, analytic on the complement of the arcs of Figure \ref{lenseFigure}, satisfying the jump conditions
\begin{equation}
Z(z_+;\varkappa)=Z(z_-;\varkappa)
   \begin{cases}
   \begin{bmatrix} 1 & 0 \\ ie^{\varkappa(2g-1)} & 1 \end{bmatrix} & z\in \partial\mathcal{L}_L^{(\pm)}\setminus\mathbb{R}, \\
   \begin{bmatrix} 1 & \frac{1}{i}e^{-\varkappa(2g+1)} \\ 0 & 1 \end{bmatrix} & z\in\partial\mathcal{L}_R^{(\pm)}\setminus\mathbb{R}, \\
   -i\sigma_1 & z\in (-1,0), \\
   i\sigma_1 & z\in (0,1),
   \end{cases}
\end{equation}
normalized by
\begin{equation}
Z(z;\varkappa)=1+\BigO{z^{-1}}, \text{   as   }  z\to\infty,
\end{equation}
and with the same endpoint behavior as $Y(z;\varkappa)$ near the endpoints $z=0,\pm1$, see $\eqref{rhpYendpt2}$.

\end{problem}

The jumps for $Z(z;\varkappa)$ on $\partial\mathcal{L}_{L,R}^{(\pm)}$ will be exponentially small as long as $z$ is a fixed distance away from $0,\pm1$ due to Proposition $\ref{gproperties}$.  If we `ignore' the jumps on the lenses of the RHP for $Z(z;\varkappa)$, we obtain the so-called model RHP.

\begin{problem}\label{modelRHP}

Find a matrix $\Psi(z)$, analytic on $\overline{\mathbb{C}}\setminus [-1,1]$, and satisfying
\begin{align}
&\Psi_+(z)=\Psi_-(z)(-i\sigma_1), \text{   for   } z\in [-1,0], \\
&\Psi_+(z)=\Psi_-(z)(i\sigma_1), \text{   for   } z\in [0,1],  \\
&\Psi(z)=\BigO{|z\mp1|^{-\frac{1}{4}}}, \text{ as } z\to \pm1,  \\ \label{zero-beh}
&\Psi(z)=\BigO{|z|^{-\frac{1}{2}}}, \text{ as } z\to 0,       \\ 
&\Psi(z)=\1+\BigO{z^{-1}} \text{  as  } z\to\infty.
\end{align}

\end{problem}

Note that condition \eqref{zero-beh} does not guarantee the uniqueness of $\Psi(z)$.

\begin{theorem}\label{model_sol}
$\Psi(z)$ is a solution to RHP \ref{modelRHP} if and only if  there exist $x,y\in\mathbb{C}$ such that
\begin{equation}\label{solmod}
\Psi(z)=\left(\1+\frac{1}{z}\begin{bmatrix} x & -x \\ y & -y \end{bmatrix}\right)\left(\frac{z^2-1}{z^2}\right)^{\sigma_1/4}.
\end{equation}
\end{theorem}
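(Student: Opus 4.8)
The plan is to produce one explicit particular solution of RHP \ref{modelRHP}, then show that every solution differs from it by left multiplication by a rational factor which the growth condition at $z=0$ pins down to exactly the stated two-parameter family.

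\textbf{Step 1: a scalar building block and a particular solution.} Since the two jump matrices $-i\sigma_1$ and $i\sigma_1$ commute (they are scalar multiples of $\sigma_1$), I would diagonalize them by the constant involution $P:=\tfrac1{\sqrt2}(\sigma_1+\sigma_3)$, for which $P^2=\1$ and $P\sigma_1P=\sigma_3$. Conjugating turns the matrix problem into the scalar Riemann--Hilbert problem $\rho_+=-i\rho_-$ on $(-1,0)$, $\rho_+=i\rho_-$ on $(0,1)$, $\rho(\infty)=1$. Solving it by the Plemelj--Cauchy formula with $\log(\mp i)=\mp i\pi/2$ gives $\rho(z)=\bigl(\tfrac{z^2-1}{z^2}\bigr)^{1/4}$, with the branch that tends to $1$ at infinity; equivalently $W(z):=\bigl(\tfrac{z^2-1}{z^2}\bigr)^{\sigma_1/4}=P\,\mathrm{diag}(\rho,\rho^{-1})\,P$. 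A direct check using the jump of $\rho$ shows $W$ satisfies the two jump conditions of RHP \ref{modelRHP}. Moreover $\rho\sim\mathrm{const}\cdot(z\mp1)^{1/4}$ as $z\to\pm1$, $\rho\sim\mathrm{const}\cdot z^{-1/2}$ as $z\to0$, and $W=\1+\BigO{z^{-2}}$ at infinity, so $W$ also meets the endpoint and normalization requirements; hence $W$ is a particular solution, and since $\det W\equiv1$ it is invertible on $\overline{\mathbb C}\setminus[-1,1]$, with $W^{-1}=\bigl(\tfrac{z^2-1}{z^2}\bigr)^{-\sigma_1/4}$.

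\textbf{Step 2: reduction to a rational factor.} Given any solution $\Psi$, set $H(z):=\Psi(z)W(z)^{-1}$. Since $\Psi$ and $W$ carry the same jumps on $(-1,0)$ and $(0,1)$, $H$ continues analytically across those intervals, hence $H$ is analytic on $\mathbb C\setminus\{0,\pm1\}$. Near $\pm1$ both $\Psi$ and $W^{-1}$ are $\BigO{|z\mp1|^{-1/4}}$, so $H=\BigO{|z\mp1|^{-1/2}}$; being single-valued there, $H$ is in fact bounded, so those singularities are removable. Near $0$ both factors are $\BigO{|z|^{-1/2}}$, so $H=\BigO{|z|^{-1}}$ and $H$ has at most a simple pole at $0$. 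As $\Psi,W\to\1$ at infinity, $H\to\1$, so Liouville gives $H(z)=\1+\tfrac1z A$ for a constant matrix $A$; that is, $\Psi=\bigl(\1+\tfrac1z A\bigr)W$.

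\textbf{Step 3: pinning down $A$ and the converse.} Writing $W=\tfrac12\bigl(\rho\,\vec e_+\vec e_+^{\,t}+\rho^{-1}\vec e_-\vec e_-^{\,t}\bigr)$ with $\vec e_\pm=(1,\pm1)^t$, and noting $\rho^{-1}=\BigO{z^{1/2}}$ while $\rho=\BigO{z^{-1/2}}$ as $z\to0$, the condition $\Psi=\BigO{|z|^{-1/2}}$ at $z=0$ (see \eqref{zero-beh}) forces the $\BigO{z^{-3/2}}$ contribution of $\tfrac1z A W$ to vanish, i.e. $A\vec e_+\vec e_+^{\,t}=0$, i.e. $A\vec e_+=0$. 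Thus $A=\begin{bmatrix} x & -x\\ y & -y\end{bmatrix}$ for some $x,y\in\mathbb C$, which is exactly \eqref{solmod}. Conversely, for any $x,y$ this formula has the correct jumps (the rational prefactor is analytic across $(-1,0)\cup(0,1)$), equals $\1+\BigO{z^{-1}}$ at infinity, is $\BigO{|z\mp1|^{-1/4}}$ at $\pm1$, and — because $A\vec e_+=0$ makes $\tfrac1z AW=\tfrac1{2z\rho}A\vec e_-\vec e_-^{\,t}=\BigO{|z|^{-1/2}}$ — is $\BigO{|z|^{-1/2}}$ at $0$; hence it solves RHP \ref{modelRHP}.

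\textbf{Expected main obstacle.} Everything here is routine except Step 3: the delicate point is to use the weak $z=0$ bound \eqref{zero-beh} correctly to cut the a priori four-parameter family $\{\1+\tfrac1z A\}$ down to the two-parameter one. The only other place requiring care is the branch bookkeeping for $\rho$, i.e. confirming that $W$ genuinely carries the jumps $-i\sigma_1$, $i\sigma_1$ rather than their inverses.
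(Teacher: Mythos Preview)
Your proof is correct and follows essentially the same route as the paper: exhibit the particular solution $W=\bigl(\tfrac{z^2-1}{z^2}\bigr)^{\sigma_1/4}$, form the ratio $\Psi W^{-1}$, apply Liouville to write it as $\1+\tfrac{1}{z}A$, and then use the $\BigO{|z|^{-1/2}}$ bound at the origin to force $A\vec e_+=0$. Your write-up is in fact slightly more careful than the paper's, since you explicitly argue that the isolated singularities of $\Psi W^{-1}$ at $z=\pm1$ are removable via the $|z\mp1|^{-1/4}$ endpoint bounds.
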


\begin{proof}

The Sokhotski-Plemelj formula (see \cite{Gak66}) can be applied to this problem to obtain the solution 
\begin{equation}
\Psi_1(z)=\beta(z)^{\sigma_1}, ~ \text{ where } ~  \beta(z)=\left(\frac{z^2-1}{z^2}\right)^{1/4}.
\end{equation}
Take any solution to RHP \ref{modelRHP} (different from $\Psi_1(z)$) and call it $\Psi_2(z)$.  Then it can be seen that the matrix $\Psi_2(z)\Psi_1^{-1}(z)$ has no jumps in the complex plane, $\Psi_2(z)\Psi_1^{-1}(z)=\1+\BigO{z^{-1}}$ as $z\to\infty$ and $\Psi_2(z)\Psi_1^{-1}(z)=\BigO{z^{-1}}$ as $z\to0$.  Then it must be that 
\begin{equation}
\Psi_2(z)\Psi_1^{-1}(z)=\1+\frac{A}{z},
\end{equation}
where $A$ is a constant matrix.  Notice that 
\begin{equation}
\Psi_1(z)=\frac{\beta(z)}{2}\begin{bmatrix} 1 & 1 \\ 1 & 1 \end{bmatrix}+\frac{1}{2\beta(z)}\begin{bmatrix} 1 & -1 \\ -1 & 1 \end{bmatrix},
\end{equation}
so we have
\begin{equation}
\Psi_2(z)=\left(\1+\frac{A}{z}\right)\Psi_1(z)=\left(\1+\frac{A}{z}\right)\left(\frac{\beta(z)}{2}\begin{bmatrix} 1 & 1 \\ 1 & 1 \end{bmatrix}+\frac{1}{2\beta(z)}\begin{bmatrix} 1 & -1 \\ -1 & 1 \end{bmatrix}\right). 
\end{equation}
Since $\Psi_2(z)$ is a solution of RHP \ref{modelRHP}, it must be true that $\Psi_2(z)=\BigO{z^{-1/2}}$ as $z\to0$.  Thus the matrix $A$ must satisfy
\begin{equation}
A\cdot\begin{bmatrix} 1 & 1 \\ 1 & 1 \end{bmatrix}=\begin{bmatrix} 0 & 0 \\ 0 & 0 \end{bmatrix} ~ \implies ~ A=\begin{bmatrix} x & -x \\ y & -y \end{bmatrix}.
\end{equation}
It is easy now to check that \eqref{solmod} with any $x,y\in\mathbb{C}$ satisfies RHP \ref{modelRHP}.

\end{proof}

\begin{remark}\label{modelProbSym}
Assume $\Psi(z)$ is a solution of the RHP \ref{modelRHP}.  Then $\det\Psi(z)\equiv 1$ if and only if $y=x$ in the representation \eqref{solmod}.  If, additionally, $x\in i\mathbb{R}$ in this representation then $\Psi(z)$ has the symmetry
\begin{equation}
    \overline{\Psi(\overline{z})}=\sigma_1\Psi(z)\sigma_1.
\end{equation}
Both properties can be easily verified.
\end{remark}

\subsubsection{Approximation of $Z(z;\varkappa)$ and Main Result}

We will construct a piecewise (in $z$) approximation of $Z(z;\varkappa)$ when $\varkappa\to\infty$.  Our approach is very similar to that in \cite{BKT16}.  Denote by
$\mathbb{D}_j$ a disc of small radius $l$ centered at $j$, $j=0,\pm1$, where $l$ is chosen so that $\partial\mathbb{D}_0\subset\tilde{\Omega}$.  The idea is as follows: on the lenses $\mathcal{L}_{L,R}^{(\pm)}$ (see Figure \ref{lenseFigure}) outside the discs $\mathbb{D}_j$, $j=0,\pm1$, the jumps of $Z(z;\varkappa)$ are uniformly close to the identity matrix thus $\Psi_0(z;\varkappa)$ (a solution to model RHP, see \eqref{Psi}) is a `good' approximation of $Z(z;\varkappa)$.  Inside $\mathbb{D}_j$, $j=0,\pm1$, we construct local approximations that are commonly called `parametrices'.  The solution of the so-called Bessel RHP is necessary.

\begin{problem}\label{BesselRHP}
Let $\nu\in(0,\pi)$ be any fixed number.  Find a matrix $\mathcal{B}_\nu(\zeta)$ that is analytic off the rays $\mathbb{R}_-$, $e^{\pm i\theta}\mathbb{R}^+$ and satisfies the following conditions.
\begin{align}
&\mathcal{B}_{\nu+}(\zeta)=\mathcal{B}_{\nu-}(\zeta)\begin{bmatrix} 1 & 0 \\ e^{-4\sqrt{\zeta}\pm i\pi\nu} & 1 \end{bmatrix}, ~ \zeta\in e^{\pm i\theta}\mathbb{R}_+ \\
&\mathcal{B}_{\nu+}(\zeta)=\mathcal{B}_{\nu-}(\zeta)\begin{bmatrix} 0 & 1 \\ -1 & 0 \end{bmatrix}, ~ \zeta\in\mathbb{R}_- \\
&\mathcal{B}_{\nu}(\zeta)=\BigO{\zeta^{-\frac{|\nu|}{2}}} \text{ for } \nu\neq0 \text{ or } \BigO{\log{\zeta}} \text{ for } \nu=0 \text{ as } \zeta\to0, \\
&\mathcal{B}_{\nu}(\zeta)=F(\zeta)\left(1+\BigO{\frac{1}{\sqrt{\zeta}}}\right) \text{ as } \zeta\to\infty, ~~ \text{where}~~ F(\zeta):=(2\pi)^{-\sigma_3/2}\zeta^{-\frac{\sigma_3}{4}}\frac{1}{\sqrt{2}}\begin{bmatrix} 1 & -i \\ -i & 1 \end{bmatrix}. \label{Binf_behavior}
\end{align}
\end{problem}

This RHP has an explicit solution in terms of Bessel functions and can be found in \cite{Van03}.  Define local coordinates at points $z=\pm1$ as 
\begin{align}
-4\sqrt{\xi_{-1}(z)}=&\varkappa(2g(z)-1), ~ \text{ for }z\in\mathbb{D}_{-1}, \label{l_coord-1} \\
4\sqrt{\xi_1(z)}=&\varkappa(2g(z)+1), ~ \text{ for }z\in\mathbb{D}_1. \label{l_coord1}
\end{align}
We call $\tilde{Z}(z;\varkappa)$ our approximation of $Z(z;\varkappa)$ and define
\begin{equation}\label{tildeZ}
\tilde{Z}(z;\varkappa):=
\begin{cases}
\Psi_0(z;\varkappa), &  z \in \mathbb{C}\setminus \ds{\bigcup_{j=-1}^1}\mathbb{D}_j, \\
       \Psi_0(z;\varkappa)i^{-\frac{\sigma_3}{2}}F^{-1}(\xi_{-1})\mathcal{B}_0(\xi_{-1})i^{\frac{\sigma_3}{2}}, &  z \in \mathbb{D}_{-1}, \\
       Z(z;\varkappa), &  z \in \mathbb{D}_0, \\
       \Psi_0(z;\varkappa)i^{-\frac{\sigma_3}{2}}\sigma_1F^{-1}(\xi_1)\mathcal{B}_0(\xi_1)\sigma_1i^{\frac{\sigma_3}{2}}, &  z \in \mathbb{D}_1.
\end{cases}
\end{equation}

\begin{remark}\label{tildeZjumps}
    The matrix $\tilde{Z}(z;\varkappa)$ was constructed to have exactly the same jumps as $Z(z;\varkappa)$ when $z\in\mathbb{D}_{0,\pm1}\cup[-1,1]$.  For 
    more details see \cite{BKT16}, section 4.3.
\end{remark}

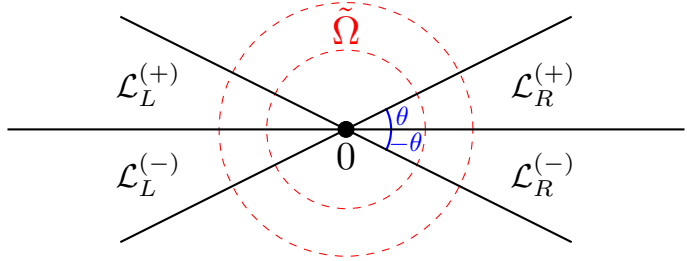
\begin{figure}[H]
\begin{center}
\begin{tikzpicture}[scale=1.5]

\draw[blue,thick] (0.4,0) arc (0:28:0.4);
\node[blue,scale=1.0] at (0.5,0.12) {$\theta$};

\draw[blue,thick] (0.4,0) arc (0:-28:0.4);
\node[blue,scale=1.0] at (0.53,-0.12) {$-\theta$};

\draw[dashed,red] (0,0) circle [radius=32pt];
\draw[dashed,red] (0,0) circle [radius=20pt];

\draw[black,thick] (-3,0) -- (3,0);

\draw[black,thick] (-2,1) -- (2,-1);
\draw[black,thick] (-2,-1) -- (2,1);

\filldraw[black] (0,0) circle [radius=2pt];

\node[below,scale=1.5] at (0,-0.0) {$0$};
\node[red,scale=1.5] at (0,0.92) {$\tilde{\Omega}$};
\node[scale=1.25] at (-1.75,0.4) {$\mathcal{L}_L^{(+)}$};
\node[scale=1.25] at (-1.75,-0.4) {$\mathcal{L}_L^{(-)}$};
\node[scale=1.25] at (1.75,0.4) {$\mathcal{L}_R^{(+)}$};
\node[scale=1.25] at (1.75,-0.4) {$\mathcal{L}_R^{(-)}$};

\end{tikzpicture}
\end{center}
\caption{\hspace{.05in}The set $\tilde{\Omega}$ and lenses.}\label{figLenseOmegaTilde}
\end{figure}

\begin{corollary}\label{ZAsymptotics}
\begin{equation}
    Z(z;\varkappa)=\Psi_0(z;\varkappa)\left(\1+\BigO{\frac{M^2}{\varkappa}}\right)~~~\text{as}~~~\varkappa\to\infty
\end{equation}
uniformly in $z\in\tilde{\Omega}$,  

\end{corollary}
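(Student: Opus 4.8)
The plan is to unwind the two transformations $\Gamma\mapsto Y\mapsto Z$ of \eqref{Y} and \eqref{Z} and to substitute the $\lambda\to0$ asymptotics of $\Gamma(z;\lambda)$ furnished by Theorem \ref{GammaAsmptotics}. First I would fix the arcs $\partial\mathcal{L}_{L,R}^{(\pm)}$ (as in Figure \ref{figLenseOmegaTilde}) so that, inside $\tilde\Omega$, they run along the rays $\arg z=\pm\theta$ and $\arg z=\pi\mp\theta$; then the three angular sectors appearing in Theorem \ref{GammaAsmptotics} are, within $\tilde\Omega$, exactly the exterior of the lenses ($\theta<|\arg z|<\pi-\theta$), the interior of $\mathcal{L}_L^{(\pm)}$ ($\pi-\theta<\arg z<\pi+\theta$), and the interior of $\mathcal{L}_R^{(\pm)}$ ($-\theta<\arg z<\theta$). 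I also note that $\tilde\Omega$ is a small annulus avoiding $0$ and lying far from $\pm1$, so no parametrices intervene in this region.

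I would first treat $z$ in the exterior of the lenses, where \eqref{Y} and \eqref{Z} give $Z(z;\varkappa)=Y(z;\varkappa)=\Gamma(z;e^{-\varkappa})\,e^{\varkappa g(z)\sigma_3}$, while Theorem \ref{GammaAsmptotics} gives $\Gamma(z;e^{-\varkappa})=\Psi_0(z;\varkappa)\big(\1+\BigO{M^2/\varkappa}\big)e^{-\varkappa g(z)\sigma_3}$; the two factors $e^{\pm\varkappa g(z)\sigma_3}$ cancel, so $Z(z;\varkappa)=\Psi_0(z;\varkappa)\big(\1+\BigO{M^2/\varkappa}\big)$, which is the claim. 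For $z\in\mathcal{L}_L^{(\pm)}$, \eqref{Z} gives $Z=\Gamma\,e^{\varkappa g\sigma_3}\left[\begin{smallmatrix}1&0\\ \mp i e^{\varkappa(2g-1)}&1\end{smallmatrix}\right]$, and substituting the matching case $\Gamma=\Psi_0(\1+\BigO{M^2/\varkappa})\left[\begin{smallmatrix}1&0\\ \pm i e^{\varkappa(2g-1)}&1\end{smallmatrix}\right]e^{-\varkappa g\sigma_3}$ of Theorem \ref{GammaAsmptotics}, the $e^{\pm\varkappa g\sigma_3}$ again cancel and the two unipotent lower-triangular matrices are mutually inverse, giving once more $Z=\Psi_0(\1+\BigO{M^2/\varkappa})$. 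The region $\mathcal{L}_R^{(\pm)}$ is handled identically, now with upper-triangular unipotent factors built out of $e^{-\varkappa(2g+1)}$ and using $1/i=-i$ to reconcile the normalizations in \eqref{Z} and in Theorem \ref{GammaAsmptotics}. Since Theorem \ref{GammaAsmptotics} is uniform in $z\in\tilde\Omega$ and every matrix factor occurring in these manipulations is uniformly bounded there, the resulting estimate for $Z$ is uniform in $\tilde\Omega$.

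I do not expect a genuine obstacle: the statement is a corollary of Theorem \ref{GammaAsmptotics} in the literal sense, all the analytic work residing in Theorem \ref{GammaAsmptotics} and, beneath it, Theorem \ref{result}. The only point requiring real attention is the bookkeeping of signs, since the $\pm$ in \eqref{Z} labels the lens half $\mathcal{L}^{(\pm)}$ while the $\pm$ in Theorem \ref{GammaAsmptotics} labels the sign of $\Im\lambda$; in each of the four sub-cases one must verify, using the boundary relations $g_++g_-=1$ on $[-1,0]$ and $g_++g_-=-1$ on $[0,1]$ from Proposition \ref{gproperties}, that the triangular factor produced by Theorem \ref{GammaAsmptotics} is exactly the inverse of the one inserted via \eqref{Z}. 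One also invokes Proposition \ref{gproperties}, items (3) and (4), to see that $|e^{\varkappa(2g-1)}|$ and $|e^{-\varkappa(2g+1)}|$ remain uniformly bounded on $\tilde\Omega$, so that nothing in the error factor $\1+\BigO{M^2/\varkappa}$ gets amplified; in fact the cancellation of the triangular factors is exact, so the error simply passes through unchanged.
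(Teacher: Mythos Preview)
Your approach is exactly the paper's: the authors' proof consists of the single sentence ``apply the transformations $\Gamma\to Y\to Z$ to Theorem \ref{GammaAsmptotics},'' and you carry this out region by region. One small caveat: the $\pm$ in Theorem \ref{GammaAsmptotics} is tied to the sign of $\Im\lambda$, not to the lens half $\mathcal{L}^{(\pm)}$, so the triangular factors are not literally mutual inverses in all four sub-cases; however, the residual off-diagonal entry that survives is $ie^{-\varkappa(2g+1)}$ (in the left sector) or $ie^{\varkappa(2g-1)}$ (in the right sector), which by Proposition \ref{gproperties} is exponentially small on $\tilde\Omega$ away from the relevant interval and hence absorbed into the $\BigO{M^2/\varkappa}$ error. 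So the conclusion stands, and your overall argument is sound.
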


\begin{proof}
    In Theorem \ref{GammaAsmptotics}, we obtained the leading order behavior of $\Gamma(z;\lambda)$ for $z\in\tilde{\Omega}$ as $\lambda\to0$.  This Theorem can easily be written in terms of $Z(z;\varkappa)$ instead of $\Gamma(z;\lambda)$ by applying the transformations (see section \ref{RHPtrans}) $\Gamma\to Y\to Z$.
\end{proof}

Define the error matrix as 
\begin{equation}\label{errorMatrix}
\mathcal{E}(z;\varkappa):=Z(z;\varkappa)\tilde{Z}^{-1}(z;\varkappa),
\end{equation}
It is clear that $\mathcal{E}(z;\varkappa)=\1+\BigO{z^{-1}}$ as $z\to\infty$ since both $Z(z;\varkappa),\tilde{Z}(z;\varkappa)$ have this behavior.  $\mathcal{E}(z;\varkappa)$ has no jumps inside $\mathbb{D}_{-1,0,1}$ because $\tilde{Z}(z;\varkappa)$ was constructed to have the same jumps as $Z(z;\varkappa)$ inside $\mathbb{D}_{-1,0,1}$, see Remark \ref{tildeZjumps}.  Thus $\mathcal{E}(z;\varkappa)$ will have jumps on $\partial\mathbb{D}_{-1,0,1}$, $\partial\mathcal{L}_{L,R}^{(\pm)}\setminus\mathbb{D}_{0,\pm1}$, and be analytic elsewhere.  Explicitly,  
\begin{equation}\label{Ejumps}
    \mathcal{E}(z_+;\varkappa)=\mathcal{E}(z_-;\varkappa)
    \begin{cases}
        \Psi_0(z;\varkappa)i^{-\frac{\sigma_3}{2}}F^{-1}(\xi_{-1})B_0(\xi_{-1})i^{\frac{\sigma_3}{2}}\Psi_0^{-1}(z;\varkappa), &z\in\partial\mathbb{D}_{-1}, \\
        
        \1+\Psi_0(z;\varkappa)\begin{bmatrix} 0 & 0 \\ ie^{\varkappa(2g(z)-1)} & 0 \end{bmatrix}\Psi_0^{-1}(z;\varkappa), &z\in\partial\mathcal{L}_L^{(\pm)}\setminus\mathbb{D}_{-1,0}, \\
        
        Z(z;\varkappa)\Psi_0^{-1}(z;\varkappa), &z\in\partial\mathbb{D}_0, \\
        
        \1+\Psi_0(z;\varkappa)\begin{bmatrix} 0 & -ie^{-\varkappa(2g(z)+1)} \\ 0 & 0 \end{bmatrix}\Psi_0^{-1}(z;\varkappa), &z\in\partial\mathcal{L}_R^{(\pm)}\setminus\mathbb{D}_{0,1}, \\
        
        \Psi_0(z;\varkappa)i^{-\frac{\sigma_3}{2}}\sigma_1F^{-1}(\xi_{1})B_0(\xi_{1})\sigma_1i^{\frac{\sigma_3}{2}}\Psi_0^{-1}(z;\varkappa), &z\in\partial\mathbb{D}_{1}.
    \end{cases}
\end{equation}
Call $\Sigma$ the collection of arcs where $\mathcal{E}(z;\varkappa)$ has a jump, as described in Figure \ref{errorJumpsFig}.

\begin{figure}[H]
\begin{center}
\begin{tikzpicture}[scale=1.25]

\filldraw[black] (-3,0) circle [radius=1.5pt] node[anchor=south east] {-1};
\filldraw[black] (0,0) circle [radius=1.5pt] node[anchor=south east] {0};
\filldraw[black] (3,0) circle [radius=1.5pt] node[anchor=south east] {1};

\draw[black,dashed,postaction = decorate, decoration = {markings, mark = at position .54 with {\arrow[black,thick]{<};}}] (-3,0) circle [radius=14pt];
\draw[black,dashed,postaction = decorate, decoration = {markings, mark = at position .54 with {\arrow[black,thick]{<};}}] (0,0) circle [radius=14pt];
\draw[black,dashed,postaction = decorate, decoration = {markings, mark = at position .54 with {\arrow[black,thick]{<};}}] (3,0) circle [radius=14pt];

\draw [smooth,dashed,postaction = decorate, decoration = {markings, mark = at position .54 with {\arrow[black,thick]{>};}}] (-2.8,0.5) .. controls (-1.5,1.5) .. (-0.2,0.5);
\draw [smooth,dashed,postaction = decorate, decoration = {markings, mark = at position .54 with {\arrow[black,thick]{>};}}] (0.2,0.5) .. controls (1.5,1.5) .. (2.8,0.5);
\draw [smooth,dashed,postaction = decorate, decoration = {markings, mark = at position .54 with {\arrow[black,thick]{>};}}] (-2.8,-0.5) .. controls (-1.5,-1.5) .. (-0.2,-0.5);
\draw [smooth,dashed,postaction = decorate, decoration = {markings, mark = at position .54 with {\arrow[black,thick]{>};}}] (0.2,-0.5) .. controls (1.5,-1.5) .. (2.8,-0.5);

\end{tikzpicture}
\end{center}
\vspace{-5mm}
\caption{\hspace{.05in}The contour $\Sigma$, where $\mathcal{E}(z;\varkappa)$ has jumps.}\label{errorJumpsFig}
\end{figure}
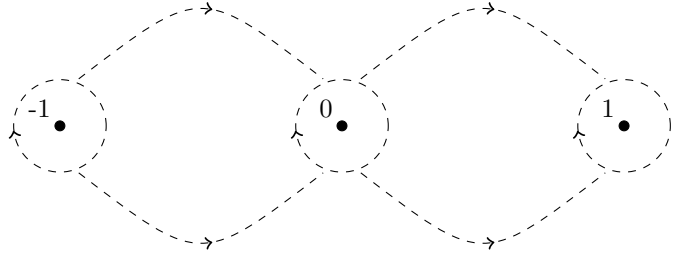

\begin{remark}
    The factors $e^{-\varkappa(2g(z)+1)}$ and $e^{\varkappa(2g(z)-1)}$ in \eqref{EjumpLargeKappa} are exponentially small for all $z$ in the corresponding set, in light of Proposition \ref{gproperties}.
\end{remark}

Now revisiting the jumps of $\mathcal{E}(z;\varkappa)$ in \eqref{Ejumps}, we have another Corollary.

\begin{corollary}\label{EjumpAsympCor}
For any $z\in\Sigma$,
    \begin{equation}\label{EjumpLargeKappa}
        \mathcal{E}(z_+;\varkappa)=\mathcal{E}(z_-;\varkappa)
        \begin{cases}
            \1+\BigO{\frac{M^2}{\varkappa}}, &z\in \partial\mathbb{D}_{-1,0,1}, \\
            \1+\BigO{e^{\varkappa(2g(z)-1)}}, &z\in\partial\mathcal{L}_L^{(\pm)}\setminus\mathbb{D}_{-1,0}, \\
            \1+\BigO{e^{-\varkappa(2g(z)+1)}}, &z\in\partial\mathcal{L}_R^{(\pm)}\setminus\mathbb{D}_{0,1},
        \end{cases}~~~\text{as}~~~\varkappa\to\infty
    \end{equation}
    uniformly in $z\in\Sigma$.
\end{corollary}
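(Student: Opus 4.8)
The plan is to estimate the five cases of the jump matrix of $\mathcal{E}(z;\varkappa)$ spelled out in \eqref{Ejumps} one at a time, using throughout that the ``outer'' factors occurring there are bounded, with bounded inverse, uniformly in $\varkappa$, on the relevant piece of $\Sigma$. First I would record this boundedness. By \eqref{Psi}, $\Psi_0(z;\varkappa)$ equals either $\Phi(z)$ or $\sigma_1\Phi(z)\sigma_1$, and the closed form \eqref{Phi} shows that $\Phi$ is analytic with $\det\Phi(z)\equiv1$ on $\overline{\mathbb{C}}\setminus\{0,\pm1\}$; since each of the arcs $\partial\mathbb{D}_{0,\pm1}$ and $\partial\mathcal{L}_{L,R}^{(\pm)}\setminus\mathbb{D}_{0,\pm1}$ lies at a fixed positive distance from $\{0,\pm1\}$, the matrices $\Psi_0^{\pm1}$, together with the fixed factors $i^{\pm\sigma_3/2}$ and $\sigma_1$, are bounded there independently of $\varkappa$.

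Next I would dispose of the lens arcs. On $\partial\mathcal{L}_L^{(\pm)}\setminus\mathbb{D}_{-1,0}$ the second line of \eqref{Ejumps} presents the jump as $\1$ plus a $\Psi_0$-conjugate of the rank-one nilpotent matrix whose only nonzero entry is $ie^{\varkappa(2g(z)-1)}$. By Proposition~\ref{gproperties}(3), $\Re(2g(z)-1)<0$ off $[-1,0]$, hence it is bounded above by a fixed negative constant on this compact arc, so $e^{\varkappa(2g(z)-1)}$ is exponentially small in $\varkappa$, and conjugation by the bounded $\Psi_0(\cdot;\varkappa)$ keeps the jump equal to $\1+\BigO{e^{\varkappa(2g(z)-1)}}$. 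The arcs $\partial\mathcal{L}_R^{(\pm)}\setminus\mathbb{D}_{0,1}$ are handled identically via Proposition~\ref{gproperties}(4) and the factor $e^{-\varkappa(2g(z)+1)}$.

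Then I would handle the three parametrix circles. On $\partial\mathbb{D}_0$ the jump is $Z(z;\varkappa)\Psi_0^{-1}(z;\varkappa)$; since the radius $l$ was chosen so that $\partial\mathbb{D}_0\subset\tilde\Omega$, Corollary~\ref{ZAsymptotics} gives $Z(z;\varkappa)=\Psi_0(z;\varkappa)(\1+\BigO{M^2/\varkappa})$ uniformly on $\partial\mathbb{D}_0$, so $Z\Psi_0^{-1}=\1+\Psi_0\,\BigO{M^2/\varkappa}\,\Psi_0^{-1}=\1+\BigO{M^2/\varkappa}$ by the boundedness of $\Psi_0^{\pm1}$. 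On $\partial\mathbb{D}_{\pm1}$ I would use the local coordinates \eqref{l_coord-1}, \eqref{l_coord1}: on $\partial\mathbb{D}_1$ one has $4\sqrt{\xi_1(z)}=\varkappa(2g(z)+1)$, where $2g(z)+1$ vanishes only at $z=1$ (to order $1/2$, by Proposition~\ref{gproperties}(4)) and hence is bounded and bounded away from $0$ on the circle $\partial\mathbb{D}_1$, so $\xi_1(z)^{-1/2}=\BigO{\varkappa^{-1}}$ uniformly there; likewise $\xi_{-1}(z)^{-1/2}=\BigO{\varkappa^{-1}}$ on $\partial\mathbb{D}_{-1}$. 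Substituting this into the large-argument expansion \eqref{Binf_behavior}, $\mathcal{B}_0(\xi)=F(\xi)(\1+\BigO{\xi^{-1/2}})$, gives $F^{-1}(\xi_{\pm1})\mathcal{B}_0(\xi_{\pm1})=\1+\BigO{\varkappa^{-1}}$, and conjugating by the bounded matrices of the first and last lines of \eqref{Ejumps} shows those jumps equal $\1+\BigO{\varkappa^{-1}}=\1+\BigO{M^2/\varkappa}$, since $M$ is fixed. Collecting the five estimates gives the statement, and uniformity in $z\in\Sigma$ is automatic because $\Sigma$ is a finite union of compact arcs on each of which the above bounds hold uniformly.

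I do not expect a serious obstacle here: the genuinely new input is Theorem~\ref{GammaAsmptotics} / Corollary~\ref{ZAsymptotics}, which controls $\Gamma$ in the annulus around the common endpoint $z=0$ and thereby pins down the jump on $\partial\mathbb{D}_0$; everything else is the standard Deift--Zhou bookkeeping---exponentially small jumps on the lenses and $\BigO{\varkappa^{-1}}$ matching on the parametrix circles. The only points needing a little care are the uniform invertibility of $\Psi_0$ along $\Sigma$ and the uniform lower bounds for $|2g(z)-1|$ on $\partial\mathbb{D}_{-1}$ and $|2g(z)+1|$ on $\partial\mathbb{D}_1$, both of which follow from the explicit formulas \eqref{Phi}, \eqref{gFunc} and Proposition~\ref{gproperties}.
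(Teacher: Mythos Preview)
Your proof is correct and follows essentially the same approach as the paper: the lens estimates come from direct inspection of \eqref{Ejumps} together with Proposition~\ref{gproperties}, the estimate on $\partial\mathbb{D}_0$ comes from Corollary~\ref{ZAsymptotics}, and the estimates on $\partial\mathbb{D}_{\pm1}$ come from the large-argument behavior \eqref{Binf_behavior} of the Bessel parametrix. The paper's proof is much terser but invokes exactly these ingredients; your version simply spells out the boundedness of $\Psi_0^{\pm1}$ on $\Sigma$ and the scaling $\xi_{\pm1}^{-1/2}=\BigO{\varkappa^{-1}}$ explicitly.
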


\begin{proof}
    The behavior for $z\in\partial\mathbb{D}_{\pm1},\partial\mathbb{D}_{0}$ is a direct consequence of \eqref{Binf_behavior}, Corollary \ref{ZAsymptotics}, respectively.  The behavior on the lenses is clear via inspection of \eqref{Ejumps}.
\end{proof}

\begin{corollary}\label{ZAsympUni}
Let $J$ be a compact subset of  $\mathbb{C}\setminus\{-1,0,1\}$, where we distinguish the points on the upper and lower sides of $(-1,0)$ and $(0,1)$. Then we have the approximation
    \begin{equation}
        Z(z;\varkappa)=\Psi_0(z;\varkappa)\left(\1+\BigO{\frac{M^2}{\varkappa}}\right) ~~~\text{as}~~\varkappa\to\infty
    \end{equation}
    uniformly in $z\in J$.
\end{corollary}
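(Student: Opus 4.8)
The plan is to run the standard Deift--Zhou small--norm argument for the error matrix $\mathcal{E}(z;\varkappa)$ introduced in \eqref{errorMatrix}, with Corollary \ref{EjumpAsympCor} supplying the only nontrivial input, and then to unwind the chain of transformations on the compact set $J$. First I would record that $\mathcal{E}(z;\varkappa)$ solves a Riemann--Hilbert problem on the contour $\Sigma$ of Figure \ref{errorJumpsFig}: it is analytic off $\Sigma$, it is normalized by $\mathcal{E}=\1+\BigO{z^{-1}}$ as $z\to\infty$, and, writing $W:=V_{\mathcal E}-\1$ for the deviation of its jump matrix from the identity, Corollary \ref{EjumpAsympCor} together with the compactness (finite length) of $\Sigma$ gives
\[
\|W(\cdot;\varkappa)\|_{L^\infty(\Sigma)\cap L^2(\Sigma)}=\BigO{M^2/\varkappa}\qquad\text{as }\varkappa\to\infty,
\]
the contributions on $\partial\mathcal L_{L,R}^{(\pm)}\setminus\mathbb D_{0,\pm1}$ being even exponentially small in $\varkappa$ and hence harmlessly absorbed.

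Second, I would invoke the small--norm theory of Riemann--Hilbert problems (as in \cite{DZ93,BKT16}). The solution, when it exists, has the integral representation
\[
\mathcal E(z;\varkappa)=\1+\frac{1}{2\pi i}\int_\Sigma\frac{\mu(s;\varkappa)\,W(s;\varkappa)}{s-z}\,ds,
\]
where $\mu:=\mathcal E_-$ is the unique solution of $(\mathrm{Id}-C_W)\mu=\1$, $C_W$ being the standard Cauchy integral operator $f\mapsto C_-(fW)$ on $L^2(\Sigma)$ and $C_-$ the boundary Cauchy projection. Since $\|W\|_{L^\infty\cap L^2}\to0$, the operator $\mathrm{Id}-C_W$ is invertible for all large $\varkappa$ with operator norm of the inverse bounded uniformly in $\varkappa$; hence $\mathcal E$ exists, is unique, $\|\mu-\1\|_{L^2(\Sigma)}=\BigO{M^2/\varkappa}$, and estimating the Cauchy integral yields $\mathcal E(z;\varkappa)=\1+\BigO{M^2/\varkappa}$ uniformly for $z$ at any fixed positive distance from $\Sigma$, while the boundary values satisfy $\mathcal E_\pm(z;\varkappa)=\1+\BigO{M^2/\varkappa}$ uniformly on $\Sigma$.

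Third, I would unwind the transformations on $J$. Given a compact $J\subset\C\setminus\{-1,0,1\}$, I would first choose the radius $l$ of $\mathbb D_{-1,0,1}$ small and the lens contours $\partial\mathcal L_{L,R}^{(\pm)}$ thin enough that the resulting $\Sigma$ is disjoint from $J$; this is possible because $J$ avoids $\{-1,0,1\}$ and because $\mathcal E$ has no jump across the open segments $(-1,0)$ and $(0,1)$ themselves — there the jumps $\mp i\sigma_1$ of $Z$ and of $\Psi_0=\tilde Z$ cancel in $Z\tilde Z^{-1}$ — so the two--sided points of these segments that may belong to $J$ still lie in a region where $\mathcal E$ is analytic, and the distance-to-$\Sigma$ estimate applies to them as well. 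On such a $J$ we are outside all three discs, so $\tilde Z(z;\varkappa)=\Psi_0(z;\varkappa)$ by \eqref{tildeZ}, and therefore
\[
Z(z;\varkappa)=\mathcal E(z;\varkappa)\tilde Z(z;\varkappa)=\Psi_0(z;\varkappa)\Bigl(\1+\Psi_0^{-1}(z;\varkappa)\bigl(\mathcal E(z;\varkappa)-\1\bigr)\Psi_0(z;\varkappa)\Bigr).
\]
Since $\det\Psi_0\equiv1$ (the representation \eqref{solmod} underlying both choices in \eqref{Psi} has $x=y$, so Remark \ref{modelProbSym} applies) and $\Psi_0$ is bounded on $J$ — its only singularities, at $0$ and $\pm1$, being excluded from $J$ — both $\Psi_0$ and $\Psi_0^{-1}$ are uniformly bounded on $J$, whence $\Psi_0^{-1}(\mathcal E-\1)\Psi_0=\BigO{M^2/\varkappa}$ uniformly on $J$. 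This is the asserted estimate.

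The genuinely hard content — the parametrix at $z=0$ coming from the uniform hypergeometric asymptotics of Section \ref{secGammaAsymp} and the Bessel parametrices at $z=\pm1$ — has already been packaged into Corollary \ref{EjumpAsympCor}, so I expect no real obstacle here; the argument is essentially that of \cite{BKT16}. The only points needing attention are (i) the uniform-in-$\varkappa$ $L^2\cap L^\infty$ smallness of $W$, which is immediate from Corollary \ref{EjumpAsympCor} and the finiteness of $\Sigma$, and (ii) the bookkeeping that lets $\Sigma$ be taken disjoint from an arbitrary compact $J$, including $J$-points on the cuts (handled, as above, by the absence of a $\mathcal E$-jump across $(-1,0)\cup(0,1)$).
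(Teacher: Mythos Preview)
Your proposal is correct and follows essentially the same approach as the paper: the paper's own proof simply chooses the discs $\mathbb{D}_{0,\pm1}$ small enough to miss $J$, invokes Corollaries \ref{ZAsymptotics} and \ref{EjumpAsympCor} together with the standard small--norm theorem (citing \cite{Dei00}, Theorem 7.171) to conclude $\mathcal{E}(z;\varkappa)=\1+\BigO{M^2/\varkappa}$ uniformly on $J$, and declares this equivalent to the statement. Your write--up is a fleshed--out version of exactly this argument, with the additional (correct and useful) bookkeeping that $\mathcal{E}$ has no jump across $(-1,0)\cup(0,1)$ and that the lens boundaries can also be adjusted to avoid $J$.
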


\begin{proof}
    Given $J$, the disks $\mathbb{D}_{0,\pm1}$ can be taken sufficiently small in order to not intersect $J$.  Corollaries \ref{ZAsymptotics}, \ref{EjumpAsympCor} and the so-called small norm theorem, see \cite{Dei00} Theorem 7.171, can now be applied to conclude that $\mathcal{E}(z;\varkappa)=\1+\BigO{\frac{M^2}{\varkappa}}$ uniformly for $z\in J$.  This is equivalent to the stated result.    
    
\end{proof}

We are now ready to prove the main result of this section.

\begin{theorem}\label{GammaAsympMainResult}
In the limit $\lambda=e^{-\varkappa}\to0$\emph{:} 
\begin{enumerate}
    \item For $z$ in compact subsets of $\mathbb{C}\setminus[-1,1]$ we have the uniform approximation 
    \begin{equation}
        \Gamma(z;\lambda) =
        \Psi_0(z;\varkappa)\left(\1+\BigO{\frac{M^2}{\varkappa}}\right)e^{-\varkappa g(z)\sigma_3};
    \end{equation}
    \item For $z$ in compact subsets of $(-1,0)\cup(0,1)$ we have the uniform approximation 
    \begin{equation}
        \Gamma(z_\pm;\lambda) =
        \begin{cases}
        \Psi_0(z_\pm;\varkappa)\left(\1+\BigO{\frac{M^2}{\varkappa}}\right)\begin{bmatrix} 1 & 0 \\ \pm ie^{\varkappa(2g(z_\pm)-1)} & 1 \end{bmatrix}e^{-\varkappa g(z_\pm)\sigma_3}, & z\in(-1,0), \\
        \Psi_0(z_\pm;\varkappa)\left(\1+\BigO{\frac{M^2}{\varkappa}}\right)\begin{bmatrix} 1 & \mp ie^{-\varkappa(2g(z_\pm)+1)} \\ 0 & 1 \end{bmatrix}e^{-\varkappa g(z_\pm)\sigma_3}, &z\in(0,1),
        \end{cases}
    \end{equation}
\end{enumerate}
where $\pm$ denotes the upper/lower shore of the real axis in the $z$-plane.
\end{theorem}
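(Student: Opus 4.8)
The plan is to derive both statements by unwinding the explicit chain of transformations $\Gamma\mapsto Y\mapsto Z$ introduced in Section \ref{RHPtrans} and then invoking Corollary \ref{ZAsympUni}, which already carries the analytic core of the argument. Inverting \eqref{Y} gives $\Gamma(z;\lambda)=Y(z;\varkappa)e^{-\varkappa g(z)\sigma_3}$ with $\lambda=e^{-\varkappa}$, and inverting \eqref{Z} expresses $Y(z;\varkappa)$ in terms of $Z(z;\varkappa)$ via a triangular factor that depends only on which lens region (if any) contains $z$. Since Corollary \ref{ZAsympUni} supplies $Z(z;\varkappa)=\Psi_0(z;\varkappa)\left(\1+\BigO{M^2/\varkappa}\right)$ uniformly on every compact subset of $\C\setminus\{-1,0,1\}$ — with the upper and lower shores of $(-1,0)$ and $(0,1)$ treated as distinct — each of the two cases reduces to a substitution.

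For item 1, I would fix a compact set $J\subset\C\setminus[-1,1]$ and shrink the lenses $\mathcal{L}_{L,R}^{(\pm)}$ and the disks $\mathbb{D}_{0,\pm1}$ so that $J$ lies outside all of them; then $Z(z;\varkappa)=Y(z;\varkappa)$ on $J$ by \eqref{Z}, and combining this with $\Gamma(z;\lambda)=Y(z;\varkappa)e^{-\varkappa g(z)\sigma_3}$ and Corollary \ref{ZAsympUni} yields $\Gamma(z;\lambda)=\Psi_0(z;\varkappa)\left(\1+\BigO{M^2/\varkappa}\right)e^{-\varkappa g(z)\sigma_3}$, uniformly on $J$.

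For item 2, I would fix a compact $J\subset(-1,0)$ and treat $z_+$ first (the cases $z_-$ and $J\subset(0,1)$ being identical after relabeling). For $z$ approached from above we have $z\in\mathcal{L}_L^{(+)}$, so \eqref{Z} reads $Z(z;\varkappa)=Y(z;\varkappa)\left[\begin{smallmatrix}1&0\\-ie^{\varkappa(2g(z)-1)}&1\end{smallmatrix}\right]$; inverting this factor and using $\Gamma=Ye^{-\varkappa g\sigma_3}$ gives $\Gamma(z_+;\lambda)=Z(z_+;\varkappa)\left[\begin{smallmatrix}1&0\\ie^{\varkappa(2g(z_+)-1)}&1\end{smallmatrix}\right]e^{-\varkappa g(z_+)\sigma_3}$, and substituting Corollary \ref{ZAsympUni} (with the upper boundary value $\Psi_0(z_+;\varkappa)$) produces the stated formula. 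The analogous manipulation on $\mathcal{L}_R^{(\pm)}$, using $\tfrac1i=-i$, handles $J\subset(0,1)$. Proposition \ref{gproperties}, parts (3)--(4), guarantees that $e^{\varkappa(2g(z)-1)}$ and $e^{-\varkappa(2g(z)+1)}$ stay bounded on $J$, so the triangular factor does not spoil the $\BigO{M^2/\varkappa}$ error.

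I expect the genuine difficulty — uniform control of the error matrix $\mathcal{E}(z;\varkappa)$ up to the shores of the cut and near (but outside) the endpoints — to be entirely absorbed into Corollaries \ref{ZAsymptotics}, \ref{EjumpAsympCor} and \ref{ZAsympUni} through the parametrix construction \eqref{tildeZ} and the small-norm theorem. What is left for this proof is bookkeeping: keeping the left/right multiplication order consistent through $\Gamma\mapsto Y\mapsto Z$, identifying the lens region of each $z_\pm$, and matching the sign conventions in the definition \eqref{Psi} of $\Psi_0(z;\varkappa)$ against the hypothesis $\pm\Im\lambda\ge0$ inherited from Theorem \ref{GammaAsmptotics}. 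The one point needing slight care is the legitimacy of using one-sided boundary values $\Psi_0(z_\pm;\varkappa)$ and $g(z_\pm)$ on $J\subset(-1,0)\cup(0,1)$: this is justified because $\mathcal{E}$ has no jump across $(-1,1)\setminus\{-1,0,1\}$, so the jump of $Z$ there is carried entirely by the model solution $\Psi_0$, whose one-sided limits exist.
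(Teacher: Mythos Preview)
Your proposal is correct and follows essentially the same approach as the paper: invert the transformations $\Gamma\to Y\to Z$ from Section \ref{RHPtrans} to express $\Gamma$ in terms of $Z$ (with the appropriate triangular lens factor inside $\mathcal{L}_{L,R}^{(\pm)}$), and then substitute the uniform estimate $Z=\Psi_0(\1+\BigO{M^2/\varkappa})$ from Corollary \ref{ZAsympUni}. Your additional remarks about shrinking the lenses away from a given compact $J\subset\C\setminus[-1,1]$, the boundedness of the triangular factors via Proposition \ref{gproperties}, and the legitimacy of the one-sided limits are all consistent with the paper's argument and add helpful detail the paper leaves implicit.
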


\begin{proof}
    
    This Theorem is a direct consequence of Corollary \ref{ZAsympUni}.  We simply need to revert the transforms that took $\Gamma\to Z$.  Doing so, we find that
    \begin{equation}
        \Gamma(z;\lambda)=\begin{cases}
            Z(z;\varkappa)e^{-\varkappa g(z)\sigma_3}, & \text{$z$ outside lenses} \\
            Z(z;\varkappa)\begin{bmatrix} 1 & 0 \\ \pm ie^{\varkappa(2g(z)-1)} & 1 \end{bmatrix} e^{-\varkappa g(z)\sigma_3}, & z\in\mathcal{L}_L^{(\pm)} \\
            Z(z;\varkappa)\begin{bmatrix} 1 & \mp ie^{-\varkappa(2g(z)+1)} \\ 0 & 1 \end{bmatrix} e^{-\varkappa g(z)\sigma_3}, & z\in\mathcal{L}_R^{(\pm)}.
        \end{cases}
    \end{equation}
    Since $z$ is in a compact subset of $\mathbb{C}\setminus[-1,1]$ or $(-1,0)\cup(0,1)$, we apply Corollary \ref{ZAsympUni} to obtain the result.

\end{proof}

%%%%%%%%%%%%%%%%%%%%%%%%%%%%%%%%%%%%%%%%%%%%%%%%%%

%%%%%%%%%%%%%%%%%%%%%%%%%%%%%%%%%%%%%%%%%%%%%%%%%%

%%%%%%%%%%%%%%%%%%   APPENDIX   %%%%%%%%%%%%%%%%%%

%%%%%%%%%%%%%%%%%%%%%%%%%%%%%%%%%%%%%%%%%%%%%%%%%%

%%%%%%%%%%%%%%%%%%%%%%%%%%%%%%%%%%%%%%%%%%%%%%%%%%

\appendix

\section{Construction of $\Gamma(z;\lambda)$}\label{appendixGammaConstruct}

Recall that the Hypergeometric ODE (see \cite{DLMF} 15.10.1) is 
\begin{equation}
\eta(1-\eta)\frac{{\mathrm{d}}^{2}w}{{\mathrm{d}\eta}^{2}}+\left(c-(a+b+1)\eta\right)\frac%
{\mathrm{d}w}{\mathrm{d}\eta}-abw=0,\label{hypergeoODE}
\end{equation}
which has exactly three regular singular points at $\eta=0,1,\infty$.  The idea is to choose parameters $a,b,c$ so that the monodromy matrices of the fundamental matrix solution solution of the ODE will match (up to similarity transformation) the jump matrices of RHP \ref{Gamma4RHP}.  We orient the real axis of the $\eta-$plane as described in Figure \ref{figRealAxisEta}.

\begin{figure}[h]
\begin{center}
\begin{tikzpicture}[scale=1.0]

\draw[blue,dashed,postaction = {decorate, decoration = {markings, mark = at position .54 with {\arrow[red,thick]{>};}}}] (-5,0) -- (-3,0);

\node[above] at (-3,0) {$0$};

\draw[blue,dashed,postaction = {decorate, decoration = {markings, mark = at position .54 with {\arrow[red,thick]{<};}}}] (-3,0) -- (-1,0);
\node[above] at (-1,0) {$1$};

\draw[blue,dashed,postaction = {decorate, decoration = {markings, mark = at position .54 with {\arrow[red,thick]{<};}}}] (-1,0) -- (5,0);
\node[above] at (5,0) {$\infty$};

\draw[blue,dashed,postaction = {decorate, decoration = {markings, mark = at position .54 with {\arrow[red,thick]{>};}}}] (5,0) -- (6,0);

\filldraw[black] (-3,0) circle [radius=1.5pt];
\filldraw[black] (-1,0) circle [radius=1.5pt];
\filldraw[black] (5,0) circle [radius=1.5pt];

\end{tikzpicture}
\end{center}
\caption{\hspace{.1in}Orientation of the real axis of the $\eta$-plane.}\label{figRealAxisEta}
\end{figure}
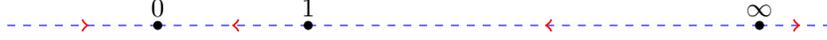

\subsection{Solutions of ODE \eqref{hypergeoODE} near Regular Singular Points and Connection Formula}\label{ODEsolConnec}

According to \cite{DLMF} 15.10.11 - 15.10.16, three pairs of linearly independent solutions of ODE \eqref{hypergeoODE} when $\eta=0,1,\infty$, respectively, are
\begin{align}
    &h_0(\eta)=\pFq{2}{1}{a , b}{c}{\eta},~ s_0(\eta)=\eta^{1-c}\pFq{2}{1}{a-c+1,b-c+1}{2-c}{\eta}, \label{0pair} \\
    &h_1(\eta)=\pFq{2}{1}{a,b}{a+b+1-c}{1-\eta},~ s_1(\eta)=(1-\eta)^{c-a-b}\pFq{2}{1}{c-a,c-b}{c-a-b+1}{1-\eta}, \label{1pair} \\
    &h_\infty(\eta)=e^{a\pi i}\eta^{-a}\pFq{2}{1}{a,a-c+1}{a-b+1}{\frac{1}{\eta}},~ s_\infty(\eta)=e^{b\pi i}\eta^{-b}\pFq{2}{1}{b,b-c+1}{b-a+1}{\frac{1}{\eta}}. \label{inftypair}
\end{align}
From \cite{DLMF} 15.10.7, we have that 
\begin{equation}\label{hypergeo_det_inf}
W_\eta\left[h_\infty(\eta),s_\infty(\eta)\right]=e^{(a+b)\pi i}(a-b)\eta^{-c}(1-\eta)^{c-a-b-1}.
\end{equation}
Kummer's 20 connection formula are listed in \cite{DLMF} 15.10.17 - 15.10.36.  We will list only what is necessary in this construction.  The connection between solutions at $\eta=0$ and $\eta=\infty$ is (see \cite{DLMF} 15.10.19, 15.10.20, 15.10.25, 15.10.26)
\begin{align}
\begin{bmatrix} h_0(\eta) & s_0(\eta) \end{bmatrix}&=\begin{bmatrix} h_\infty(\eta) & s_\infty(\eta) \end{bmatrix}C_{\infty 0}, \\
\begin{bmatrix} h_\infty(\eta) & s_\infty(\eta) \end{bmatrix}&=\begin{bmatrix} h_0(\eta) & s_0(\eta) \end{bmatrix}C_{0 \infty},
\end{align}
where
\begin{align}
C_{\infty0}&=\begin{bmatrix} \frac{\Gamma\left(c\right)\Gamma\left(b-a\right)}{\Gamma\left(b\right)\Gamma\left(c-a\right)} & e^{(1-c)\pi i}\frac{\Gamma\left(2-c\right)\Gamma\left(b-a\right)}{\Gamma\left(1-a\right)\Gamma\left(b-c+1\right)} \\ \frac{\Gamma\left(c\right)\Gamma\left(a-b\right)}{\Gamma\left(a\right)\Gamma\left(c-b\right)} & e^{(1-c)\pi i}\frac{\Gamma\left(2-c\right)\Gamma\left(a-b\right)}{\Gamma\left(1-b\right)\Gamma\left(a-c+1\right)} \end{bmatrix}, \label{CInf0} \\
C_{\infty 0}^{-1}&=C_{0\infty}=\begin{bmatrix} \frac{\Gamma\left(1-c\right)\Gamma\left(a-b+1\right)}{\Gamma\left(a-c+1\right)\Gamma\left(1-b\right)} & \frac{\Gamma\left(1-c\right)\Gamma\left(b-a+1\right)}{\Gamma\left(b-c+1\right)\Gamma\left(1-a\right)} \\ e^{(c-1)\pi i}\frac{\Gamma\left(c-1\right)\Gamma\left(a-b+1\right)}{\Gamma\left(a\right)\Gamma\left(c-b\right)} & e^{(c-1)\pi i}\frac{\Gamma\left(c-1\right)\Gamma\left(b-a+1\right)}{\Gamma\left(b\right)\Gamma\left(c-a\right)} \end{bmatrix}. \label{C0Inf}
\end{align}
The connection between solutions at $\eta=1$ and $\eta=\infty$ is (see \cite{DLMF} 15.10.23, 15.10.24, 15.10.27, 15.10.28)
\begin{align}
\begin{bmatrix} h_1(\eta) & s_1(\eta) \end{bmatrix}&=\begin{bmatrix} h_\infty(\eta) & s_\infty(\eta) \end{bmatrix}C_{\infty1}, \\
\begin{bmatrix} h_\infty(\eta) & s_\infty(\eta) \end{bmatrix}&=\begin{bmatrix} h_1(\eta) & s_1(\eta) \end{bmatrix}C_{1\infty},
\end{align}
where
\begin{align}
    C_{\infty 1}&=\begin{bmatrix} e^{-a\pi i}\frac{\Gamma\left(a+b-c+1\right)\Gamma\left(b-a\right)}{\Gamma\left(b\right)\Gamma\left(b-c+1\right)} & e^{(b-c)\pi i}\frac{\Gamma\left(c-a-b+1\right)\Gamma\left(b-a\right)}{\Gamma\left(1-a\right)\Gamma\left(c-a\right)} \\ e^{-b\pi i}\frac{\Gamma\left(a+b-c+1\right)\Gamma\left(a-b\right)}{\Gamma\left(a\right)\Gamma\left(a-c+1\right)} & e^{(a-c)\pi i}\frac{\Gamma\left(c-a-b+1\right)\Gamma\left(a-b\right)}{\Gamma\left(1-b\right)\Gamma\left(c-b\right)} \end{bmatrix}, \label{CInf1} \\
    C_{\infty 1}^{-1}&=C_{1\infty}=\begin{bmatrix} e^{a\pi i}\frac{\Gamma\left(a-b+1\right)\Gamma\left(c-a-b\right)}{\Gamma\left(1-b\right)\Gamma\left(c-b\right)} & e^{b\pi i}\frac{\Gamma\left(b-a+1\right)\Gamma\left(c-a-b\right)}{\Gamma\left(1-a\right)\Gamma\left(c-a\right)} \\ e^{(c-b)\pi i}\frac{\Gamma\left(a-b+1\right)\Gamma\left(a+b-c\right)}{\Gamma\left(a\right)\Gamma\left(a-c+1\right)} & e^{(c-a)\pi i}\frac{\Gamma\left(b-a+1\right)\Gamma\left(a+b-c\right)}{\Gamma\left(b\right)\Gamma\left(b-c+1\right)} \end{bmatrix}. \label{C1Inf}
\end{align}

\subsection{Selection of Parameters $a,b,c$}

Define
\begin{equation}\label{hatGammaConstruct}
\hat{\Gamma}(\eta):=\eta^{\frac{c}{2}}(1-\eta)^{\frac{a+b-c+1}{2}}\begin{bmatrix} h_\infty(\eta) & s_\infty(\eta) \\ h'_\infty(\eta) & s'_\infty(\eta) \end{bmatrix}.
\end{equation}
Notice that for any $\eta\in\mathbb{C}$
\begin{equation}\label{appendix_detHatGamma}
    \det\left(\hat{\Gamma}(\eta)\right)=e^{(a+b)\pi i}(a-b)
\end{equation}
according to \eqref{hypergeo_det_inf}.  Our solution $\Gamma(z;\lambda)$ to RHP \ref{Gamma4RHP} has singular points at $z=b_L,0,b_R$.  Notice that the M\"obius transform 
\begin{equation}\label{m1Construct}
    \eta=M_1(z):=\frac{b_R(z-b_L)}{z(b_R-b_L)}
\end{equation}
maps $b_L\to 0$, $b_R\to1$, and $0\to\infty$ where the orientation of the $z-$axis is described in Figure \ref{figRealAxisZ}.  Thus we are interested in the matrix 
\begin{align}
\hat{\Gamma}\left(M_1(z)\right)&=\left(\frac{b_R(z-b_L)}{|b_L|(z-b_R)}\right)^{\frac{c}{2}}\left(\frac{|b_L|(z-b_R)}{z(b_R-b_L)}\right)^{\frac{a+b+1}{2}}\begin{bmatrix} h_\infty\left(M_1(z)\right) & s_\infty\left(M_1(z)\right) \\ h_\infty'\left(M_1(z)\right) & s_\infty'\left(M_1(z)\right) \end{bmatrix}.
\end{align}
\begin{figure}
\begin{center}
\begin{tikzpicture}[scale=1.0]

\draw[blue,dashed,postaction = {decorate, decoration = {markings, mark = at position .54 with {\arrow[red,thick]{>};}}}] (-5,0) -- (-3,0);

\draw[blue,dashed,postaction = {decorate, decoration = {markings, mark = at position .54 with {\arrow[red,thick]{<};}}}] (-3,0) -- (-1,0);

\draw[blue,dashed,postaction = {decorate, decoration = {markings, mark = at position .54 with {\arrow[red,thick]{>};}}}] (-1,0) -- (2,0);

\draw[blue,dashed,postaction = {decorate, decoration = {markings, mark = at position .54 with {\arrow[red,thick]{>};}}}] (2,0) -- (6,0);

\filldraw[black] (-1,0) circle [radius=1.5pt];
\node[above] at (-1,0) {$0$};
\filldraw[black] (2,0) circle [radius=1.5pt];
\node[above] at (2,0) {$b_R$};
\filldraw[black] (-3,0) circle [radius=1.5pt];
\node[above] at (-3,0) {$b_L$};

\end{tikzpicture}
\end{center}
\caption{\hspace{.1in}Orientation of the real axis of the $z$-plane.}\label{figRealAxisZ}
\end{figure}
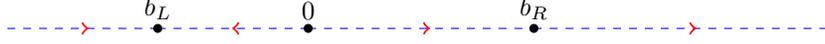
We need to determine parameters $a,b,c$ such that $\hat{\Gamma}(M_1(z))$ is $L^2_{\text{loc}}$ at $z=b_L,0,b_R$, so we are interested in the bi-resonant case, which is  
\begin{equation}
c\in\mathbb{Z}, \hspace{5mm} c-b-a\in\mathbb{Z}.
\end{equation}
When $z= b_L$, to guarantee that $\hat{\Gamma}$ is $L^2_{loc}$ we must have that (use the connection formula of section \ref{ODEsolConnec} to easily inspect the local behavior)
\begin{align}
\frac{c}{2}&>-\frac{1}{2}, ~~~ -\frac{c}{2}>-\frac{1}{2}.
\end{align}
Since $c\in\mathbb{Z}$, it must be so that $c=0$.  Now for $z=b_R$, we must have that 
\begin{align}
\frac{a+b-c+1}{2}=\frac{r+1}{2}>-\frac{1}{2}, \\
\frac{1-a-b}{2}=\frac{-r-1}{2}>-\frac{1}{2}
\end{align}
where $a+b=r\in\mathbb{Z}$.  Since $r\in\mathbb{Z}$, the only possibility is $r=-1$.  So we have that $b=-1-a$ and $c=0$.  Lastly, as $z\to0$,
\begin{align}
    h_\infty\left(M_1(z)\right)&=\BigO{z^a}, ~~~ h_\infty'\left(M_1(z)\right)=\BigO{z^{a+1}}, \\
    s_\infty\left(M_1(z)\right)&=-e^{-a\pi i}\left(\frac{-b_Rb_L}{z(b_R-b_L)}\right)^{a+1}+\BigO{z^{-a}}, \\
    s_\infty'\left(M_1(z)\right)&=-(a+1)e^{-a\pi i}\left(\frac{-b_Rb_L}{z(b_R-b_L)}\right)^{a}+\BigO{z^{-a+1}},
\end{align}
so we see that it is not possible for $\hat{\Gamma}(M_1(z))$ to have $L^2$ behavior at $z=0$.  On the other hand, observe that
\begin{align}
    s_\infty\left(M_1(z)\right)+\frac{b_Lb_R}{z(b_R-b_L)(a+1)}s_\infty'\left(M_1(z)\right)=\BigO{z^{-a}}, ~~ z\to0.
\end{align}
Thus the matrix 
\begin{equation}\label{hatGammaL2Loc}
    \begin{bmatrix} 1 & \frac{b_Lb_R}{z(b_R-b_L)(a+1)} \\ 0 & 1 \end{bmatrix}\hat{\Gamma}(M_1(z))
\end{equation}
is $L^2_{\text{loc}}$ as $z\to0$ provided that $|\Re(a)|<1/2$.  In the next section, we solve for $a$ explicitly in terms of $\lambda$ and the condition $|\Re(a)|<1/2$ will be met provided that $\lambda\notin[-1/2,1/2]$, see Appendix \ref{aAppendix}.

\subsection{Monodromy}

The monodromy matrices of $\hat{\Gamma}(z)$ about the singular points $z=0,1,\infty$ are 
\begin{equation}
M_0=C_{\infty0}e^{i\pi c\sigma_3}C_{0\infty},\hspace{0.5cm} M_1=C_{\infty1}e^{i\pi(a+b-c+1)\sigma_3}C_{1\infty},\hspace{0.5cm} M_\infty=e^{i\pi(b-a-1)\sigma_3},
\end{equation}
where $C_{\infty 0}, C_{0 \infty}, C_{\infty 1}, C_{1 \infty}$ are defined in \eqref{CInf0}, \eqref{C0Inf}, \eqref{CInf1}, \eqref{C1Inf}, respectively.  With some effort it can be shown that 
\begin{align}
M_0&=\begin{bmatrix} \cos\pi c\left(1-2i\frac{\sin\pi a\sin\pi b}{\sin\pi(b-a)}\right) & \frac{2\pi i\Gamma(b-a)\Gamma(b-a+1)}{\Gamma(b)\Gamma(c-a)\Gamma(b-c+1)\Gamma(1-a)} \\ \frac{2\pi i\Gamma(a-b)\Gamma(a-b+1)}{\Gamma(a)\Gamma(c-b)\Gamma(a-c+1)\Gamma(1-b)} & \cos\pi c\left(1+2i\frac{\sin\pi a\sin\pi b}{\sin\pi(b-a)}\right) \end{bmatrix}+\sin\pi c\frac{\sin\pi(a+b)}{\sin\pi(b-a)}\begin{bmatrix} 1 & 0 \\ 0 & 1 \end{bmatrix}, \\
M_1&=e^{\frac{i\pi}{2}(b-a)\sigma_3}M_0\big|_{c\to a+b-c+1}e^{-\frac{i\pi}{2}(b-a)\sigma_3}.
\end{align}
From the previous section, we take $c=0$ and $b=-1-a$.  It is important to note that the connection matrices $C_{\infty 0}, C_{0 \infty}, C_{\infty 1}, C_{1 \infty}$ are singular when $c=0$ and/or $b=-1-a$, but we can see that $M_0, M_1$ are not.  Taking $c=0$ and $b=-a-1$, we obtain
\begin{align}
M_0&=\begin{bmatrix} 1-i\tan(a\pi) & \frac{\tan^2(a\pi)\Gamma(a)\Gamma(a+2)}{i4^{2a+1}\Gamma(a+\frac{1}{2})\Gamma(a+\frac{3}{2})} \\ \frac{i4^{2a+1}\Gamma(a+\frac{1}{2})\Gamma(a+\frac{3}{2})}{\Gamma(a)\Gamma(a+2)} & 1+i\tan(a\pi) \end{bmatrix}, \\
M_1&=\sigma_3e^{-i\pi a\sigma_3}M_0e^{i\pi a\sigma_3}\sigma_3, \\
M_\infty&=e^{-2\pi ia\sigma_3}.
\end{align}
Let 
\begin{equation}\label{Q}
Q(\lambda)=\begin{bmatrix} -\tan(a\pi) & 0 \\ 0 & 4^{2a+1}e^{a\pi i}\frac{\Gamma(a+3/2)\Gamma(a+1/2)}{\Gamma(a)\Gamma(a+2)} \end{bmatrix}\begin{bmatrix} 1 & e^{a\pi i} \\ -e^{a\pi i} & 1 \end{bmatrix}
\end{equation}
so then we have 
\begin{align}
Q^{-1}M_0Q&=\begin{bmatrix} 1 & 0 \\ -\frac{e^{2\pi ia}-1}{e^{a\pi i}} & 1 \end{bmatrix}, ~~~ Q^{-1}M_1Q=\begin{bmatrix} 1 & -\frac{e^{2\pi ia}-1}{e^{a\pi i}} \\ 0 & 1 \end{bmatrix}, \label{M0M1jump} \\
Q^{-1}M_\infty Q&=\begin{bmatrix} \frac{e^{4\pi ia}-e^{2\pi ia}+1}{e^{2\pi ia}} & \frac{1-e^{2\pi ia}}{e^{a\pi i}} \\ \frac{1-e^{2\pi ia}}{e^{a\pi i}} & 1 \end{bmatrix}
\end{align}
The match requires 
\begin{equation}
\frac{e^{2\pi ia}-1}{e^{a\pi i}}=\frac{i}{\lambda}
\end{equation}
which implies
\begin{equation}\label{aConstruct}
a(\lambda)=\frac{1}{i\pi}\ln\left(\frac{i+\sqrt{4\lambda^2-1}}{2\lambda}\right).
\end{equation}
In Appendix \ref{aAppendix} we have listed all the important properties of $a(\lambda)$.

\subsection{Proof of Theorem \ref{ThmRHPSol}}

We will now construct $\Gamma(z;\lambda)$ so that it is the solution of RHP \ref{Gamma4RHP}.

\begin{proof}
Notice that the matrix 
\begin{equation}
    \begin{bmatrix} 1 & \frac{b_Lb_R}{z(b_R-b_L)(a+1)} \\ 0 & 1 \end{bmatrix}\hat{\Gamma}(M_1(z))Q(\lambda)\sigma_2,
\end{equation}
where $\hat{\Gamma}, M_1, a$ are defined in \eqref{hatGammaConstruct}, \eqref{m1Construct}, \eqref{aConstruct}, respectively, satisfies the following properties:

\begin{itemize}
    \item $L^2$ behavior at $z=0$, provided $\lambda\not\in[-1/2,1/2]$, due to \eqref{hatGammaL2Loc} and properties of $a(\lambda)$ (see Appendix \ref{aAppendix}),
    \item jump matrix $\begin{bmatrix} 1 & -\frac{i}{\lambda} \\ 0 & 1 \end{bmatrix}$ for $z\in(b_L,0)$ with positive orientation, see \eqref{M0M1jump} and Figure \ref{figRealAxisZ} for orientation,
    \item jump matrix $\begin{bmatrix} 1 & 0 \\ \frac{i}{\lambda} & 1 \end{bmatrix}$ for $z\in(0,b_R)$ with positive orientation, see \eqref{M0M1jump},
    \item column-wise behavior $\begin{bmatrix} \BigO{1} & \BigO{\ln(z-b_L)} \end{bmatrix}$ as $z\to b_L$, because the first column has no jump on $(b_L,0)$ and is analytic for $z\not\in[b_L,b_R]$ so the first column is $\BigO{1}$ as $z\to b_L$.  The Sokhotski-Plemelj formula can be used to inspect the behavior of the second column.
    \item column-wise behavior $\begin{bmatrix} \BigO{\ln(z-b_R)} & \BigO{1} \end{bmatrix}$ as $z\to b_R$, same idea as above,
    \item behavior $\hat{\Gamma}(M_1(\infty))Q(\lambda)\sigma_2(\1+\BigO{z^{-1}})$ as $z\to\infty$,
    \item analytic for $z\in\mathbb{C}\setminus[b_L,b_R]$, due to properties of hypergeometric functions.
\end{itemize}

Thus, we conclude that the matrix 
\begin{equation}
    \Gamma(z;\lambda):=\sigma_2Q^{-1}(\lambda)\hat{\Gamma}^{-1}(M_1(\infty))\begin{bmatrix} 1 & \frac{b_Lb_R}{z(b_R-b_L)(a+1)} \\ 0 & 1 \end{bmatrix}\hat{\Gamma}(M_1(z))Q(\lambda)\sigma_2
\end{equation}

is a solution of RHP \ref{Gamma4RHP} provided that $\lambda\notin[-1/2,1/2]$.  Uniqueness follows immediately from the $L^2$ behavior of $\Gamma(z;\lambda)$ at the endpoints $z=b_L,0,b_R$.
\end{proof}

\section{Definition and properties of $a(\lambda)$}\label{aAppendix}

Recall, from \eqref{aFunc}, that 
\begin{equation}
a(\lambda)=\frac{1}{i\pi}\ln\left(\frac{i+\sqrt{4\lambda^2-1}}{2\lambda}\right)
\end{equation}
where $\sqrt{4\lambda^2-1}=2\lambda+\BigO{1}$ as $\lambda\to\infty$ and the principle branch of the logarithm is taken.  The following proposition lists all relevant properties of $a(\lambda)$, none of which are difficult to prove.

\begin{proposition}\label{aProp}
The function $a(\lambda)$ has the following properties:

\begin{enumerate}

    \item $a(\lambda)$ is analytic and Schwarz symmetric for $\lambda\in\mathbb{C}\setminus[-1/2,1/2]$.
    
    \item $a_+(\lambda)+a_-(\lambda)=\begin{cases} 1, &\lambda\in(0,1/2) \\ -1, &\lambda\in(-1/2,0)\end{cases}$
    
    \item $\Re[a_\pm(\lambda)]=\begin{cases} \frac{1}{2}, &\lambda\in(0,1/2) \\ -\frac{1}{2}, &\lambda\in(-1/2,0) \end{cases}$ and $-\frac{1}{2}<\Re\left[a(\lambda)\right]<\frac{1}{2}$ for $\lambda\in\overline{\mathbb{C}}\setminus\left[-\frac{1}{2},\frac{1}{2}\right]$.
    
    \item  For $\lambda\in(-1/2,1/2)$, $\Im[a_+(\lambda)]<0$, $\Im[a_-(\lambda)]>0$, and $\Im[a_+(\lambda)]=-\Im[a_-(\lambda)]$.
    
    \item If $\lambda\to0$, provided that either $\Im\lambda\geq0$ or $\Im\lambda\leq0$, then 
        \begin{equation}\label{aAsymp}
            a(\lambda)=\begin{cases}
                -\frac{1}{i\pi}\ln(\lambda)+\frac{1}{2}+\BigO{\lambda^2}=\frac{\varkappa}{i\pi}+\frac{1}{2}+\BigO{e^{-2\varkappa}}, &\Im\lambda\geq0, \\
                \frac{1}{i\pi}\ln(\lambda)+\frac{1}{2}+\BigO{\lambda^2}=-\frac{\varkappa}{i\pi}+\frac{1}{2}+\BigO{e^{-2\varkappa}}, &\Im\lambda\leq0,
            \end{cases}
        \end{equation}
        
        where $\varkappa=-\ln\lambda$.
        
    \item As $\lambda\to0$, provided that either $\Im\lambda\geq0$ or $\Im\lambda\leq0$, then
        \begin{equation}\label{AAsymp}
            e^{i\pi a(\lambda)}=\begin{cases} ie^{\varkappa}(1+\BigO{e^{-2\varkappa}}), &\Im\lambda\geq0, \\ ie^{-\varkappa}(1+\BigO{e^{-2\varkappa}}), &\Im\lambda\leq0. \end{cases}
        \end{equation}
    
\end{enumerate}

\end{proposition}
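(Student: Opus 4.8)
The plan is to reduce every item to the single pair of identities
\begin{equation}\label{sinpia}
e^{i\pi a(\lambda)}=\frac{i+\sqrt{4\lambda^2-1}}{2\lambda}=:w(\lambda),\qquad \sin\bigl(\pi a(\lambda)\bigr)=\frac{1}{2\lambda},
\end{equation}
after which each statement is read off by elementary trigonometry. The first identity is just \eqref{aFunc} exponentiated ($i\pi a=\ln w$). The second follows from a one-line computation: writing $R:=\sqrt{4\lambda^2-1}$ and using $R^2=4\lambda^2-1$ one gets $w-w^{-1}=\frac{(i+R)^2-4\lambda^2}{2\lambda(i+R)}=\frac{2i(i+R)}{2\lambda(i+R)}=\frac{i}{\lambda}$, hence $\sin(\pi a)=\frac{1}{2i}(w-w^{-1})=\frac{1}{2\lambda}$; the same computation gives $\cos(\pi a)=\frac12(w+w^{-1})=\frac{R}{2\lambda}$, which records that the branch of $\arcsin$ in play is the one with $\cos(\pi a)\to1$ as $\lambda\to\infty$, i.e.\ $a(\infty)=0$.

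I would first dispose of Item 1. Since $\frac{1}{2\lambda}$ lies on the branch locus $(-\infty,-1]\cup[1,\infty)$ of $\arcsin$ exactly when $\lambda\in[-\tfrac12,\tfrac12]$, the function $a=\tfrac1\pi\arcsin\frac{1}{2\lambda}$ with the branch fixed above is analytic on $\overline{\mathbb C}\setminus[-\tfrac12,\tfrac12]$, and $a(\infty)=0$. For Schwarz symmetry, $R$ is itself Schwarz symmetric, so the computation behind \eqref{sinpia} also gives $\overline{w(\bar\lambda)}=w(\lambda)^{-1}$, whence $\overline{a(\bar\lambda)}=\frac{-1}{i\pi}\ln w(\lambda)^{-1}=a(\lambda)$; in particular on the cut the two boundary values satisfy $a_-=\overline{a_+}$. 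For Items 2--4 I restrict to $\lambda\in(-\tfrac12,\tfrac12)$: there $R_\pm=\pm i\sqrt{1-4\lambda^2}$, so the boundary values $w_\pm=\frac{i(1\pm\sqrt{1-4\lambda^2})}{2\lambda}$ are $i$ times a real number of sign $\operatorname{sgn}(\lambda)$, giving $\arg w_\pm=\tfrac\pi2\operatorname{sgn}(\lambda)$ and $|w_\pm|=\frac{1\pm\sqrt{1-4\lambda^2}}{|2\lambda|}$. From $e^{i\pi a_\pm}=w_\pm$ we then read $\Re(\pi a_\pm)=\arg w_\pm$, i.e.\ $\Re a_\pm=\tfrac12\operatorname{sgn}(\lambda)$ (first half of Item 3), and $\pi\Im a_\pm=-\ln|w_\pm|$. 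Since $|w_+||w_-|=\frac{1-(1-4\lambda^2)}{4\lambda^2}=1$ and $|w_+|>1$ (as $1+\sqrt{1-4\lambda^2}>1>2|\lambda|$), this yields $\Im a_+<0$, $\Im a_-=-\Im a_+>0$ (Item 4), and $a_++a_-=\operatorname{sgn}(\lambda)$ (Item 2).

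It remains to handle the strict bound in Item 3 and the asymptotics in Items 5--6. If $\Re a(\lambda_0)=\pm\tfrac12$ for some $\lambda_0$ off the cut, then $\pi a(\lambda_0)=\pm\tfrac\pi2+it$, so $\sin(\pi a(\lambda_0))=\pm\cosh t$ is real with modulus $\ge1$; hence $\frac{1}{2\lambda_0}\in(-\infty,-1]\cup[1,\infty)$, forcing $\lambda_0\in[-\tfrac12,\tfrac12]$, a contradiction. Since $\overline{\mathbb C}\setminus[-\tfrac12,\tfrac12]$ is connected, $\Re a$ is continuous there, and $\Re a(\infty)=0$, the intermediate value theorem then gives $|\Re a|<\tfrac12$ throughout. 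For Items 5--6, take $\lambda\to0$ with $\Im\lambda\ge0$; then $\Im a\to-\infty$ by Item 4, so $e^{i\pi a}$ dominates in $\frac{1}{2i}(e^{i\pi a}-e^{-i\pi a})=\frac{1}{2\lambda}$. Rewriting this exact relation as $e^{i\pi a}=\frac{i}{\lambda}+(e^{i\pi a})^{-1}$ and substituting the leading term once gives $e^{i\pi a}=\frac{i}{\lambda}\bigl(1+\BigO{\lambda^2}\bigr)$, which is Item 6 after $\lambda=e^{-\varkappa}$; taking logarithms ($\ln i=\tfrac{i\pi}{2}$) gives $a=\tfrac12+\frac{\varkappa}{i\pi}+\BigO{e^{-2\varkappa}}$, which is Item 5, and the case $\Im\lambda\le0$ is the mirror image with $e^{-i\pi a}$ dominant. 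The only steps that demand any care are keeping the branches of $R$, $\ln$ and $\arcsin$ mutually consistent -- which is precisely the role of \eqref{sinpia} -- and the short connectedness argument for the strict inequality $|\Re a|<\tfrac12$; everything else is routine bookkeeping.
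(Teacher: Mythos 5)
Your proof is correct, and in fact the paper offers no proof at all of Proposition \ref{aProp} (it merely remarks that the properties are ``none of which are difficult to prove''), so your argument supplies what the authors leave as an exercise rather than competing with an existing one. The reduction to $e^{i\pi a}=w(\lambda)$ together with $\sin(\pi a)=\tfrac{1}{2\lambda}$, $\cos(\pi a)=\tfrac{\sqrt{4\lambda^2-1}}{2\lambda}$ is the natural organizing device, and all the computations I checked (the shore values $R_\pm=\pm i\sqrt{1-4\lambda^2}$, $|w_+||w_-|=1$, the identity $\overline{w(\bar\lambda)}=w(\lambda)^{-1}$, and the bootstrap $e^{i\pi a}=\tfrac{i}{\lambda}+(e^{i\pi a})^{-1}$ for Items 5--6) are right. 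Two spots deserve one more explicit line each: (i) identifying the logarithmic formula \eqref{aFunc} globally with the principal branch of $\tfrac1\pi\arcsin\tfrac{1}{2\lambda}$ requires knowing that $w(\lambda)$ never meets $(-\infty,0]$ for $\lambda$ off the cut (equivalently that \eqref{aFunc} is continuous on the connected domain), which follows from your $\Re a=\tfrac{\arg w}{\pi}$ computation combined with the strict-bound argument you give later; and (ii) ``$\Im a\to-\infty$ by Item 4'' is really a two-step claim --- $|\Im a|\to\infty$ because $|\sin(\pi a)|=\tfrac{1}{2|\lambda|}\to\infty$ forces $|\sinh(\pi\Im a)|\to\infty$, and the sign is then propagated from the upper shore by continuity and connectedness of the punctured upper half-disc. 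Neither point is a gap in substance, only in exposition.
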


\section{Properties of $d_L(z;\lambda),d_R(z;\lambda)$}\label{dldr_appendix}

In this Appendix we compute and list the important properties of the functions $d_L(z;\lambda)$ and $d_R(z;\lambda)$.  Recall that (from eq. \eqref{drdl})
\begin{align}
    d_R(z;\lambda)&:=\alpha(\lambda)h_\infty '\left(\frac{b_R(z-b_L)}{z(b_R-b_L)}\right)+\beta(\lambda)s_\infty '\left(\frac{b_R(z-b_L)}{z(b_R-b_L)}\right), \\ d_L(z;\lambda)&:=-e^{a\pi i}\alpha(\lambda)h_\infty '\left(\frac{b_R(z-b_L)}{z(b_R-b_L)}\right)+e^{-a\pi i}\beta(\lambda)s_\infty '\left(\frac{b_R(z-b_L)}{z(b_R-b_L)}\right),
\end{align}
where 
\begin{align}
    \alpha(\lambda):=\frac{e^{-a\pi i}\tan(a\pi)\Gamma(a)}{4^{a+1}\Gamma(a+3/2)}, ~~ \beta(\lambda):=\frac{4^{a}e^{a\pi i}\Gamma(a+1/2)}{\Gamma(a+2)}
\end{align}
and $h_\infty',s_\infty'$ are defined in \eqref{hhprime}, \eqref{ssprime}.

\begin{proposition}\label{dldr_AppendixProp}
    For $\lambda\in(-1/2,0)$,    
    \begin{enumerate}
    
        \item $d_R\left(M_2(x);\lambda\right)=d_L(x;\lambda)$, where $M_2(x)=\frac{b_Rb_Lx}{x(b_R+b_L)-b_Rb_L}$,
    
        \item $d_L(z;\lambda)$ and $d_R(z;\lambda)$ are single-valued in $\lambda$, 
        
        \item $d_L(z;\lambda)$ is analytic for $z\in\overline{\mathbb{C}}\setminus[0,b_R]$, $\overline{d_L(z;\lambda)}=d_L(\overline{z};\overline{\lambda})$ for $z\in\overline{\mathbb{C}}$, and 
        \begin{equation}
            d_L(z_+;\lambda)-d_L(z_-;\lambda)=\frac{i}{\lambda}d_R(z;\lambda), ~~~ \text{for $z\in(0,b_R)$,}
        \end{equation}

        \item $d_R(z;\lambda)$ is analytic for $z\in\overline{\mathbb{C}}\setminus[b_L,0]$, $\overline{d_R(z;\lambda)}=d_R(\overline{z};\overline{\lambda})$ for $z\in\overline{\mathbb{C}}$, and 
        \begin{equation}
            d_R(z_+;\lambda)-d_R(z_-;\lambda)=-\frac{i}{\lambda}d_L(z;\lambda), ~~~ \text{for $z\in(b_L,0)$,}
        \end{equation}

        \item We have the SVD system 
        \begin{align}
            \mathcal{H}_R\left[\frac{d_R(y;\lambda)}{y}\right](x)=2\lambda\frac{d_L(x;\lambda)}{x}, ~~~ \mathcal{H}_L\left[\frac{d_L(x;\lambda)}{x}\right](y)=2\lambda\frac{d_R(y;\lambda)}{y}
        \end{align}
        where $x\in(b_L,0)$ and $y\in(0,b_R)$.

        \item $\ds{\int_0^{b_R}\frac{d_R(x;\lambda)}{x}~dx=2\pi\lambda d_L(\infty;\lambda)}$, $\ds{\int_{b_L}^0\frac{d_L(x;\lambda)}{x}~dx=-2\pi\lambda d_R(\infty;\lambda)}$
        
    \end{enumerate}
    
    We obtain the corresponding identities for $D_L(z;\lambda), D_R(z;\lambda)$ when $\lambda\in(-1,1)$ via the relation $D_R(z;\lambda):=d_R(z;-|\lambda|/2)$ and $D_L(z;\lambda):=d_L(z;-|\lambda|/2)$.

\end{proposition}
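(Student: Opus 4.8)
The backbone of the argument is the identity
\begin{equation}\label{dLdRviaGamma}
\begin{bmatrix} d_L(z;\lambda) & d_R(z;\lambda) \end{bmatrix}=\begin{bmatrix} d_L(\infty;\lambda) & d_R(\infty;\lambda) \end{bmatrix}\Gamma(z;\lambda),\qquad \lambda\in\overline{\mathbb{C}}\setminus[-1/2,1/2],
\end{equation}
which I would read off directly from \eqref{RHPSolution}. Rewriting that formula as $\hat{\Gamma}(M_1(\infty))Q\sigma_2\,\Gamma(z;\lambda)=\left[\begin{smallmatrix}1 & \frac{b_Lb_R}{z(b_R-b_L)(a+1)}\\ 0 & 1\end{smallmatrix}\right]\hat{\Gamma}(M_1(z))Q\sigma_2$ and taking the bottom row of both sides (which is unchanged by left multiplication by the unipotent upper-triangular factor), the bottom row of $\hat{\Gamma}(M_1(z))Q\sigma_2$ equals $\bigl[h_\infty'(M_1(z)),s_\infty'(M_1(z))\bigr]Q\sigma_2$, which by \eqref{drdl}--\eqref{alphabeta} is a $z$-independent scalar multiple of $\bigl[d_L(z;\lambda),d_R(z;\lambda)\bigr]$ --- this is exactly the computation of $m_{21},m_{22}$ already performed in the proof of Theorem \ref{GammaJumpLambdaThm}. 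Setting $z=\infty$ and using $\Gamma(\infty;\lambda)=\1$ fixes the scalar and gives \eqref{dLdRviaGamma}. Statements (3) and (4) then follow at once: $\Gamma(z;\lambda)$ is analytic on $\overline{\mathbb{C}}\setminus[b_L,b_R]$, so $d_L,d_R$ are too; on $(b_L,0)$ the jump \eqref{rhpI1} of $\Gamma$ is unipotent upper-triangular, so from \eqref{dLdRviaGamma} the first component $d_L$ is continuous across $(b_L,0)$ and hence continues analytically to $\overline{\mathbb{C}}\setminus[0,b_R]$, while the second component gives $d_R(z_+;\lambda)-d_R(z_-;\lambda)=-\frac{i}{\lambda}d_L(z;\lambda)$; the interval $(0,b_R)$ is handled symmetrically using \eqref{rhpI3}. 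The Schwarz symmetries $\overline{d_{L,R}(z;\lambda)}=d_{L,R}(\overline z;\overline\lambda)$ reduce, via \eqref{dLdRviaGamma} and $\overline{\Gamma(\overline z;\overline\lambda)}=\Gamma(z;\lambda)$ (Remark \ref{GammaSymmetry}), to the same symmetry at $z=\infty$, which in turn follows from the Schwarz symmetry of $a(\lambda)$ (Proposition \ref{aProp}).

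For statement (5) I would invoke the Sokhotski--Plemelj formula. By (3)--(4) the function $d_L(z;\lambda)/z$ is analytic on $\overline{\mathbb{C}}\setminus[0,b_R]$, equals $\BigO{z^{-1}}$ at $\infty$ (because $d_L(\infty;\lambda)$ is finite), is integrable at the endpoints $0,b_R$ of the cut (near $0$ one has $d_L(z;\lambda)=\BigO{z^{1/2}}$ since $\Re a(\lambda)=-1/2$ for $\lambda\in(-1/2,0)$, and near $b_R$ at worst $\BigO{\log(z-b_R)}$ by \eqref{RHP2endpt2}), and jumps by $\frac{i}{\lambda}d_R(z;\lambda)/z$ across $(0,b_R)$; hence $\frac{d_L(z;\lambda)}{z}=\frac{1}{2\pi i}\int_0^{b_R}\frac{(i/\lambda)\,d_R(y;\lambda)/y}{y-z}\,dy=\frac{1}{2\lambda}\mathcal{H}_R\!\left[\frac{d_R(y;\lambda)}{y}\right](z)$, which for $z=x\in(b_L,0)$ is the first SVD identity; the second one comes the same way from the jump of $d_R(z;\lambda)/z$ across $(b_L,0)$. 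Statement (6) is then obtained from the $z\to\infty$ expansion of this Cauchy representation by matching the coefficients of $z^{-1}$ (the precise sign and the factor $2\pi\lambda$ being fixed by the orientation conventions).

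Statement (2) must be proved by a direct computation, since $\Gamma(z;\lambda)$ itself is \textbf{not} single-valued across $(-1/2,0)$ (Theorem \ref{GammaJumpLambdaThm}). Using $a_+(\lambda)+a_-(\lambda)=-1$ on $(-1/2,0)$ (Proposition \ref{aProp}) together with the involution $h_\infty(\eta)\big|_{a\to-a-1}=s_\infty(\eta)$ recorded in Appendix \ref{appendixGammaConstruct} --- which give $h_\infty'(\eta;a_+)=s_\infty'(\eta;a_-)$ and $s_\infty'(\eta;a_+)=h_\infty'(\eta;a_-)$ --- one checks via the $\Gamma$-function reflection and duplication formulas that $\alpha(\lambda_+)=\beta(\lambda_-)$, $\beta(\lambda_+)=\alpha(\lambda_-)$, and $e^{a_+\pi i}=-e^{-a_-\pi i}$; feeding these into \eqref{drdl} yields $d_{L,R}(z;\lambda_+)=d_{L,R}(z;\lambda_-)$. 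For statement (1), observe that $M_1\circ M_2=M_3=1-M_1$ (Remark \ref{mobius}), so $d_R(M_2(z);\lambda)=\alpha\,h_\infty'(1-M_1(z))+\beta\,s_\infty'(1-M_1(z))$; since the ODE \eqref{hsODE} is invariant under $\eta\mapsto 1-\eta$, there is a constant matrix $C$ with $\bigl[h_\infty(1-\eta),s_\infty(1-\eta)\bigr]=\bigl[h_\infty(\eta),s_\infty(\eta)\bigr]C$, hence $\bigl[h_\infty'(1-\eta),s_\infty'(1-\eta)\bigr]=-\bigl[h_\infty'(\eta),s_\infty'(\eta)\bigr]C$; computing $C$ from the Kummer connection coefficients of Appendix \ref{ODEsolConnec} (by matching leading behavior at $\eta=\infty$) and simplifying shows $-C\left[\begin{smallmatrix}\alpha\\ \beta\end{smallmatrix}\right]=\left[\begin{smallmatrix}-e^{a\pi i}\alpha\\ e^{-a\pi i}\beta\end{smallmatrix}\right]$, which is precisely the coefficient vector defining $d_L$ in \eqref{drdl}. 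Finally, the statements for $D_L,D_R$ are immediate: substitute $\lambda\mapsto-|\lambda|/2$ (which lies in $(-1/2,0]$ for $\lambda\in(-1,1)$) into (1)--(6).

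The conceptually short steps are the $\Gamma$-identity \eqref{dLdRviaGamma} and the Plemelj reconstruction; the bulk of the work, and the main obstacle, lies in the $\Gamma$-function bookkeeping behind statements (1) and (2): verifying the exchange $\alpha\leftrightarrow\beta$ across the cut and evaluating the $\eta\mapsto 1-\eta$ connection matrix $C$ in closed form both require repeated use of the reflection and duplication identities and of the periodicity of $\tan$, while one must simultaneously keep track of the branches of $h_\infty,s_\infty$ and of the orientation conventions entering the jump relations. None of this is difficult in principle, but it is where essentially all of the computation is concentrated.
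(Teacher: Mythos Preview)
Your proposal is correct and follows essentially the same route as the paper. The backbone identity \eqref{dLdRviaGamma} is exactly what the paper extracts (in its proof of item (3)) from the second row of $M(z,\lambda)=\hat\Gamma(M_1(z))Q\sigma_2$, and items (3)--(4) then follow from the jumps of $\Gamma$ in the same way. For items (5)--(6) you use the Sokhotski--Plemelj reconstruction of $d_L(z)/z$ from its jump data, whereas the paper phrases the same step as a contour deformation through $\infty$; the two are equivalent. For items (1) and (2) both you and the paper rely on the same ingredients: the involution $h_\infty|_{a\to-a-1}=s_\infty$ together with $a_++a_-=-1$ and the exchange $\alpha\leftrightarrow\beta$ for (2), and the $\eta\mapsto 1-\eta$ hypergeometric transformation (which for the parameters $c=0$, $b=-a-1$ gives the diagonal relations $h_\infty'(1-\eta)=-e^{a\pi i}h_\infty'(\eta)$, $s_\infty'(1-\eta)=e^{-a\pi i}s_\infty'(\eta)$) for (1).
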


\begin{proof}
    \begin{enumerate}
    
        \item Recall that $M_j(x)$, $j=1,2,3$, are defined in Remark \ref{mobius}.  It is easy to show that $1-M_3(x)=M_1(x)$, $1-\frac{1}{M_3(x)}=\frac{-M_1(x)}{M_3(x)}$.  From \cite{AS64} 15.5.7, 15.5.13, (taking $a:=a_-(\lambda)$ and $-1=e^{-i\pi}$) we obtain
\begin{align}
    \pFq{2}{1}{a+1,a+1}{2a+2}{\frac{1}{M_3(x)}}&=-e^{a\pi i}\left(\frac{M_1(x)}{M_3(x)}\right)^{-a-1}\pFq{2}{1}{a+1,a+1}{2a+2}{\frac{1}{M_1(x)}},
\end{align}
thus we have 
\begin{align}\label{hM3}
    h_\infty'(M_3(x))&=-a e^{a\pi i}M_3(x)^{-a-1}\pFq{2}{1}{a+1,a+1}{2a+2}{\frac{1}{M_3(x)}}=-e^{a\pi i}h_\infty'(M_1(x)).
\end{align}
Taking $a\to-a-1$ in the identity above yields 
\begin{equation}\label{sM3}
    s_\infty'(M_3(x))=e^{-a\pi i}s_\infty'(M_1(x)),
\end{equation}
because $h_\infty(\eta)|_{a\to-a-1}=s_\infty(\eta)$.  Now using the definition of $d_R(x;\lambda)$, \eqref{hM3}, \eqref{sM3}, and that $M_1(M_2(x))=M_3(x)$, we obtain the result.

        \item Notice that $h_\infty(z)\big|_{a\to-a-1}=s_\infty(z)$ so we have $h_\infty'(z,\lambda_+)=s_\infty'(z,\lambda_-)$.  To compute the jumps of the coefficients $\alpha,\beta$, we use \cite{DLMF} 5.5.3 to obtain $\alpha_+(\lambda)=\beta_-(\lambda)$ and, similarly, $\beta_+(\lambda)=\alpha_-(\lambda)$.  This can now be used to prove the statement.

        \item Recall, from the proof of Theorem \ref{GammaJumpLambdaThm}, that $d_L(z;\lambda), d_R(z;\lambda)$ was defined in terms of the $(2,1), (2,2)$ element, respectively, of the matrix 
        \begin{equation}
            M(z,\lambda)=\hat{\Gamma}(M_1(z),\lambda)Q(\lambda)\sigma_2.
        \end{equation}
        Using the definition of $\Gamma(z;\lambda)$, see \eqref{RHPSolution}, some simple algebra shows that 
        \begin{equation}
            M(z,\lambda)=\begin{bmatrix} 1 & \frac{-b_Lb_R}{z(b_R-b_L)(a+1)} \\ 0 & 1 \end{bmatrix}\hat{\Gamma}^{-1}(M_1(\infty))Q\sigma_2\Gamma(z;\lambda).
        \end{equation}
        Since $\Gamma(z;\lambda)$ is a solution of RHP \ref{Gamma4RHP}, we know $M(z,\lambda)$ is analytic for $z\in\overline{\mathbb{C}}\setminus[b_L,b_R]$ and 
        \begin{equation}
            M_+=M_-\begin{bmatrix} 1 & -\frac{i}{\lambda} \\ 0 & 1 \end{bmatrix},~~z\in(b_L,0), ~~~~~ M_+=M_-\begin{bmatrix} 1 & 0 \\ \frac{i}{\lambda} & 1 \end{bmatrix},~~z\in(0,b_R),
        \end{equation}
        which immediately gives the jumps and analyticity of both $d_L,d_R$.  For the symmetry, notice that, by definition,
        \begin{align}
            d_R(z;\lambda)&=\frac{-a\tan(a\pi)\Gamma(a)}{4^{a+1}\Gamma(a+\frac{3}{2})}M_1(z)^{-a-1}\pFq{2}{1}{a+1, a+1}{2a+2}{\frac{1}{M_1(z)}} \nonumber \\
            &-\frac{(a+1)4^a\Gamma(a+\frac{1}{2})}{\Gamma(a+2)}M_1(z)^a\pFq{2}{1}{-a,-a}{-2a}{\frac{1}{M_1(z)}},
        \end{align}
        and $a=a(\lambda), M_1(z)$ are Schwarz symmetric.  Thus $\overline{d_R(z;\lambda)}=d_R(\overline{z};\overline{\lambda})$ and the symmetry of $d_L(z;\lambda)$ follows from the relation $d_R(M_2(x);\lambda)=d_L(x;\lambda)$.

        \item This was proven above.

        \item Let $\gamma_R$ be the circle with center and radius of $b_R/2$ with negative orientation.  Then,
        \begin{align}
            \mathcal{H}_R\left[\frac{d_R(y;\lambda)}{y}\right](x)&=\frac{2\lambda}{2\pi i}\int_0^{b_R}\frac{\frac{i}{\lambda}d_R(y;\lambda)}{y(y-x)}~dy=\frac{2\lambda}{2\pi i}\int_0^{b_R}\frac{\Delta_yd_L(y;\lambda)}{y(y-x)}~dy \cr
            &=\frac{2\lambda}{2\pi i}\int_{\gamma_R}\frac{d_L(y;\lambda)}{y(y-x)}~dy=2\lambda d_L(x;\lambda)
        \end{align}
        where we have deformed $\gamma_R$ through $z=\infty$ and into the circle of center and radius $b_L/2$ with positive orientation and then apply residue theorem.  The remaining computation is nearly identical.

        \item The idea is similar to that of the last proof;
        \begin{align}
            \int_0^{b_R}\frac{d_R(x;\lambda)}{x}~dx&=\frac{2\pi\lambda}{2\pi i}\int_0^{b_R}\frac{\frac{i}{\lambda}d_R(x;\lambda)}{x}~dx=\frac{2\pi\lambda}{2\pi i}\int_{\gamma_R}\frac{d_L(x;\lambda)}{x}~dx=2\pi\lambda d_L(\infty;\lambda),
        \end{align}
        and the remaining identity is proven analogously.
    \end{enumerate}
\end{proof}

\section{Proof of Theorem \ref{result}}\label{appendix_saddlePt}

\subsection{Deformation of $[0,1]$ to $\gamma_\eta$}

For any $\eta\in\Omega_+$, we want the path $\gamma_\eta$ to have the property 
\begin{equation}
    \Re\left[S_\eta(t)-S_\eta(t^*_-(\eta))\right]\leq0
\end{equation}
for all $t\in\gamma_\eta$ with equality only when $t=t_-^*(\eta)$, so it is important to  understand the collection of the level   curves of 
\begin{equation}
    \Re\left[S_\eta(t)-S_\eta(t^*_-(\eta))\right]=0,
\end{equation}
which obviously depends on $\eta$.

\begin{lemma}\label{levSetProp}
    For each $\eta\in\Omega_+$ \nonumber \\ we have the following:
    \begin{enumerate}
        \item There is exactly one curve $l_0$ emitted from $t=0$.  Moreover, $l_0$ lies in the sector $|\arg(t)|\leq\pi/4$ for 
         sufficiently small $|t|$.
        \item There is exactly one curve $l_1$ emitted from $t=1$.  Moreover, $l_0$ lies in the sector $|\arg(1-t)|\leq\pi/4$ for 
         sufficiently small $|1-t|$.
        \item There exists exactly one $\theta=\theta_{u,\eta}\in(0,\pi)$ such that $\Re\left[S_\eta(1/2+1/2e^{i\theta_{u,\eta}})\right]=\Re\left[S_\eta(t^*_-(\eta))\right]$.  Moreover, for $M$ sufficiently large, $\theta_{u,\eta}\in(\pi/4,3\pi/4)$.
        \item There exists exactly one $\theta=\theta_{l,\eta}\in(-\pi,0)$ such that $\Re\left[S_\eta(1/2+1/2e^{i\theta_{l,\eta}})\right]=\left[S_\eta(t^*_-(\eta))\right]$.  Moreover, for $M$ sufficiently large, $\theta_{l,\eta}\in(-3\pi/4,-\pi/4)$.
    \end{enumerate}
\end{lemma}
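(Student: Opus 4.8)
The plan is to reduce each of the four assertions to a uniform $\BigO{1/M}$ perturbation of an explicit elementary model, using that on the closed disc $\overline{B(1/2,1/2)}$ the function $S_\eta$ is analytic except for logarithmic singularities at the two boundary points $t=0,1$ (the branch points $0,1,\eta$ of $S_\eta$ meet $\overline{B(1/2,1/2)}$ only at $0$ and $1$), while $\eta\in\Omega_+$ forces $|\eta|\ge M$. Since $-i\ln w=\arg w-i\ln|w|$, one has $\Re[S_\eta(t)]=\arg\!\big(\tfrac{t(1-t)}{1-t/\eta}\big)$ on $B(1/2,1/2)$, and by \eqref{Satsaddle} together with Proposition \ref{saddlepts}, $\Re[S_\eta(t^*_-(\eta))]=2\arg t^*_-(\eta)=\BigO{1/M}$, because $t^*_-(\eta)=\tfrac12+\BigO{\eta^{-1}}$. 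Thus the target level value is small, uniformly in $\eta\in\Omega_+$, and each item becomes a statement about where a curve ``$\arg(\cdot)=\text{small}$'' lies.

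For items 1 and 2 I would isolate the logarithm. Near $t=0$ write $S_\eta(t)=-i\ln t-i\ln\!\big(\tfrac{1-t}{1-t/\eta}\big)$, where the second summand is the real part... more precisely it is analytic in a fixed neighbourhood of $0$ and vanishes at $t=0$; hence $\Re[S_\eta(t)]=\arg t+u(t)$ with $u=\Re v$, $v$ analytic near $0$, $v(0)=0$. Consequently the level set $\{\Re[S_\eta(t)-S_\eta(t^*_-(\eta))]=0\}$ can approach $0$ only along the ray $\arg t=\Re[S_\eta(t^*_-(\eta))]+o(1)$, and since the $\theta$--derivative of $\arg t+u(t)$ along circles $|t|=r$ is $1+\BigO{r}$, the implicit function theorem yields exactly one smooth arc $l_0$ emanating from $0$, tangent to that ray; because $\Re[S_\eta(t^*_-(\eta))]=\BigO{1/M}$, we get $l_0\subset\{|\arg t|\le\pi/4\}$ for $|t|$ small and $M$ large. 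Item 2 is identical after writing $S_\eta(t)=-i\ln(1-t)-i\ln\!\big(\tfrac{t}{1-t/\eta}\big)$ near $t=1$, whose analytic part equals $i\ln(1-1/\eta)=\BigO{1/M}$ there.

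For items 3 and 4 I would parametrize $\partial B(1/2,1/2)$ by $t=\tfrac12(1+e^{i\theta})$, so $t(1-t)=\tfrac14(1-e^{2i\theta})=\tfrac12|\sin\theta|\,e^{i(\theta\mp\pi/2)}$ for $\pm\theta\in(0,\pi)$, while $1-t/\eta=1+\BigO{1/M}$ on $\overline{B(1/2,1/2)}$. Hence
\[
\Re\!\big[S_\eta\!\big(\tfrac12+\tfrac12e^{i\theta}\big)\big]=\theta\mp\tfrac\pi2+\BigO{1/M},\qquad \pm\theta\in(0,\pi),
\]
uniformly in $\eta\in\Omega_+$, and a short computation using $\tfrac1t-\tfrac1{1-t}=\tfrac{-2i}{\sin\theta}$ (so that $(\tfrac1t-\tfrac1{1-t})\tfrac{i}{2}e^{i\theta}=\cot\theta+i$) shows that the $\theta$--derivative of the left side is $1+\BigO{1/M}$, uniformly on $(0,\pi)$ and on $(-\pi,0)$. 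For $M$ large this derivative is positive, so on each open semicircle $\Re[S_\eta(\tfrac12+\tfrac12e^{i\theta})]$ is strictly increasing, running from $-\tfrac\pi2+\BigO{1/M}$ to $\tfrac\pi2+\BigO{1/M}$. Since $\Re[S_\eta(t^*_-(\eta))]=\BigO{1/M}$ lies strictly inside that range, the intermediate value theorem together with strict monotonicity gives exactly one $\theta_{u,\eta}\in(0,\pi)$ (resp. one $\theta_{l,\eta}\in(-\pi,0)$), and solving $\theta\mp\tfrac\pi2+\BigO{1/M}=\BigO{1/M}$ shows $\theta_{u,\eta}=\tfrac\pi2+\BigO{1/M}$ (resp. $\theta_{l,\eta}=-\tfrac\pi2+\BigO{1/M}$), hence $\theta_{u,\eta}\in(\pi/4,3\pi/4)$ and $\theta_{l,\eta}\in(-3\pi/4,-\pi/4)$ for $M$ large.

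The only genuinely delicate part is the bookkeeping of uniformity: one must check that every $\BigO{1/M}$ remainder above — in the value of $\Re[S_\eta]$, in its $\theta$--derivative along $\partial B(1/2,1/2)$, and in the local expansions at $t=0,1$ — is uniform in $\eta\in\Omega_+$, and that the continuous branch of $S_\eta$ on the disc really restricts to the explicit formulas above on each semicircle (which one verifies by evaluating at a single point, e.g. $t=\tfrac12+\tfrac i2$). All of this follows from the single estimate $\big|\tfrac{1/\eta}{1-t/\eta}\big|\le\tfrac{1/M}{1-1/M}$ on $\overline{B(1/2,1/2)}$ (valid since $|\eta|\ge M$) together with Cauchy estimates for the $t$--derivatives of the analytic factors $-i\ln\!\big(\tfrac{1-t}{1-t/\eta}\big)$ and $-i\ln\!\big(\tfrac{t}{1-t/\eta}\big)$, the logarithmic singularities at $0$ and $1$ being handled explicitly by the factorizations used above.
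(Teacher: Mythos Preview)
Your proof is correct and follows essentially the same route as the paper: both identify $\Re[S_\eta(t)]=\arg\bigl(\tfrac{t(1-t)}{1-t/\eta}\bigr)$, use $\Re[S_\eta(t^*_-(\eta))]=2\arg t^*_-(\eta)=\BigO{1/M}$, show the angular derivative of this argument along small circles about $0,1$ (resp.\ along $\partial B(\tfrac12,\tfrac12)$) equals $1$ plus a small remainder, and then apply monotonicity/IVT to locate a unique crossing in the claimed sector. Your explicit computation $t(1-t)=-\tfrac{i}{2}\sin\theta\,e^{i\theta}$ on the boundary circle for items 3--4, and your use of $\tfrac1t-\tfrac1{1-t}=\tfrac{-2i}{\sin\theta}$ to get the derivative $1+\BigO{1/M}$, are slightly cleaner than the paper's $\arctan$ formulas, but the argument is the same.
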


\begin{figure}[h]
\begin{center} 

\begin{tikzpicture}[scale=0.75]

\draw[red,thick] (-3,0) .. controls (0,-0.25) .. (3,0);

\draw [thick,smooth,red] (0.5,-3.31) .. controls (-0.16,-0.19) .. (0.5,3.59);

\draw[blue,thick] (3,0) -- (4.5,0);
\draw[blue,thick] (-4.5,0) -- (-3,0);
\filldraw[black] (-3,0) circle [radius=2pt] node[anchor=north east] {0};
\filldraw[black] (0,-0.19) circle [radius=2pt] node[anchor=south east] {$t_-^*(\eta)$};
\filldraw[black] (3,0) circle [radius=2pt] node[anchor=north west] {1};

\draw[black,dashed] (0,0) circle [radius=85.4pt];

\node[] at (-2.5,0.25) {$+$};
\node[] at (-2.5,-0.25) {$-$};

\node[] at (2.5,-0.25) {$+$};
\node[] at (2.5,0.25) {$-$};

\end{tikzpicture}
\end{center}
\vspace{-5mm}
\caption{This is a visualization of Lemma \ref{levSetProp}.  The blue lines are the branch cuts of $S_\eta(t)$, the red curves form the level set $\Re\left[S_\eta(t)-S_\eta(t^*_-(\eta))\right]=0$ and the black dashed circle has center and radius $1/2$.  The $+$ denotes regions where $\Re\left[S_\eta(t)-S_\eta(t^*_-(\eta))\right]>0$ and $-$ denotes regions where $\Re\left[S_\eta(t)-S_\eta(t^*_-(\eta))\right]<0$.} \label{figZeroLevelSet}
\end{figure}
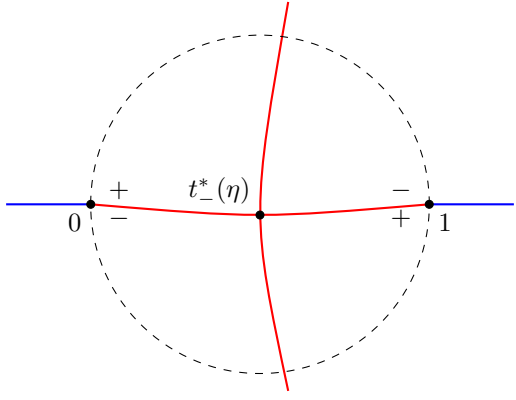

\begin{proof}

For brevity, define
\begin{align}\label{alpha}
    \alpha_\eta(t)&:=\Re\left[S_\eta(t)-S_\eta(t^*_-(\eta))\right]=\arg\left(\frac{\eta t(1-t)}{\eta-t}\right)-2\arg\left(t^*_-(\eta)\right).
\end{align}
The statements listed above are equivalent to
\begin{enumerate}
        \item For any $r>0$ sufficiently small, $\alpha_\eta(re^{i\theta})$ is increasing for $-\pi<\theta<\pi$ and ${\exists!\theta_{0,\eta,r}\in(-\pi/4,\pi/4)}$ such that ${\alpha_\eta(re^{i\theta_{0,\eta,r}})=0}$,
        
        \item For any $r>0$ sufficiently small, $\alpha_\eta(1-re^{i\theta})$ is increasing for $-\pi<\theta<\pi$ and ${\exists!\theta_{1,\eta,r}\in(-\pi/4,\pi/4)}$ such that ${\alpha_\eta(1-re^{i\theta_{1,\eta,r}})=0}$,
        
        \item $\alpha_\eta\left(\frac{1}{2}+\frac{1}{2}e^{i\theta}\right)$ is increasing for $0<\theta<\pi$ and $\exists!\theta_{u,\eta}\in(\pi/4,3\pi/4)$ such that ${\alpha_\eta\left(\frac{1}{2}+\frac{1}{2}e^{i\theta_{u,\eta}}\right)=0}$,
        
        \item $\alpha_\eta\left(\frac{1}{2}+\frac{1}{2}e^{i\theta}\right)$ is increasing for $-\pi<\theta<0$ and $\exists!\theta_{l,\eta}\in(-3\pi/4,-\pi/4)$ such that ${\alpha_\eta\left(\frac{1}{2}+\frac{1}{2}e^{i\theta_{l,\eta}}\right)=0}$.
        
    \end{enumerate}
The proofs of each of the four claims above are similar so we prove the first.
\begin{align}
\alpha_\eta(re^{i\theta})&=\arg(re^{i\theta})+\arg(1-re^{i\theta})-\arg(\eta-re^{i\theta})+\arg(\eta)-2\arg\left(t^*_-(\eta)\right) \nonumber \\
&\equiv\theta+\tan^{-1}\left(\frac{r\sin\theta}{r\cos\theta-1}\right)-\tan^{-1}\left(\frac{r\sin\theta-\Im\eta}{r\cos\theta-\Re\eta}\right)+\arg(\eta)-2\arg\left(t^*_-(\eta)\right) \pmod{\pi} \nonumber 
\end{align}
Differentiating with respect to $\theta$, we have
\begin{align}
\frac{d}{d\theta}\left[\alpha_\eta(re^{i\theta})\right]&=1+\frac{r^2-r\cos\theta}{(r\cos\theta-1)^2+r^2\sin^2\theta}-\frac{r^2-r(\Re(\eta)\cos\theta+\Im(\eta)\sin\theta)}{(r\cos\theta-\Re\eta)^2+(r\sin\theta-\Im\eta)^2}\to1
\end{align}
as $r\to0$.  So with $r$ small enough, $\frac{d}{d\theta}\left[\alpha_\eta(re^{i\theta})\right]>0$ for any $\theta$ and for any $\eta\in\Omega_+$.  Take $M$ sufficiently large so that 
\begin{equation}\label{Mcondit ion1}
    |\arg\left(t^*_-(\eta)\right)|<\pi/8
\end{equation}
for all $\eta\in\Omega_+$.  Thus when $r$ is sufficiently small,
\begin{align}
    \alpha_\eta(re^{i\pi/4})&=\frac{\pi}{4}+\tan^{-1}\left(\frac{r/\sqrt{2}}{r/\sqrt{2}-1}\right)-\arg\left(1-\frac{re^{i\pi/4}}{\eta}\right)-2\arg\left(t^*_-(\eta)\right)>0
\end{align}
and similarly it can be shown that $ \alpha_\eta(re^{-i\pi/4})<0$.  Intermediate Value Theorem can now be applied to show $\exists!\theta_{0,\eta,r}\in(-\pi/4,\pi/4)$ such that $\alpha_\eta(re^{i\theta_{0,\eta,r}})=0$.

\end{proof}

\begin{remark}
    According to Lemma \ref{levSetProp}, for every $\eta\in\Omega_+$, the set $t\in B(1/2,1/2)$ is split into 4 sectors by the level curve $\Re\left[S_\eta(t)-S_\eta(t^*_-(\eta))\right]=0$, see Figure \ref{figZeroLevelSet}.
\end{remark}

We have now proven that we can deform $[0,1]$ to an `appropriate' path $\gamma_\eta$, as described in the following Theorem.

\begin{theorem} 
    For each $\eta\in\Omega_+$, the path $[0,1]$ can be continuously deformed to a path $\gamma_\eta$ which begins at $t=0$ (with $\emph{arg}(t)<-\pi/4$ for small $|t|$), passes through $t=t_-^*(\eta)$, ends at $t=1$ (with $\emph{arg}(t-1)<3\pi/4$ for small $|t-1|$), and ${\gamma_\eta\subset B(1/2,1/2)}$.  Moreover,  $\Re\left[S_\eta(t)-S_\eta(t^*_-(\eta))\right]\leq0$ for all $t\in\gamma_\eta$ with equality only when $t=t_-^*(\eta)$.  See \eqref{OmegaPlus}, \eqref{S} for $\Omega_+, S_\eta(t)$, respectively.
\end{theorem}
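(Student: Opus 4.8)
The plan is to read off the global geometry of the zero level set of $u(t):=\Re\!\left[S_\eta(t)-S_\eta(t^*_-(\eta))\right]$ inside $B(1/2,1/2)$ by combining the local behaviour of $S_\eta$ near its unique interior saddle with the boundary data proved in Lemma \ref{levSetProp}, and then to choose $\gamma_\eta$ running through the two sectors on which $u<0$ that meet at $t^*_-(\eta)$.

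First I would note that, since every $\eta\in\Omega_+$ has $|\eta|\ge M$ large, the three branch cuts of $S_\eta(t)$ — namely $(-\infty,0)$, $(1,\infty)$, and the ray from $\eta$ to $\infty$ — meet $\overline{B(1/2,1/2)}$ only at $t=0$ and $t=1$; hence $S_\eta$ is analytic on the open disc $B(1/2,1/2)$, $u$ is harmonic there, and the critical points of $u$ are exactly the zeros of $S'_\eta$, namely $t^*_\pm(\eta)$. By Proposition \ref{saddlepts}, $t^*_+(\eta)=2\eta+\BigO{1}$ lies far outside the disc while $t^*_-(\eta)=\tfrac12+\BigO{\eta^{-1}}\in B(1/2,1/2)$ and $S''_\eta(t^*_-(\eta))=\tfrac{2i}{t^*_-(\eta)(1-t^*_-(\eta))}=8i+\BigO{M^{-1}}$ is nonzero; thus $u$ has a single, nondegenerate critical point in $B(1/2,1/2)$.

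Next I would pin down $\mathcal Z_\eta:=\{t\in B(1/2,1/2):u(t)=0\}$. By standard facts about harmonic functions, $\mathcal Z_\eta$ is a union of analytic arcs which contains no closed loop in $B(1/2,1/2)$ (a Jordan curve in the disc on which a harmonic function vanishes forces it to vanish inside, hence everywhere, a contradiction), whose only singularity is a transversal self-crossing at $t^*_-(\eta)$ — four arcs, since the Hessian there is nondegenerate — and whose arcs otherwise can only terminate on $\partial B(1/2,1/2)$ or tend to $0$ or $1$. From $S''_\eta(t^*_-(\eta))=8i+\BigO{M^{-1}}$ one gets $u(t)-u(t^*_-(\eta))=-4\Im\!\left[(t-t^*_-(\eta))^2\right]+\BigO{M^{-1}|t-t^*_-(\eta)|^2}$, so the four arcs leave $t^*_-(\eta)$ in the directions $0,\tfrac\pi2,\pi,\tfrac{3\pi}2$. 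Matching with Lemma \ref{levSetProp}(1)--(4) — one arc reaching $t=0$, one reaching $t=1$, exactly one crossing of $\partial B(1/2,1/2)$ in the upper semicircle at $\theta_{u,\eta}\in(\tfrac\pi4,\tfrac{3\pi}4)$, one in the lower at $\theta_{l,\eta}\in(-\tfrac{3\pi}4,-\tfrac\pi4)$ — forces $\mathcal Z_\eta$ to be precisely these four arcs: the one leaving along $\approx\pi$ reaches $0$, the one along $\approx0$ reaches $1$, and the ones along $\approx\pm\tfrac\pi2$ exit at $\theta_{u,\eta}$, $\theta_{l,\eta}$. They split $B(1/2,1/2)$ into four Jordan sectors on which $u$ alternates in sign; hence exactly two of them lie in $\{u<0\}$, and being non-adjacent these two are opposite to one another across $t^*_-(\eta)$.

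Finally I would build $\gamma_\eta$. From the sign computation in the proof of Lemma \ref{levSetProp} (namely $u(re^{-i\pi/4})<0$ for small $r$, and the analogous estimate at $t=1$, consistent with Figure \ref{figZeroLevelSet}), the two negative sectors are exactly the sector just below the arc $l_0$ at $t=0$ and the sector just above the arc $l_1$ at $t=1$; by Lemma \ref{levSetProp}(1) and the monotonicity of $\theta\mapsto u(re^{i\theta})$, the set $\{u<0\}$ near $0$ is $\{\arg t\in(-\pi,\theta_{0,\eta,r})\}$ with $\theta_{0,\eta,r}\in(-\tfrac\pi4,\tfrac\pi4)$, and $\{u<0\}$ near $1$ is $\{\arg(1-t)\in(-\pi,\theta_{1,\eta,r})\}$ with $\theta_{1,\eta,r}\in(-\tfrac\pi4,\tfrac\pi4)$. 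I would then take $\gamma_\eta$ to leave $t=0$ into the first sector along a direction with $\arg t\in(-\tfrac\pi2,-\tfrac\pi4)$ (so $\arg t<-\tfrac\pi4$), run inside it to $t^*_-(\eta)$, pass through $t^*_-(\eta)$ into the opposite sector — legitimate precisely because the two negative sectors are opposite and both have $t^*_-(\eta)$ on their boundary — and run to $t=1$, arriving with $\arg(1-t)\in(-\tfrac\pi2,-\tfrac\pi4)\subset(-\pi,\theta_{1,\eta,r})$, which gives $\arg(t-1)\in(\tfrac\pi2,\tfrac{3\pi}4)$, hence $\arg(t-1)<\tfrac{3\pi}4$. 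Such a $\gamma_\eta$ lies in $B(1/2,1/2)$, meets $\mathcal Z_\eta$ only at $t^*_-(\eta)$ — so $\Re[S_\eta(t)-S_\eta(t^*_-(\eta))]\le0$ on $\gamma_\eta$ with equality only there — and is homotopic rel endpoints to $[0,1]$ inside the contractible set $\overline{B(1/2,1/2)}$, across which the integrand $F(t,\eta,\lambda)e^{-\Im[a(\lambda)]S_\eta(t)}$ is analytic away from $t=0,1$. The step I expect to be the main obstacle is the third paragraph: upgrading the local saddle behaviour and the boundary statements of Lemma \ref{levSetProp} into a fully rigorous global description of $\mathcal Z_\eta$ — ruling out spurious components and showing each of the four arcs goes where claimed — and thereby ensuring that the two sectors with $u<0$ are opposite across $t^*_-(\eta)$ so that the steepest-descent route through the saddle is actually available; once that picture is secured, writing down $\gamma_\eta$ and checking the argument constraints and the homotopy is routine.
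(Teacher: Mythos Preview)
Your proposal is correct and follows essentially the same approach as the paper: both arguments use that $\Re S_\eta$ is harmonic in $B(1/2,1/2)$ to exclude closed loops in the zero level set, combine the four saddle arcs emanating from the unique nondegenerate critical point $t^*_-(\eta)$ with the four boundary entries supplied by Lemma \ref{levSetProp}, and conclude that the topology must be as in Figure \ref{figZeroLevelSet}, after which the construction of $\gamma_\eta$ is immediate. Your write-up is considerably more detailed than the paper's (which simply asserts ``the only possible topology of these level curves is shown on Figure \ref{figZeroLevelSet}''), and the obstacle you flag --- making the four-in/four-out matching fully rigorous --- is precisely the step the paper leaves at the level of a counting argument.
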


\begin{proof}
 Let us fix some $\eta\in\Omega_+$. The function $\Re S_\eta(t)$ is a harmonic function in $t\in  B(1/2,1/2)$, so the  level curves 
 $\Re S_\eta(t)=\Re S_\eta(t^*_-(\eta))$ can not form closed loops there. Since, according to Lemma \ref{levSetProp}, there exactly four 
 level curves  $\Re S_\eta(t)=\Re S_\eta(t^*_-(\eta))$ entering the disc  $  B(1/2,1/2)$, and exactly four legs of this level curve
 emanating from $t=t^*_-(\eta)$, the only possible topology of these level curves is shown on Figure \ref{figZeroLevelSet}. We can now deform 
 $[0,1]$ to an `appropriate' path $\gamma_\eta$ as stated in the theorem. The new path of integration, $\gamma_\eta$, is the black curve in Figure \ref{figGammaPathSplit}.
\end{proof}

\subsection{Estimates}

Here we will split $\gamma_\eta$ into 3 pieces (see Figure \ref{figGammaPathSplit}) and show that the leading order contribution in Theorem \ref{result} comes from the piece containing $t^*_-(\eta)$.  Define radius
\begin{equation}\label{r}
r:=r_M=\max_{\eta\in\Omega_+}\left|\frac{1}{2}-t^*_-(\eta)\right|=O(M^{-1})
\end{equation}
so that $t^*_-(\eta)\in \overline{B}(1/2,r)$ for all $\eta\in\Omega_+$.   The latter estimate follows from Prposition \ref{saddlepts}.  Also define function 
\begin{equation}\label{v_func}
v_\eta(t):=v\left(t,\eta\right)=\sqrt{S_\eta(t^*_-(\eta))-S_\eta(t)}
\end{equation}
where the square root is defined so that $\Re v_\eta(t)>0$ along the path $\gamma_\eta$ lying between $t^*_-(\eta)$ and 1.  The function $v_\eta(t)$ will be an essential change of variables in the integral \eqref{int} so we mention its important properties.
\begin{lemma}\label{v_prop}
If $M>0$ is sufficiently large, the function $v_\eta(t)$ for every $\eta\in\Omega_+$ is: 1)  biholomorphic in  $t\in B(1/2,2r)$; 
2) satisfies the condition
\begin{equation}\label{v_biLip}
    \left|v_\eta(t_1)-v_\eta(t_2)\right|=  (2+O(M^{-1}))\left|t_1-t_2\right|
\end{equation}
for any $t_1,t_2\in B(1/2,2r)$. Moreover, if $I_\eta$ is the image of $B(1/2,2r)$ under the map $v_\eta(t)$,  then there exists a $\delta_*>0$ such that $\ds{{B(0,\delta_*)\subset\bigcap_{\eta\in\Omega_+}I_\eta}}$.
\end{lemma}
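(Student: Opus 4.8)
The plan is to reduce everything to the study of the function
$\phi_\eta(t):=\dfrac{S_\eta(t^*_-(\eta))-S_\eta(t)}{(t-t^*_-(\eta))^2}$,
so that $v_\eta(t)=(t-t^*_-(\eta))\sqrt{\phi_\eta(t)}$ once a branch of the square root is fixed. Since $S'_\eta(t^*_-(\eta))=0$ by Proposition~\ref{saddlepts}, the singularity of $\phi_\eta$ at $t=t^*_-(\eta)$ is removable, $\phi_\eta$ is holomorphic near $t^*_-(\eta)$, and $\phi_\eta(t^*_-(\eta))=-\tfrac12 S''_\eta(t^*_-(\eta))$. For $M$ large the branch cuts of $S_\eta$ (the sets $(-\infty,0]$, $[1,\infty)$ and the ray from $\eta$ to $\infty$, see \eqref{S}) stay at distance $\ge \tfrac12+O(M^{-1})$ from the disc $B(1/2,2r)$, because $\eta\in\Omega_+$ forces $|\eta|\ge M$ while $r=r_M=O(M^{-1})$ by \eqref{r} and Proposition~\ref{saddlepts}; hence $S_\eta$, and therefore $\phi_\eta$, is holomorphic on a slightly larger disc, with all derivatives bounded uniformly in $\eta\in\Omega_+$.

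The key estimate is that $\phi_\eta$ is essentially constant on $B(1/2,2r)$. By Proposition~\ref{saddlepts}, $t^*_-(\eta)=\tfrac12+O(\eta^{-1})$ and, by \eqref{Satsaddle}, $S''_\eta(t^*_-(\eta))=\dfrac{2i}{t^*_-(\eta)\left(1-t^*_-(\eta)\right)}=8i+O(M^{-1})$, so $\phi_\eta(t^*_-(\eta))=-4i+O(M^{-1})$. Combining this with a Cauchy estimate for $\phi'_\eta$ and the fact that $B(1/2,2r)$ has radius $O(M^{-1})$ gives $\phi_\eta(t)=-4i+O(M^{-1})$ uniformly for $t\in B(1/2,2r)$ and $\eta\in\Omega_+$. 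In particular, for $M$ large $\phi_\eta$ never vanishes on this disc and takes values in a small disc around $-4i$, so there is a well-defined holomorphic branch of $\sqrt{\phi_\eta}$ there; we take the branch making $v_\eta(t)=(t-t^*_-(\eta))\sqrt{\phi_\eta(t)}$ satisfy $\Re v_\eta(t)>0$ on the leg of $\gamma_\eta$ between $t^*_-(\eta)$ and $1$, i.e. the square root of $-\tfrac12 S''_\eta(t^*_-(\eta))$ prescribed after \eqref{v_func}. This $v_\eta$ agrees with $\sqrt{S_\eta(t^*_-(\eta))-S_\eta(t)}$, is holomorphic on $B(1/2,2r)$, vanishes only at $t^*_-(\eta)$, and satisfies $\sqrt{\phi_\eta(t)}=2\sqrt{-i}\,(1+O(M^{-1}))$ uniformly.

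To obtain biholomorphy and \eqref{v_biLip}, I would differentiate: $v'_\eta(t)=\sqrt{\phi_\eta(t)}\bigl(1+(t-t^*_-(\eta))\tfrac{\phi'_\eta(t)}{2\phi_\eta(t)}\bigr)$. On $B(1/2,2r)$ one has $|t-t^*_-(\eta)|=O(M^{-1})$ while $\phi'_\eta/\phi_\eta$ is bounded, so $v'_\eta(t)=2\sqrt{-i}\,(1+O(M^{-1}))$ uniformly in $t\in B(1/2,2r)$ and $\eta\in\Omega_+$. Writing $v_\eta(t_1)-v_\eta(t_2)=(t_1-t_2)\int_0^1 v'_\eta\bigl(t_2+s(t_1-t_2)\bigr)\,ds$ then gives $v_\eta(t_1)-v_\eta(t_2)=(t_1-t_2)\bigl(2\sqrt{-i}+O(M^{-1})\bigr)$, hence $\left|v_\eta(t_1)-v_\eta(t_2)\right|=(2+O(M^{-1}))\left|t_1-t_2\right|$, which is \eqref{v_biLip}; in particular $v_\eta$ is injective on $B(1/2,2r)$, and being holomorphic with non-vanishing derivative it is a biholomorphism onto its image $I_\eta$. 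Finally, fix $M$ large. Since $|t^*_-(\eta)-\tfrac12|\le r$, the closed disc $\overline{B(t^*_-(\eta),r/2)}$ lies in $B(1/2,2r)$, and for $z\in\partial B(t^*_-(\eta),r/2)$ the lower Lipschitz bound gives $|v_\eta(z)|=|v_\eta(z)-v_\eta(t^*_-(\eta))|\ge(2-O(M^{-1}))(r/2)\ge r/2$; by the argument principle (Rouch\'e applied to $v_\eta(\cdot)-w$ for $|w|<r/2$) one gets $v_\eta\bigl(B(t^*_-(\eta),r/2)\bigr)\supseteq B(0,r/2)$, so $B(0,r_M/2)\subseteq I_\eta$ for every $\eta\in\Omega_+$ and we may take $\delta_*=r_M/2>0$.

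The main obstacle is the uniform-in-$\eta$ control asserted in the second paragraph: that $\phi_\eta$ stays bounded away from $0$ and within $O(M^{-1})$ of the constant $-4i$ on the whole (shrinking) disc $B(1/2,2r)$. This is exactly where Proposition~\ref{saddlepts} (for the location of $t^*_-(\eta)$ and the value of $S''_\eta(t^*_-(\eta))$) and the fact that $B(1/2,2r)$ keeps a fixed positive distance from the singularities $0$, $1$, $\eta$ of $S_\eta$ are both essential; once that is in place, fixing the square-root branch, the near-constant derivative estimate, and the image bound are routine.
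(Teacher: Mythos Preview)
Your argument is correct and follows essentially the same route as the paper's proof: both factor $v_\eta(t)=(t-t^*_-(\eta))\sqrt{-\tfrac12 S''_\eta(t^*_-(\eta))+O(t-t^*_-(\eta))}$, use Proposition~\ref{saddlepts} to get $S''_\eta(t^*_-(\eta))=8i+O(M^{-1})$, and conclude $|v'_\eta(t)|=2+O(M^{-1})$ uniformly on the shrinking disc, whence the bi-Lipschitz estimate \eqref{v_biLip} and injectivity.

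The one genuine difference is the final image bound. The paper argues by continuity and compactness: it sets $\delta_*(\eta)=\min\{|v|:v\in\partial I_\eta\}$, observes this is positive for each $\eta$, and takes the minimum over the compact $\Omega_+$. Your Rouch\'e argument is more explicit and in fact slightly sharper: from the lower Lipschitz bound on $\partial B(t^*_-(\eta),r/2)$ you extract a concrete $\delta_*=r_M/2$, bypassing the need to check continuity of $\eta\mapsto\partial I_\eta$. Both are valid; yours gives a quantitative $\delta_*=O(M^{-1})$ that the compactness route does not directly yield.
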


\begin{proof}
Statement 1) follows directly from 
\begin{equation}\label{veta}
   v_\eta(t)=\left(t-t^*_-(\eta)\right)\sqrt{-\frac{1}{2}S_\eta''(t^*_-(\eta))+\BigO{t-t^*_-(\eta)}}, ~~ t\in B(1/2,2r).
\end{equation}
To prove statement 2), we observe that Proposition \ref{saddlepts} implies $   S_\eta ''(t^*_-(\eta))=8i+O(M^{-1})  $. Then,
taking into account \eqref{veta},
\begin{equation}
 v'_\eta(t)=\frac{-S_\eta'(t)}{2v_\eta(t)}=\sqrt{-\frac{1}{2}S_\eta''(t^*_-(\eta))}+O(M^{-1}),
 \end{equation}
in $ B(1/2,2r)$, so that $|v'_\eta(t)|=2+O(M^{-1})$ there. That implies \eqref{v_biLip}.  To prove the last statement, define the function 
\begin{equation}
\delta_*(\eta)=\min \{v\, : ~ v\in \partial I_\eta\}
\end{equation}
for all  $\eta\in\Omega_+$. Since $\delta_*(\eta)$ is a continuous positive function on the compact boundary $\partial  \Omega_+$,
it must attain its minimal value $\delta_*>0$.
\end{proof}

Let $\delta=\frac{\delta_*}2$ and define
\begin{equation}
    t^*_L(\eta):=v^{-1}_\eta(-\delta), ~~~ t^*_R(\eta):=v^{-1}_\eta(\delta).
\end{equation}
With the previous Lemma in mind, we now write ${\gamma_\eta=\gamma_{L,\eta}+\gamma_{C,\eta}+\gamma_{R,\eta}}$, where $\gamma_{C,\eta}$ is the image of $[-\delta,\delta]$ under the map $v^{-1}_\eta$, $\gamma_{L,\eta}$ is the portion of $\gamma$ beginning at $t=0$ and ending at $t=t^*_L(\eta)$, and $\gamma_{L,\eta}$ is the portion of $\gamma$ beginning at $t=t^*_R(\eta)$ and ending at $t=1$.  The curves $\gamma_{L,\eta},\gamma_{C,\eta},\gamma_{R,\eta}$ are pictured in Figure \ref{figGammaPathSplit}.
\begin{figure}
\begin{center}
\begin{tikzpicture}

\draw[blue,thick] (-4.5,0) -- node[anchor=south,black] {$+$} node[anchor=north,black] {$-$} (-3,0);
\draw[red,thick] (-3,0) -- (3,0);
\draw[blue,thick] (3,0) -- node[anchor=south,black] {$-$} node[anchor=north,black] {$+$} (4.5,0);
\draw [thick,smooth,red] (0.5,-2) .. controls (-0.15,0) .. (0.5,2);
\draw [thick,smooth] (-3,0) .. controls (-1,-0.88) .. (0,0);
\draw [thick,smooth] (0,0) .. controls (1,0.88) .. (3,0);

\filldraw[black] (-3,0) circle [radius=1.5pt] node[anchor=south] {0};
\filldraw[black] (0,0) circle [radius=1.5pt] node[anchor=south east] {$t^*_-(\eta)$};
\filldraw[black] (3,0) circle [radius=1.5pt] node[anchor=north] {1};

\node[] at (-2.25,-0.65) {$\gamma_{L,\eta}$};
\node[] at (2.25,0.65) {$\gamma_{R,\eta}$};

\filldraw[black] (0.7,0.55) circle [radius=1.5pt] node[anchor=south] {$t^*_R(\eta)$};
\filldraw[black] (-0.7,-0.55) circle [radius=1.5pt] node[anchor=north] {$t^*_L(\eta)$};

\end{tikzpicture}
\end{center}
\caption{\hspace{.05in}$\gamma_\eta=\gamma_{L,\eta}+\gamma_{C,\eta}+\gamma_{R,\eta}$}\label{figGammaPathSplit}
\end{figure}
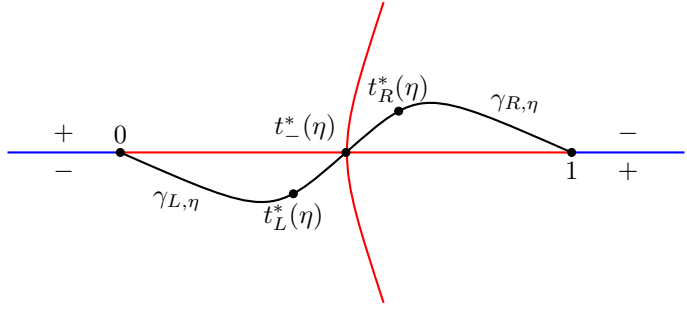

\begin{remark}
Notice that
\begin{equation}
    \left|t_-^*(\eta)-t_R^*(\eta)\right|=\left|v_\eta^{-1}(0)-v_\eta^{-1}(\delta)\right|\geq\frac{2}{9}|\delta-0|>0
\end{equation}
for all $\eta\in\Omega_+$ by use of \eqref{v_biLip}.  An identical statement holds for $t_L^*(\eta)$.
\end{remark}

\begin{lemma}\label{intLeading}

Choose sufficiently large $M$. Then,
under the hypotheses of Theorem \ref{result}
\begin{align}
    &\int_{\gamma_{C,\eta}}F(t,\eta,\lambda)e^{-\Im[a(\lambda)]S_\eta(t)}dt \nonumber \\
    &=e^{-\Im[a(\lambda)]S_\eta(t^*_-(\eta))}F\left(t^*_-(\eta),\eta,\lambda\right)\sqrt{\frac{2\pi}{\Im[a(\lambda)]S''_\eta(t^*_-(\eta))}}
    \left[1+\BigO{\frac{M^2}{\Im[a(\lambda)]}}\right]
\end{align}
as $\lambda\to0$, $\Im\lambda\geq0$,  uniformly in $\eta\in\Omega_+$.

\end{lemma}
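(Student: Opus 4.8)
The plan is to pass to the steepest-descent coordinate $v=v_\eta(t)$ from \eqref{v_func} on the central arc, turning the integral into a genuine Gaussian (Laplace-type) integral, and then to peel off the leading contribution by a Watson-lemma argument while tracking every constant uniformly in $\eta\in\Omega_+$.

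First I would change variables. By Lemma \ref{v_prop}, for $M$ large $v_\eta$ is biholomorphic on $B(1/2,2r)$ with image $I_\eta\supset B(0,\delta_*)$, and by construction $\gamma_{C,\eta}=v_\eta^{-1}([-\delta,\delta])$ with $\delta=\delta_*/2$. Since $v_\eta(t)^2=S_\eta(t^*_-(\eta))-S_\eta(t)$, the substitution $v=v_\eta(t)$ gives
\[
\int_{\gamma_{C,\eta}}F(t,\eta,\lambda)e^{-\Im[a]S_\eta(t)}\,dt=e^{-\Im[a]S_\eta(t^*_-(\eta))}\int_{-\delta}^{\delta}G(v,\eta,\lambda)\,e^{\Im[a]v^2}\,dv,
\]
where $G(v,\eta,\lambda):=F\big(v_\eta^{-1}(v),\eta,\lambda\big)\dfrac{d}{dv}v_\eta^{-1}(v)$ and $\Im[a]=\Im[a(\lambda)]$. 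On $[-\delta,\delta]$ the exponent $\Im[a]v^2$ is real and, since $\Im[a]\to-\infty$ as $\lambda\to0$ with $\Im\lambda\ge0$, negative away from $v=0$, so the integrand peaks at $v=0$, i.e. at $t=t^*_-(\eta)$. The arc $\gamma_{C,\eta}\subset B(1/2,2r)$ stays uniformly away from $t=0$ and $t=1$ for $M$ large, so only hypotheses (1), (2), (5) of Theorem \ref{result} are used here: by (1), $F(\cdot,\eta,\lambda)$ is analytic on $B(1/2,1/2)\supset B(1/2,2r)$, hence $G(\cdot,\eta,\lambda)$ is analytic on $B(0,\delta_*)$; by (2) together with \eqref{v_biLip} it is bounded on $B(0,\delta)$ uniformly in $(\eta,\lambda)$; and by (5) together with $\big|\tfrac{d}{dv}v_\eta^{-1}(0)\big|=|v_\eta'(t^*_-(\eta))|^{-1}$ being two-sidedly bounded, $|G(0,\eta,\lambda)|$ is bounded away from $0$ and $\infty$ uniformly.

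Next I would extract the leading term by writing $G(v,\eta,\lambda)=G(0,\eta,\lambda)+v\,\partial_vG(0,\eta,\lambda)+R(v,\eta,\lambda)$. Since $\gamma_{C,\eta}$ corresponds to the symmetric interval $[-\delta,\delta]$, the linear term integrates to zero: $\int_{-\delta}^{\delta}v\,e^{\Im[a]v^2}\,dv=0$. For the constant term, $\int_{-\delta}^{\delta}e^{\Im[a]v^2}\,dv=\sqrt{\pi/(-\Im[a])}\big(1+\BigO{e^{-\delta^2|\Im[a]|}}\big)$, and the truncation error is super-polynomially small because $\delta\asymp M^{-1}$ is fixed while $|\Im[a]|\asymp|\varkappa|\to\infty$. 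For the remainder, Cauchy's estimate on $B(0,\delta)$ using analyticity on $B(0,\delta_*)$ and the uniform bound on $G$ gives $|\partial_v^2G(w,\eta,\lambda)|\le CM^2$ for $|w|\le\delta$ — this is where the power $M^2$ enters, since $\delta_*\asymp M^{-1}$ — so $|R(v,\eta,\lambda)|\le CM^2v^2$ and $\big|\int_{-\delta}^{\delta}R\,e^{\Im[a]v^2}\,dv\big|\le CM^2\int_{-\infty}^{\infty}v^2e^{-|\Im[a]|v^2}\,dv=\BigO{M^2|\Im[a]|^{-3/2}}$. Thus the central-arc integral equals $e^{-\Im[a]S_\eta(t^*_-(\eta))}\big[G(0,\eta,\lambda)\sqrt{\pi/(-\Im[a])}+\BigO{M^2|\Im[a]|^{-3/2}}\big]$, and dividing out the leading term, which is $\asymp|\Im[a]|^{-1/2}$ by the lower bound on $|G(0,\eta,\lambda)|$, yields the relative error $\BigO{M^2/\Im[a]}$ of the statement. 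Finally, by \eqref{veta}, $\tfrac{d}{dv}v_\eta^{-1}(0)=\big(v_\eta'(t^*_-(\eta))\big)^{-1}=\big(-\tfrac12 S_\eta''(t^*_-(\eta))\big)^{-1/2}$ with the branch fixed by \eqref{v_func}, hence $G(0,\eta,\lambda)\sqrt{\pi/(-\Im[a])}=F(t^*_-(\eta),\eta,\lambda)\sqrt{2\pi/(\Im[a]S_\eta''(t^*_-(\eta)))}$, which is exactly the amplitude in the Lemma; all constants ($r$, $\delta_*$, the sup of $|G|$, the two-sided bound on $|v_\eta'(t^*_-(\eta))|$) being uniform gives the uniformity in $\eta\in\Omega_+$.

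I expect the main obstacle to be the interplay of the two scales: one must establish that $\delta_*\asymp M^{-1}$ (so that the Cauchy bound for $\partial_v^2G$ contributes precisely the factor $M^2$) while at the same time $\delta^2|\Im[a]|\to\infty$ (so that truncating the Gaussian at $\pm\delta$ costs only a super-polynomially small error); together these fix the error at $\BigO{M^2/\Im[a]}$ and make it uniform. A second, more routine point requiring care is the branch of the square root in identifying $\sqrt{-2/S_\eta''(t^*_-(\eta))}\cdot\sqrt{\pi/(-\Im[a])}$ with $\sqrt{2\pi/(\Im[a]S_\eta''(t^*_-(\eta)))}$, which is dictated by the sign convention in \eqref{v_func} and the position of $\gamma_\eta$ relative to $t^*_-(\eta)$.
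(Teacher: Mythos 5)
Your proposal is correct and follows essentially the same route as the paper: change of variable $v=v_\eta(t)$ via Lemma \ref{v_prop}, Taylor expansion of $F(t(v),\eta,\lambda)/v_\eta'(t(v))$ at $v=0$ with the odd term cancelling over the symmetric interval, a Cauchy estimate on $B(0,\delta_*)$ with $\delta_*\asymp M^{-1}$ producing the $M^2$ in the remainder bound, and the lower bound on $|F(t^*_-(\eta),\eta,\lambda)|$ from hypothesis (5) to convert the absolute error into the relative error $\BigO{M^2/\Im[a(\lambda)]}$. The only cosmetic difference is that the paper evaluates the truncated Gaussian moments via the incomplete Gamma function, whereas you extend to $\pm\infty$ and note the truncation error is superexponentially small; these are equivalent.
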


\begin{proof}

By definition,
$\gamma_{C,\eta}$ is the path of steepest descent for $S_\eta(t)-S_\eta(t^*_-(\eta))$ and thus $\Im[S_\eta(t)-S_\eta(t^*_-(\eta))]=0$ and $\Re[S_\eta(t)-S_\eta(t^*_-(\eta))]\leq0$ on $\gamma_{C,\eta}$ with equality only when $t=t_-^*(\eta)$.  Using change of variables 
\begin{align}
v&=v_\eta(t)=\sqrt{S_\eta(t^*_-(\eta))-S_\eta(t)},
\end{align}
we obtain
\begin{align}
    \int_{\gamma_{C,\eta}}F(t,\eta,\lambda)e^{-\Im[a]S_\eta(t)}dt&=e^{-\Im[a]S_\eta(t^*_-)}\int_{\gamma_{C,\eta}}F(t,\eta,\lambda)e^{-\Im[a][S_\eta(t)-S_\eta(t^*_-)]}dt \cr
    &=e^{-\Im[a]S_\eta(t^*_-)}\int_{-\delta}^\delta e^{\Im[a]v^2}F(t(v),\eta,\lambda)\frac{dv}{v'_\eta(t(v))}. \label{changeOfVar1}
\end{align}
Since $v_\eta(t)$ is biholomorphic in $B(1/2,2r)$ for every $\eta\in\Omega_+$, both $\frac{dt}{dv}$ and $F(t(v),\eta,\lambda)$ are analytic for $v\in B(0,\delta_*)$ by Lemma \ref{v_prop}.  So 
\begin{equation}
    \frac{F(t(v),\eta,\lambda)}{v'_\eta(t(v))}=\sum_{n=0}^\infty b_{n}(\eta,\lambda)v^{n}=:b_0(\eta,\lambda)+vb_1(\eta,\lambda)+v^2\hat R_2(v,\eta,\lambda)
\end{equation}
where
\begin{align}
    b_0(\eta,\lambda)&=F(t^*_-(\eta),\eta,\lambda)\sqrt{\frac{2}{-S_\eta ''(t^*_-(\eta))}}, ~~~ b_{n}(\eta,\lambda)=\frac{1}{2\pi i}\int_{|w|=\delta}\frac{F(t(w),\eta,\lambda)}{v'_\eta(t(w)) w^{n+1}}dw
\end{align}
for $n\geq1$, where, by construction,  $\delta<\delta_*$. We have 
\begin{align}\label{R2eq}
    e^{-\Im[a]S_\eta(t^*_-)}\int_{-\delta}^\delta e^{\Im[a]v^2}F(t(v),\eta,\lambda) \frac{dv}{v'_\eta(t(v))} 
    =2e^{-\Im[a]S_\eta(t^*_-)}\int_0^\delta e^{\Im[a]v^2}\left[b_0(\eta,\lambda)+v^2 R_2(v,\eta,\lambda)\right]dv, 
\end{align}
where $R_2$ is the even part of  $\hat R_2$. 
By the assumptions of the Lemma,
\begin{equation}
K:=\max_{(\eta,\lambda)\in\Omega_+\times B^0(0,\delta_*)}\max_{v\in \partial{B}(0,\delta_*)}\left|\frac{F(t(v),\eta,\lambda)}{v'_\eta(t(v))}\right|.
\end{equation}
 $K$ is finite.  By Cauchy's estimate we now have 
\begin{equation}
    \left| b_{2n}(\eta,\lambda)\right|\leq\frac{K}{\delta_*^{2n}}
\end{equation}
so that for $v\in[0,\delta]$, 
\begin{align}
    \left|R_2(v,\eta,\lambda)\right|=\left|\sum_{n=1}^\infty b_{2n}(\eta,\lambda)v^{2(n-1)}\right|\leq\frac{K}{\delta_*^{2}}\sum_{n=1}^\infty\left(\frac{\delta}{\delta_*}\right)^{2(n-1)}\leq\frac{K}{\delta_*^2-\delta^2}=:R.
\end{align}
Note that $R=O(M^2)$. We fix a sufficiently large $M$.  Using a second change of variable  $-\tau=\Im[a]v^2$, we  rewrite \eqref{R2eq} as
\begin{align}
       &e^{-\Im[a]S_\eta(t^*_-)}\int_{-\delta}^\delta e^{\Im[a]v^2}F(t(v),\eta,\lambda) \frac{dv}{v'_\eta(t(v))}= \nonumber \\
    &\frac{e^{-\Im[a]S_\eta(t^*_-)}}{\sqrt{-\Im[a]}}\int_0^{-\Im[a]\delta^2}e^{-\tau}\left[\frac{b_0(\eta,\lambda)}{\sqrt{\tau}}-\frac{\sqrt{\tau}}{\Im[a]}R_2\left(t\left(\sqrt{\frac{-\tau}{\Im[a]}}\right),\eta,\lambda\right)\right]d\tau
\end{align}
Using now the asymptotics of the Incomplete Gamma function (see \cite{DLMF} 8.2.2, 8.2.11), we have 
\begin{equation}
    \int_0^{-\Im[a]\delta^2}e^{-\tau}\tau^{1/2-1}d\tau=\Gamma(1/2)-\Gamma(1/2,-\Im[a]\delta^2)=\sqrt{\pi}+\BigO{\sqrt{-\Im[a]}e^{\Im[a]\delta^2}}
\end{equation}
as $\lambda\rightarrow0$ and
\begin{align}
    &\left|\int_0^{-\Im[a]\delta^2}e^{-\tau}\tau^{3/2-1}R_2\left(t\left(\sqrt{\frac{-\tau}{\Im[a]}}\right),\eta,\lambda\right)d\tau\right| \cr
    &\leq R\int_0^{-\Im[a]\delta^2}e^{-\tau}\tau^{3/2-1}d\tau 
    =R\left[\Gamma(3/2)+\BigO{\left(-\Im[a]\right)^{3/2}e^{\Im[a]\delta^2}}\right]
\end{align}
with the error term being uniform with respect to $\eta,\lambda$.  Now we have
\begin{align}
     \int_{\gamma_{C,\eta}}F(t,\eta,\lambda)e^{-\Im[a(\lambda)]S_\eta(t)}dt=\frac{e^{-\Im[a]S_\eta(t^*_-)}}{\sqrt{-\Im[a]}}\left[b_0(\eta,\lambda)\sqrt{\pi}+\BigO{\frac{M^2}{\Im[a]}}\right],
\end{align}
which is equivalent to the statement of the Lemma since, by the hypotheses of the Lemma, $b_0(\eta,\lambda)$ is uniformly bounded away from 0.
\end{proof}

Next we show that the contribution away from the saddle point is negligible.

\begin{lemma}\label{intEstimate}
    Assume $F(t,\eta,\lambda)$ satisfies the hypotheses of Theorem \ref{result}.  Then, as $\lambda\to0$ with $\Im\lambda\geq0$, we have the bound 
    \begin{equation}
        \left|\int_{\gamma_{L+R,\eta}}F(t,\eta,\lambda)e^{-\Im[a(\lambda)]S_\eta(t)}~dt\right|\leq e^{-\Im[a(\lambda)]\Re[S_\eta(t^*_-(\eta))]}\cdot e^{-\Im[a(\lambda)]c_*} K_{LR},
    \end{equation}
    where $\gamma_{L+R,\eta}=\gamma_{L,\eta}\cup\gamma_{R,\eta}$  and constants $c_*<0$ and $ K_{LR}>0$ are $\eta,\lambda$ independent.
    
\end{lemma}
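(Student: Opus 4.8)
The plan is to split $\gamma_{L+R,\eta}$ into the two legs $\gamma_{L,\eta}$ (from $t=0$ to $t=t_L^*(\eta)$) and $\gamma_{R,\eta}$ (from $t=t_R^*(\eta)$ to $t=1$), bound each integral separately, and combine. The key structural fact available from the previous subsection is that on $\gamma_\eta$ we have $\Re[S_\eta(t)-S_\eta(t_-^*(\eta))]\le 0$ with equality only at the saddle $t=t_-^*(\eta)$. Hence on the closed subarcs $\gamma_{L,\eta}$ and $\gamma_{R,\eta}$, which stay a fixed positive distance away from $t_-^*(\eta)$ (by the Remark preceding this Lemma, $|t_-^*(\eta)-t_{L/R}^*(\eta)|\ge \tfrac29\delta>0$ uniformly in $\eta\in\Omega_+$), the continuous function $\Re[S_\eta(t)-S_\eta(t_-^*(\eta))]$ attains a strictly negative maximum. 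The main point is to make this maximum uniform in $\eta$: since $(t,\eta)\mapsto \Re[S_\eta(t)-S_\eta(t_-^*(\eta))]$ is continuous on the compact set $\{(t,\eta): \eta\in\Omega_+,\ t\in\gamma_{L,\eta}\cup\gamma_{R,\eta}\}$ and strictly negative there, it has a uniform negative upper bound $c_*<0$. (One has to be slightly careful because the contour $\gamma_\eta$ itself depends on $\eta$; but the level-set picture of Figure \ref{figZeroLevelSet} shows the contour can be chosen to depend continuously on $\eta$, so the relevant set is compact and the argument goes through. Alternatively one fixes, for each $\eta$, the steepest-descent-respecting contour and takes the infimum over $\eta$ of the per-$\eta$ maxima, which is positive by the same compactness argument applied to $\partial\Omega_+$ as in the proof of Lemma \ref{v_prop}.)

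With $c_*$ in hand the estimate is routine. On $\gamma_{L+R,\eta}$ we write
\begin{equation}
\big|e^{-\Im[a(\lambda)]S_\eta(t)}\big| = e^{-\Im[a(\lambda)]\Re[S_\eta(t)]} \le e^{-\Im[a(\lambda)]\Re[S_\eta(t_-^*(\eta))]}\, e^{-\Im[a(\lambda)] c_*},
\end{equation}
using that $-\Im[a(\lambda)]\to +\infty$ as $\lambda\to 0$ with $\Im\lambda\ge 0$ (Proposition \ref{aProp}) and $\Re[S_\eta(t)]-\Re[S_\eta(t_-^*(\eta))]\le c_*<0$. It then remains to bound $\int_{\gamma_{L+R,\eta}}|F(t,\eta,\lambda)|\,|dt|$ by an $\eta,\lambda$-independent constant $K_{LR}$. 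Here I would invoke hypotheses (2)--(4) of Theorem \ref{result}: $F$ is continuous in all variables and bounded away from the endpoints uniformly in $(\eta,\lambda)$, and near $t=0$ and $t=1$ it satisfies $F=\BigO{t^{c_0}}$, $F=\BigO{(1-t)^{c_1}}$ with $c_0,c_1>-1$, which makes $|F|$ integrable against arclength near the endpoints with a uniform bound; the arclength of $\gamma_{L+R,\eta}$ is itself uniformly bounded since $\gamma_\eta\subset B(1/2,1/2)$ and can be taken of bounded length. Combining, $\int_{\gamma_{L+R,\eta}}|F|\,|dt|\le K_{LR}$ uniformly, giving the claimed bound.

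The main obstacle is the uniformity of the negative constant $c_*$ over $\eta\in\Omega_+$, precisely because the integration contour $\gamma_\eta$ varies with $\eta$; this is where one must be careful to set up a genuinely compact parameter domain (continuous dependence of $\gamma_\eta$ on $\eta$, or a continuity/compactness argument on $\overline{\Omega_+}$) so that a single $c_*$ works for all $\eta$. The integrability-at-the-endpoints piece is comparatively soft: it follows directly from $c_0,c_1>-1$ in the hypotheses of Theorem \ref{result}, and the fact that $\gamma_\eta$ approaches $0$ and $1$ within fixed sectors (Lemma \ref{levSetProp}) so that the endpoint asymptotics of $F$ in the variable $t$ transfer to estimates along $\gamma_\eta$ without loss. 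Finally I would remark that the same bound holds for $\Im\lambda\le0$ by the symmetry argument used elsewhere (or simply by replacing $\Im[a(\lambda)]$ by $|\Im[a(\lambda)]|$ throughout), so the Lemma as stated is complete.
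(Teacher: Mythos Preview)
Your proof is correct and follows essentially the same approach as the paper: define $c_*$ as the uniform (over $\eta\in\Omega_+$) maximum of $\Re[S_\eta(t)-S_\eta(t_-^*(\eta))]$ on $\gamma_{L+R,\eta}$, use compactness to ensure $c_*<0$, factor out $e^{-\Im[a]\Re[S_\eta(t_-^*)]}e^{-\Im[a]c_*}$, and bound the remaining $\int_{\gamma_{L+R,\eta}}|F|\,|dt|$ by a constant $K_{LR}$ using the hypotheses of Theorem~\ref{result}. If anything, you are more explicit than the paper about the two delicate points---the $\eta$-dependence of the contour when establishing uniformity of $c_*$, and the role of the endpoint hypotheses $c_0,c_1>-1$ in making $K_{LR}$ finite---which the paper passes over with one-line appeals to continuity and ``by hypotheses.''
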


\begin{proof}

Define
\begin{equation}
    c_*(\eta)=\max_{t\in\gamma_{L+R,\eta}}\Re\left[S_\eta(t)-S_\eta(t_-^*(\eta))\right].
\end{equation}
Obviously  $c_*(\eta)$ is a continuous and negative function of  $\eta\in\Omega_+$.  Since $\Omega_+$ is a compact set,  the maximal value $c_*$ of $ c_*(\eta)$  on $\Omega_+$ is negative. Then
\begin{align}
    &\left|\int_{\gamma_{L+R,\eta}}F(t,\eta,\lambda)e^{-\Im[a(\lambda)]S_\eta(t)}~dt\right| 
    \leq e^{-\Im[a(\lambda)]\Re[S_\eta(t^*_-(\eta))]}\int_{\gamma_{L+R,\eta}}\left|F(t,\eta,\lambda)\right|e^{-\Im[a(\lambda)]\Re\left[S_\eta(t)-S_\eta(t^*_-(\eta))\right]}~dt \cr
    &\leq e^{-\Im[a(\lambda)]\Re[S_\eta(t^*_-(\eta))]}e^{-\Im[a(\lambda)]c_*}\int_{\gamma_{L+R,\eta}}\left|F(t,\eta,\lambda)\right|~dt 
    \leq e^{-\Im[a(\lambda)]\Re[S_\eta(t^*_-(\eta))]}\cdot e^{-\Im[a(\lambda)]c_*} K_{LR}
\end{align}
where the constant
\begin{equation}
    K_{LR}:=\max_{\eta\in\Omega_+}\max_{\lambda\in\overline{B}(0,\epsilon)}\int_{\gamma_{L+R,\eta}}\left|F(t,\eta,\lambda)\right|~dt.
\end{equation}
is finite by hypotheses of Theorem \ref{result}.

\end{proof}

The combination of Lemmas \ref{intLeading} and \ref{intEstimate} proves Theorem \ref{result} provided that $\lambda$ is in the upper half plane.

\begin{remark}\label{lambdaLHP}
When $\lambda\to0$ in the lower half plane, the key difference is that $\Im[a(\lambda)]\to\infty$ (see Proposition \ref{aProp}).  With that in mind, rewrite the integrand of $\eqref{int}$ as 
\begin{align}
F(t,\eta,\lambda)e^{-\Im[a(\lambda)]S_\eta(t)}=F(t,\eta,\lambda)e^{\Im[a(\lambda)][-S_\eta(t)]}.
\end{align}
We can replace $S$ with $-S$ and carry out the same analysis as before.  The only difference will be that the regions in the $t$ plane where $\Re[S_\eta(t)-S_\eta(t^*_-(\eta))]<0$ and $\Re[S_\eta(t)-S_\eta(t^*_-(\eta))]>0$ will swap, so we deform $[0,1]$ to a different contour, see Figure \ref{figLambdaLHP}.  All previous ideas from this section can now be applied using the new contour $\gamma$.

\end{remark}

\begin{figure}[h]
\begin{center}
\begin{tikzpicture}

\draw[blue,thick] (-4.5,0) -- node[anchor=south,black] {$-$} node[anchor=north,black] {$+$} (-3,0);
\draw[red,thick] (-3,0) -- (3,0);
\draw[blue,thick] (3,0) -- node[anchor=south,black] {$+$} node[anchor=north,black] {$-$} (4.5,0);
\draw [thick,smooth,red] (0.5,-2) .. controls (-0.15,0) .. (0.5,2);
\draw [thick,smooth] (-3,0) .. controls (-1,0.88) .. (0,0);
\draw [thick,smooth] (0,0) .. controls (1,-0.88) .. (3,0);

\filldraw[black] (-3,0) circle [radius=1.5pt] node[anchor=south] {0};
\filldraw[black] (0,0) circle [radius=1.5pt] node[anchor=north east] {$t^*_-(\eta)$};
\filldraw[black] (3,0) circle [radius=1.5pt] node[anchor=north] {1};

\filldraw[black] (0.72,-0.56) circle [radius=1.5pt] node[anchor=north] {$t^*_R(\eta)$};
\filldraw[black] (-0.72,0.56) circle [radius=1.5pt] node[anchor=south] {$t^*_L(\eta)$};

\end{tikzpicture}
\end{center}
\vspace{-5mm}
\caption{\hspace{.05in}Path of integration when $\lambda$ is in the lower half plane.} \label{figLambdaLHP}
\end{figure}
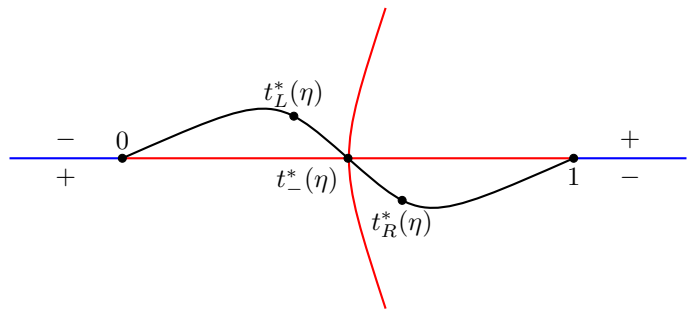

%%%%%%%%%%%%%%%   REFERENCES   %%%%%%%%%%%%%%%

% \phantomsection
% \addcontentsline{toc}{section}{References}

\bibliographystyle{amsplain}
\bibliography{4pt_NoGap.bib}

\end{document}